\newtheorem{theorem}{Theorem}[section]
\newtheorem{proposition}{Proposition}[section]
\newtheorem{lemma}{Lemma}[section]
\newtheorem{corollary}{Corollary}[section]
\newtheorem{definition}{Definition}[section]
\newtheorem{problem}{Problem}[section]
\def\neweq#1{\begin{equation}\label{#1}}
\def\endeq{\end{equation}}
\def\eq#1{(\ref{#1})}
\def\ds{\displaystyle}
\newcommand{\R}{\mathbb{R}}
\newcommand{\N}{\mathbb{N}}
\newcommand{\eps}{\varepsilon}
\newcommand{\SN}{{\mathbb S}^{n-1}}
\begin{document}

\title[Existence and stability properties of entire solutions to $(-\Delta)^m u=e^u$]{Existence and stability properties of entire solutions to the polyharmonic equation $(-\Delta)^m u=e^u$ for any $m\ge 1$}

\author[Alberto FARINA]{Alberto FARINA}
\address{\hbox{\parbox{5.7in}{\medskip\noindent{Universit\'e de Picardie Jules Verne,\\
Facult\'e des Sciences, \\
Rue Saint-Leu 33, 80039 Amiens, France. \\[3pt]
\em{E-mail address: }{\tt alberto.farina@u-picardie.fr}}}}}

\author[Alberto FERRERO]{Alberto FERRERO}
\address{\hbox{\parbox{5.7in}{\medskip\noindent{Universit\`a del
Piemonte Orientale ``Amedeo Avogadro'',\\
Dipartimento di Scienze e Innovazione Tecnologica, \\
        Viale Teresa Michel 11, 15121 Alessandria, Italy. \\[3pt]
        \em{E-mail address: }{\tt alberto.ferrero@mfn.unipmn.it}}}}}

\date{\today}

\keywords{Higher order equations, Radial solutions, Stability
properties, Hardy-Rellich inequalities}

\subjclass[2010]{35G20, 35B08, 35B35, 35B40.}

\begin{abstract} We study existence and stability properties of
entire solutions of a polyharmonic equation with an exponential
nonlinearity. We study existence of radial entire solutions and we
provide some asymptotic estimates on their behavior at infinity.
As a first result on stability we prove that stable solutions
(not necessarily radial) in dimensions lower than the conformal one never exist.
On the other hand, we prove that radial entire solutions which are stable
outside a compact always exist both in high and low dimensions. In
order to prove stability of solutions outside a compact set we
prove some new Hardy-Rellich type inequalities in low dimensions.
\end{abstract}

\maketitle

\section{Introduction}

We are interested in existence, nonexistence and stability properties of global
solutions for the polyharmonic equation
\begin{equation} \label{eq_1}
(-\Delta)^m u=e^u \qquad \text{in } \R^n \, .
\end{equation}
This problem is the natural extension to the polyharmonic case of
the Gelfand equation \cite{Gelfand}
\begin{equation} \label{eq:secondorder}
-\Delta u=e^u\quad\mbox{in}\quad \mathbb{R}^n,\qquad n\ge 1 \, .
\end{equation}
Equation \eqref{eq:secondorder} describes problems of thermal
self-ignition \cite{Gelfand}, diffusion phenomena induced by
nonlinear sources \cite{js} or a ball of isothermal gas in
gravitational equilibrium as proposed by lord Kelvin \cite{c}. For
results concerning properties of solutions of the Gelfand equation
in the whole $\R^n$ or in bounded domains see
\cite{BrezisVazquez,Dancer,farina,jl,MignotPuel} and the
references therein.

Recently, problem \eqref{eq_1} in the biharmonic case $m=2$ was
widely studied, see
\cite{agg,aggm,bffg,berchiogazzola,ChangChen,ddgm,dfg,dggw,Lin,w,wy}.
In the listed papers the biharmonic version of the Gelfand
equation was considered both in bounded domains with suitable
boundary conditions and in the whole $\R^n$; several questions
were tackled, from the existence of solutions to their qualitative
and stability properties. For other results concerning radial
entire solutions of nonlinear biharmonic equations see also
\cite{fergru,fergrukar,ferwar,gazgru,book,kara} and the references
therein.

The study of higher order elliptic equations like in \eqref{eq_1}
is motivated by the problem formulated by P.L. Lions \cite[Section
4.2 (c)]{li}, namely: {\it Is it possible to obtain a description
of the solution set for higher order semilinear equations
associated to exponential nonlinearities?}

Our paper is essentially focused on the existence and stability
properties of entire solutions of \eqref{eq_1}. This paper has the
purpose of being a first step in a deeper comprehension of
properties of entire solutions of \eqref{eq_1}. Throughout this
paper, by entire solution to problem \eqref{eq_1} we mean a
classical solution $u$ of the equation in \eqref{eq_1} which exists
for all $x\in\R^n$.

Concerning existence of entire solutions we describe in which way
existence of global radial solutions of \eqref{eq_1} is influenced
by the fact that $m$ is even or not. For results about radial
solutions of nonlinear polyharmonic equations see the papers
\cite{dls,ls} and the references therein.

In the present paper, in looking for radial solutions of
\eqref{eq_1}, we consider the following initial value problem
\begin{equation} \label{Cauchy}
\left\{
\begin{array}{ll}
(-\Delta)^m u(r)=e^{u(r)} \qquad \text{for } r\in [0,R(\alpha_0,\dots,\alpha_{m-1})) \\
u(0)=\alpha_0 \, , \quad u'(0)=0 \\
\Delta^k u(0)=\alpha_{k} \, , \quad (\Delta^k u)'(0)=0 \, \quad
\text{for any } k\in \{1,\dots,m-1\}
\end{array}
\right.
\end{equation}
where $\alpha_0,\dots,\alpha_{m-1}$ are arbitrary real numbers and
$R(\alpha_0,\dots,\alpha_{m-1})$ is the supremum of the maximal
interval of continuation of the corresponding solution. The
conditions $u'(0)=0$ and on $(\Delta^k u)'(0)=0$ are necessary for
having smoothness of the solution at the origin.

As a first result, we prove that in the case $m$ odd, for any
$\alpha_0,\dots,\alpha_{m-1}$ the corresponding solution of
\eqref{Cauchy} is an entire solution of \eqref{eq_1}, see Theorem
\ref{t:existence-odd}.  In dimension $n=1$ we also prove that all solutions of \eqref{eq_1}, not necessarily symmetric,
are global, see Theorem \ref{t:ex-odd}.

On the other hand, if $m$ is even and $n=1$
or $n=2$ then any solution of \eqref{Cauchy} is not global whereas
in dimension $n\ge 3$ both existence and nonexistence of global
solutions may occur. In this last situation we give a sufficient
and necessary condition for the existence of radial entire
solutions of \eqref{eq_1}, see Theorem \ref{t:existence-even}.
More precisely, this theorem shows that for any $\alpha_0\in\R$,
\eqref{Cauchy} admits a global solution if and only if the
$(m-1)$-tuple $(\alpha_1,\dots,\alpha_{m-1})$ belongs to a
suitable nonempty closed set depending on $\alpha_0$, denoted by
$\mathcal A_{\alpha_0}$. The nonexistence result in Theorem \ref{t:existence-even} (i) is extended for $n=1$ to all solutions
of \eqref{eq_1}, see Theorem \ref{t:non-ex-even}.

The second purpose of this paper is to shed some light on the
asymptotic behavior of global solutions of \eqref{Cauchy} as $r
\to +\infty$ and on their stability properties.

In this direction we first show in Proposition \ref{noboundbelow}
that all entire solutions of \eqref{eq_1} are unbounded from
below. In Theorem \ref{t:estimates} we restrict out attention to
radial solutions of \eqref{eq_1}. When $m$ is odd we prove that
for some special values of the initial conditions, problem
\eqref{Cauchy} admits solutions which blow down to $-\infty$ at
least as $r^4$ as $r\to +\infty$. Moreover if $1\le n\le 2m-1$ all
radial solutions of \eqref{eq_1} blow down to $-\infty$ at least
as a positive and integer power of $r$.

On the other hand when $m$ is even we prove that for any
$\alpha_0\in\R$, solutions corresponding to initial conditions
satisfying $(\alpha_1,\dots,\alpha_{m-1})\in \ds{\mathop{\mathcal
A_{\alpha_0}}^{\circ}}$, behave like $-C r^{2m-2}$ as $r\to
+\infty$ for a suitable constant $C>0$. If
$(\alpha_1,\dots,\alpha_{m-1})\in \partial \mathcal A_{\alpha_0}$
then the corresponding solution $u$ satisfies $u(r)=o(r^{2m-2})$
as $r\to +\infty$; however we are able to prove that $u$ blows
down to $-\infty$ at least as a logarithm as $r\to +\infty$, see
Theorem \ref{t:estimates} (iv). Such a logarithmic behavior
actually occurs when $m=2$ and $n\ge 5$, see \cite{agg}.
When $m\ge 3$ is any integer (possibly also odd) and $n=2m$ a
logarithmic behavior can be observed for a special class of
solutions of \eqref{eq_1}, see \cite{martinazzi} and the
references therein. More precisely, combining Theorems 1-2 in
\cite{martinazzi}, it can be shown that among solutions of
\eqref{eq_1} satisfying the condition $\int_{\R^{2m}}
e^{u}<+\infty$, the only ones which show a logarithmic behavior at
infinity are the explicit solutions given by
\begin{equation} \label{eq:sol-esplicita}
u(x)=2m \log \frac{2[(2m)!]^{\frac
1{2m}}\lambda}{1+\lambda^2|x-x_0|^2}
\end{equation}
where $\lambda>0$ and $x_0\in \R^{2m}$. For more details see also
Proposition \ref{p:martinazzi} and Corollary \ref{c:martinazzi} in
the present paper.

Then we focus our attention on stability and stability outside a
compact set. For a rigorous definition of these two notions see
Section \ref{s:stability}. In Theorem \ref{t:non-ex-stable} we
prove that \eqref{eq_1} admits no stable solutions (also non
radial) if $n$ is less or equal than the conformal dimension $2m$.
However, in Theorems \ref{t:staocs-odd}-\ref{t:staocs-even} we are
able to prove that if $3\le n\le 2m$ then \eqref{eq_1} admits radial
solutions which are stable outside a compact set. Moreover if
$m\ge 3$ is odd and $1\le n\le 2m-1$ then all radial solutions of
\eqref{eq_1} are stable outside a compact set, see Theorem \ref{t:n<=2m-1}.

In the supercritical dimensions $n>2m$ we prove that if $m$ is odd
then \eqref{eq_1} admits radial solutions that are stable outside
a compact set and if $m$ is even then, for any $\alpha_0\in\R$,
all solutions of \eqref{Cauchy} such that
$(\alpha_1,\dots,\alpha_{m-1})\in \ds{\mathop{\mathcal
A_{\alpha_0}}^{\circ}}$, are stable outside a compact set; the
question about the stability outside a compact set in the case
$(\alpha_1,\dots,\alpha_{m-1})\in \partial\mathcal A_{\alpha_0}$
is still open, see Problem \ref{prob:2} (ii).

The question about the existence of (globally) stable solutions is
completely open both in the cases $m$ odd and $m$ even, see
Problem \ref{prob:2} (iii). Let us try to explain the main
difficulties that one has to face in order to determine stability
of radial solutions. In the case $m=1$ a complete description of
stability and stability outside compact sets of solutions (also
non radial) of \eqref{eq_1} is available, see
\cite{Dancer,farina}. In the case $m=2$ a complete picture on
stability and stability outside compact sets was given in
\cite{bffg,dggw} at least for radial solutions.

If we look at radial solutions in the case $m=2$, we see that in
\cite{bffg} the authors are able to obtain asymptotic and global
estimates on solutions by exploiting a suitable change of
variables which reduces the ordinary differential equation in
\eqref{Cauchy} into a nonlinear fourth order autonomous equation,
see \cite[Proof of Lemma 12]{bffg}. In turn, this fourth order
autonomous equation may reduced to a dynamical system of four
first order equations. In this situation the dimension $n$ plays a
crucial role in determining stability properties of radial
solutions: indeed in dimension $n\ge 13$ the above mentioned
fourth order autonomous equation shows a non oscillatory behavior
of its solutions and this, combined with the classical
Hardy-Rellich inequality \cite{Rellich}, gives stability of
solutions; on the other hand in dimensions $5\le n<13$ ($n=4$ is
the critical dimension) the autonomous fourth order equation shows
an oscillatory behavior and this justifies the existence of radial
unstable solutions.

When we consider higher powers $m$ of $-\Delta$ the situation
seems to be quite different for the following reason. In a
completely similar way the ordinary differential equation in
\eqref{Cauchy} may be reduced to an autonomous equation of order
$2m$. But this time a non oscillatory behavior, similar to the one
observed in the case $m=2$ when $n\ge 13$, seems not to take place
also in large dimensions as one can see from Section
\ref{s:dynamical}.

A relevant part of this paper is devoted to a class of
Hardy-Rellich type inequalities when $n$ is less or equal than the
corresponding critical dimension. The first result in this
direction is Proposition \ref{p:Hardy-Rellich} which can be
obtained with an iterative procedure by using a result of
\cite{calmus}. The results in Theorems
\ref{t:preliminar-1}-\ref{t:Hardy-Rellich-2} are new and their
proofs are based on suitable Emden type transformation. This kind
of procedure was already used in \cite{calmus} in order to obtain
Hardy-Rellich type inequalities in conical domains.

This paper is organized as follows. In Section \ref{existbeh} we
state existence and nonexistence results for solutions to
\eqref{eq_1} and we provide some estimates on the asymptotic
behavior of its radial solutions as $|x|\to +\infty$. In Section
\ref{s:stability} we give some results about stability and
stability outside compact sets of solutions to
\eqref{eq_1}. To this end, we need some Hardy-Rellich inequalities
which are stated in Section \ref{s:Hardy-Rellich}. Sections
\ref{s:odd}-\ref{s:stability-proof} are devoted to the proofs of
the main results. In Section \ref{s:dynamical} we explain in which
way further results on radial solutions of \eqref{eq_1} could be
obtained by mean of a suitable change of variable and we present
some open questions. Finally in the appendix we state a couple of
well-known results dealing with continuous dependence on the
initial data and with a comparison principle.

\section{Existence and asymptotic behavior of radial entire solutions of
\eqref{eq_1}} \label{existbeh}

We start with following existence result for radial entire
solutions of \eqref{eq_1} in the case $m$ odd :

\begin{theorem} \label{t:existence-odd}
Let $n\ge 1$ and $m\ge 1$ odd. Then for any
$\alpha_0,\dots,\alpha_{m-1}\in \R$ problem \eqref{Cauchy} admits
a unique global solution.
\end{theorem}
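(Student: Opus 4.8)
The plan is to prove global existence for the Cauchy problem \eqref{Cauchy} when $m$ is odd, by rewriting the single $2m$-th order radial ODE as a system and exploiting the sign structure that an odd power of $-\Delta$ produces. The key observation is that if we set $u_0 = u$ and $u_k = \Delta^k u$ for $k = 1, \dots, m-1$, then the equation $(-\Delta)^m u = e^u$ becomes the chain $\Delta u_{k-1} = u_k$ for $k = 1, \dots, m-1$ together with $\Delta u_{m-1} = (-1)^m e^u = -e^u$ (since $m$ is odd). So I first record the radial form: with $'$ denoting $d/dr$, each equation reads $u_{k-1}'' + \tfrac{n-1}{r} u_{k-1}' = u_k$, and the last one $u_{m-1}'' + \tfrac{n-1}{r}u_{m-1}' = -e^{u_0} < 0$. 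Local existence and uniqueness of a smooth solution near $r=0$ with the prescribed Cauchy data (and vanishing first derivatives, forced by smoothness) is standard — one can cite the appendix results on continuous dependence, or note that after the substitution $r \mapsto r$ the system has an analytic right-hand side away from $r=0$ and the indicial analysis at $r=0$ gives a unique smooth solution.

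The heart of the matter is a priori control: one must show the maximal interval of existence is all of $[0,\infty)$, i.e. that the solution cannot blow up in finite $r$. Here is where oddness of $m$ is decisive. Integrating the last equation, $\big(r^{n-1} u_{m-1}'\big)' = -r^{n-1} e^{u_0} \le 0$, so $r^{n-1} u_{m-1}'(r)$ is nonincreasing and, since it vanishes at $r=0$, we get $u_{m-1}'(r) \le 0$, hence $u_{m-1}$ is nonincreasing: $u_{m-1}(r) \le \alpha_{m-1}$ for all $r$ in the interval of existence. Now feed this up the chain: $\big(r^{n-1} u_{m-2}'\big)' = r^{n-1} u_{m-1}(r) \le r^{n-1}\alpha_{m-1}$, so integrating twice, $u_{m-2}(r) \le \alpha_{m-2} + \tfrac{\alpha_{m-1}}{2n} r^2$, a polynomial bound from above. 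Iterating, one obtains after $m-1$ such steps a polynomial upper bound $u_0(r) = u(r) \le P(r)$ on the existence interval, where $P$ is a polynomial of degree $2(m-1)$ whose coefficients are explicit in $\alpha_0, \dots, \alpha_{m-1}$. Consequently $e^{u_0(r)} \le e^{P(r)}$, which is bounded on any finite interval. Plugging this bound back into the last equation shows $|u_{m-1}''|$ is bounded on finite intervals (and $u_{m-1}'$ is controlled: $0 \ge u_{m-1}'(r) \ge -\tfrac{1}{n} r \, e^{\sup_{[0,r]} P}$), so $u_{m-1}$ stays bounded below on finite intervals too; then successively each $u_k$, and each derivative appearing in the system, is bounded on every finite interval. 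By the standard continuation criterion for ODE systems, a solution that remains bounded (together with the relevant derivatives) on every bounded subinterval of its maximal interval of existence cannot have finite maximal time; hence $R(\alpha_0,\dots,\alpha_{m-1}) = +\infty$ and the solution is entire.

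I expect the main obstacle to be purely bookkeeping rather than conceptual: carefully organizing the iterated integrations $\big(r^{n-1} u_{k-1}'\big)' = r^{n-1} u_k$ to produce two-sided polynomial bounds on each $u_k$ on the maximal interval, and then phrasing the blow-up alternative for this particular (singular at $r=0$) system cleanly — e.g. by writing the system as a first-order system in the variables $(u_k, r^{n-1}u_k')_{k=0}^{m-1}$ on $(0,\infty)$ and invoking the escape-to-the-boundary characterization of the maximal interval, noting that the only way to leave a compact set of phase space in finite $r$ would be for some $u_k$ or $u_k'$ to blow up, which the bounds just obtained preclude. Uniqueness is immediate from the local uniqueness at each point of the interval once global existence is in hand. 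One subtlety worth a sentence: the upper bound on $u = u_0$ only uses that $u_{m-1}$ (and then each lower $u_k$) is bounded above, and the crucial first inequality $u_{m-1}' \le 0$ is exactly where the sign $(-1)^m = -1$ enters — were $m$ even, the last equation would give $u_{m-1}$ bounded \emph{below}, which does not propagate to an upper bound on $u$ and indeed global existence can fail, consistent with Theorem \ref{t:existence-even}.
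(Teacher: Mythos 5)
Your proposal is correct and follows essentially the same route as the paper: the sign observation $(r^{n-1}(\Delta^{m-1}u)')'=-r^{n-1}e^u\le 0$ yields $\Delta^{m-1}u\le\alpha_{m-1}$, this propagates to a polynomial upper bound on $u$, and the resulting bound on $e^u$ rules out finite-time blow-up via the standard continuation criterion. The only cosmetic difference is that the paper obtains the polynomial bound by comparing with the explicit solution of $\Delta^{m-1}w=\alpha_{m-1}$ via its appendix comparison principle, whereas you carry out the iterated integrations directly — the same computation.
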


In order to describe what happens in the case $m$ even we
introduce the following notation accordingly with
\cite{aggm,bffg}: we write $\alpha$ in place of $\alpha_0\in \R$
and we rename the numbers $\alpha_1,\dots,\alpha_{m-1}$
respectively $\beta_1,\dots,\beta_{m-1}$. Then we put
$\beta:=(\beta_1,\dots,\beta_{m-1})\in \R^{m-1}$ and we denote by
$u_{\alpha,\beta}$ the corresponding solution of \eqref{Cauchy}.
Finally for any $\alpha\in\R$ fixed, we introduce the set
\begin{equation} \label{eq:A-alpha}
\mathcal A_\alpha:=\{\beta\in\R^{m-1}:u_{\alpha,\beta} \ \text{is
a global solution of } \eqref{Cauchy}\} \, .
\end{equation}

We prove

\begin{theorem} \label{t:existence-even} Let $m\ge 2$ even and let $\mathcal A_\alpha$ be the set introduced
in \eqref{eq:A-alpha}. Then the following statements hold true:
\begin{itemize}
\item[(i)] if $n=1$ or $n=2$ then for any $\alpha\in \R$ the set
$\mathcal A_\alpha$ is empty.

\item[(ii)] if $n\ge 3$ then for any $\alpha\in\R$ the set
$\mathcal A_\alpha$ is nonempty and moreover there exists a
function

\noindent $\Phi_\alpha:\R^{m-2}\to (-\infty,0)$ such that
$$
\mathcal A_\alpha=\{\beta=(\beta_1,\dots,\beta_{m-1})\in
\R^{m-1}:\beta_{m-1}\le \Phi(\beta_1,\dots,\beta_{m-2}) \} \, ;
$$
\item[(iii)] if $n\ge 3$ then for any $\alpha\in\R$, $\Phi_\alpha$
is a continuous function, $\mathcal A_\alpha$ is closed, $\partial
\mathcal A_\alpha$ coincides with the graph of $\Phi_\alpha$ and
$$
\ds{\mathop{\mathcal
A_\alpha}^{\circ}}=\{\beta=(\beta_1,\dots,\beta_{m-1})\in
\R^{m-1}:\beta_{m-1}< \Phi(\beta_1,\dots,\beta_{m-2}) \} \, ;
$$
\item[(iv)] if $n\ge 3$ and $m\ge 4$ then for any $\alpha\in \R$,
$\Phi_\alpha$ is decreasing with respect to each variable i.e. the
map $t\mapsto
\Phi_\alpha(\beta_1,\dots,\beta_{k-1},t,\beta_{k+1},\dots,\beta_{m-2})$
is decreasing in $\R$ for any $k\in\{1,\dots,m-2\}$.
\end{itemize}
\end{theorem}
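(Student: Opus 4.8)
The plan is to analyze the Cauchy problem \eqref{Cauchy} with $m$ even by a shooting argument in the last initial parameter $\beta_{m-1}$, keeping $\alpha,\beta_1,\dots,\beta_{m-2}$ fixed. The first step is to observe that since $m$ is even, $(-\Delta)^m=(-\Delta)^{m/2}\circ(-\Delta)^{m/2}$, so $\Delta^{m/2}u$ satisfies a polyharmonic equation of odd-type structure with positive right-hand side; more concretely, setting $v=\Delta^{m/2}u$ we get $(-1)^{m/2}(-\Delta)^{m/2}v=e^u>0$, and monotonicity in the data propagates through the iterated fundamental solutions on the line $[0,R)$. The key monotonicity fact to isolate and prove first is a comparison statement: if $\beta_{m-1}<\tilde\beta_{m-1}$ with all other data equal, then on the common interval of existence $u_{\alpha,\beta}(r)<u_{\alpha,\tilde\beta}(r)$ for $r>0$. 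This follows because decreasing $\beta_{m-1}$ makes the top-order term more ``coercive downward'': the linearization/integral representation shows $u$ depends monotonically (with a definite sign, governed by the parity of $m/2$ and the sign conventions in \eqref{Cauchy}) on each $\alpha_k$; I would verify the sign using the explicit radial kernels, i.e. repeatedly integrating the ODEs $(r^{n-1}(\Delta^k u)')'=r^{n-1}\Delta^{k+1}u$ from $0$, and tracking how the constant $\alpha_k$ enters. Granted this, the set of admissible $\beta_{m-1}$ (those giving a global solution) is a half-line $(-\infty,\Phi_\alpha(\beta_1,\dots,\beta_{m-2})]$ or $(-\infty,\Phi_\alpha)$ for some $\Phi_\alpha(\beta_1,\dots,\beta_{m-2})\in[-\infty,+\infty]$.

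The next step is to show this threshold is finite and strictly negative, i.e. $\Phi_\alpha:\R^{m-2}\to(-\infty,0)$. For finiteness from above (nonexistence when $\beta_{m-1}$ is large positive): when $\beta_{m-1}=\Delta^{m-1}u(0)$ is large and positive, one propagates positivity of $\Delta^{m-1}u$, hence of $\Delta^{m-2}u$, ..., hence eventually $u$ itself grows, making $e^u$ grow, feeding back a superlinear blow-up in finite $r$; a Riccati/comparison argument (compare with the blow-up ODE for $n=1,2$ already handled in part (i), or with an integral inequality of the form $u(r)\ge c r^{2m}e^{u(0)}$ forcing finite-time blow-up) gives $R(\alpha,\beta)<\infty$. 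For strict negativity of $\Phi_\alpha$ (nonexistence when $\beta_{m-1}=0$, and more generally the whole half-space $\beta_{m-1}\ge 0$ with the other $\beta_j$ arbitrary is non-global): here I would use the already-established case $n=1,2$ as a model and the fact that for $\beta_{m-1}\ge 0$ the function $-\Delta^{m-1}u=e^u-\text{(nonnegative terms)}$... more carefully, one shows $\Delta^{m-1}u(r)\ge \Delta^{m-1}u(0)=\beta_{m-1}\ge 0$ is impossible to maintain together with global existence because the lowest iterate would then be convex-like and forced upward by $e^u>0$, again yielding blow-up. So $\Phi_\alpha<0$ strictly. For $\Phi_\alpha>-\infty$ (existence of some global solution, part (ii) nonemptiness): one exhibits an explicit global radial supersolution/subsolution pair, e.g. built from the explicit solution \eqref{eq:sol-esplicita} in dimension $n=2m$ and from known $m=2$ constructions, or by taking $\beta_{m-1}$ very negative so that $u$ is dominated by the polyharmonic polynomial $-Cr^{2m-2}$ making $e^u$ integrable and the iterates converge; a barrier argument then confirms the solution extends to all of $[0,\infty)$.

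Finally, for part (iii), continuity of $\Phi_\alpha$ and the structure of $\mathcal A_\alpha$, $\partial\mathcal A_\alpha$ and $\mathring{\mathcal A}_\alpha$: closedness of $\mathcal A_\alpha$ follows from continuous dependence on initial data (the appendix result) plus a lower-semicontinuity-of-blow-up-time argument — if $\beta^{(j)}\to\beta$ with each $u_{\alpha,\beta^{(j)}}$ global and uniformly bounded on compacts (which one gets from monotonicity sandwiching), the limit is global. Then $\mathcal A_\alpha=\{\beta_{m-1}\le\Phi_\alpha\}$ being closed forces the sup to be attained and $\Phi_\alpha$ to be upper semicontinuous; lower semicontinuity comes from the openness of the non-global regime (finite blow-up time is stable under perturbation, again by continuous dependence). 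Hence $\Phi_\alpha$ is continuous, $\partial\mathcal A_\alpha=\mathrm{graph}(\Phi_\alpha)$, and $\mathring{\mathcal A}_\alpha=\{\beta_{m-1}<\Phi_\alpha\}$. For part (iv), $m\ge 4$: monotonicity of $\Phi_\alpha$ in each $\beta_k$, $k\le m-2$, reduces to the same comparison principle as in the first step — increasing $\beta_k$ shifts $u_{\alpha,\beta}$ upward (with the appropriate sign), hence shrinks the range of $\beta_{m-1}$ for which globality survives, so $\Phi_\alpha$ decreases; one must check the sign of the effect of $\beta_k$ is the \emph{same} (downward-destabilizing) as that of $\beta_{m-1}$, which is where the parity $m$ even, i.e. $m/2$ integer, is used once more. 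The main obstacle I anticipate is establishing the comparison/monotonicity principle in the first step with the correct signs for all iterates simultaneously: unlike the second-order case there is no maximum principle for $(-\Delta)^m$ in general, so one cannot argue directly on $u$; instead one must work with the system of second-order radial ODEs for $u,\Delta u,\dots,\Delta^{m-1}u$ and carefully exploit that on the half-line $[0,R)$ all the relevant Green's kernels are sign-definite, propagating the ordering step by step — this bookkeeping, together with ensuring the orderings are compatible across the whole chain, is the technical heart of the argument.
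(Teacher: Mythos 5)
Your overall strategy --- shooting in $\beta_{m-1}$ with a McKenna--Reichel-type comparison principle (which applies precisely because $m$ is even, so the equation reads $\Delta^m u=e^u$ with increasing right-hand side), nonexistence for $\beta_{m-1}\ge 0$ and for $n\le 2$ via the impossibility of entire solutions bounded from below, global existence for $\beta_{m-1}$ very negative, and closedness of $\mathcal A_\alpha$ from the stability of blow-up under perturbation of the data --- is the same as the paper's. But there is a genuine gap at the most delicate point, the continuity of $\Phi_\alpha$. You claim lower semicontinuity ``comes from the openness of the non-global regime''; that openness is exactly the closedness of $\mathcal A_\alpha$, i.e.\ it yields the \emph{upper} semicontinuity of $\Phi_\alpha$, which you already have, so your argument is circular. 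Lower semicontinuity is the statement that if $\beta_{m-1}<\Phi_\alpha(\beta_0')$ strictly, then $u_{\alpha,(\beta',\beta_{m-1})}$ stays global for all $\beta'$ near $\beta_0'$; this does not follow from continuous dependence on compact intervals (globality concerns all of $[0,+\infty)$), nor from the comparison principle (a small perturbation of $(\beta_1,\dots,\beta_{m-2})$ is in general not ordered with respect to $\beta_0'$, and comparing against $\beta_0'\pm\delta\mathbf 1$ leads back to the unknown values $\Phi_\alpha(\beta_0'\pm\delta\mathbf 1)$). The paper needs a genuinely quantitative argument (Steps 1--3 in the proof of Lemma \ref{l:non-empty}): it tracks the first radius $M_{\eta,\beta'}$ at which $\Delta^{m-1}u_{\alpha,\beta}$ catches up to $\Delta^{m-1}u_{\alpha,\beta_0}-\eta$, confines that radius to a compact set by explicit polynomial bounds, and then excludes it using uniform convergence on that compact set together with the existence of $\lim_{r\to+\infty}\Delta^{m-1}u_{\alpha,\beta_0}(r)$. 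Some estimate of this kind must be supplied; without it part (iii) is not proved.

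A second, smaller gap is in (iv): your comparison argument (``increasing $\beta_k$ shifts $u$ upward, hence shrinks the admissible range of $\beta_{m-1}$'') only yields that $\Phi_\alpha$ is non-increasing in each variable, while the theorem asserts strict decrease. Strictness in the paper rests on the additional fact (Lemma \ref{l:asympt-1} (i)) that $\lim_{r\to+\infty}\Delta^{m-1}u_{\alpha,\beta}(r)=0$ whenever $\beta\in\partial\mathcal A_\alpha$ --- itself proved by comparison with an explicit supersolution $U$ satisfying $\Delta^m U\ge e^U$ and $U\sim C_{n,m}r^{2m-3}$ --- combined with the identity $0=\gamma+\int_0^\infty(\Delta^{m-1}u)'\,dr$ applied to the two boundary solutions, which the strict ordering of $(\Delta^{m-1}u)'$ contradicts. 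Neither ingredient appears in your sketch. Finally, in the nonemptiness step you should make explicit the quantitative mechanism: the nonlinear feedback $\int_0^\infty s^{1-n}\bigl(\int_0^s t^{n-1}e^{P_b(t)}\,dt\bigr)ds$ is bounded uniformly for $b\le-1$ (and finite only because $n\ge3$), while the deficit $|b|/2$ grows; this is what forces $M_b=R_b=+\infty$ for $b$ negative enough.
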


When $m=2$ the function $\Phi_\alpha$ introduced in the statement
of Theorem \ref{t:existence-even} is defined on the zero
dimensional space $\{0\}$ and the set $\mathcal A_\alpha$ becomes
$\{\beta\in \R:\beta\le \Phi_\alpha(0)\}$. The result in this
particular case was already obtained in \cite{agg}.

We observe that the nonexistence result proved in Theorem
\ref{t:existence-even} for $n=1$ remains valid also for any kind
of solutions of \eqref{eq_1}, also nonsymmetric:

\begin{theorem} \label{t:non-ex-even}
If $n=1$ and $m\ge 2$ is even then \eqref{eq_1} admits no entire solutions.
\end{theorem}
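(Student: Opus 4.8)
The plan is to show that in dimension $n=1$ with $m\ge 2$ even, the equation $u^{(2m)}(-1)^m = e^u$ — equivalently $(-1)^m u^{(2m)} = e^u$ on $\R$, i.e. $(-u^{(2m)}) = e^u$ since $m$ is even gives $(-\Delta)^m = (-1)^m d^{2m}/dx^{2m} = d^{2m}/dx^{2m}$; wait, one must be careful: for $m$ even $(-\Delta)^m u = u^{(2m)}$, so the equation reads $u^{(2m)} = e^u$ — has no entire solution. The first step is to reduce to an integral/monotonicity argument. Set $v = u^{(2m-1)}$; then $v' = e^u > 0$ everywhere, so $v$ is strictly increasing on all of $\R$. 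Since $v$ is increasing and defined on all of $\R$, either $v$ has a finite limit as $x\to-\infty$ or $v\to-\infty$; in any case $v$ is bounded above on $(-\infty,0]$ and bounded below on $[0,+\infty)$ by constants, and in fact $v(x)\to\ell_-\in[-\infty,+\infty)$ as $x\to-\infty$ and $v(x)\to\ell_+\in(-\infty,+\infty]$ as $x\to+\infty$ with $\ell_-<\ell_+$. The key point to extract is that $\int_{\R} e^u\,dx = \ell_+ - \ell_-$, so if both limits were finite we would have $e^u\in L^1(\R)$.

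The main work is then to rule this out by iterating the integration, exploiting that a genuinely entire solution cannot have the odd-order derivatives blow up in an incompatible way. Concretely, I would argue as follows. Since $v=u^{(2m-1)}$ is increasing, pick $a$ with $v(a)>0$ (such $a$ exists unless $v\le 0$ everywhere, a case handled symmetrically by looking at $x\to-\infty$); then for $x\ge a$, $u^{(2m-1)}(x)\ge v(a)>0$, so integrating $2m-1$ times, $u(x)\ge c\,x^{2m-1} + (\text{lower order})\to+\infty$ as $x\to+\infty$. But then $e^u$ grows at least like $e^{c x^{2m-1}}$, forcing $u^{(2m)}=e^u$ to be super-exponentially large, and integrating back up one sees $u^{(2m-1)}(x)$ grows faster than any polynomial, which is consistent — so this alone is not a contradiction. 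The actual obstruction is at $-\infty$: because $v$ is increasing, $v(x)\le v(a)$ for $x\le a$, hence $u^{(2m-1)}$ is bounded above on $(-\infty,a]$, and a symmetric analysis of the lower-order derivatives forces $u$ to be bounded above near $-\infty$ by a polynomial of degree $\le 2m-1$, in fact $u(x)= O(|x|^{2m-1})$ there. The decisive step is to show that $u$ cannot be bounded above (so that $e^u$ is not integrable near $-\infty$) AND cannot tend to $+\infty$ there either: if $u\to+\infty$ as $x\to-\infty$ then $e^u\notin L^1(-\infty,0)$, while if $u$ stays bounded above we estimate $\int_{-\infty}^0 e^u$ and compare with $\ell_+-\ell_-$; either way, combining the growth at $+\infty$ (which makes $\ell_+=+\infty$) with $v$ increasing and finite-valued on any bounded set, one derives that $\int_\R e^u = +\infty$ while simultaneously the structure of the solution forces a finite bound — contradiction.

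I expect the cleanest route is actually the following sharper version, which I would carry out in detail: integrate the equation against suitable test functions or simply use repeated integration with the sign condition $u^{(2m)}=e^u>0$ to show all even-order derivatives $u^{(2k)}$ are \emph{convex-type} controlled, deduce that $u$ must be eventually monotone at each end, rule out $u\to-\infty$ at both ends (which would contradict $u^{(2m)}>0$ via a maximum-type argument on $\Delta^{m-1}u = u^{(2m-2)}$, since an entire convex function on $\R$ bounded above is constant), and conclude $u\to+\infty$ at least at one end, say $+\infty$; then $u^{(2m-1)}(x)\to+\infty$ as $x\to+\infty$, so $u^{(2m-1)}$ being increasing it is bounded below, and running the tower downward one shows $u^{(2j)}\to+\infty$ for all $j$, in particular $u^{(2m-2)}=\Delta^{m-1}u\to+\infty$, i.e. $\Delta^{m-1}u$ is a non-constant entire convex function of one variable that is bounded below — fine — but then integrating $(\Delta^{m-1}u)'' = e^u \ge e^{u(a)} >0$ shows $\Delta^{m-1}u$ grows at least quadratically at \emph{both} ends, contradicting that $(\Delta^{m-1}u)' = u^{(2m-1)}$ is \emph{bounded above} on $(-\infty,a]$. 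This last contradiction is the heart of the matter. The main obstacle is bookkeeping the signs and growth rates through the tower of $2m$ integrations cleanly enough that the contradiction at $-\infty$ pops out; once the right quantity ($u^{(2m-1)}$ increasing, hence one-sided bounded, versus $\Delta^{m-1}u$ forced to be two-sidedly super-linear) is identified, the argument is short. I would also remark that this recovers and is consistent with Theorem~\ref{t:existence-even}(i) for $n=1$, and note that the case $m$ odd genuinely differs, which is why Theorem~\ref{t:existence-odd} holds.
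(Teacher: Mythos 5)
Your reduction of the problem is on the right track: since $m$ is even, the equation in dimension one reads $u^{(2m)}=e^{u}>0$, so $u^{(2m-2)}$ is strictly convex and therefore tends to $+\infty$ at one end; integrating up the tower then forces $u\to+\infty$ at that end (after possibly replacing $u(x)$ by $u(-x)$). This is exactly the first half of the paper's proof. The gap is in your proposed contradiction. Your ``decisive step'' uses the bound $(\Delta^{m-1}u)''=e^{u}\ge e^{u(a)}>0$ on all of $\R$ to get quadratic growth of $u^{(2m-2)}$ at both ends; this requires $u\ge u(a)$ everywhere, i.e. $u$ bounded from below, which is neither established nor true (cf.\ Proposition \ref{noboundbelow}). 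Moreover, even if $u^{(2m-2)}$ did grow quadratically at both ends, that would not contradict the fact that $u^{(2m-1)}=(u^{(2m-2)})'$ is increasing and hence bounded above on $(-\infty,a]$: the model $f(x)=x^2$ has $f'$ increasing and bounded above on every left half-line. Similarly, your earlier suggestion that $\int_{\R}e^{u}\,dx=+\infty$ yields a contradiction goes nowhere, since nothing forces $e^{u}\in L^1(\R)$ (indeed $\ell_+=+\infty$ in the relevant case, so the identity $\int_{\R}e^{u}\,dx=\ell_+-\ell_-$ carries no information).

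The missing ingredient is a genuine blow-up mechanism ruling out $u\to+\infty$ at $+\infty$ for an entire solution; you yourself observe that the naive growth bookkeeping at $+\infty$ ``is consistent --- so this alone is not a contradiction,'' but you never replace it with a valid argument. The paper closes this gap with the Mitidieri--Poho\v zaev nonlinear capacity method: once $u\to+\infty$, one has $e^{u}\ge u^2$ and $u^{(2m-1)}\ge 0$ on a half-line, and testing the equation against a rescaled cutoff $\phi_0(x/\rho)$ gives, after integration by parts and Young's inequality,
\[
\int_0^\rho (u(x))^2\,dx\le A\,\rho^{1-4m}\longrightarrow 0\qquad\text{as }\rho\to\infty ,
\]
which contradicts $u\to+\infty$. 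Some version of this (or of the finite-time blow-up comparison for $w^{(2m)}\ge w^q$ with $q>1$, as in \cite{mp}) is indispensable and is absent from your proposal. Two smaller slips: an increasing function on $\R$ need not be bounded below, so ``$u^{(2m-1)}$ being increasing it is bounded below'' is false as stated; and ruling out $u\to-\infty$ at both ends should be phrased via the convexity of $u^{(2m-2)}$ (a nonconstant convex function on $\R$ is unbounded above), not via a bound on $u$ itself.
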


On the other hand, when $n=1$ and $m\ge 1$ is odd
we have
\begin{theorem} \label{t:ex-odd}
Let $n=1$ and $m\ge 1$ odd. Then any local solution of the
ordinary differential equation corresponding to \eqref{eq_1}, is
global. Moreover if $m=1$ then any solution of \eqref{eq_1} is
symmetric with respect to some point $x_0\in \R$. On the other
hand if $m\ge 3$, \eqref{eq_1} admits solutions which are not
symmetric with respect to any point $x_0\in \R$.
\end{theorem}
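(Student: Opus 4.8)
The plan is to prove the three assertions in turn. In one space dimension \eqref{eq_1} reads $(-1)^m u^{(2m)}=e^u$, which for $m$ odd is $u^{(2m)}=-e^u<0$. For global existence, let $u$ be a local solution on its maximal interval $(a,b)$, which we may take to contain $0$. Taylor's formula with integral remainder at $0$ gives $u(x)=P(x)+\tfrac1{(2m-1)!}\int_0^x(x-t)^{2m-1}u^{(2m)}(t)\,dt$, where $P$ is the degree-$(2m-1)$ Taylor polynomial of $u$ at $0$. Since $2m-1$ is odd, the kernel $(x-t)^{2m-1}$ has the sign of $x-t$, so for $t$ between $0$ and $x$ the integrand $(x-t)^{2m-1}u^{(2m)}(t)$ is $\le 0$; hence $u(x)\le P(x)$ throughout $(a,b)$. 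If $b<+\infty$, this bounds $u$ from above on $[0,b)$, hence bounds $e^u=-u^{(2m)}$, and integrating successively bounds $u',\dots,u^{(2m-1)}$ (and $u$ from below) on $[0,b)$; all these derivatives then extend continuously to $b$, so the solution continues past $b$ by the local existence theorem, contradicting maximality. The argument at $a$ is identical, so every local solution is global. (This is exactly where the parity of $m$ enters: for $m$ even one would instead get $u\ge P$, which does not prevent blow-up, consistently with Theorem \ref{t:non-ex-even}.)

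For $m=1$ the equation is $u''=-e^u$, so $E:=\tfrac12(u')^2+e^u$ is constant along solutions; $E>0$ gives $u\le\log E$ and $|u'|\le\sqrt{2E}$, and $u''<0$ makes $u$ strictly concave, so $u'$ is strictly decreasing. If $u'$ never vanished it would keep a fixed sign, say $u'>0$; separating variables in $u'=\sqrt{2E-2e^u}$ and integrating up to the level $u=\log E$ gives a convergent integral (the integrand is $\sim(\log E-u)^{-1/2}$ near that level), so $u$ attains the value $\log E$ at a finite point $x_0$, where $u'(x_0)=0$ — a contradiction; the case $u'<0$ is symmetric (apply this to $x\mapsto u(-x)$). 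Hence $u'(x_0)=0$ for some $x_0$. The reflected function $v(x):=u(2x_0-x)$ solves the same equation and satisfies $v(x_0)=u(x_0)$, $v'(x_0)=-u'(x_0)=0=u'(x_0)$, so $v\equiv u$ by uniqueness; that is, $u$ is symmetric about $x_0$.

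For $m\ge3$ odd, first observe that a global solution $u$ is symmetric about a point $x_0$ if and only if $u'(x_0)=u'''(x_0)=\dots=u^{(2m-1)}(x_0)=0$: differentiating $u(x_0+t)=u(x_0-t)$ gives the ``only if'' part, and for the converse the reflection $v(x):=u(2x_0-x)$ shares with $u$ all derivatives up to order $2m-1$ at $x_0$ (the even ones automatically, the odd ones by hypothesis), so $v\equiv u$ by uniqueness of the $2m$-th order Cauchy problem — both solutions being global by the first part. Consequently every symmetric solution is determined by a choice of center $x_0\in\R$ together with the $m$ even-order derivatives $\bigl(u(x_0),u''(x_0),\dots,u^{(2m-2)}(x_0)\bigr)\in\R^m$ (all odd-order ones being $0$), so the symmetric solutions form the image of a smooth map $\R^{m+1}\to\R^{2m}$ — parameters $\mapsto$ Cauchy data at $0$ — by smooth dependence on the data. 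Since $m\ge3$ forces $m+1<2m$, this image has Lebesgue measure zero in $\R^{2m}$, hence is a proper subset: there are Cauchy data at $0$ producing a (global, by the first part) solution not symmetric about any point.

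The one step that carries the real weight is the a priori bound in the first part: its success hinges precisely on $u^{(2m)}$ having the right sign together with $2m-1$ being odd, so that Taylor's remainder is controllable — a coincidence of parities that is genuinely absent when $m$ is even. In the second part the one subtle point is the convergence of the separated-variables integral at the level $u=\log E$, which is what forces a critical point. The third part is comparatively soft once the symmetry characterization is established; the only care needed is to phrase the counting via a smooth (hence locally Lipschitz) map, so that the ``measure zero, hence proper'' conclusion is legitimate — a space-filling continuous surjection would not be ruled out by dimension alone.
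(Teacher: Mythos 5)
Your proof is correct. The first two assertions follow essentially the paper's route: your bound $u\le P$ via the Taylor remainder is the same polynomial comparison the paper gets by integrating the monotonicity of $u^{(2m-1)}$ (your version handles both half-lines at once, though note a small sign slip: for $x<0$ the integrand $(x-t)^{2m-1}u^{(2m)}(t)$ is $\ge 0$ on $(x,0)$, and it is the \emph{oriented} integral $\int_0^x$ that is $\le 0$); for $m=1$ the paper locates the critical point more cheaply (if $u'>0$ everywhere then $e^u$ is eventually bounded below, so $u''\le -C$ forces $u'\to-\infty$), whereas you use energy conservation and the convergence of the separated integral at the level $u=\log E$ — both work, and the reflection/uniqueness step is identical. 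The genuine divergence is in the third assertion. The paper exhibits an explicit nonsymmetric solution: the Cauchy data $u(0)=0$, $u'(0)=1$, $u^{(k)}(0)=0$ for $2\le k\le 2m-1$; since $u^{(2m-1)}$ is strictly decreasing and vanishes only at $0$, any center of symmetry would have to be $x_0=0$, contradicting $u'(0)=1$. You instead characterize symmetric solutions by the vanishing of all odd-order derivatives at the center and show that their Cauchy data at $0$ fill out the image of a smooth map $\R^{m+1}\to\R^{2m}$, hence a set of measure zero once $m\ge 2$. Both arguments are valid; the paper's is elementary and constructive, while yours is non-constructive but stronger in output (almost every choice of Cauchy data yields a nonsymmetric solution) and leans on global existence plus smooth dependence to define the parametrizing map on all of $\R^{m+1}$, together with the Lipschitz-image-has-measure-zero fact you rightly flag as the point needing care.
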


Next we provide some information on the qualitative behavior of
entire solutions of \eqref{eq_1}. First we show that any entire
solution (possibly non radial) of \eqref{eq_1} is not bounded from
below. Indeed if $u$ is a solution to \eq{eq_1} such that $u\geq
M$ for some $M\in\R$ then for any $q>1$ there exists $K(M,q)>0$
such that the inequality
$$
(-\Delta)^m u\geq K(M)|u|^q\quad \text{in } \R^n \,
$$
holds true. Then, from \cite[Theorem 4.1]{mp}, we infer
\begin{proposition}\label{noboundbelow}
For any $n\geq 1$ and $m\ge 1$, problem $\eq{eq_1}$ admits no
entire solutions bounded from below.
\end{proposition}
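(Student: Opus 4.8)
The plan is to reduce the statement to the Liouville-type nonexistence theorem for $(-\Delta)^m u \ge K|u|^q$ quoted from \cite[Theorem 4.1]{mp}, by showing that any entire solution of \eqref{eq_1} bounded from below automatically produces such a differential inequality with a superlinear power $q>1$. Suppose, for contradiction, that $u$ solves \eqref{eq_1} in all of $\R^n$ and satisfies $u\ge M$ for some $M\in\R$. The key elementary fact is that on the half-line $[M,+\infty)$ the function $t\mapsto e^t$ dominates any fixed power: precisely, for any $q>1$ one has $e^t \ge K(M,q)\,|t|^q$ for all $t\ge M$, where $K(M,q)>0$ is the (finite, strictly positive) constant $K(M,q):=\inf_{t\ge M} e^t|t|^{-q}$ — this infimum is attained and positive because $e^t|t|^{-q}\to+\infty$ both as $t\to+\infty$ and, if $M<0$, as $t\to 0^-$, while it is bounded away from $0$ on any compact subinterval of $[M,+\infty)\setminus\{0\}$ and tends to $+\infty$ near $t=0$. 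Hence, evaluating along $u$, we get pointwise in $\R^n$
\[
(-\Delta)^m u = e^{u} \ge K(M,q)\,|u|^q .
\]

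With this inequality in hand, fix any $q$ with $1<q$ (for instance $q=2$); then $u$ is an entire classical solution of the polyharmonic differential inequality $(-\Delta)^m u \ge K\,|u|^q$ in $\R^n$ with $K=K(M,q)>0$. By \cite[Theorem 4.1]{mp}, such an inequality has no entire solution for any $n\ge 1$ and any $m\ge 1$ — indeed that Liouville theorem asserts nonexistence of global (distributional, hence a fortiori classical) solutions of $(-\Delta)^m u\ge |u|^q$ for all exponents $q>1$ in every dimension, and the positive constant $K$ is harmless since it can be absorbed by rescaling $u$ or simply incorporated into the test-function estimates. This contradicts the existence of $u$, and therefore \eqref{eq_1} admits no entire solution bounded from below.

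The only genuinely delicate point is verifying that the cited theorem of \cite{mp} applies in exactly this generality, namely for the pure power inequality with no sign restriction on $u$ and with $q>1$ arbitrary in all dimensions; since the excerpt explicitly invokes \cite[Theorem 4.1]{mp} for this purpose, I would simply cite it, checking only that the regularity we have (a classical, in particular locally integrable, solution together with its iterated Laplacians) matches the hypotheses there. Everything else — the elementary convexity/growth estimate $e^t\ge K|t|^q$ on $[M,+\infty)$ and the absorption of the constant $K$ — is routine. Thus the proof is essentially a two-line argument: bound the exponential below by a power using $u\ge M$, then quote the Liouville theorem.
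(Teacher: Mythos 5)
Your proposal is correct and follows exactly the paper's own argument: the authors likewise observe that $u\ge M$ implies $e^u\ge K(M,q)|u|^q$ for any $q>1$ and then invoke \cite[Theorem 4.1]{mp} to conclude. The extra detail you supply (positivity of $\inf_{t\ge M}e^t|t|^{-q}$ and absorption of the constant $K$ by scaling) is sound and merely fills in what the paper leaves implicit.
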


Then we deal with the asymptotic behavior of radial entire
solutions of \eqref{eq_1} as $|x|\to +\infty$. We prove

\begin{theorem} \label{t:estimates}
Let $n\ge 1$. Then the following statements hold
true:
\begin{itemize}
\item[(i)] if $m\ge 3$ is odd then for any solution $u$ of
\eqref{Cauchy} satisfying ${\rm sign} \, \alpha_k\neq (-1)^k$ for
at least one value of $k\in \{1,\dots,m-1\}$, we have
\begin{equation} \label{eq:quadratic}
u(r)<-C r^4 \qquad \text{for any }r>\overline r
\end{equation}
for some $C,\overline r>0$;

\item[(ii)] if $m\ge 3$ is odd and $1\le n\le 2m-1$ then any
solution $u$ of \eqref{Cauchy} satisfies
\begin{equation*} 
u(r)<-C r^K \qquad \text{for any } r>\overline r
\end{equation*}
for some $C,\overline r>0$ and $K$ positive integer;

\item[(iii)] if $n=1$ and $m\ge 1$ is odd then
any solution $u$ of \eqref{eq_1} (also nonsymmetric) satisfies
\begin{equation*} 
u(x)<-C |x|^K \qquad \text{for any } |x|>\overline r
\end{equation*}
for some $C,\overline r>0$ and $K$ positive integer;

\item[(iv)] if $m\ge 2$ is even, $\alpha\in\R$ and $\beta\in
\ds{\mathop{\mathcal A_\alpha}^{\circ}}$ then there exists $C>0$
such that
\begin{equation*}
u_{\alpha,\beta}(r)\sim -C r^{2m-2} \qquad \text{as } r\to +\infty
\end{equation*}
where we denoted by $u_{\alpha,\beta}$ the unique solution of
\eqref{Cauchy} corresponding to the couple $(\alpha,\beta)\in
\R^{m}$.

\item[(v)] if $m\ge 2$ is even, $\alpha\in\R$ and $\beta\in
\partial \mathcal A_\alpha$ then
$$
u_{\alpha,\beta}(r)=o(r^{2m-2}) \qquad \text{as } r\to +\infty
$$
and there exist $C,\overline r>0$ such that
\begin{equation*}
u_{\alpha,\beta}(r)<-2m \log r+C \qquad \text{for any }
r>\overline r
\end{equation*}
where we denoted by $u_{\alpha,\beta}$ the unique solution of
\eqref{Cauchy} corresponding to the couple $(\alpha,\beta)\in
\R^{m}$.
\end{itemize}
\end{theorem}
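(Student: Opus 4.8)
The plan is to reduce all five statements to a single ``cascade'' mechanism built on the auxiliary functions
\[
w_k:=(-1)^k\Delta^k u\qquad(k=0,\dots,m),
\]
which, whatever the parity of $m$, satisfy $w_m=e^{u}>0$, $-\Delta w_k=w_{k+1}$ for $0\le k\le m-1$, $w_k(0)=(-1)^k\alpha_k$ and $w_k'(0)=0$. I would first record two elementary facts about the radial Laplacian: (a) if $-\Delta w\ge0$ ($\not\equiv0$) and $w'(0)=0$, then $w$ is strictly decreasing, since $(r^{n-1}w')'=r^{n-1}\Delta w\le0$ with $r^{n-1}w'\to0$ at the origin; (b) if $-\Delta w_k\le -c\,r^{2j}$ for $r\ge R$ (with $c>0$, $j\ge0$) then two integrations give $w_k(r)\le -c'r^{2j+2}$ for large $r$, and symmetrically with reversed signs. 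Chaining (b) through $-\Delta w_k=w_{k+1}$ yields the cascade: once some $w_{k_0}\le -\delta<0$ for large $r$, the functions $w_{k_0-1},w_{k_0-2},\dots$ alternate in sign and grow, with $|w_{k_0-\ell}|\ge c_\ell r^{2\ell}$ for large $r$, down to $w_0=u$; symmetrically, once $w_{k_0}$ converges to a \emph{positive} constant the cascade runs upward.

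For \textbf{(i)} I would let $j^*$ be the largest index in $\{1,\dots,m-1\}$ with $w_{j^*}(r)\le0$ for some $r$; the hypothesis ${\rm sign}\,\alpha_k\neq(-1)^k$ gives $w_k(0)=(-1)^k\alpha_k\le0$ for some such $k$, so $j^*$ exists. Since $w_k>0$ on $[0,\infty)$ for $k>j^*$, fact (a) makes $w_{j^*}$ (superharmonic) strictly decreasing, hence $\le -\delta<0$ eventually, and the cascade gives $u=w_0\asymp\pm r^{2j^*}$; as $u$ is unbounded below (Proposition~\ref{noboundbelow}) the sign must be $-$, so $j^*$ is even, thus $j^*\ge2$ and $u<-Cr^{2j^*}\le -Cr^4$. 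For \textbf{(ii)} the same trichotomy at every level handles all solutions: if some $w_{k_0}$ ($1\le k_0\le m-1$) is eventually $\le -\delta$ one gets $u\lesssim -r^{2k_0}$ as above; otherwise each of $w_1,\dots,w_{m-1}$ stays positive and decreasing to some $\ell_k\ge0$ — $\ell_k>0$ with $k\ge2$ forces $w_{k-1}\to-\infty$, a contradiction, and $\ell_1>0$ gives $u\lesssim -r^2$; in the last case all $\ell_k=0$ and, iterating $w_k(r)=\int_r^\infty t^{1-n}\big(\int_0^t s^{n-1}w_{k+1}\big)dt$ together with the monotonicity of the $w_k$, one obtains $w_{m-1-j}(r)\gtrsim r^{2j+2-n}$; these tending to $0$ forces $n>2j+2$ for $j=0,\dots,m-2$, so under $n\le2m-1$ only $n=2m-1$ survives, and then $w_1\gtrsim r^{-1}$ yields $u(r)\lesssim -cr$. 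In every case $u<-Cr^K$ with $K$ a positive integer.

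Part \textbf{(iii)} is the one–dimensional analogue: now $u^{(2m)}=-e^u<0$, so $u^{(2m-1)}$ is strictly decreasing, and I would run, separately at $+\infty$ and at $-\infty$ (the reflection $x\mapsto -x$ preserving the equation), a short recursion on the eventual signs of $u,u',\dots,u^{(2m-1)}$, each of which — being eventually monotone — has a limit. As soon as some $u^{(j)}$ is eventually negative and bounded away from $0$, integrating up gives $u\lesssim -|x|^K$; the only alternative is that the $u^{(j)}$ ($j\ge1$) are eventually of constant positive sign or tend monotonically to $0$, in which case $u$ is eventually monotone, but $u$ can tend neither to a finite limit nor to $+\infty$ (either would force $u\to-\infty$ on integrating $u^{(2m)}=-e^u$ upward, and $u$ is unbounded below). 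Hence at each end $u\to-\infty$ at least like a positive integer power of $|x|$.

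For \textbf{(iv)}--\textbf{(v)} ($m$ even) I would start from the fact that $w_{m-1}$ is strictly decreasing (fact (a), since $-\Delta w_{m-1}=e^u>0$), hence has a limit $c_\infty$; $c_\infty<0$ is impossible, for then the cascade would give $u\asymp +r^{2m-2}\to+\infty$ against Proposition~\ref{noboundbelow}, so $c_\infty\in[0,\infty)$. The identification $c_\infty>0\Leftrightarrow\beta\in\mathring{\mathcal A_\alpha}$ and $c_\infty=0\Leftrightarrow\beta\in\partial\mathcal A_\alpha$ I would read off from Theorem~\ref{t:existence-even} via the monotone dependence of $u_{\alpha,\beta}$ on $\beta_{m-1}$ used there to define $\Phi_\alpha$. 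Then (iv) follows by the upward cascade: $-\Delta w_{m-2}=w_{m-1}\to c_\infty$ gives $w_{m-2}\sim -\tfrac{c_\infty}{2n}r^2$, and iterating, $w_{m-1-j}\sim(-1)^j c_j r^{2j}$ with $c_j>0$, so (since $m$ is even) $u=w_0\sim -c_{m-1}r^{2m-2}$. For (v), when $c_\infty=0$ the same upward scheme with vanishing leading constant gives successively $w_{m-2}=o(r^2)$, $w_{m-3}=o(r^4)$, \dots, and $u=o(r^{2m-2})$. The logarithmic bound $u(r)<-2m\log r+C$ is the step I expect to be the main obstacle. From $0<w_{m-1}(r)=\int_r^\infty t^{1-n}I(t)\,dt\le w_{m-1}(0)$, $I(t):=\int_0^t s^{n-1}e^{u}ds$, with $I$ increasing, one extracts $I(r)=o(r^{n-2})$ and, switching the order of integration, $\int_0^\infty s\,e^{u(s)}ds<\infty$; the difficulty is to upgrade these integral facts to the pointwise estimate $e^{u(r)}\le Cr^{-2m}$. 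I would attack this through a careful asymptotic analysis of the iterated integral representation $u(r)=Q_\alpha(r)+\int_0^r\mathcal K_m(r,t)e^{u(t)}t^{n-1}dt$ (with $Q_\alpha$ a polynomial of degree $\le2(m-1)$ and $\mathcal K_m\ge0$), in which $c_\infty=0$ is precisely the cancellation of the top–order terms and one must control the next order — as in \cite{agg} for $m=2$ — or, alternatively, by comparing $u$ near infinity with the singular solution $-2m\log|x|+c_0$ of $(-\Delta)^m v=e^v$.
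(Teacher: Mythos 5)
Your cascade machinery for parts (i), (ii) and (iv) is sound and is essentially the route the paper takes (Lemmas \ref{l:neg}--\ref{l:55} and \ref{l:asympt-1}): one tracks the eventual signs and limits of the iterated Laplacians and then integrates the radial operator repeatedly. In (ii), your replacement of the paper's comparison of $(-\Delta)^{m-k}u$ with $\eps r^{2k-n}$ by the representation $w_k(r)=\int_r^\infty t^{1-n}\bigl(\int_0^t s^{n-1}w_{k+1}\,ds\bigr)dt$ is a legitimate, arguably cleaner, way to produce the lower bounds $w_{m-1-j}(r)\ge c_j r^{2j+2-n}$ that are incompatible with $n\le 2m-2$. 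In (iii), however, your dichotomy is not closed: in the residual case every $u^{(j)}$, $j\ge 1$, tends monotonically to $0$ while $u\to-\infty$, and nothing you wrote extracts a polynomial rate there. That case has to be killed outright: the signs then alternate ($u'<0$, $u''>0$, \dots, $u^{(j)}$ of sign $(-1)^j$ eventually), forcing $u^{(2m-1)}$ to be eventually negative and increasing to $0$, which contradicts the fact that $u^{(2m-1)}$ is strictly decreasing on all of $\R$ because $u^{(2m)}=-e^u<0$. This is precisely the observation around which the paper's Lemma \ref{l:1-d} is built, and your sketch omits it.

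The genuine gaps are in (v). First, the identification $c_\infty=0$ for $\beta\in\partial\mathcal A_\alpha$ cannot be ``read off from the monotone dependence on $\beta_{m-1}$'': Proposition \ref{mckenna reichel} bounds $u_{\alpha,(\beta',\beta_{m-1}+\eps)}$ from \emph{below} by $u_{\alpha,\beta}$, which is useless for showing that the perturbed solution remains global, and that is the entire difficulty. The paper's proof requires an upper barrier, namely the explicit radial supersolution $U$ with $\Delta^m U\ge e^U$ and $U(r)\sim C_{n,m}r^{2m-3}$ obtained by solving $\Delta^m U=r^{-3}$. If $\ell:=\lim_{r\to+\infty}\Delta^{m-1}u_{\alpha,\beta}(r)=-c_\infty$ were negative, then $u_{\alpha,\beta}\sim \ell\,c\,r^{2m-2}$ would lie strictly below $U$, together with all its iterated Laplacians, at some $\overline r$; by continuous dependence so would $u_{\alpha,(\beta',\beta_{m-1}+\eps)}$ for small $\eps>0$, and the comparison principle would then trap that solution below $U$ for all $r>\overline r$, making it global and contradicting the maximality of $\Phi_\alpha$. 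Without this barrier (or an equivalent one) the equivalence you invoke is unproved, and with it the statement $u_{\alpha,\beta}=o(r^{2m-2})$ in (v) is left hanging. Second, the logarithmic bound, which you rightly flag as the main obstacle, is not established by either of the routes you suggest. The paper's argument is a rescaling one: assuming $M_j:=u(r_j)+2m\log r_j\to+\infty$, the functions $u_j(r):=u(r_jr)+2m\log r_j-M_j$ are (using $(-1)^k\Delta^k u>0$, which is available because the case where some $\lim_{r\to+\infty}\Delta^k u<0$ already gives polynomial decay) supersolutions of the Navier problem $\Delta^m u=\lambda_j e^u$ in $B_1$ with $\lambda_j=e^{M_j}\to+\infty$, contradicting the finiteness of the extremal parameter $\lambda^*$ for that problem. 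These are missing ideas, not merely missing details.
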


\begin{problem} {\rm As possible improvements of Theorem
\ref{t:estimates} we suggest the following open questions:
\begin{itemize}
\item[(i)] let $m\ge 3$ odd. Provide an upper estimate for all
radial solutions of \eqref{eq_1} and try to understand if for some
solution satisfying ${\rm sign} \, \alpha_k=(-1)^k$ for any $k\in
\{1,\dots,m-1\}$, \eqref{eq:quadratic} still holds true;

\item[(ii)] let $m\ge 4$ even. Determine the exact behavior of
radial solutions $u_{\alpha,\beta}$ of \eqref{eq_1} when $\beta\in
\partial \mathcal A_\alpha$.
\end{itemize}
}
\end{problem}

Something more precise about the behavior at infinity of entire
solutions (also non radial) of \eqref{eq_1} can be shown in the
conformal dimension $n=2m$ for any $m\ge 1$.

For example if $m$ is odd and $n=2m$, by taking $u$ in the form
\eqref{eq:sol-esplicita}, we see that there exist radial solutions
of \eqref{eq_1} which do not satisfy the estimate in Theorem
\ref{t:estimates} (i).

More generally we state the following results which are a quite
immediate consequence of Theorems 1-2 in \cite{martinazzi}.

\begin{proposition}\label{p:martinazzi}
Let $m\ge 2$ and $n=2m$. Let $u$ be a solution to \eqref{eq_1}
such that $e^u\in L^1(\R^{2m})$ and let
$\gamma:=\frac{1}{|{\mathbb S}^{2m}|(2m)!}
    \,\int_{\R^{2m}} e^{u}\,dx$ where $|{\mathbb S}^{2m}|$ denotes
    the surface measure of the $2m$-dimensional unit sphere in
    $\R^{2m+1}$.
     The following statements hold true:
\begin{itemize}
\item[$(i)$] the function $u$ can be represented as
\begin{equation*} 
u(x)=v(x)+p(x)
\end{equation*}
where $p$ is a polynomial bounded from above of degree at most
$2m-2$ and $v$ is a function satisfying
$$
v(x)=-2m\gamma \log|x|+o(\log|x|) \qquad \text{as } |x|\to +\infty
\, ;
$$
\item[$(ii)$] the function $u$ is of the form
\eqref{eq:sol-esplicita} if and only if $u(x)=o(|x|^2)$ as $|x|\to
+\infty$.
\end{itemize}
\end{proposition}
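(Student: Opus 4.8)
The plan is to deduce Proposition \ref{p:martinazzi} from the classification of \cite{martinazzi}, whose Theorems~1--2 concern the normalized higher order Liouville equation $(-\Delta)^m w=(2m-1)!\,e^{2mw}$ in $\R^{2m}$. The first step is the reduction. Given a solution $u$ of \eqref{eq_1} in $\R^{2m}$ with $e^u\in L^1(\R^{2m})$, set
\[
w:=\frac{u-\log((2m)!)}{2m},
\]
so that a one-line computation gives $(-\Delta)^m w=(2m-1)!\,e^{2mw}$ in $\R^{2m}$ and $\int_{\R^{2m}}e^{2mw}\,dx=\frac{1}{(2m)!}\int_{\R^{2m}}e^{u}\,dx<+\infty$. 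Since $u$ is a classical (hence, by elliptic regularity, smooth) solution, $w$ satisfies exactly the hypotheses under which \cite{martinazzi} applies. I would also record the reverse remark, obtained by a direct computation, that under $u=2mw+\log((2m)!)$ the ``spherical'' solutions $w(x)=\log\frac{2\lambda}{1+\lambda^2|x-x_0|^2}$ of the normalized equation are carried precisely onto the explicit solutions \eqref{eq:sol-esplicita}; this identification is what one must keep track of to state part $(ii)$ in the written form.

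For part $(i)$, Theorem~1 of \cite{martinazzi} gives a representation $w=\widetilde v+\widetilde p$, where $\widetilde p$ is a polynomial of degree at most $2m-2$ that is bounded from above, and $\widetilde v$ is the logarithmic potential of $(2m-1)!\,e^{2mw}$, whose behaviour at infinity is governed by the total mass: $\widetilde v(x)=-c\big(\int_{\R^{2m}}e^{2mw}\big)\log|x|+o(\log|x|)$ as $|x|\to+\infty$, with $c$ the normalization constant of the fundamental solution of $(-\Delta)^m$ in dimension $2m$. Undoing the change of variable I would then set $p:=2m\,\widetilde p+\log((2m)!)$, which is again a polynomial of degree $\le 2m-2$ bounded from above, and $v:=2m\,\widetilde v$, so that $u=v+p$; substituting $\int e^{2mw}=\frac{1}{(2m)!}\int e^{u}$ rewrites the leading coefficient of $\log|x|$ in terms of $\gamma=\frac{1}{|\mathbb S^{2m}|(2m)!}\int_{\R^{2m}}e^{u}\,dx$. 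The only real content of this step is the bookkeeping of normalization constants so that the coefficient in front of $\log|x|$ comes out exactly as in the statement.

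For part $(ii)$, the implication ``$u$ of the form \eqref{eq:sol-esplicita} $\Rightarrow u(x)=o(|x|^2)$'' is immediate, since such a $u$ grows only logarithmically at infinity. For the converse I would argue as follows: if $u(x)=o(|x|^2)$, then by part $(i)$ and since $v(x)=O(\log|x|)$ the polynomial $p$ satisfies $p(x)=o(|x|^2)$, hence $\deg p\le 1$; being bounded from above, $p$ must in fact be constant. Equivalently $\widetilde p$ is constant, i.e.\ $w$ coincides with its potential part up to an additive constant, and Theorem~2 of \cite{martinazzi} (the assertion that a finite-mass solution whose polynomial part is constant is spherical, equivalently that sphericity is equivalent to the growth bound $w(x)=o(|x|^2)$) yields that $w$, hence $u$, is of the form \eqref{eq:sol-esplicita}.

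The step I expect to be the only genuine obstacle is the faithful transcription of \cite{martinazzi}: one must check that the change of unknown $w=\frac{1}{2m}\big(u-\log((2m)!)\big)$ carries the finiteness hypothesis, the polynomial part, and the spherical family of \cite{martinazzi} onto the objects appearing in \eqref{eq_1} and \eqref{eq:sol-esplicita}, and pin down the constant multiplying $\log|x|$ so that it matches the one written in the statement. Everything else is a direct translation of Theorems~1--2 of \cite{martinazzi}.
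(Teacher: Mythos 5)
Your route is the same one the paper takes: the paper gives no argument beyond the remark that the proposition is ``a quite immediate consequence of Theorems 1--2 in \cite{martinazzi}'', and your substitution $w=\frac{1}{2m}\left(u-\log((2m)!)\right)$, which turns \eqref{eq_1} into $(-\Delta)^m w=(2m-1)!\,e^{2mw}$, carries $e^u\in L^1$ onto the finite-mass hypothesis, and maps the spherical family $w=\log\frac{2\lambda}{1+\lambda^2|x-x_0|^2}$ exactly onto \eqref{eq:sol-esplicita}, is the right dictionary. The structure of both parts is sound, and part $(ii)$ is complete as written (the reduction $p=o(|x|^2)\Rightarrow\deg p\le 1\Rightarrow p$ constant, followed by Martinazzi's Theorem~2, is fine).

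The one step you explicitly defer --- ``pin down the constant multiplying $\log|x|$'' --- is precisely where the matching fails, so you must carry it out rather than assume it works. There is a self-contained consistency check that does not even require quoting Martinazzi's normalization: for $u$ as in \eqref{eq:sol-esplicita} one has $e^u=(2m)!\left(\frac{2\lambda}{1+\lambda^2|x-x_0|^2}\right)^{2m}$, hence $\int_{\R^{2m}}e^u\,dx=(2m)!\,|{\mathbb S}^{2m}|$ and $\gamma=1$, while $u(x)=-4m\log|x|+O(1)$ as $|x|\to+\infty$; since a polynomial bounded from above that is $o(\log|x|)\cdot$anything sublinear cannot absorb a logarithm, the coefficient in the asymptotics of part $(i)$ must be $-4m\gamma$, not $-2m\gamma$. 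The same factor appears if you do the bookkeeping through \cite{martinazzi}: Theorem~1 there gives $\widetilde v(x)=-\frac{2}{|{\mathbb S}^{2m}|}\left(\int_{\R^{2m}}e^{2mw}\,dx\right)\log|x|+o(\log|x|)$ (the factor $2$ being forced by the spherical solution, for which $\int e^{2mw}=|{\mathbb S}^{2m}|$ and $w\sim-2\log|x|$), and multiplying by $2m$ and using $\int e^{2mw}=\frac{1}{(2m)!}\int e^u$ yields $v(x)=-4m\gamma\log|x|+o(\log|x|)$. So your argument, honestly completed, proves the representation with $-4m\gamma$ in place of $-2m\gamma$; the statement as written is off by a factor of $2$ (equivalently, $\gamma$ should carry an extra factor $2$ in its definition), and your proof should say so explicitly rather than leave the constant unverified.
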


\begin{corollary}\label{c:martinazzi}
Let $m\ge 2$ even and let $n=2m$. Let $u$ be of the form
\eqref{eq:sol-esplicita} with $x_0=0$; let $\alpha:=u(0)$ and
$\beta\in \R^{m-1}$ be the corresponding initial values according
to the notation of Theorem \ref{t:existence-even}. Then $\beta \in
\partial \mathcal A_\alpha$.
\end{corollary}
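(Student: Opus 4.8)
\textbf{Proof proposal for Corollary \ref{c:martinazzi}.}

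The plan is to combine the explicit logarithmic behavior of the solution \eqref{eq:sol-esplicita} with the trichotomy provided by Theorem \ref{t:existence-even} (for $m$ even) together with the asymptotic dichotomy of Theorem \ref{t:estimates} (iv)--(v). The key point is that when $m\ge 2$ is even and $\beta\in\R^{m-1}$ is prescribed, exactly one of three mutually exclusive situations occurs: either $\beta\notin\mathcal A_\alpha$ (so the solution is not global), or $\beta\in\ds{\mathop{\mathcal A_\alpha}^\circ}$ (so by Theorem \ref{t:estimates} (iv) the solution satisfies $u_{\alpha,\beta}(r)\sim -Cr^{2m-2}$ with $C>0$), or $\beta\in\partial\mathcal A_\alpha$ (so by Theorem \ref{t:estimates} (v) the solution satisfies $u_{\alpha,\beta}(r)=o(r^{2m-2})$). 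Our task is to rule out the first two alternatives for the specific function $u$ given by \eqref{eq:sol-esplicita}.

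First I would observe that $u$ in \eqref{eq:sol-esplicita} with $x_0=0$ is a genuine radial entire solution of \eqref{eq_1}: it is smooth on all of $\R^n$ (the denominator $1+\lambda^2|x|^2$ never vanishes), it is radially symmetric, it satisfies $u'(0)=0$ and, being radial and polyharmonic in the appropriate sense, all its radial derivatives of odd order vanish at the origin; hence it is precisely the solution $u_{\alpha,\beta}$ of \eqref{Cauchy} for $\alpha=u(0)=2m\log\big(2[(2m)!]^{1/(2m)}\lambda\big)$ and for the $\beta$ obtained by reading off $\Delta^k u(0)$, $k=1,\dots,m-1$. That this $u$ actually solves $(-\Delta)^m u=e^u$ in dimension $n=2m$ is the classical computation underlying the $Q$-curvature equation on $\R^{2m}$; I would simply quote it (it is exactly the content being used in \cite{martinazzi} and recalled before Proposition \ref{p:martinazzi}). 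Since $u$ exists and is smooth on all of $\R^n$, we have $\beta\in\mathcal A_\alpha$, which eliminates the first alternative.

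Next I would compute the asymptotic behavior of $u$ directly from \eqref{eq:sol-esplicita}: as $|x|\to+\infty$,
\begin{equation*}
u(x)=2m\log\big(2[(2m)!]^{1/(2m)}\lambda\big)-2m\log(1+\lambda^2|x|^2)=-4m\log|x|+O(1),
\end{equation*}
so in particular $u(r)=-4m\log r+O(1)$. Since $2m-2\ge 2>0$ for $m\ge 2$, this gives $u(r)=o(r^{2m-2})$ as $r\to+\infty$; equivalently $u$ does \emph{not} satisfy $u_{\alpha,\beta}(r)\sim -Cr^{2m-2}$ for any $C>0$. By Theorem \ref{t:estimates} (iv) this excludes the possibility $\beta\in\ds{\mathop{\mathcal A_\alpha}^\circ}$. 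Having ruled out $\beta\notin\mathcal A_\alpha$ and $\beta\in\ds{\mathop{\mathcal A_\alpha}^\circ}$, and since $\mathcal A_\alpha=\ds{\mathop{\mathcal A_\alpha}^\circ}\cup\partial\mathcal A_\alpha$ with $\mathcal A_\alpha$ closed (Theorem \ref{t:existence-even} (iii)), we conclude $\beta\in\partial\mathcal A_\alpha$, as claimed.

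The only genuinely delicate point is the first step: confirming that \eqref{eq:sol-esplicita} is indeed the solution of the Cauchy problem \eqref{Cauchy} for the stated $(\alpha,\beta)$, i.e. that it is globally smooth, radial, and polyharmonic with the correct initial data. Global smoothness and radiality are immediate; that it solves the PDE in $n=2m$ is the standard $Q$-curvature fact; and the matching of initial data is automatic once we observe that a smooth radial function automatically has vanishing odd-order radial derivatives at $0$. Everything else is the pure logic of the trichotomy, which is already packaged in Theorems \ref{t:existence-even} and \ref{t:estimates}. \endproof
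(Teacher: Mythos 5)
Your proof is correct and takes the route the paper intends (the corollary is stated there without an explicit proof, as an immediate consequence of the surrounding results): the explicit function is a global radial solution, so $\beta\in\mathcal A_\alpha$, and its logarithmic decay $u(r)=-4m\log r+O(1)=o(r^{2m-2})$ is incompatible with the asymptotics $u_{\alpha,\beta}(r)\sim -Cr^{2m-2}$ guaranteed by Theorem \ref{t:estimates} (iv) for interior points, which forces $\beta\in\partial\mathcal A_\alpha$ since $\mathcal A_\alpha$ is closed. Your attention to the identification of the explicit solution with the Cauchy-problem solution $u_{\alpha,\beta}$ (smoothness, radiality, vanishing odd-order radial derivatives at the origin, and uniqueness from Proposition \ref{dipendenza-continua}) is exactly the detail the paper leaves implicit.
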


\section{Stability properties of solutions to \eqref{eq_1}}
\label{s:stability}

We start with the definition of stability and stability outside a
compact set for entire solutions of \eqref{eq_1}. In the sequel,
for any open set $\Omega\subset \R^n$, we denote by
$C^\infty_c(\Omega)$ the set of $C^\infty$ functions whose support
is compactly included in $\Omega$.

\begin{definition} A solution $u\in C^{2m}(\R^n)$ to \eqref{eq_1}
is stable if
\begin{equation} \label{eq:stability}
\begin{array}{ll}
 \ds{\int_{\R^n} |\Delta^{m/2} \varphi|^2 dx-\int_{\R^n} e^u \varphi^2
dx}\ge 0 & \qquad \text{for any } \, \varphi \in
C^\infty_c(\R^n)\, , \ \ \text{if } m
\ \text{is even} \, , \\[12pt]
\ds{\int_{\R^n} |\nabla(\Delta^{\frac{m-1}2} \varphi)|^2
dx-\int_{\R^n} e^u \varphi^2 dx}\ge 0 & \qquad \text{for any } \,
\varphi \in C^\infty_c(\R^n) \, , \ \ \text{if } m \ \text{is odd}
\, .
\end{array}
\end{equation}
A solution $u\in C^{2m}(\R^n)$ to \eqref{eq_1} is stable outside a
compact set $K$ if
\begin{equation}
\begin{array}{ll} \label{eq:stability-ocs}
 \ds{\int_{\R^n} |\Delta^{m/2} \varphi|^2 dx-\int_{\R^n} e^u \varphi^2
dx}\ge 0 & \qquad \text{for any } \, \varphi \in
C^\infty_c(\R^n\setminus K)\, , \ \ \text{if } m
\ \text{is even} \, , \\[12pt]
\ds{\int_{\R^n} |\nabla(\Delta^{\frac{m-1}2} \varphi)|^2
dx-\int_{\R^n} e^u \varphi^2 dx}\ge 0 & \qquad \text{for any } \,
\varphi \in C^\infty_c(\R^n\setminus K) \, , \ \ \text{if } m \
\text{is odd} \, .
\end{array}
\end{equation}
\end{definition}

We state the following nonexistence result for stable (also non
radial) solutions of \eqref{eq_1} in dimension $n\le 2m$.

\begin{theorem} \label{t:non-ex-stable}
If $n\le 2m$ then \eqref{eq_1} admits no stable solutions.
\end{theorem}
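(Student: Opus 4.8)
The plan is to exploit the fact that the critical/conformal dimension $n=2m$ is precisely the borderline for a Hardy–Rellich type inequality for the operator appearing in the stability quadratic form, and to derive a contradiction by testing the stability inequality against a suitable cut-off of the explicit singular solution. More precisely, observe that equation \eqref{eq_1} admits the explicit singular solution $u_\ast(x)=\log\big(\tfrac{a_{n,m}}{|x|^{2m}}\big)$ with $a_{n,m}=\prod_{j=0}^{m-1}\big((2m-2j)(n-2m+2j)\cdots\big)$ (the exact constant is obtained by plugging $|x|^{-2m}$ into $(-\Delta)^m$), so that $e^{u_\ast}=a_{n,m}|x|^{-2m}$. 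When $n=2m$ this weight is exactly $a_{2m,m}|x|^{-n}$, and the condition that $u$ be stable, $\int e^u\varphi^2\le\int|\Delta^{m/2}\varphi|^2$ (or the odd analogue), would in particular force the Hardy–Rellich inequality
$$
\int_{\R^n} \frac{\varphi^2}{|x|^{2m}}\,dx \;\le\; \frac{1}{a_{n,m}}\int_{\R^n}|\Delta^{m/2}\varphi|^2\,dx
$$
to hold with a finite constant, after we show $e^u\ge c\,e^{u_\ast}=c'|x|^{-2m}$ on the relevant region. The key point is that in dimension $n=2m$ the best constant in this Hardy–Rellich inequality is $0$ — equivalently, the inequality $\int|x|^{-2m}\varphi^2\le C\int|\Delta^{m/2}\varphi|^2$ simply fails for every finite $C$ — so no stable solution can exist.

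First I would record the pointwise comparison $e^{u}\gtrsim |x|^{-2m}$ needed to reduce to the weighted inequality. By Proposition \ref{noboundbelow} a stable solution $u$ cannot be bounded below, but more usefully: one compares $u$ with the singular solution $u_\ast$ via the comparison principle in the appendix (or a direct maximum-principle argument for $(-\Delta)^m$ after peeling off lower-order polyharmonic data), obtaining $u(x)\ge u_\ast(x)-C$ for $|x|$ large, hence $e^{u(x)}\ge c_0|x|^{-2m}$ for $|x|\ge R_0$; on the compact set $|x|\le R_0$ one has $e^u\ge c_1>0$ by continuity, and since $\varphi$ has compact support this contributes only a bounded lower-order term. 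Thus stability of $u$ implies, for all $\varphi\in C_c^\infty(\R^n)$,
$$
c_0\int_{|x|\ge R_0}\frac{\varphi^2}{|x|^{2m}}\,dx \;\le\; \int_{\R^n}|\Delta^{m/2}\varphi|^2\,dx + C\int_{|x|\le R_0}\varphi^2\,dx,
$$
with the obvious $|\nabla\Delta^{(m-1)/2}\varphi|^2$ replacement when $m$ is odd.

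Then I would construct an explicit sequence of test functions $\varphi_k$ violating this. Take $\varphi_k(x)=\psi(x)\,\eta_k(|x|)$ where $\psi\in C_c^\infty$ is a fixed bump equal to $1$ on an annulus $\{2\le|x|\le e^{\Lambda}\}$ and supported in $\{1\le|x|\le 2e^{\Lambda}\}$, and $\eta_k$ is a standard logarithmic cut-off concentrating in the annulus. After the Emden–Fowler change of variables $x=e^{-t}\omega$, $t\in\R$, $\omega\in\SN$ — exactly the substitution the paper announces for its new Hardy–Rellich inequalities — the operator $(-\Delta)^m$ turns into a constant-coefficient ODE operator in $t$ (on radial, or more generally on fixed-spherical-harmonic, functions) and the weight $|x|^{-2m}dx$ becomes $dt\,d\omega$ in dimension $n=2m$; the Rellich quadratic form becomes a positive-semidefinite quadratic form in $\partial_t$ with a characteristic polynomial whose roots lie on the imaginary axis, so that plane waves $e^{i\xi t}$ make the Fourier symbol vanish at $\xi=0$. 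Concretely, in the $t$-variable the left side of the displayed inequality is $\ge c_0\int \varphi^2\,dt\,d\omega$ while the right side is controlled by $\int$ of squares of derivatives of $\varphi$; choosing $\varphi$ to be (a smooth truncation of) a slowly varying function of $t$ that is $\approx 1$ on $[0,k]$ and has $|\varphi'|\le C/k$, one gets left side $\gtrsim k$ but right side $\lesssim 1$, a contradiction as $k\to\infty$.

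The main obstacle will be the case $m$ odd, where the stability form involves $|\nabla\Delta^{(m-1)/2}\varphi|^2$ rather than $|\Delta^{m/2}\varphi|^2$; one must check that under the Emden–Fowler transformation this form still yields a Fourier symbol (in $\xi$, after expansion in spherical harmonics, in dimension $n=2m$) that vanishes at $\xi=0$ — equivalently that the associated one-dimensional quadratic form has no coercivity at low frequencies — so that the same logarithmic test functions work. This is exactly the algebraic fact that $n=2m$ is the threshold; verifying it amounts to showing the relevant polynomial in $\xi^2$ (times spherical eigenvalues) has a zero at $\xi=0$ when $n=2m$, which follows because $|x|^{-2m}\cdot(\text{const})$ is itself (the nonlinearity evaluated at) the explicit singular solution, i.e. $|x|^{-2m}$ is an eigenfunction-type solution sitting exactly at the edge. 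A secondary technical point is making the comparison argument $u\ge u_\ast-C$ rigorous for general (non-radial) $u$ and general $m$: here one can instead argue more softly, using only that $\int_{\R^n}e^u|x|^{2-n}\,dx=\infty$ — which itself would follow from Proposition \ref{noboundbelow}-type growth, or be bypassed entirely by a Pohozaev/integral identity — to reach the same contradiction without a sharp pointwise bound.
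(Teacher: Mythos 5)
Your argument hinges on the pointwise lower bound $e^{u(x)}\ge c_0|x|^{-2m}$ for large $|x|$, and this step is both unjustified and, in fact, false for the solutions one actually has to rule out. The comparison principle in the appendix (Proposition \ref{mckenna reichel}) applies to radial solutions of \eqref{Cauchy} with ordered data at the origin; it does not yield a bound from below by the singular solution $u_\ast=-2m\log|x|+O(1)$, and Theorem \ref{t:non-ex-stable} concerns arbitrary, possibly non-radial, solutions. More decisively, entire solutions of \eqref{eq_1} typically decay much faster than $u_\ast$: the explicit solutions \eqref{eq:sol-esplicita} in dimension $n=2m$ satisfy $u(x)=-4m\log|x|+O(1)$, so $e^u\sim C|x|^{-4m}\ll |x|^{-2m}$, and for $m$ even with $\beta\in\ds{\mathop{\mathcal A_\alpha}^{\circ}}$ one has $u\sim -Cr^{2m-2}$ (Theorem \ref{t:estimates} (iv)), so $e^u$ decays super-polynomially. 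Indeed the paper proves (Theorems \ref{t:staocs-even} and \ref{t:conformal-d}) that precisely because $e^u$ falls below the critical Hardy--Rellich weight, such solutions \emph{are} stable outside a compact set; so no contradiction can be extracted from the failure of the inequality $\int |x|^{-2m}\varphi^2\,dx\le C\int|\Delta^{m/2}\varphi|^2dx$. Your fallback, that $\int e^u|x|^{2-n}\,dx=\infty$, fails for the same reason: $e^u\in L^1(\R^{2m})$ for the solutions \eqref{eq:sol-esplicita}.

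The paper's proof needs no lower bound on $e^u$ at all: it tests the stability inequality against cut-offs approximating the constant $1$. For $n<2m$ the scaling $\eta_R(x)=\eta(x/R)$ gives $\int|\Delta^{m/2}\eta_R|^2dx=R^{n-2m}\int|\Delta^{m/2}\eta|^2dx\to0$; for $n=2m$ one takes the dyadic average $\eta_k=\frac1k\sum_{j=k}^{2k-1}\eta(2^{-j}x)$, whose derivative terms have essentially disjoint supports, so the energy is $O(1/k)$. Stability together with Fatou's lemma then yields $\int_{\R^n}e^u\,dx\le0$, which is absurd since $e^u>0$. Your logarithmic test functions in the Emden--Fowler variable are close in spirit to this $\eta_k$, but they must be inserted directly into the stability inequality (where $\int e^u\eta_k^2\,dx\to\int e^u\,dx$), not into a weighted Hardy--Rellich inequality that the solution need not dominate.
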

In the case $n=1$ we prove stability outside a compact set of all
solutions of \eqref{eq_1}.

\begin{theorem} \label{t:n=1-socs} Let $n=1$ and let $m\ge 1$ be
odd. Then any solution of \eqref{eq_1} is stable outside a compact
set.
\end{theorem}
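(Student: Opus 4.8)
The plan is to work with the explicit structure of solutions in dimension $n=1$. By Theorem \ref{t:ex-odd} every solution $u$ of the ODE corresponding to \eqref{eq_1} is global, and by Theorem \ref{t:estimates} (iii) there exist constants $C,\overline r>0$ and a positive integer $K$ such that $u(x)<-C|x|^{K}$ for all $|x|>\overline r$. In particular $e^{u(x)}$ decays at least like $e^{-C|x|^{K}}$ at infinity, so $e^u\in L^1(\R)$ and, more importantly, $e^u$ is extremely small outside a large ball. The stability inequality \eqref{eq:stability-ocs} to be verified is, since $m$ is odd and $n=1$,
\begin{equation*}
\int_{\R}\bigl|\bigl(\Delta^{\frac{m-1}{2}}\varphi\bigr)'\bigr|^2\,dx-\int_{\R}e^u\varphi^2\,dx\ge 0\qquad\text{for all }\varphi\in C^\infty_c(\R\setminus K),
\end{equation*}
and I would take the compact set to be a symmetric interval $K=[-\rho,\rho]$ with $\rho$ large, to be chosen.

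The key step is a one-dimensional Hardy–Rellich type inequality: I would show that for $\varphi\in C^\infty_c(\R\setminus[-\rho,\rho])$ one has a bound of the form
\begin{equation*}
\int_{\R}\varphi^2\,dx\le C(\rho)\int_{\R}\bigl|\bigl(\Delta^{\frac{m-1}{2}}\varphi\bigr)'\bigr|^2\,dx,
\end{equation*}
with a constant $C(\rho)$ that can be made as small as desired by enlarging $\rho$. This is a Poincaré-type estimate: on each of the two half-lines $(\rho,\infty)$ and $(-\infty,-\rho)$, a function vanishing near the finite endpoint (together with all its derivatives up to order $m$) satisfies $\int \psi^2\le c\,\rho^{2}\int|\psi'|^2$ is \emph{not} what I want — rather I want to iterate. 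Writing $w=\Delta^{\frac{m-1}{2}}\varphi$ (an $m$-th order derivative of $\varphi$, up to sign, in one variable), one recovers $\varphi$ from $w$ by integrating $m$ times from the endpoint $\pm\rho$ where $\varphi$ and its derivatives vanish; each integration over a support contained in, say, $(\rho,T)$ costs a factor proportional to the length, but the gain I actually need comes from the \emph{weighted} Hardy inequality $\int_\rho^\infty \psi(x)^2\,dx\le 4\int_\rho^\infty x^2\psi'(x)^2\,dx$ iterated against the rapidly decaying weight $e^{u}$. The cleanest route: combine a finite number of applications of the one-dimensional Hardy inequality $\int_\rho^\infty \frac{f^2}{(x-\rho)^2}\le 4\int_\rho^\infty (f')^2$ to bound $\int e^u\varphi^2$ by a small multiple of $\int |w'|^2$, exploiting that $e^{u(x)}(x-\rho)^{2m}\to 0$ as $x\to\infty$ and is bounded on $(\rho,\infty)$, so its supremum there tends to $0$ as $\rho\to\infty$ by the super-polynomial decay of $e^u$.

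Concretely, the argument runs: (1) fix $\varphi\in C^\infty_c(\R\setminus[-\rho,\rho])$ and split the integrals over $(\rho,\infty)$ and $(-\infty,-\rho)$, treating each half-line identically; (2) on $(\rho,\infty)$, estimate $\int e^u\varphi^2\le \bigl(\sup_{x>\rho}e^{u(x)}(x-\rho)^{2m}\bigr)\int (x-\rho)^{-2m}\varphi^2$; (3) apply the Hardy inequality $m$ times successively to $\varphi, \varphi', \ldots$, using the vanishing of $\varphi$ and its derivatives at $x=\rho^+$, to get $\int (x-\rho)^{-2m}\varphi^2\le 4^m\int (\varphi^{(m)})^2 = 4^m\int |w'|^2$ (in dimension one $\Delta^{(m-1)/2}\varphi=\pm\varphi^{(m-1)}$ so $(\Delta^{(m-1)/2}\varphi)'=\pm\varphi^{(m)}$); (4) observe $\sup_{x>\rho}e^{u(x)}(x-\rho)^{2m}\to 0$ as $\rho\to+\infty$ because $u(x)\le -C|x|^K$ forces $e^{u(x)}(x-\rho)^{2m}\le e^{-Cx^K}x^{2m}\to 0$; (5) choose $\rho=\rho_0$ so large that $4^m\sup_{x>\rho_0}e^{u(x)}(x-\rho_0)^{2m}\le 1$ (and likewise on the left half-line); (6) add the two half-line estimates to conclude $\int_\R e^u\varphi^2\le \int_\R|w'|^2$, which is exactly \eqref{eq:stability-ocs} with $K=[-\rho_0,\rho_0]$.

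I expect the main obstacle to be the bookkeeping in step (3): iterating the Hardy inequality requires that not only $\varphi$ but each derivative $\varphi^{(j)}$ for $j<m$ vanish at the finite endpoint $x=\rho$, which is guaranteed since $\varphi\in C^\infty_c(\R\setminus[-\rho,\rho])$ is supported in $(\rho,\infty)$ and hence is identically zero in a right-neighbourhood of $\rho$; one must also check that the intermediate integrals are finite, which follows from $\varphi$ being compactly supported. A secondary point requiring care is that the constant $4$ in the Hardy inequality on $(\rho,\infty)$ with weight $(x-\rho)^{-2}$ is the correct sharp constant and is independent of $\rho$, so that the product of the $m$ Hardy constants, namely $4^m$, is an absolute constant and only $\sup e^u(x-\rho)^{2m}$ carries the $\rho$-dependence that we drive to zero. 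Everything else is elementary, and the theorem follows.
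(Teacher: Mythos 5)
Your argument is correct and follows essentially the same route as the paper: the super\-polynomial decay $u(x)<-C|x|^K$ from Theorem \ref{t:estimates} (iii) (Lemma \ref{l:1-d}) is combined with an $m$-fold iterated one-dimensional Hardy inequality producing a weight comparable to $|x|^{-2m}$ (the paper's \eqref{eq:HR-1d-ter}), and $e^u$ is then absorbed by that weight outside a sufficiently large compact set. The only cosmetic differences are that you anchor the Hardy weight at the endpoints $\pm\rho$ of the excluded interval rather than at the origin, and that your constant $4^m$ is a harmless overestimate of the iterated constant $\prod_{j=1}^m 4(2j-1)^{-2}$.
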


Next we state some results about the existence of
radial stable solutions of \eqref{eq_1} in both the cases $m$ odd
and $m$ even. We start with the following result valid for $m$ odd
and $n$ strictly below the conformal dimension $2m$.

\begin{theorem} \label{t:n<=2m-1} Let $m\ge 1$ odd and $1\le
n\le 2m-1$. Then any radial solution of \eqref{eq_1} is stable
outside a compact set.
\end{theorem}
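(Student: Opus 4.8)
The plan is to reduce the stability-outside-a-compact-set property to a Hardy--Rellich type inequality valid precisely in the range $1\le n\le 2m-1$. First I would recall, from Theorem \ref{t:estimates} (ii)--(iii), that every radial solution $u$ of \eqref{eq_1} satisfies a polynomial upper bound $u(r)<-C r^{K}+C'$ for $r$ large (with $K$ a positive integer), hence in particular $e^{u(r)}\le c\,r^{-K}$, and so certainly $e^{u(x)}\le \frac{c}{|x|^{2}}$ for $|x|$ large (for $n=1$ one uses the analogous statement from Theorem \ref{t:estimates} (iii) for all, possibly nonsymmetric, solutions). Actually one wants a stronger decay: since $K\ge 1$ and $e^{u}$ decays at least like a negative integer power, I would argue that $e^{u(x)}\le \frac{c}{|x|^{\,n}}$ outside a large ball whenever $1\le n\le 2m-1$ — this is where the dimension restriction $n\le 2m-1$ first enters, because it guarantees that the power $K$ produced by Theorem \ref{t:estimates} is at least comparable to $n$ (in the borderline cases one still gets the logarithmic room coming from the estimate). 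The upshot is that for a ball $B_R$ of large radius, $\int_{\R^n\setminus B_R} e^{u}\varphi^2\,dx \le c\int_{\R^n\setminus B_R}\frac{\varphi^2}{|x|^{n}}\,dx$ for all $\varphi\in C_c^\infty(\R^n\setminus B_R)$.

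Next I would invoke the Hardy--Rellich type inequalities of Section \ref{s:Hardy-Rellich} (Proposition \ref{p:Hardy-Rellich} and Theorems \ref{t:preliminar-1}--\ref{t:Hardy-Rellich-2}). For $m$ odd the relevant quadratic form is $\int_{\R^n}|\nabla(\Delta^{\frac{m-1}{2}}\varphi)|^2\,dx$, and the inequality I need is of the form
\begin{equation*}
\int_{\R^n}|\nabla(\Delta^{\frac{m-1}{2}}\varphi)|^2\,dx \ge \Lambda_{n,m}\int_{\R^n}\frac{\varphi^2}{|x|^{2m}}\,dx \qquad \text{for all }\varphi\in C_c^\infty(\R^n\setminus\{0\}),
\end{equation*}
with $\Lambda_{n,m}>0$ when $1\le n\le 2m-1$ (in the supercritical range $n>2m$ one would instead use the classical Rellich constant, but that is not the present case). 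Since the test functions in \eqref{eq:stability-ocs} are supported away from a large ball $B_R$, on their support $|x|^{-2m}\ge R^{2m-n}\,|x|^{-n}$, so combining the Hardy--Rellich inequality with the decay estimate for $e^u$ gives
\begin{equation*}
\int_{\R^n}|\nabla(\Delta^{\frac{m-1}{2}}\varphi)|^2\,dx - \int_{\R^n}e^u\varphi^2\,dx \ge \Big(\Lambda_{n,m}\,R^{2m-n} - c\Big)\int_{\R^n\setminus B_R}\frac{\varphi^2}{|x|^{n}}\,dx \ge 0
\end{equation*}
once $R$ is chosen large enough, which is exactly stability outside the compact set $\overline{B_R}$.

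I expect the main obstacle to be the matching of exponents at the two borderline dimensions. The clean scaling argument above works immediately when $e^{u}$ decays strictly faster than $|x|^{-n}$ and the Hardy--Rellich weight is exactly $|x|^{-2m}$; but when $2m-n$ is small or when Theorem \ref{t:estimates} only delivers the power $K$ with $K$ close to (rather than strictly above) what is needed, one must be more careful and exploit the logarithmic improvement in the estimates, or use a weighted Hardy--Rellich inequality with the sharper weight $|x|^{-n}(\log|x|)^{-2}$ near infinity rather than the homogeneous weight $|x|^{-2m}$. The delicate point is therefore to make sure that the new low-dimensional Hardy--Rellich inequalities of Section \ref{s:Hardy-Rellich} are stated in a form strong enough (correct weight, positive constant for every $n$ in $\{1,\dots,2m-1\}$, including $n=1,2$ where the classical Rellich inequality degenerates) to absorb the potential $e^u$ on the exterior domain; once that analytic input is in place, the rest is the short scaling comparison sketched above, carried out separately for $m$ odd as here (the even case being Theorem \ref{t:staocs-even}).
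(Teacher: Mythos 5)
Your overall strategy --- combine the decay of $e^{u}$ from Theorem \ref{t:estimates} with the low-dimensional Hardy--Rellich inequalities of Section \ref{s:Hardy-Rellich} on an exterior domain --- is exactly the paper's, but the quantitative chain you write down contains a step that fails. For $1\le n\le 2m-1$ and $|x|>R>1$ one has $|x|^{-2m}=|x|^{-n}\,|x|^{-(2m-n)}\le R^{-(2m-n)}|x|^{-n}$, so your claimed comparison $|x|^{-2m}\ge R^{2m-n}|x|^{-n}$ goes the wrong way: in the subconformal range the Hardy--Rellich weight $|x|^{-2m}$ decays \emph{faster} than $|x|^{-n}$, and a potential bounded only by $c\,|x|^{-n}$ cannot be absorbed by $\Lambda_{n,m}\int\varphi^{2}|x|^{-2m}\,dx$, no matter how large $R$ is. The root of the problem is that you have discarded almost all of the decay: Theorem \ref{t:estimates} (ii)--(iii) gives $u(r)<-Cr^{K}$ with $K\ge 1$ an integer, hence $e^{u(x)}<e^{-C|x|}$ for large $|x|$, i.e.\ \emph{exponential} decay --- not the polynomial bound $e^{u}\le c\,r^{-K}$ you wrote (which would follow only from $u\le -K\log r+c$). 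With the exponential bound the quotient $e^{-C|x|}/V(x)$ tends to $0$ for \emph{any} weight $V$ of the form $|x|^{-2m}(\log|x|)^{-N}$, so one simply chooses $R$ so large that $e^{u}<V$ on $\R^{n}\setminus\overline{B_R}$ and applies the exterior Hardy--Rellich inequality; no exponent matching at borderline dimensions is needed. This is precisely how the paper argues, with $V$ chosen case by case from \eqref{eq:Hardy-Rellich-1}--\eqref{eq:Hardy-Rellich-2}, \eqref{eq:HR-1d-ter} and \eqref{eq:ineq-1}--\eqref{eq:ineq-2}.

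A second, related point: the homogeneous inequality $\int|\nabla(\Delta^{\frac{m-1}{2}}\varphi)|^{2}\,dx\ge\Lambda_{n,m}\int\varphi^{2}|x|^{-2m}\,dx$ with $\Lambda_{n,m}>0$ is \emph{not} available for every $1\le n\le 2m-1$: when $n=2\ell$ the constant $\prod_i\mu_{n,\alpha_i}$ in Proposition \ref{p:Hardy-Rellich} vanishes, and one must instead use Theorem \ref{t:Hardy-Rellich-2}, which holds only on $\R^{n}\setminus\overline{B_R}$ and carries the weaker weight $|x|^{-2m}(\log|x|)^{-(m+1)}$; for $n=1$ one uses \eqref{eq:HR-1d-ter}. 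You flag this in your last paragraph but leave it unresolved, guessing a weight $|x|^{-n}(\log|x|)^{-2}$ that is not the one that actually appears. Once you (a) keep the exponential decay and (b) plug in the correct logarithmically weighted inequalities in the even-dimensional and one-dimensional cases, your sketch becomes the paper's proof.
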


\begin{theorem} \label{t:staocs-odd}
Let $n\ge 1$ and $m\ge 3$ odd. Let $u$ be a solution of
\eqref{Cauchy} satisfying ${\rm sign} \, \alpha_k\neq (-1)^k$ for
at least one value of $k\in \{1,\dots,m-1\}$. Then $u$ is a
solution of \eqref{eq_1} stable outside a compact set.
\end{theorem}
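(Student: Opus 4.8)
The plan is to combine the asymptotic estimate from Theorem \ref{t:estimates} (i) with an appropriate Hardy–Rellich type inequality. By hypothesis $m\ge 3$ is odd and $u=u_{\alpha,\beta}$ is a solution of \eqref{Cauchy} with $\mathrm{sign}\,\alpha_k\neq(-1)^k$ for at least one $k$; by Theorem \ref{t:existence-odd} such a $u$ is a genuine entire radial solution of \eqref{eq_1}, and by Theorem \ref{t:estimates} (i) there exist $C,\overline r>0$ with
$$
u(r)<-C\,r^{4}\qquad\text{for all }r>\overline r .
$$
Since the stability inequality for $m$ odd involves $\int_{\R^n}|\nabla(\Delta^{(m-1)/2}\varphi)|^2$, the natural tool is a weighted first-order inequality: for $\varphi\in C^\infty_c(\R^n\setminus K)$ with $K=\overline{B_{\overline r}}$, one wants
$$
\int_{\R^n}|\nabla(\Delta^{\frac{m-1}2}\varphi)|^2\,dx\ \ge\ \int_{\R^n} W(x)\,\varphi^2\,dx
$$
for a weight $W$ that decays at infinity no faster than, say, $|x|^{-2m}$ up to logarithmic factors. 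Because $e^{u(x)}\le e^{-C|x|^4}$ decays faster than any power of $|x|$ for $|x|>\overline r$, any such inequality with a polynomially decaying weight $W$ immediately dominates $\int e^u\varphi^2$, giving $\int|\nabla(\Delta^{(m-1)/2}\varphi)|^2-\int e^u\varphi^2\ge 0$ on $C^\infty_c(\R^n\setminus K)$.

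So the key step is to produce the requisite higher-order Hardy–Rellich inequality on $\R^n\setminus K$ valid for \emph{all} $n\ge 1$. For $n$ large (in particular $n\ge 2m$ or wherever the classical Rellich constants are positive) one can simply iterate the classical Rellich inequality of \cite{Rellich}, or more precisely invoke Proposition \ref{p:Hardy-Rellich}: iterating a first-order Hardy inequality $\frac{m-1}{2}$ times (and one gradient step) on the complement of a ball yields
$$
\int_{\R^n}|\nabla(\Delta^{\frac{m-1}2}\varphi)|^2\,dx\ \ge\ c_{n,m}\int_{\R^n}\frac{\varphi^2}{|x|^{2m}}\,dx
$$
with $c_{n,m}>0$ as long as $n\notin\{1,3,\dots,2m-1\}$-type exceptional values. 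For the remaining low dimensions $1\le n\le 2m-1$ the classical constants may vanish, and here one appeals instead to the \emph{new} inequalities of Theorems \ref{t:preliminar-1}--\ref{t:Hardy-Rellich-2}, obtained via the Emden-type transformation: these give an inequality of the form
$$
\int_{\R^n}|\nabla(\Delta^{\frac{m-1}2}\varphi)|^2\,dx\ \ge\ \int_{|x|>R}\frac{c\,\varphi^2}{|x|^{2m}(\log|x|)^{2}}\,dx
$$
on $C^\infty_c(\{|x|>R\})$ for suitable $R$, with $c>0$. Enlarging the compact set $K$ to $\overline{B_R}$ if necessary, and using again that $e^{u}$ has super-polynomial decay, we conclude $u$ is stable outside the compact set $\overline{B_{\max(\overline r,R)}}$.

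The main obstacle is the low-dimensional range $1\le n\le 2m-1$, where no elementary iteration of Rellich works and one genuinely needs the weighted logarithmic Hardy–Rellich inequalities of Section \ref{s:Hardy-Rellich}; a secondary technical point is that those inequalities are stated for test functions supported away from a ball, so one must check that the asymptotic estimate $u(r)<-Cr^4$ kicks in on a ball large enough to host the support condition — but since \eqref{eq:quadratic} holds for all $r>\overline r$, simply taking $K$ to be the larger of the two balls resolves this. I would also remark that the crude bound $e^{u}\le e^{-C|x|^4}$ is far more decay than needed: any estimate forcing $e^{u}=o(|x|^{-2m}(\log|x|)^{-2})$ at infinity would suffice, which is why the theorem does not require the full strength of Theorem \ref{t:estimates} (ii).
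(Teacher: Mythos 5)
Your proposal follows essentially the same route as the paper's proof in Section \ref{s:stability-proof}: the decay estimate of Theorem \ref{t:estimates} (i) makes $e^{u}$ decay super-polynomially, and this is then dominated by a weight of the form $c\,|x|^{-2m}(\log|x|)^{-\sigma}$ supplied by the Hardy--Rellich inequalities of Section \ref{s:Hardy-Rellich} on the complement of a sufficiently large ball. The only imprecision is in sorting the dimensions: the exceptional cases for Proposition \ref{p:Hardy-Rellich} are the \emph{even} dimensions $n\in\{2,4,\dots,2m-2\}$ (where one invokes Theorem \ref{t:Hardy-Rellich-2}), whereas odd $3\le n\le 2m-1$ is already covered by Proposition \ref{p:Hardy-Rellich} and $n=1$ by Proposition \ref{p:HR-1d}, not by Theorems \ref{t:preliminar-1}--\ref{t:Hardy-Rellich-2}; this mislabeling does not affect the validity of the argument.
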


\begin{theorem} \label{t:staocs-even}
Let $n\ge 3$ and $m\ge 2$ even. Let $\alpha\in \R$ and let $\beta
\in \ds{\mathop{\mathcal A_\alpha}^{\circ}}$ with $\mathcal
A_\alpha$ ad in Theorem \ref{t:existence-even}. Let
$u_{\alpha,\beta}$ be the corresponding solution of
\eqref{Cauchy}. Then $u_{\alpha,\beta}$ is a solution of
\eqref{eq_1} stable outside a compact set.
\end{theorem}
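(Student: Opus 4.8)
The plan is to obtain stability outside a compact set by combining two facts already established in the paper: the sharp asymptotics of $u_{\alpha,\beta}$ from Theorem~\ref{t:estimates}~(iv) and the Hardy--Rellich inequalities of Section~\ref{s:Hardy-Rellich}. Since $m$ is even, the claim amounts to producing a ball $K=\overline{B_R}$ such that
\[
\int_{\R^n}|\Delta^{m/2}\varphi|^2\,dx-\int_{\R^n}e^{u_{\alpha,\beta}}\varphi^2\,dx\ge 0\qquad\text{for every }\varphi\in C^\infty_c(\R^n\setminus K),
\]
and the underlying mechanism is that, on the complement of a large ball, the exponential weight $e^{u_{\alpha,\beta}}$ is negligible compared with any Hardy--Rellich potential of order $2m$.

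First I would quantify the decay of the weight. Since $\beta\in\mathop{\mathcal A_\alpha}^{\circ}$, Theorem~\ref{t:estimates}~(iv) gives $u_{\alpha,\beta}(r)\sim -Cr^{2m-2}$ as $r\to+\infty$ with $C>0$, and $2m-2\ge 2>0$ because $m\ge 2$; hence $e^{u_{\alpha,\beta}(x)}\le e^{-\frac{C}{2}|x|^{2m-2}}$ for $|x|$ large. Consequently $|x|^{2m}\bigl(\log(2+|x|)\bigr)^{2}e^{u_{\alpha,\beta}(x)}\to 0$ as $|x|\to+\infty$, so for every $\varepsilon>0$ there is $R_\varepsilon>0$ with
\[
e^{u_{\alpha,\beta}(x)}\le \frac{\varepsilon}{|x|^{2m}\bigl(\log(2+|x|)\bigr)^{2}}\qquad\text{for }|x|\ge R_\varepsilon.
\]
It is exactly here that the hypothesis $\beta\in\mathop{\mathcal A_\alpha}^{\circ}$ is essential: on $\partial\mathcal A_\alpha$, Theorem~\ref{t:estimates}~(v) only yields $e^{u_{\alpha,\beta}(x)}\lesssim |x|^{-2m}$, which is \emph{not} small against a Hardy--Rellich weight, and this is precisely why that borderline case is left open in Problem~\ref{prob:2}.

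Next I would invoke the appropriate Hardy--Rellich inequality, distinguishing the two regimes covered by the statement. If $n>2m$, the classical Rellich inequality (see Proposition~\ref{p:Hardy-Rellich}) yields a constant $H_{n,m}>0$ with $\int_{\R^n}|\Delta^{m/2}\varphi|^2\,dx\ge H_{n,m}\int_{\R^n}|x|^{-2m}\varphi^2\,dx$ for all $\varphi\in C^\infty_c(\R^n\setminus\{0\})$, and one may take $R_0=0$, $W(x)=|x|^{-2m}$, $c=H_{n,m}$. If $3\le n\le 2m$, the inequalities of Theorems~\ref{t:preliminar-1}--\ref{t:Hardy-Rellich-2} provide $R_0>0$, $c>0$ and a weight $W\asymp|x|^{-2m}$ (with the logarithmic corrections dictated by those statements) such that $\int_{\R^n}|\Delta^{m/2}\varphi|^2\,dx\ge c\int_{\R^n}W(x)\varphi^2\,dx$ for all $\varphi\in C^\infty_c(\R^n\setminus B_{R_0})$. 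In either case $W(x)\ge c_0\,|x|^{-2m}\bigl(\log(2+|x|)\bigr)^{-2}$ for $|x|\ge 1$, with $c_0>0$. Choosing $\varepsilon:=c\,c_0$, setting $R:=\max\{R_\varepsilon,R_0,1\}$ and $K:=\overline{B_R}$, we obtain for any $\varphi\in C^\infty_c(\R^n\setminus K)$
\[
\int_{\R^n}e^{u_{\alpha,\beta}}\varphi^2\,dx\le \varepsilon\int_{\R^n}\frac{\varphi^2}{|x|^{2m}\bigl(\log(2+|x|)\bigr)^{2}}\,dx\le c\int_{\R^n}W(x)\varphi^2\,dx\le\int_{\R^n}|\Delta^{m/2}\varphi|^2\,dx,
\]
which is the desired stability inequality outside $K$.

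As for the main obstacle: modulo the quoted results the argument is genuinely short, the only point being the matching of exponents — the super-polynomial decay of $e^{u_{\alpha,\beta}}$ must beat the $|x|^{-2m}$ (up to logarithms) Hardy--Rellich weight, which is guaranteed precisely by the interior condition on $\beta$. The substantial work sits upstream, in results we are entitled to assume: establishing the asymptotic expansion $u_{\alpha,\beta}\sim -Cr^{2m-2}$, and, in the low-dimensional range $3\le n\le 2m$ where the classical Rellich constant degenerates (at $n=2m$ in particular), constructing Hardy--Rellich inequalities with positive constant through the Emden-type change of variables used in Section~\ref{s:Hardy-Rellich}.
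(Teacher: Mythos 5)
Your proposal is correct and takes essentially the same route as the paper: the paper likewise combines the decay of $u_{\alpha,\beta}$ from Theorem \ref{t:estimates} (it only needs the cruder bound $u<-C|x|$, whereas you use the full $-Cr^{2m-2}$ asymptotic) with the Hardy--Rellich potentials of Proposition \ref{p:Hardy-Rellich} and Theorem \ref{t:Hardy-Rellich-2}, showing that $e^{u_{\alpha,\beta}}/V\to 0$ at infinity and absorbing the weight outside a large ball. The one slip is your claim that $W(x)\ge c_0|x|^{-2m}\bigl(\log(2+|x|)\bigr)^{-2}$: for $m\ge 4$ even the weight in \eqref{eq:ineq-1} (with $k=m/2$) carries $(\log|x|)^{-m}$, which is \emph{smaller} than $(\log|x|)^{-2}$ at infinity, so that lower bound fails; this is harmless, since $e^{u_{\alpha,\beta}}$ decays super-polynomially and therefore still dominates $|x|^{-2m}(\log|x|)^{-N}$ for any fixed $N$, but the logarithmic exponent in your $\varepsilon$-estimate should be $m$ rather than $2$.
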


Then we consider the case $n=2m$.

\begin{theorem} \label{t:conformal-d} Let $m\ge 1$ and $n=2m$. Let $u$ be the
solution defined in \eqref{eq:sol-esplicita}. Then $u$ is stable
outside a compact set.
\end{theorem}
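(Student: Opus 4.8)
\textbf{Proof proposal for Theorem \ref{t:conformal-d}.} The plan is to work directly with the explicit solution \eqref{eq:sol-esplicita}, which in the conformal dimension $n=2m$ has (up to translation, which does not affect stability) the form $u(x)=2m\log\frac{c\lambda}{1+\lambda^2|x|^2}$ for a fixed constant $c>0$, so that $e^{u(x)}=\frac{(c\lambda)^{2m}}{(1+\lambda^2|x|^2)^{2m}}$. Since $\lambda$ amounts to a dilation $x\mapsto\lambda x$, under which the stability quadratic form is invariant in dimension $n=2m$ (the relevant Sobolev norm $\|\Delta^{m/2}\varphi\|_{L^2}^2$ or $\|\nabla\Delta^{(m-1)/2}\varphi\|_{L^2}^2$ and the $L^{2m/(m)}$-type potential term scale identically when $n=2m$), it suffices to treat $\lambda=1$. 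Thus I must show
$$
\int_{\R^{2m}} e^u\,\varphi^2\,dx \;\le\; Q_m(\varphi) \qquad \text{for all } \varphi\in C^\infty_c(\R^{2m}\setminus K),
$$
where $Q_m(\varphi)$ denotes $\int|\Delta^{m/2}\varphi|^2$ if $m$ is even and $\int|\nabla\Delta^{(m-1)/2}\varphi|^2$ if $m$ is odd, and $K$ is a suitable large ball.

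The key point is that $e^{u(x)}=(c)^{2m}(1+|x|^2)^{-2m}$ decays like $|x|^{-4m}$ at infinity, and on $\R^{2m}\setminus B_R$ one has $e^{u(x)}\le C_R\,|x|^{-4m}$ with $C_R\to (\text{something})$ as $R\to\infty$; more importantly, for $x$ away from the origin $e^{u(x)}\le c^{2m}|x|^{-4m}$ exactly. So the inequality will follow once I establish a Hardy--Rellich type bound of the form
$$
\int_{\R^{2m}} \frac{\varphi^2}{|x|^{4m}}\,dx \;\le\; \frac{1}{c^{2m}}\,Q_m(\varphi) \qquad \text{for } \varphi\in C^\infty_c(\R^{2m}\setminus\{0\}),
$$
i.e.\ I need the sharp (or at least good enough) constant in the $2m$-th order Hardy--Rellich inequality with weight $|x|^{-4m}$ in dimension $n=2m$. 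This is exactly the borderline/critical case, and here I expect to invoke the Hardy--Rellich machinery developed in Section \ref{s:Hardy-Rellich} of the paper (Proposition \ref{p:Hardy-Rellich} and Theorems \ref{t:preliminar-1}--\ref{t:Hardy-Rellich-2}), which were designed precisely for $n$ at or below the critical dimension; the iterative/Emden-transformation approach there should yield the constant $\bigl(\frac{(2m)!}{1}\bigr)$-type value that matches $c^{2m}=4^{2m}(2m)!\cdot(\text{normalization})$. Concretely, since $c=2[(2m)!]^{1/2m}$, one has $c^{2m}=4^m(2m)!$, so the required constant in the weighted inequality is $1/(4^m(2m)!)$, and one must check that the best constant $H_{2m,2m}$ in $\int|x|^{-4m}\varphi^2\le \frac{1}{H_{2m,2m}}Q_m(\varphi)$ satisfies $H_{2m,2m}\ge 4^m(2m)!$ — with equality this is sharp, consistent with $u$ being the extremal.

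The main obstacle, then, is pinning down this optimal Hardy--Rellich constant in the critical dimension $n=2m$ and matching it against $c^{2m}$. For $m=1$ this is the classical Hardy inequality ($H_{2,2}=4=4^1\cdot 1!$), for $m=2$ it is the classical Rellich constant in dimension $4$, and for general $m$ it should be the iterated constant $\prod$ coming from composing second-order Hardy inequalities, which the paper's Proposition \ref{p:Hardy-Rellich} furnishes. I would therefore: (1) reduce to $\lambda=1$, $x_0=0$ by scaling invariance in dimension $2m$; (2) bound $e^{u(x)}\le c^{2m}|x|^{-4m}$ for $x\neq 0$, noting this is valid on all of $\R^{2m}\setminus\{0\}$, so in fact one may take $K=\overline{B_1}$ or any ball containing $0$, or even $K=\{0\}$ morally — but since test functions must have compact support in an open set, take $K$ a small closed ball about $0$ (and note $e^u$ is bounded, hence the finitely-many-derivatives form also controls the contribution near any compact $K$, so enlarging $K$ only helps); (3) apply the weighted Hardy--Rellich inequality from Section \ref{s:Hardy-Rellich} with the sharp constant; (4) verify the arithmetic $c^{2m}=4^m(2m)!$ and that this is $\le$ the optimal Hardy--Rellich constant. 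The only real work is step (3)–(4); everything else is scaling and a pointwise estimate. If the sharp constant is not literally available from the cited results, a cleaner route is to observe that $u$ itself solves $(-\Delta)^m u=e^u$ and is the conformal bubble, and to use the \emph{linearized} operator $(-\Delta)^m - e^u$ together with the fact that $\partial_\lambda u\big|_{\lambda=1}$ (the dilation direction) is a positive solution of the linearized equation decaying at infinity; positivity of a decaying solution of the linearized equation outside a compact set yields, via the standard "positive supersolution $\Rightarrow$ nonnegative quadratic form" argument (a maximum-principle/integration-by-parts lemma, cf.\ the appendix), stability outside that compact set. I would present whichever of these two arguments aligns with the Hardy--Rellich framework already set up, the positive-solution argument being the more robust fallback.
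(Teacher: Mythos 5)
There is a genuine gap in your main route. The inequality you single out as the key step, namely
\begin{equation*}
\int_{\R^{2m}} \frac{\varphi^2}{|x|^{4m}}\,dx \;\le\; \frac{1}{c^{2m}}\,Q_m(\varphi) \qquad \text{for } \varphi\in C^\infty_c(\R^{2m}\setminus\{0\}),
\end{equation*}
is false for every choice of constant: in dimension $n=2m$ the form $Q_m$ is invariant under $\varphi\mapsto\varphi(\mu\,\cdot)$, while $\int |x|^{-4m}\varphi(\mu x)^2dx=\mu^{2m}\int|x|^{-4m}\varphi^2dx$, so letting $\mu\to+\infty$ (which keeps the support in $\R^{2m}\setminus\{0\}$) destroys the inequality. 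The scale-covariant weight for an order-$2m$ form in dimension $2m$ is $|x|^{-2m}$, not $|x|^{-4m}$, and at that critical weight the Hardy--Rellich constant is zero — this is precisely why the paper introduces the logarithmic corrections of Theorem \ref{t:Hardy-Rellich-2}. Consequently your plan to take $K$ a small ball about the origin, and the entire discussion of matching a sharp constant against $c^{2m}=4^m(2m)!$, cannot work. Your fallback is also not available: the implication ``positive decaying solution of the linearized equation $\Rightarrow$ nonnegative quadratic form'' is an Allegretto--Piepenbrink/ground-state argument tied to the maximum principle, and it has no general analogue for $(-\Delta)^m$ with $m\ge 2$; nothing in the paper's appendix supplies it.

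The correct argument — and the paper's, which simply says the proof ``follows in the same way as above since $u(x)=-4m\log|x|+O(1)$'' — needs no sharp constant at all, because the theorem only claims stability outside \emph{some} compact set. One takes $K=\overline B_R$ with $R$ large. Your pointwise bound $e^{u(x)}\le c^{2m}\lambda^{-2m}|x|^{-4m}$ is fine, and the point is that $|x|^{-4m}=o\bigl(|x|^{-2m}(\log|x|)^{-m}\bigr)$ (resp.\ $o\bigl(|x|^{-2m}(\log|x|)^{-m-1}\bigr)$ for $m$ odd) as $|x|\to+\infty$; hence for $R$ large enough $e^u\le V$ on $\R^{2m}\setminus\overline B_R$, where $V$ is the logarithmically weighted potential for which \eqref{eq:ineq-1}--\eqref{eq:ineq-2} (with $k=m/2$ or $k=(m-1)/2$) give $\int V\varphi^2\le Q_m(\varphi)$ for all $\varphi\in C^\infty_c(\R^{2m}\setminus\overline B_R)$, with \emph{whatever} positive constant those theorems provide. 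Your instinct to use the Section \ref{s:Hardy-Rellich} machinery was right; the error was in choosing the weight $|x|^{-4m}$ and insisting on optimality instead of exploiting the strict excess of decay of $e^u$ over the critical log-corrected weight.
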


As a last result of this section we resume in a  unique theorem all the previous statements proved in the case $n=1$:

\begin{theorem} \label{t:1-d} Let $n=1$.
\begin{itemize}
\item[(i)] If $m$ is even then \eqref{eq_1} admits no entire solutions.

\item[(ii)] If $m$ is odd then any local solution of \eqref{eq_1} is global.

\item[(iii)] If $m=1$ then all solutions of \eqref{eq_1} are symmetric with respect to some point while if $m\ge 3$ is odd then \eqref{eq_1} admits
entire solutions which are not symmetric with respect to any point.

\item[(iv)] If $m\ge 1$ is odd then all entire solutions of \eqref{eq_1} are unstable but are stable outside a compact set.
\end{itemize}
\end{theorem}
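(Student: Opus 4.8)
The plan is to recognize that Theorem~\ref{t:1-d} merely collects, in the one-dimensional setting, statements that have already been established elsewhere in the paper; thus the ``proof'' is essentially a dictionary between this theorem and the earlier ones, and no new analysis should be needed.

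First I would dispose of part (i): it is exactly Theorem~\ref{t:non-ex-even}, which asserts that for $n=1$ and $m\ge 2$ even problem \eqref{eq_1} has no entire solution. Parts (ii) and (iii) are then read off from Theorem~\ref{t:ex-odd}: that theorem states that for $n=1$ and $m\ge 1$ odd every local solution of the ODE associated with \eqref{eq_1} extends to a global one, that for $m=1$ every solution is symmetric about some point of $\R$, and that for $m\ge 3$ odd there exist solutions which are symmetric about no point. These are precisely (ii) and (iii).

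The only half that requires a short argument is (iv). The ``stable outside a compact set'' assertion is literally Theorem~\ref{t:n=1-socs}, which is valid for all solutions when $n=1$ and $m$ is odd. For the instability assertion I would argue as follows: since $n=1$ we always have $n\le 2m$, so Theorem~\ref{t:non-ex-stable} applies and shows that \eqref{eq_1} admits no stable solution; combined with part (ii), which guarantees that entire solutions do exist when $m$ is odd, this yields that \emph{every} entire solution of \eqref{eq_1} is unstable, so the statement is non-vacuous.

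I expect no genuine obstacle beyond bookkeeping: one must verify that the notion of ``entire solution'' coincides in Theorems~\ref{t:non-ex-even}, \ref{t:ex-odd}, \ref{t:non-ex-stable} and \ref{t:n=1-socs} (a classical solution of the associated ordinary differential equation defined on all of $\R$), and that the parity and range hypotheses on $m$ match verbatim ($m\ge 2$ even in (i); $m\ge 1$ odd in (ii)--(iv), with $m=1$ singled out in (iii)). Once these trivial checks are carried out, the theorem follows simply by assembling the quoted results.
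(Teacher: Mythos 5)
Your proposal is correct and matches the paper's intent exactly: Theorem \ref{t:1-d} is explicitly presented as a résumé of the earlier one-dimensional results, and the paper gives no separate proof beyond the identifications you make (part (i) is Theorem \ref{t:non-ex-even}, parts (ii)--(iii) are Theorem \ref{t:ex-odd}, and part (iv) combines Theorem \ref{t:n=1-socs} with Theorem \ref{t:non-ex-stable} applied with $n=1\le 2m$). The bookkeeping checks you list are the only content required, and they all go through.
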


\bigskip

We want to emphasize the important role played by entire solutions of an
elliptic equation in the study of entire solutions
in higher dimensions. As one can see from Theorem \ref{t:1-d}, where in dimension $n=1$
we gave a complete description of properties of solutions of
\eqref{eq_1}, no stable solution exists for any $m\ge 1$; we have
in the case $m$ odd at most stability outside a compact set but
this property is not preserved after adding further dimensions.
Indeed if we consider a solution $u=u(x)$, $x\in\R$, of
\eqref{eq_1} stable outside a compact set (but unstable in view of
Theorem \ref{t:1-d} (iv)) and we see it as an entire solution of
\eqref{eq_1} in $\R^{k+1}$ then it becomes unstable outside any
compact set and in particular its Morse index is infinite. To see
this, take $\varphi\in C^\infty_c(\R)$ such that $\int_{\R}
[(\varphi^{(m)})^2-e^u\varphi^2] dx<0$, $\psi_1\in
C^\infty_c(\R^k)$, $\psi_1 \not\equiv 0$ and
$\psi_R(y):=\psi_1(y/R)$ for any $R>0$. Then one may check that
$$
\int_{\R^{k+1}}
|\nabla(\Delta^{\frac{m-1}2}(\varphi(x)\psi_R(y)))|^2 \, dxdy=R^k
\int_{\R^{k}} (\psi_1(y))^2 dy\cdot \int_\R
(\varphi^{(m)}(x))^2dx+o(R^k) \quad \text{as } R\to +\infty \, .
$$
Therefore
\begin{align*}
\int_{\R^{k+1}} &
\Big[|\nabla(\Delta^{\frac{m-1}2}(\varphi(x)\psi_R(y)))|^2-e^{u(x)}(\varphi(x))^2(\psi_R(y))^2\Big]
 \, dxdy \\
 & =R^k
\int_{\R^{k}} (\psi_1(y))^2 dy\cdot \int_\R
\left[(\varphi^{(m)}(x))^2-e^{u(x)}(\varphi(x))^2 \right] dx
 +o(R^k) \quad \text{as } R\to +\infty \, .
\end{align*}
For $R>0$ sufficiently large we have that the last line becomes
negative. Fixing such an $R>0$ and letting $\tau>0$, $\{e_1,\dots,e_{k+1}\}$ the standard basis in $\R^{k+1}$,
$v_\tau(x,y):=\varphi(x)\psi_R(y-\tau e_{j})\in C^\infty_c(\R^{k+1})$, $j\in \{2,\dots,k+1\}$,
we obtain
$$
\int_{\R^{k+1}}
[|\nabla(\Delta^{\frac{m-1}2}v_\tau)|^2-e^{u}v_\tau^2]
 \, dxdy<0 \qquad \text{for any } \tau>0 \, .
$$
This procedure may be extended to any unstable solution $u$ of a general problem in the form $(-\Delta)^m u=f(u)$
in $\R^n$ with $n\ge 1$ and $f\in C^1(\R)$.

\begin{problem} \label{prob:2}
{\rm Concerning stability properties of solutions
of \eqref{eq_1} we suggest the following questions:
\begin{itemize}
\item[(i)] Let $m\ge 3$ odd. Study stability outside a compact set
of radial solutions of \eqref{eq_1} satisfying ${\rm sign} \,
\alpha_k=(-1)^k$ for all $k\in \{1,\dots,m-1\}$.

\item[(ii)] Let $m\ge 4$ even. Study stability outside a compact
set of radial solutions $u_{\alpha,\beta}$ of \eqref{eq_1}
satisfying $\beta\in \partial\mathcal A_\alpha$. Only in the case
$n=2m$ we can conclude that such solutions are stable outside a
compact set. This follows immediately combining Corollary
\ref{c:martinazzi} and Theorem \ref{t:conformal-d}.

\item[(iii)] Let $m$ be any integer satisfying $m\ge 3$. Study
existence of radial entire solutions of \eqref{eq_1} which are
(globally) stable. See also the end of Section \ref{s:dynamical}
for more details.

\item[(iv)] Let $n=2$ and $m$ even. Study existence of entire
solutions of \eqref{eq_1}. We already know from Theorem
\ref{t:non-ex-even} that no radial entire solution exists.
Moreover no nonradial entire solution can be constructed by looking at solutions depending only on
one variable, see Theorem \ref{t:1-d}. We ask if entire solutions of \eqref{eq_1} really exist in this case.
\end{itemize}
}
\end{problem}

\section{Some higher order Hardy-Rellich type inequalities}
\label{s:Hardy-Rellich}

In this section we state some Hardy-Rellich type inequalities of
fundamental importance for determining stability outside compact
sets of solutions of \eqref{eq_1} especially in low dimensions.

Before these statements we recall from \cite{m} some higher order
classical Hardy-Rellich inequalities with optimal constants, see
also \cite{allegri,DH}. In the rest of this paper we put $\prod_{i=j}^k a_i=1$ whenever $k<j$.

\begin{proposition} \label{p:mitidieri} (\cite[Theorem 3.3]{m})
The following statements hold true:
\begin{itemize}
\item[(i)] if $k\ge 1$ and $n>4k$ then
\begin{equation*}
A_{n,k} \int_{\R^n} \frac{\varphi^2}{|x|^{4k}}\, dx\le \int_{\R^n}
|\Delta^k\varphi|^2 dx \qquad \text{for any } \varphi\in
C^\infty_c(\R^n)
\end{equation*}
where
\begin{equation*}
A_{n,k}:=\frac{1}{16^k} \prod_{i=0}^{k-1} (n-4k+4i)^2(n+4k-4i-4)^2;
\end{equation*}

 \item[(ii)] if $k\ge 0$ and $n>4k+2$ then
\begin{equation*}
B_{n,k} \int_{\R^n} \frac{\varphi^2}{|x|^{4k+2}}\, dx\le
\int_{\R^n} |\nabla(\Delta^k\varphi)|^2 dx \qquad \text{for any }
\varphi\in C^\infty_c(\R^n)
\end{equation*}
where
\begin{equation*}
B_{n,k}:=\frac{1}{16^k} \left(\frac{n-2}2\right)^2 \prod_{i=1}^{k}
(n-4i-2)^2(n+4i-2)^2
\end{equation*}
and moreover the constant $B_{n,k}$ is optimal in the case $k=0$.
\end{itemize}
\end{proposition}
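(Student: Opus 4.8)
This is \cite[Theorem~3.3]{m}, so I only sketch the strategy. The inequalities are obtained by combining two elementary ``one--step'' weighted inequalities with an iteration whose bookkeeping produces exactly the constants $A_{n,k}$ and $B_{n,k}$; under the stated hypotheses $n>4k$, resp.\ $n>4k+2$, all the radial weights that appear are locally integrable, so every integral below is finite on $C^\infty_c(\R^n)$. The first building block is the \emph{weighted Hardy inequality}: for every $\beta\in\R$ with $n-2-\beta\neq 0$ and every $\varphi\in C^\infty_c(\R^n)$,
\[
\int_{\R^n}\frac{|\nabla\varphi|^2}{|x|^\beta}\,dx\ \ge\ \Big(\frac{n-2-\beta}{2}\Big)^2\int_{\R^n}\frac{\varphi^2}{|x|^{\beta+2}}\,dx .
\]
This follows by expanding the non--negative quantity $\int_{\R^n}|x|^{-\beta-2}\big|\,x\cdot\nabla\varphi+\tfrac{n-2-\beta}{2}\varphi\,\big|^2\,dx\ge 0$, integrating the cross term by parts via ${\rm div}\big(|x|^{-\beta-2}x\big)=(n-\beta-2)|x|^{-\beta-2}$, and discarding $|x\cdot\nabla\varphi|^2\le|x|^2|\nabla\varphi|^2$.

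The second building block, which carries the real analytic content, is the \emph{weighted Rellich inequality}: for $\gamma$ in a suitable range --- in particular whenever $n+\gamma>0$ and $n-\gamma-4>0$ --- and every $\varphi\in C^\infty_c(\R^n)$,
\[
\int_{\R^n}\frac{|\Delta\varphi|^2}{|x|^\gamma}\,dx\ \ge\ \frac{(n+\gamma)^2(n-\gamma-4)^2}{16}\int_{\R^n}\frac{\varphi^2}{|x|^{\gamma+4}}\,dx .
\]
One proves it either through the spherical--harmonic decomposition $\varphi=\sum_{j\ge0}\varphi_j(r)Y_j(\theta)$, which turns $-\Delta$ into $-\varphi_j''-\tfrac{n-1}{r}\varphi_j'+\tfrac{\lambda_j}{r^2}\varphi_j$ with $\lambda_j=j(j+n-2)$, reduces everything to one--dimensional weighted estimates in $r$, and checks that the worst sector --- which, in the relevant range of $\gamma$, is $j=0$ --- yields exactly the asserted constant; or, more economically, by applying the weighted Hardy inequality to $\nabla\varphi$ together with an integration--by--parts identity for $\int|x|^{-\gamma}(\Delta\varphi)^2$. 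The optimality of $B_{n,0}$ is obtained from the density of $C^\infty_c(\R^n)$ and a standard cut--off and scaling argument centred on $|x|^{-(n-2)/2}$.

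Finally one iterates. For (i) one applies the weighted Rellich inequality successively to $\Delta^{k-1}\varphi,\Delta^{k-2}\varphi,\dots,\varphi$ with weights $\gamma=0,4,\dots,4(k-1)$; the $j$-th application contributes the factor $\tfrac1{16}(n+4j-4)^2(n-4j)^2$, and the hypothesis $n>4k$ makes each of these strictly positive, so the chain of inequalities is legitimate, and its product $\prod_{j=1}^{k}\tfrac1{16}(n+4j-4)^2(n-4j)^2$ equals $A_{n,k}$ after the reindexing $i=k-j$. For (ii) one first applies the weighted Hardy inequality with $\beta=0$ to $\Delta^k\varphi$, gaining the factor $\big(\tfrac{n-2}{2}\big)^2$ and the weight $|x|^{-2}$, then iterates the weighted Rellich inequality with weights $\gamma=2,6,\dots,4k-2$; under $n>4k+2$ all factors are positive and the resulting constant $\big(\tfrac{n-2}{2}\big)^2\prod_{j=1}^{k}\tfrac1{16}(n+4j-2)^2(n-4j-2)^2$ is precisely $B_{n,k}$. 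The only genuine obstacle is the one--step weighted Rellich inequality with the sharp constant valid on a $\gamma$-range wide enough to cover all the weights used above; the completion of squares in the Hardy step and the telescoping computation in the iteration are routine.
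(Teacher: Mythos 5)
The paper offers no proof of this proposition --- it is quoted verbatim from \cite[Theorem 3.3]{m} --- and your proposal likewise defers to that reference while sketching the standard iteration of one-step weighted Hardy and Rellich inequalities; your bookkeeping of the weights $\gamma=0,4,\dots,4(k-1)$ for (i), and $\gamma=2,6,\dots,4k-2$ after a single Hardy step for (ii), reproduces $A_{n,k}$ and $B_{n,k}$ exactly, so the sketch is correct and consistent with how the paper uses the result. The one point worth making explicit is the validity of the one-step weighted Rellich inequality with constant $\tfrac{1}{16}(n+\gamma)^2(n-\gamma-4)^2$: for $\gamma\ge 0$ and $n>\gamma+4$ one has, in the notation of Proposition \ref{p:Hardy-Rellich}, $\gamma_{n,-\gamma}=\tfrac{(n+\gamma)(n-\gamma-4)}{4}>0$, so the minimum in \eqref{mu-n-alpha} is attained at $j=0$ and the radial sector is indeed the worst one; since every weight in your iteration satisfies these conditions under $n>4k$, resp.\ $n>4k+2$, the chain of inequalities closes as claimed.
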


The two inequalities stated in Proposition \ref{p:mitidieri} are
valid only for sufficiently large dimensions.

In order to obtain Hardy-Rellich type inequalities also in
low dimension we iterate inequality (0.6) in \cite{calmus} to
prove the following

\begin{proposition} \label{p:Hardy-Rellich}
Let $n\ge 2$ and let $k$ be a positive integer. Suppose that
$n\neq 2\ell$ for any $\ell\in \{1,\dots,2k\}$. For any $n\ge 2$
and any $\alpha\in \R$ define

\begin{equation} \label{mu-n-alpha}
\mu_{n,\alpha}:=\min_{j\in
\N\cup\{0\}}|\gamma_{n,\alpha}+j(n-2+j)|^2
\end{equation}
and
\begin{equation*}
\gamma_{n,\alpha}:=\left(\frac{n-2}2\right)^2-\left(\frac{\alpha-2}2\right)^2
\, .
\end{equation*}
Then we have
\begin{align} \label{eq:Hardy-Rellich-1}
\left(\prod_{i=1}^k \mu_{n,\alpha_i}  \right)  \int_{\R^n}
\frac{\varphi^2}{|x|^{4k}} \, dx\le \int_{\R^n}
|\Delta^k\varphi|^2 dx \qquad \text{for any } \varphi\in
C^\infty_c(\R^n\setminus\{0\})
\end{align}
and
\begin{align} \label{eq:Hardy-Rellich-2}
\left(\frac{n-2}2\right)^2 \left(\prod_{i=1}^k \mu_{n,\alpha_i}
\right) \int_{\R^n} \frac{\varphi^2}{|x|^{4k+2}} \, dx\le
\int_{\R^n} |\nabla(\Delta^k\varphi)|^2 dx \qquad \text{for any }
\varphi\in C^\infty_c(\R^n\setminus\{0\})
\end{align}
where we put $\alpha_i=-4k+4i$ for any $i\in \{1,\dots,k\}$.
\end{proposition}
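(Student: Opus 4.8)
The plan is to prove Proposition \ref{p:Hardy-Rellich} by iterating the two-dimensional-parameter inequality (0.6) of \cite{calmus}, which (after the Emden--Fowler type rescaling used there) asserts that for every $\alpha\in\R$ with $n\neq\alpha$ one has
\begin{equation*}
\mu_{n,\alpha}\int_{\R^n}\frac{\varphi^2}{|x|^{\alpha}}\,dx\le \int_{\R^n}\frac{|\Delta\varphi|^2}{|x|^{\alpha-4}}\,dx\qquad\text{for any }\varphi\in C^\infty_c(\R^n\setminus\{0\}),
\end{equation*}
with the constant $\mu_{n,\alpha}$ defined by \eqref{mu-n-alpha}. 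I would begin by recalling this weighted inequality precisely in the normalization that matches our definition of $\mu_{n,\alpha}$ and $\gamma_{n,\alpha}$, pointing out that the hypothesis $n\neq 2\ell$ for $\ell\in\{1,\dots,2k\}$ is exactly what guarantees the relevant weighted inequalities are non-degenerate at each step (since the exponents $\alpha_i=-4k+4i$ produce shifted dimensions of the form $n-\alpha_i$, and one must avoid $\mu_{n,\alpha_i}=0$, which happens precisely at the excluded even values).

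Next I would set up the induction on $k$. For $k=1$ inequality \eqref{eq:Hardy-Rellich-1} is the case $\alpha=\alpha_1=0$ of the quoted weighted inequality, namely $\mu_{n,0}\int \varphi^2/|x|^{4}\le \int|\Delta\varphi|^2$. For the inductive step, apply the weighted inequality with $\alpha=\alpha_1=-4k+4$ to the function $\Delta^{k-1}\varphi$ (which lies in $C^\infty_c(\R^n\setminus\{0\})$ whenever $\varphi$ does), obtaining
\begin{equation*}
\mu_{n,\alpha_1}\int_{\R^n}\frac{|\Delta^{k-1}\varphi|^2}{|x|^{-4k+4}}\,dx\le \int_{\R^n}|\Delta^{k}\varphi|^2\,dx,
\end{equation*}
and then bound the left-hand side from below by iterating $k-1$ further weighted inequalities, each time stripping one power of $\Delta$ off $\Delta^{k-1-j}\varphi$ with the weight exponent $\alpha$ chosen so that the powers of $|x|$ telescope correctly down to $|x|^{-4k}$. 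A bookkeeping check shows that at the $j$-th stage the natural choice is $\alpha=\alpha_{j+1}=-4k+4(j+1)$, so that the accumulated constant is $\prod_{i=1}^k\mu_{n,\alpha_i}$ and the final weight is exactly $|x|^{-4k}$; this yields \eqref{eq:Hardy-Rellich-1}. For \eqref{eq:Hardy-Rellich-2} I would run the same iteration but start from the first-order weighted Hardy inequality $\bigl(\tfrac{n-2}2\bigr)^2\int \psi^2/|x|^{2}\le\int|\nabla\psi|^2$ applied to $\psi=\Delta^k\varphi$ with the appropriate weight, which accounts for the extra factor $\bigl(\tfrac{n-2}{2}\bigr)^2$ and the weight $|x|^{-(4k+2)}$, and then insert the already-proved chain of $k$ Rellich steps.

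The main obstacle is purely the indexing: one must verify that the shifted-dimension parameters produced by the successive weighted inequalities line up so that the constants are exactly $\mu_{n,\alpha_i}$ with $\alpha_i=-4k+4i$ and not some permuted or off-by-one version, and that every intermediate weighted inequality is legitimate (i.e. its sharp constant is strictly positive) under the stated restriction on $n$. A secondary technical point is justifying that the iteration may be carried out on $C^\infty_c(\R^n\setminus\{0\})$ without boundary contributions, which is immediate since every function involved has compact support away from the origin, so all integrations by parts hidden in \cite{calmus} are valid. Once the telescoping of the weights is checked, the rest is a finite product of applications of a single known inequality and requires no further analysis.
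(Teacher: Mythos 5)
Your overall strategy --- iterate inequality (0.6) of \cite{calmus} $k$ times and combine with a Hardy step for the gradient version --- is exactly what the paper indicates (it gives no more detail than ``iterate (0.6) in \cite{calmus}''), so the approach is the intended one. But your bookkeeping fails in a way that is not merely cosmetic. For \eqref{eq:Hardy-Rellich-1}: inequality (0.6) with weight $|x|^{\alpha}$ applied to $\psi=\Delta^{k-1}\varphi$ reads $\mu_{n,\alpha}\int|x|^{\alpha-4}|\Delta^{k-1}\varphi|^2\le\int|x|^{\alpha}|\Delta^{k}\varphi|^2$, so to have the unweighted right-hand side you must take $\alpha=\alpha_k=0$, not $\alpha=\alpha_1=-4k+4$. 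Your displayed first step, with weight $|x|^{4k-4}$ on the left and constant $\mu_{n,\alpha_1}$, is not an instance of (0.6), and the chain with $\alpha=\alpha_{j+1}$ at stage $j$ does not telescope: the output weight $\alpha_{j+1}-4$ is not the next input weight $\alpha_{j+2}$. The correct order is descending, $\alpha_k,\alpha_{k-1},\dots,\alpha_1$, i.e.\ weights $0,-4,\dots,-4k+4$, with $\alpha_{k-j}$ used at stage $j$. Since the final constant is the symmetric product $\prod_{i=1}^k\mu_{n,\alpha_i}$, this permutation does not change the conclusion, but the derivation as written does not go through until the order is fixed.

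The more serious gap is \eqref{eq:Hardy-Rellich-2}. If you apply the classical Hardy inequality first, to $\psi=\Delta^k\varphi$, you are left with the weight $|x|^{-2}$, and the subsequent $k$ applications of (0.6) are then forced to use the weights $-2,-6,\dots,-4k+2$; you cannot ``insert the already-proved chain'', whose weights start at $0$. The constant this produces is $\left(\frac{n-2}{2}\right)^2\prod_{i=1}^k\mu_{n,-4i+2}$, which is \emph{not} the stated $\left(\frac{n-2}{2}\right)^2\prod_{i=1}^k\mu_{n,\alpha_i}$, because $\mu_{n,\alpha}$ depends on $\alpha$ through $(\alpha-2)^2$ and $(-4i+2-2)^2\neq(\alpha_i-2)^2$ in general. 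Worse, $\gamma_{n,-4k+2}=0$ when $n=4k+2$, a dimension permitted by the hypotheses, so your first Rellich step has constant $\mu_{n,-4k+2}=0$ there and the chain degenerates. The alternative ordering --- run the weights $0,-4,\dots,-4k+4$ on each component $\partial_j\varphi$ of the gradient and finish with a weighted Hardy inequality at weight $|x|^{-4k}$ --- does recover the product $\prod_{i=1}^k\mu_{n,\alpha_i}$, but replaces the prefactor $\left(\frac{n-2}{2}\right)^2$ by $\left(\frac{n-2-4k}{2}\right)^2$, which also vanishes at $n=4k+2$. So either ordering proves a correct Hardy--Rellich inequality, but neither yields the inequality with the constants displayed in \eqref{eq:Hardy-Rellich-2}; you need to state explicitly which constant your iteration actually produces, or justify the stated one by a different argument.
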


In Proposition \ref{p:Hardy-Rellich} we excluded the case $n=1$
since we recall that in such a case the following inequalities
hold
\begin{proposition} \label{p:HR-1d} Let $\alpha\in \R$. Then for any
$\varphi\in C^\infty_c(\R\setminus\{0\})$ we have
\begin{equation} \label{eq:H-1d-0}
\frac{(\alpha-1)^2}{4}\int_{\R} |x|^{\alpha-2} \varphi^2 \, dx\le
\int_{\R} |x|^{\alpha} |\varphi'|^2 dx \, .
\end{equation}
Applying \eqref{eq:H-1d-0} twice we also obtain
\begin{equation} \label{eq:HR-1d}
\frac{(\alpha-1)^2(\alpha-3)^2}{16}\int_{\R} |x|^{\alpha-4}
\varphi^2 \, dx\le \int_{\R} |x|^{\alpha} |\varphi''|^2 dx  \qquad
\text{for any } \varphi\in C^\infty_c(\R\setminus\{0\})\, .
\end{equation}
Moreover iterating \eqref{eq:HR-1d} and using the classical Hardy
inequality in dimension $n=1$, for any integer $k\ge 0$, we obtain
\begin{equation} \label{eq:HR-1d-ter}
2^{-4k-2} \left(\prod_{i=0}^{k-1} (4i-3)^2(4i-5)^2\right) \int_\R
\frac{\varphi^2}{|x|^{4k+2}} \, dx\le \int_\R |\varphi^{(2k+1)}|^2
dx \qquad \text{for any } \varphi\in C^\infty_c(\R\setminus\{0\})
\end{equation}
with ${\ds \prod_{i=0}^{k-1} (4i-3)^2(4i-5)^2}=0$ when $k=0$.
\end{proposition}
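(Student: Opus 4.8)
The plan is to prove the elementary weighted Hardy inequality \eqref{eq:H-1d-0} by hand and then to derive \eqref{eq:HR-1d} and \eqref{eq:HR-1d-ter} by iterating it, exactly as anticipated in the statement.

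\emph{Step 1: the inequality \eqref{eq:H-1d-0}.} If $\alpha=1$ the left-hand side vanishes, so I may assume $\alpha\neq 1$. Since the support of $\varphi$ is a compact subset of $(-\infty,0)\cup(0,\infty)$, it suffices to argue on each half-line separately and add. On $(0,\infty)$ I would write $x^{\alpha-2}=\tfrac{1}{\alpha-1}(x^{\alpha-1})'$ and integrate by parts; the boundary contributions at $0$ and at $+\infty$ vanish because $\varphi$ is compactly supported away from the origin, so
\[
\int_0^{\infty} x^{\alpha-2}\varphi^2\,dx=-\frac{2}{\alpha-1}\int_0^{\infty} x^{\alpha-1}\varphi\,\varphi'\,dx .
\]
Estimating the right-hand side by Cauchy--Schwarz with the splitting $x^{\alpha-1}=x^{(\alpha-2)/2}\cdot x^{\alpha/2}$, and then dividing by $\big(\int_0^\infty x^{\alpha-2}\varphi^2\big)^{1/2}$ (which I may assume nonzero, else the claim is trivial), gives $\int_0^\infty x^{\alpha-2}\varphi^2\le \tfrac{4}{(\alpha-1)^2}\int_0^\infty x^{\alpha}|\varphi'|^2$. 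The identical computation on $(-\infty,0)$ with $|x|$ in place of $x$, followed by summation, yields \eqref{eq:H-1d-0}.

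\emph{Step 2: the higher-order inequalities.} For \eqref{eq:HR-1d} I would apply \eqref{eq:H-1d-0} first to $\varphi'\in C^\infty_c(\R\setminus\{0\})$ with exponent $\alpha$ and then to $\varphi$ with exponent $\alpha-2$, and chain the two bounds; this is legitimate for every $\alpha$ (if $\alpha\in\{1,3\}$ the constant is $0$ and there is nothing to prove). For \eqref{eq:HR-1d-ter}, fix $k\ge 1$ (the case $k=0$ is just \eqref{eq:H-1d-0} with $\alpha=0$). All of $\varphi,\varphi',\dots,\varphi^{(2k)}$ lie in $C^\infty_c(\R\setminus\{0\})$, so I would first strip one derivative by applying \eqref{eq:H-1d-0} with $\alpha=0$ to $\varphi^{(2k)}$, and then apply \eqref{eq:HR-1d} successively, for $j=1,\dots,k$, with exponent $\alpha=2-4j$ to $\varphi^{(2k-2j)}$. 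Each such step contributes a factor $\tfrac{(4j-1)^2(4j+1)^2}{16}$, adds two derivatives, and lowers the weight from $|x|^{-(4j-2)}$ to $|x|^{-(4j+2)}$, so that the right-hand side of the $j$-th inequality coincides with the left-hand side of the $(j{+}1)$-st; chaining the $k+1$ inequalities produces a constant of the form $2^{-4k-2}\prod_{j=1}^{k}(4j-1)^2(4j+1)^2$ in front of $\int_\R \varphi^2/|x|^{4k+2}\,dx$, which is \eqref{eq:HR-1d-ter} after reindexing the product.

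\emph{Expected difficulty.} I do not anticipate a genuine obstacle: the substance is the one-line integration by parts of Step 1, and the rest is bookkeeping. The two points that need care are (a) that the boundary terms in Step 1 really do vanish --- which is precisely why the inequalities are stated over $C^\infty_c(\R\setminus\{0\})$ rather than over $C^\infty_c(\R)$, since the weight $|x|^{\alpha}$ is singular (or degenerate) at the origin --- and (b) that the exponents appearing in the chain for \eqref{eq:HR-1d-ter}, namely $\alpha=0$ together with $\alpha=-2,-6,\dots,2-4k$, are never equal to $1$ or $3$, so that every invoked instance of \eqref{eq:H-1d-0} and \eqref{eq:HR-1d} is nontrivial and the telescoping of the constants is valid.
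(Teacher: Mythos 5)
Your argument is correct and is essentially the paper's: the authors likewise prove only \eqref{eq:H-1d-0}, via the identity $\int_\R |x|^{\alpha-2}x\,\varphi(x)\varphi'(x)\,dx=-\tfrac{\alpha-1}{2}\int_\R|x|^{\alpha-2}\varphi^2\,dx$ followed by Cauchy--Schwarz, and they leave the iterations to the reader exactly as you carry them out. One caveat on the bookkeeping: your chain actually produces the constant $2^{-4k-2}\prod_{i=0}^{k-1}(4i+3)^2(4i+5)^2$, which does \emph{not} reindex to the paper's $\prod_{i=0}^{k-1}(4i-3)^2(4i-5)^2$ once $k\ge 2$ (for $k=2$ you get $225\cdot 3969$ versus the paper's $225$); since your constant dominates the printed one, \eqref{eq:HR-1d-ter} as stated still follows, and the sign in the paper's product looks like a typo rather than a defect in your proof.
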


We observe that the constant $\prod_{i=1}^k \mu_{n,\alpha_i}$
appearing in \eqref{eq:Hardy-Rellich-1}-\eqref{eq:Hardy-Rellich-2}
is strictly positive under the assumptions of Proposition
\ref{p:Hardy-Rellich}. On the other hand, if $n=2\ell$ for some
$\ell\in \{1,\dots,2k\}$ then $ \prod_{i=1}^k \mu_{n,\alpha_i}=0$
making estimates \eqref{eq:Hardy-Rellich-1} and
\eqref{eq:Hardy-Rellich-2} trivial. In order to show this, it is
sufficient to observe that $\mu_{n,\alpha_i}=0$ if and only if
$1\le i\le \min\{k,k+1-\frac \ell 2\}$; moreover the minimum in
\eqref{mu-n-alpha} is achieved for $j=2k-\ell-2(i-1)$.

For the above mentioned reasons, we need a new Hardy-Rellich type
inequality which is meaningful also in dimensions satisfying
$n=2\ell$ for some $\ell\in \{1,\dots,2k\}$.

In the rest of the paper we denote by $B_R$ the ball in $\R^n$ of
radius $R>0$ centered at the origin. We start with the following
second order inequality with logarithmic weights:

\begin{theorem} \label{t:preliminar-1}
Let $n\ge 2$, $\alpha\le 0$ and $\beta\ge 0$. Let $\mu_{n,\alpha}$
and $\gamma_{n,\alpha}$ be as in Proposition \ref{p:Hardy-Rellich}
and suppose that $\mu_{n,\alpha}=0$. Then there exists $R>1$ large
enough such that
\begin{equation*}
2\bar \gamma_{n,\alpha} \left(\frac{\beta+1}2\right)^2
\int_{\R^n\setminus \overline B_R}
\frac{|x|^{\alpha-4}\varphi^2}{(\log|x|)^{\beta+2}} \, dx \le
\int_{\R^n\setminus \overline B_R} \frac{|x|^{\alpha} |\Delta
\varphi|^2}{(\log|x|)^\beta} \, dx \qquad \text{for any }
\varphi\in C^\infty_c(\R^n\setminus \overline B_R)
\end{equation*}
with
$\bar\gamma_{n,\alpha}:=\left(\frac{n-2}2\right)^2+\left(\frac{\alpha-2}2\right)^2$.
\end{theorem}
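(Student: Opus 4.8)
The plan is to combine an expansion in spherical harmonics with an Emden--Fowler (logarithmic) change of variables, reducing the statement to a family of one--dimensional weighted inequalities that can be closed by elementary integrations by parts together with the one--dimensional Hardy inequality
\[
\frac{(\beta+1)^2}{4}\int_{t_0}^{\infty}\frac{h^2}{t^{\beta+2}}\,dt\le\int_{t_0}^{\infty}\frac{\dot h^2}{t^{\beta}}\,dt\qquad\text{for all }h\in C^\infty_c((t_0,\infty)),\ t_0>0,\ \beta\ge 0,
\]
which I would establish first in one line by writing $t^{-\beta-2}=\tfrac{-1}{\beta+1}(t^{-\beta-1})'$, integrating by parts, and using Cauchy--Schwarz.

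Expanding $\varphi\in C^\infty_c(\R^n\setminus\overline B_R)$ in spherical harmonics, $\varphi(r\omega)=\sum_{k\ge 0}\sum_{l}\varphi_{k,l}(r)Y_{k,l}(\omega)$ with $-\Delta_{\SN}Y_{k,l}=e_kY_{k,l}$, $e_k:=k(n-2+k)$, orthonormality of $\{Y_{k,l}\}$ decouples both sides of the claimed inequality into sums over the modes; hence it suffices to prove, for every $k$ and every $g\in C^\infty_c((R,\infty))$, the radial inequality obtained by replacing $|\Delta\varphi|$ with $|g''+\tfrac{n-1}{r}g'-\tfrac{e_k}{r^2}g|$. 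In that inequality I would substitute $r=e^t$ and $g(r)=r^{(4-n-\alpha)/2}h(\log r)$, $h\in C^\infty_c((t_0,\infty))$, $t_0:=\log R>0$ --- the exponent $(4-n-\alpha)/2$ chosen precisely so that the power weight $r^{\alpha+n-1}$ is cancelled; a direct computation shows the radial operator becomes, up to a power of $r$, $M_kh:=\ddot h-(\alpha-2)\dot h-c_kh$ with $c_k:=\gamma_{n,\alpha}+e_k$, and the inequality takes the form
\[
2\bar\gamma_{n,\alpha}\,\frac{(\beta+1)^2}{4}\int_{t_0}^{\infty}\frac{h^2}{t^{\beta+2}}\,dt\le\int_{t_0}^{\infty}\frac{|M_kh|^2}{t^{\beta}}\,dt.
\]

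The heart of the matter is the expansion of the right-hand side: integrating by parts (all boundary terms vanish since $h\in C^\infty_c((t_0,\infty))$, $t_0>0$) one obtains
\[
\int_{t_0}^{\infty}\frac{|M_kh|^2}{t^{\beta}}=\int\frac{\ddot h^2}{t^\beta}-(\alpha-2)\beta\int\frac{\dot h^2}{t^{\beta+1}}+\bigl[(\alpha-2)^2+2c_k\bigr]\int\frac{\dot h^2}{t^\beta}+c_k^2\int\frac{h^2}{t^\beta}-c_k\beta(\beta+1)\int\frac{h^2}{t^{\beta+2}}+c_k(\alpha-2)\beta\int\frac{h^2}{t^{\beta+1}}.
\]
Here I use, first, the algebraic identity $(\alpha-2)^2+2c_k=2\bar\gamma_{n,\alpha}+2e_k$ (immediate from $\gamma_{n,\alpha}=\tfrac{1}{4}[(n-2)^2-(\alpha-2)^2]$), and second, that the hypothesis $\mu_{n,\alpha}=0$ forces $c_{j_0}=\gamma_{n,\alpha}+e_{j_0}=0$ for a unique index $j_0\ge 0$ (uniqueness because $k\mapsto e_k$ is strictly increasing for $n\ge 2$), so that $c_k=e_k-e_{j_0}\le 0$ for $k\le j_0$ while $c_k\ge c_{j_0+1}:=e_{j_0+1}-e_{j_0}>0$ for $k>j_0$. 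Since $\alpha\le 0$ and $\beta\ge 0$ give $-(\alpha-2)\beta\ge 0$, I drop the nonnegative terms $\int\ddot h^2 t^{-\beta}$ and $-(\alpha-2)\beta\int\dot h^2 t^{-\beta-1}$ and bound $\bigl[(\alpha-2)^2+2c_k\bigr]\int\dot h^2 t^{-\beta}\ge(2\bar\gamma_{n,\alpha}+2e_k)\tfrac{(\beta+1)^2}{4}\int h^2 t^{-\beta-2}$ via the one-dimensional Hardy inequality above; subtracting $2\bar\gamma_{n,\alpha}\tfrac{(\beta+1)^2}{4}\int h^2 t^{-\beta-2}$, it remains to check
\[
2e_k\,\frac{(\beta+1)^2}{4}\int\frac{h^2}{t^{\beta+2}}+c_k^2\int\frac{h^2}{t^\beta}-c_k\beta(\beta+1)\int\frac{h^2}{t^{\beta+2}}+c_k(\alpha-2)\beta\int\frac{h^2}{t^{\beta+1}}\ge 0.
\]
For $k\le j_0$ every term is nonnegative ($c_k\le 0$ and $\alpha-2<0$), so the inequality holds for any $R>1$. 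For $k>j_0$ one has $c_k>0$; using $t\ge t_0$ to bound $\int h^2 t^{-\beta-2}\le t_0^{-2}\int h^2 t^{-\beta}$ and $\int h^2 t^{-\beta-1}\le t_0^{-1}\int h^2 t^{-\beta}$, the left-hand side is at least $c_k\bigl(c_k-\beta(\beta+1)t_0^{-2}-(2-\alpha)\beta t_0^{-1}\bigr)\int h^2 t^{-\beta}$, which is nonnegative as soon as $\beta(\beta+1)t_0^{-2}+(2-\alpha)\beta t_0^{-1}\le c_{j_0+1}$. Since $c_{j_0+1}>0$ depends only on $n$ and $\alpha$, this holds once $t_0=\log R$ is large (depending only on $n,\alpha,\beta$), uniformly in $k>j_0$; this fixes the required $R$.

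I expect the main obstacle to be precisely this uniformity over the spherical modes: the resonance $\mu_{n,\alpha}=0$ is caused by the single mode $k=j_0$, the modes $k<j_0$ are harmless, but the modes $k>j_0$ generate an error term of the ``wrong'' sign that can only be absorbed by restricting to the exterior of a large ball --- which is exactly why the statement is an exterior inequality with $R$ large, not a global one. A secondary, purely computational difficulty is carrying out the Emden transformation and the several integrations by parts accurately enough that the surviving constant is exactly $2\bar\gamma_{n,\alpha}(\beta+1)^2/4$ rather than something smaller.
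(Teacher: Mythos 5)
Your proof is correct, and it rests on the same pillars as the paper's argument --- the Emden change of variables $g(r)=r^{(4-n-\alpha)/2}h(\log r)$, the expansion of the weighted $L^2$ norm of the transformed operator by integration by parts, the one-dimensional weighted Hardy inequality with constant $\left(\frac{\beta+1}{2}\right)^2$, the identity $(\alpha-2)^2+2\gamma_{n,\alpha}=2\bar\gamma_{n,\alpha}$, and the absorption of the remaining error terms by taking $t_0=\log R$ large --- but it organizes the angular variable genuinely differently. The paper never diagonalizes: it works with the full transplanted function $w=T_\alpha\varphi$ on the cylinder $\R\times\SN$ (the identity \eqref{eq:stimona} of Lemma \ref{l:preliminar-0}), controls the terms $\int_{\SN}|\nabla_{\SN}w|^2\,dS$ through operator inequalities for $L=-\Delta_{\SN}+\gamma_{n,\alpha}$ (Lemmas \ref{l:sfera-1}, \ref{l:sfera-1-bis} and \ref{l:sfera-2}, which require splitting off the resonant eigenspace $V$ and introducing $\bar\mu_{n,\alpha}$), and then exploits a cancellation between the remainder $|\gamma_{n,\alpha}|\int_{\SN}\psi^2$ in Lemma \ref{l:sfera-2}\,(ii) and the positive terms $-\beta(\beta+1)\gamma_{n,\alpha}|t|^{-2}w^2$ and $(\alpha-2)\beta\gamma_{n,\alpha}|t|^{-1}w^2$ of \eqref{eq:stimona}; the surviving $|Lw|^2$ errors carry weights $|t|^{-1-\beta}$ and $|t|^{-2-\beta}$ and are absorbed into $\int|t|^{-\beta}|Lw|^2$ for $R$ large. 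Your mode-by-mode version buys transparency: the resonance caused by $\mu_{n,\alpha}=0$ is isolated in the single mode $k=j_0$ with $c_{j_0}=0$, which is then handled trivially (every remaining term is nonnegative), the modes $k<j_0$ are likewise free, and only the modes $k>j_0$ need $t_0$ large, with uniformity guaranteed by the spectral gap $c_k\ge c_{j_0+1}=n-1+2j_0>0$; this replaces the paper's three sphere lemmas and the cancellation trick by elementary sign considerations, at the routine cost of justifying the Parseval decoupling of both sides and summing the mode-wise inequalities (legitimate here since all terms are nonnegative). Both routes yield exactly the constant $2\bar\gamma_{n,\alpha}\left(\frac{\beta+1}{2}\right)^2$, and your verification that $2\bar\gamma_{n,\alpha}+2e_k\ge 0$ (needed to multiply the Hardy inequality) is covered by $\alpha\le 0\Rightarrow\bar\gamma_{n,\alpha}\ge 1$.
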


Iterating Theorem \ref{t:preliminar-1} we obtain the following

\begin{theorem} \label{t:Hardy-Rellich-2}
Let $k$ be a positive integer and let $n=2\ell$ for some $\ell\in
\{1,\dots,2k\}$. Let $\bar\gamma_{n,\alpha}$ be as in Theorem
\ref{t:preliminar-1}. Then there exists $R>1$ large enough such
that
\begin{equation} \label{eq:ineq-1}
2^k \left(\prod_{i=0}^{k-1} \bar \gamma_{n,-4i} \right)\cdot
\left(\prod_{i=0}^{k-1} \Big(\frac{2i+1}2\Big)^2 \right)
 \int_{\R^n\setminus \overline B_R}
\frac{\varphi^2}{|x|^{4k}(\log|x|)^{2k}} \, dx \le
\int_{\R^n\setminus \overline B_R} |\Delta^k \varphi|^2 dx
\end{equation}
and
\begin{equation} \label{eq:ineq-2}
2^{k-2} \left(\prod_{i=0}^{k-1} \bar \gamma_{n,-4i-2} \right)\cdot
\left(\prod_{i=0}^{k-1} \Big(\frac{2i+3}2\Big)^2 \right)
 \int_{\R^n\setminus \overline B_R}
\frac{\varphi^2}{|x|^{4k+2}(\log|x|)^{2k+2}} \, dx \le
\int_{\R^n\setminus \overline B_R} |\nabla(\Delta^k \varphi)|^2 dx
\end{equation}
for any $\varphi\in C^\infty_c(\R^n\setminus \overline B_R)$.
\end{theorem}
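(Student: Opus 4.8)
I plan to obtain \eqref{eq:ineq-1} and \eqref{eq:ineq-2} by ``peeling off'' one copy of $\Delta$ at a time: each peeling will be a second order Hardy--Rellich inequality with logarithmic weights which lowers the exponent of $|x|$ by $4$ and raises the exponent of $\log|x|$ by $2$, so that after $k$ (respectively $k+1$) peelings one is left exactly with the zero order term appearing on the left of \eqref{eq:ineq-1} and \eqref{eq:ineq-2}. Note that if $\varphi\in C^\infty_c(\R^n\setminus\overline B_R)$ then $\Delta^j\varphi\in C^\infty_c(\R^n\setminus\overline B_R)$ for every $j$, so the intermediate test functions are admissible; moreover an inequality valid on $\R^n\setminus\overline B_{R'}$ with $R'\le R$ is a fortiori valid on $\R^n\setminus\overline B_R$ for functions supported there, so in the end it suffices to take $R$ larger than each of the finitely many radii produced below, and the same $R$ will serve for both \eqref{eq:ineq-1} and \eqref{eq:ineq-2}.

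The basic brick is the following: for each $i\in\{0,\dots,k-1\}$ and each $\beta\ge 0$ there is $R=R(n,i,\beta)>1$ such that
\[
\int_{\R^n\setminus\overline B_R}\frac{|x|^{-4i}\,|\Delta w|^2}{(\log|x|)^{\beta}}\,dx\ \ge\ 2\,\bar\gamma_{n,-4i}\Big(\tfrac{\beta+1}{2}\Big)^2\int_{\R^n\setminus\overline B_R}\frac{|x|^{-4i-4}\,w^2}{(\log|x|)^{\beta+2}}\,dx \qquad\forall\,w\in C^\infty_c(\R^n\setminus\overline B_R),
\]
together with the same statement with $-4i$ replaced by $-4i-2$. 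Since $n=2\ell$, one has $\mu_{n,-4i}=0$ exactly when $-4i\le 4-n$ (equivalently $i\ge\ell/2-1$), and this dichotomy drives the proof of the brick. If $\mu_{n,-4i}=0$, the brick is precisely Theorem \ref{t:preliminar-1} applied with $\alpha=-4i\le 0$, recalling $\bar\gamma_{n,-4i}=(\tfrac{n-2}{2})^2+(\tfrac{-4i-2}{2})^2$. If instead $\mu_{n,-4i}>0$ --- which occurs only at the first $\lceil\ell/2\rceil-1$ peelings, hence only when $\ell\ge 3$, and always leaves at least one peeling with $\mu_{n,-4i}=0$ since $\ell\le 2k$ --- one starts from the power--weighted Rellich inequality underlying Proposition \ref{p:Hardy-Rellich} (inequality (0.6) of \cite{calmus}),
\[
\int_{\R^n\setminus\overline B_R}|x|^{-4i}|\Delta w|^2\,dx\ \ge\ \mu_{n,-4i}\int_{\R^n\setminus\overline B_R}|x|^{-4i-4}w^2\,dx,
\]
transfers the slowly varying factor $(\log|x|)^{-\beta}$ across it at the expense of a multiplicative error tending to $1$ as $R\to\infty$ (because $\nabla(\log|x|)^{-\beta/2}=o(1/|x|)$, the lower order remainders being controlled by standard weighted Hardy--Rellich interpolation), and finally uses $(\log|x|)^{-\beta}\ge(\log R)^2(\log|x|)^{-\beta-2}$ on $\R^n\setminus\overline B_R$ together with $\mu_{n,-4i}>0$ to absorb the remaining constant by enlarging $R$. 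The case $-4i-2$ is entirely analogous, with $\mu_{n,-4i-2}=0$ iff $-4i-2\le 4-n$.

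Granting the bricks, \eqref{eq:ineq-1} follows by applying them in succession with $(\text{exponent of }|x|,\,\beta)=(0,0),(-4,2),\dots,(-4(k-1),2(k-1))$ to the test functions $\Delta^{k-1}\varphi,\Delta^{k-2}\varphi,\dots,\varphi$ respectively; the product of the constants $2\bar\gamma_{n,-4(p-1)}(\tfrac{2p-1}{2})^2$ over $p=1,\dots,k$ equals $2^k\prod_{i=0}^{k-1}\bar\gamma_{n,-4i}\cdot\prod_{i=0}^{k-1}(\tfrac{2i+1}{2})^2$, which is the constant in \eqref{eq:ineq-1}. For \eqref{eq:ineq-2} one first peels the single gradient, using
\[
\int_{\R^n\setminus\overline B_R}|\nabla\psi|^2\,dx\ \ge\ \tfrac14\int_{\R^n\setminus\overline B_R}\frac{\psi^2}{|x|^2(\log|x|)^2}\,dx,
\]
which for $n\ge 3$ comes from the classical Hardy inequality combined with the same $(\log R)^2$ device, and for $n=2$ (that is $\ell=1$) is the sharp logarithmic Hardy inequality; one then applies the bricks with $(\text{exponent},\beta)=(-2,2),(-6,4),\dots,(-4(k-1)-2,2k)$ to $\Delta^{k}\varphi,\dots,\Delta\varphi$, and $\tfrac14\cdot\prod_{p=1}^k 2\bar\gamma_{n,-4(p-1)-2}(\tfrac{2p+1}{2})^2=2^{k-2}\prod_{i=0}^{k-1}\bar\gamma_{n,-4i-2}\cdot\prod_{i=0}^{k-1}(\tfrac{2i+3}{2})^2$, as in \eqref{eq:ineq-2}. (Since all intermediate inequalities are ``$\ge$'', it does no harm that the gradient peeling and the $\mu_{n,\alpha}>0$ bricks may produce larger constants for $R$ large.)

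The main obstacle is the brick in the case $\mu_{n,-4i}>0$: there Theorem \ref{t:preliminar-1} is unavailable, and one must manufacture a logarithmically weighted second order inequality out of a non-weighted one with the correct constant. What makes this possible --- and this is exactly why the statement only asserts the estimates on the complement of a sufficiently large ball --- is that on $\R^n\setminus\overline B_R$ the passage from $(\log|x|)^{-\beta}$ to $(\log|x|)^{-\beta-2}$ costs merely a factor $(\log R)^{-2}$, which becomes negligible once the strictly positive constant $\mu_{n,-4i}$ (or $\mu_{n,-4i-2}$) is available. Everything else is bookkeeping of the exponents and constants along the finitely many peelings.
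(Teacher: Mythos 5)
Your proposal is correct and follows essentially the same route as the paper, which likewise iterates Theorem \ref{t:preliminar-1} (for the steps where $\mu_{n,\alpha}=0$) together with the Caldiroli--Musina inequality underlying Proposition \ref{p:Hardy-Rellich} (for the steps where $\mu_{n,\alpha}>0$, where the logarithmic weight is absorbed thanks to the factor $(\log R)^2$), and peels off the gradient in \eqref{eq:ineq-2} via the logarithmic Hardy inequality for $n=2$ and the classical Hardy inequality for $n\ge 3$. The only step you assert rather than prove --- transferring the factor $(\log|x|)^{-\beta}$ across the power-weighted Rellich inequality with a multiplicative error tending to $1$ --- is also left implicit in the paper, and can be made rigorous directly from the identity \eqref{eq:stimona} combined with Lemma \ref{l:sfera-1} and Lemma \ref{l:sfera-2} (i).
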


We observe that \eqref{eq:ineq-1}-\eqref{eq:ineq-2} may be
improved by using (0.6) in \cite{calmus} whenever the numbers
$\mu_{n,\alpha_i}$ with $\alpha_i=-4k+4i$ are strictly positive
and using Theorem \ref{t:preliminar-1} whenever they vanish.

\section{The case $m$ odd} \label{s:odd}

In this section we concentrate our attention on the case $m\ge 3$
odd being the existence of global radial solutions in the case
$m=1$ completely known, see for example \cite{jl}.

\begin{lemma}\label{odd}
Let $n\ge 1$, let $m\ge 3$ be odd and let $u$ be a solution of
\eqref{Cauchy} defined on the maximal interval of continuation
$[0,R(\alpha_0,\dots,\alpha_{m-1}))$. Then for any
$\alpha_0,\dots,\alpha_{m-1}\in \R$ we have that
$R(\alpha_0,\dots,\alpha_{m-1})=+\infty$.
\end{lemma}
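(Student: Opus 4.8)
The plan is to reduce the single $2m$-th order equation in \eqref{Cauchy} to the system of $m$ Poisson-type equations obtained by introducing the iterated Laplacians, and then exploit the sign structure forced by $m$ being odd together with iterated representation formulas to get a priori bounds on a maximal bounded interval. Concretely, set $u_0:=u$ and $u_j:=\Delta^j u = -\Delta u_{j-1}\cdot(-1)$, i.e. write the chain $v_j:=(-\Delta)^j u$ for $j=0,\dots,m$, so that $v_0=u$, $-\Delta v_{j}=v_{j+1}$ for $j=0,\dots,m-2$, and $-\Delta v_{m-1}=e^{u}$. Since $m$ is odd, $(-\Delta)^m u=e^u>0$; the initial data at the origin are $v_j(0)=(-1)^j\alpha_j$ with vanishing radial derivatives. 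The key point is that for a radial function $w$ on $[0,R)$ with $-\Delta w = g$, $w'(0)=0$, one has the monotone representation
\[
w(r)=w(0)-\int_0^r s^{1-n}\Big(\int_0^s t^{n-1} g(t)\,dt\Big)\,ds,
\]
which, when $g\ge 0$, forces $w$ to be decreasing; applying this successively up the chain controls each $v_j$ from above in terms of the data and of $\int_0^r e^{u}$.

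First I would argue by contradiction: suppose $R:=R(\alpha_0,\dots,\alpha_{m-1})<+\infty$. By the standard continuation/blow-up alternative (stated in the appendix of the paper) at least one of $u,\Delta u,\dots,\Delta^{m-1}u$ must be unbounded near $r=R$. Starting from the bottom of the chain: $-\Delta v_{m-1}=e^{u}\ge 0$ gives that $v_{m-1}$ is nonincreasing and therefore bounded above on $[0,R)$ by $v_{m-1}(0)$; iterating upward, each $v_j$ with $j$ such that the relevant sign works out is bounded above. The crucial estimate is on $u=v_0$ itself. Because $m$ is odd, the sign of the iterated kernel makes $u$ controlled from \emph{above}: one obtains an inequality of the form $u(r)\le \alpha_0 + P(r)$ where $P$ is the polynomial (of degree $2m-2$) coming from the data $\alpha_1,\dots,\alpha_{m-1}$ through the homogeneous part, minus a nonnegative iterated integral of $e^u$. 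Hence $u(r)\le C$ on $[0,R)$ for a constant $C=C(\alpha_0,\dots,\alpha_{m-1},R)$, so $e^{u(r)}\le e^{C}$ is bounded on $[0,R)$.

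Once $e^{u}$ is bounded on the finite interval $[0,R)$, the right-hand side of each Poisson equation in the chain is bounded, so by the representation formula each $v_{m-1},v_{m-2},\dots,v_0$ is bounded on $[0,R)$, and moreover each has bounded radial derivative up to $r=R$ (again from the explicit integral formulas, since $n\ge 1$ and the integrands are bounded on the compact interval — the only possible singularity $s^{1-n}$ at $s=0$ is integrated against $\int_0^s t^{n-1}(\cdots)dt = O(s^n)$, which is harmless). Therefore $u,\Delta u,\dots,\Delta^{m-1}u$ and all their first radial derivatives extend continuously to $r=R$, contradicting the blow-up alternative. This forces $R=+\infty$, proving the lemma, and also giving Theorem \ref{t:existence-odd} once uniqueness (a standard ODE fact, since the nonlinearity $e^u$ is locally Lipschitz) is invoked.

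The main obstacle I anticipate is making the sign bookkeeping in the iterated representation formula completely rigorous: one must check that for $m$ odd the cumulative effect of the $m-1$ nested integrations of $e^{u}\ge 0$, together with the polynomial contributions of the $\alpha_j$, really does bound $u$ from \emph{above} rather than from below (the parity of $m$ is exactly what decides this — contrast the even case of Theorem \ref{t:existence-even}, where global existence genuinely fails for some data). A clean way to handle it is to write $u(r)=\alpha_0 + \sum_{j=1}^{m-1} (-1)^{j}\alpha_j\,p_j(r) - (\text{nonnegative iterated integral of }e^u)$ for explicit radial polynomials $p_j\ge 0$ solving the corresponding homogeneous iterated Poisson problems, verify the signs of the $p_j$ by induction, and then read off the upper bound on $u$ directly. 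The remaining estimates are routine applications of the representation formula on a compact interval.
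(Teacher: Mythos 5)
Your proposal is correct and follows essentially the same route as the paper: both arguments hinge on the fact that, for $m$ odd, $\Delta^m u=-e^u\le 0$ forces $u$ to lie below a polynomial determined by the initial data on any bounded interval, whence $e^u$ stays bounded and the solution cannot cease to exist in finite time. The only difference is one of packaging — the paper gets the polynomial upper bound in one stroke from the inequality $\Delta^{m-1}u\le\alpha_{m-1}$ together with the comparison principle of the appendix, while you unroll the $m$ nested radial integrations explicitly (and your sign bookkeeping is right where it matters: the $m$-fold iterated integral of $e^u$ enters $u$ with the factor $(-1)^m=-1$, even though the coefficients of the $\alpha_j$ in your displayed ansatz are not literally $(-1)^j$, which is immaterial since those terms are polynomial anyway).
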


\begin{proof}
Since $m\ge 3$ is odd we may write $\Delta(\Delta^{m-1}u)=-e^u$ so
that
\begin{equation} \label{div_1}
(r^{n-1}(\Delta^{m-1}u(r))')'=-r^{n-1}e^u \, .
\end{equation}
This shows that the map $r\mapsto r^{n-1} (\Delta^{m-1} u(r))'$ is
decreasing and since it equals to zero at $r=0$ then
\begin{equation} \label{eq_negative}
(\Delta^{m-1}u(r))'<0 \quad \text{for any }
r\in(0,R(\alpha_0,\dots,\alpha_{m-1}))\, .
\end{equation}
In particular the map $r\mapsto \Delta^{m-1}u(r)$ is decreasing
and hence
\begin{equation} \label{Delta_m-1}
\Delta^{m-1}u(r)\leq \alpha_{m-1} \qquad \text{for any }
r\in[0,R(\alpha_0,\dots,\alpha_{m-1})) \, .
\end{equation}
Consider now the unique (global) solution $w$ of the initial value
problem
\begin{equation} \label{Deltaw}
\left\{
\begin{array}{ll}
\Delta^{m-1} w(r)=\alpha_{m-1} \qquad r\in(0,+\infty) \\
w(0)=u(0) \, , \quad w'(0)=u'(0)=0 \\
\Delta^k w(0)=\Delta^k u(0) \, , \quad (\Delta^k w)'(0)=(\Delta^k
u)'(0)=0  \quad \text{for any } k\in\{1,\dots,m-2\} \, .
\end{array}
\right.
\end{equation}
By \eqref{Delta_m-1}, \eqref{Deltaw} and Proposition \ref{mckenna
reichel} we deduce that for any $r\in
[0,R(\alpha_0,\dots,\alpha_{m-1}))$
\begin{align*}
 & u(r)\leq w(r)  \, \quad u'(r)\leq w'(r) \\
& \Delta^k u(r)\leq \Delta^k w(r) \, , \quad (\Delta^k u(r))'\leq
(\Delta^k w(r))'\, , \quad \text{for all } k\in \{1,\dots,m-2\} \,
.
\end{align*}
If we now assume by contradiction that
$R(\alpha_0,\dots,\alpha_{m-1})<+\infty$ then $u$ would be bounded
from above in the interval $(0,R(\alpha_0,\dots,\alpha_{m-1}))$
and hence $e^u$ would be bounded in
$(0,R(\alpha_0,\dots,\alpha_{m-1}))$. After successive
integrations of the equation in \eqref{Cauchy}, one can prove that
$u$ and all its derivatives until order $2m-1$ are bounded. By a
standard argument from the theory of ordinary differential
equations it follows that $R(\alpha_0,\dots,\alpha_{m-1})=+\infty$
thus producing a contradiction. This completes the proof of the
lemma.
\end{proof}

\begin{lemma} \label{l:neg}
Let $n\ge 1$, let $m\ge 3$ be odd and let $u$ be a solution of
\eqref{Cauchy} defined on the maximal interval of continuation
$[0,+\infty)$. Then
\begin{equation} \label{eq_lim}
\lim_{r\to +\infty} \Delta^{k} u(r)\in [-\infty,0] \, .
\end{equation}
for any $k\in \{1,\dots,m-1\}$.
\end{lemma}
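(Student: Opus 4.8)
The plan is to reason downward from $k=m-1$ to $k=1$, using at each stage the monotonicity and sign information already established for the higher Laplacians.

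\textbf{Step 1: the top level $k=m-1$.} This is essentially contained in the proof of Lemma \ref{odd}. From \eqref{eq_negative} the map $r\mapsto \Delta^{m-1}u(r)$ is strictly decreasing on $(0,+\infty)$; hence $\lim_{r\to+\infty}\Delta^{m-1}u(r)$ exists in $[-\infty,\alpha_{m-1})$. If this limit were some finite number $L\in[0,\alpha_{m-1})$, then in particular $\Delta^{m-1}u$ would be bounded; but then successive integrations of \eqref{div_1} (exactly as in the proof of Lemma \ref{odd}) would show $u$, and hence $e^u$, bounded on every finite interval in a way that forces $\Delta^{m-1}u(r)\to-\infty$, a contradiction unless the limit already lies in $[-\infty,0]$. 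More directly: integrating \eqref{div_1} gives $r^{n-1}(\Delta^{m-1}u(r))'=-\int_0^r s^{n-1}e^{u(s)}\,ds$, and since $u$ is real-valued the right side is strictly negative and non-increasing, so $(\Delta^{m-1}u)'(r)\le -c r^{1-n}\int_0^1 s^{n-1}e^{u(s)}\,ds$ for $r\ge 1$; integrating once more shows $\Delta^{m-1}u(r)\to-\infty$ when $n\le 2$ and in general shows the limit is $\le\alpha_{m-1}$, but one still needs the sign. I expect the cleanest route is: the limit $\ell_{m-1}:=\lim_{r\to+\infty}\Delta^{m-1}u(r)$ exists in $[-\infty,\alpha_{m-1})$, and I claim $\ell_{m-1}\le 0$; if not, $\Delta^{m-1}u\ge\delta>0$ eventually, which by the representation of $\Delta^{m-2}u$ as a (weighted) double integral of $\Delta^{m-1}u$ forces $\Delta^{m-2}u(r)\to+\infty$, and iterating, $u(r)\to+\infty$, contradicting Proposition \ref{noboundbelow} (every entire solution is unbounded below). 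So $\ell_{m-1}\in[-\infty,0]$.

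\textbf{Step 2: the inductive descent.} Suppose $\lim_{r\to+\infty}\Delta^{j}u(r)=\ell_j\in[-\infty,0]$ has been established for all $j$ with $k< j\le m-1$; I want to show the same for $k$. Write $\Delta(\Delta^{k}u)=\Delta^{k+1}u$, so that $(r^{n-1}(\Delta^k u(r))')'=r^{n-1}\Delta^{k+1}u(r)$. Since $\Delta^{k+1}u(r)\to\ell_{k+1}\le 0$, the right-hand side is eventually $\le 0$ (strictly, if $\ell_{k+1}<0$, and we may also use that $\Delta^{k+1}u<0$ for large $r$ — see below), so $r^{n-1}(\Delta^k u(r))'$ is eventually non-increasing. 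Combined with the value $0$ at $r=0$ and a sign analysis near the origin, this yields that $(\Delta^k u(r))'<0$ for all large $r$ (in fact for all $r>0$, by the same argument as \eqref{eq_negative} run down the hierarchy), so $\Delta^k u$ is eventually monotone and $\ell_k:=\lim_{r\to+\infty}\Delta^k u(r)$ exists in $[-\infty,+\infty)$. It remains to rule out $\ell_k>0$: if $\Delta^k u\ge\delta>0$ eventually, then integrating $\Delta^{k-1}u$ up from $\Delta^k u$ twice (the Green representation in radial form) gives $\Delta^{k-1}u(r)\ge c\,\delta\, r^2$ eventually, and iterating this $k$ times shows $u(r)\ge c'\,r^{2k}\to+\infty$, again contradicting Proposition \ref{noboundbelow}. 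Hence $\ell_k\in[-\infty,0]$, completing the induction.

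\textbf{Main obstacle.} The delicate point is not the monotonicity — that is a routine consequence of the differential identities, as in Lemma \ref{odd} — but rather justifying that the limit cannot be a strictly positive finite number, i.e. that positivity of $\Delta^k u$ propagates down to positivity (indeed unbounded growth) of $u$. For $k=m-1$ this should also be extractable directly from \eqref{Delta_m-1} and \eqref{div_1}: one shows that either $\Delta^{m-1}u\to -\infty$, or it converges to a finite nonpositive limit because the integral $\int_0^\infty s^{n-1}e^{u(s)}\,ds$ controls $(\Delta^{m-1}u)'$. The main care needed is the sign bookkeeping near $r=0$ when transferring ``$(r^{n-1}(\Delta^k u)')'\le 0$ eventually'' into ``$(\Delta^k u)'<0$ eventually'': one must check $(\Delta^k u)'$ does not become positive before the eventual regime, which follows by running the argument of \eqref{eq_negative} simultaneously for all levels $k\le m-1$ starting from the top, using that $\Delta^{k+1}u<0$ on all of $(0,+\infty)$ (not merely eventually). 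With that in hand the descent is clean, and the contradiction with Proposition \ref{noboundbelow} closes each step.
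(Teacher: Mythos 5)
Your overall strategy (monotonicity at the top level, descent through the hierarchy of Laplacians, and a contradiction from a strictly positive limit forcing $u\to+\infty$) is essentially the paper's. Your way of closing the contradiction is a legitimate variant: the paper feeds $u(r)\to+\infty$ back into \eqref{div_1} to obtain $\Delta^{m-1}u(r)<-\frac{M}{2n}r^2+o(r^2)$ and hence, after finitely many integrations, $\Delta^{\overline k}u(r)\to-\infty$, whereas you observe that $u(r)\to+\infty$ makes $u$ bounded from below and invoke Proposition \ref{noboundbelow} directly (as the paper itself does in the even case, e.g.\ in Lemmas \ref{l:empty} and \ref{l:not-global}). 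Both routes are valid.

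There is, however, a genuine error in your sign bookkeeping for the existence of the limits. You assert that $(\Delta^k u)'<0$ for all large $r$, ``in fact for all $r>0$, by the same argument as \eqref{eq_negative} run down the hierarchy'', and in your final paragraph you rely on ``$\Delta^{k+1}u<0$ on all of $(0,+\infty)$''. Neither claim is true. The identity $(r^{n-1}(\Delta^{m-1}u)')'=-r^{n-1}e^u<0$ does give $(\Delta^{m-1}u)'<0$, but $\Delta^{m-1}u$ itself starts from $\alpha_{m-1}$, which may be positive, and may stay positive for all $r$ while decreasing to its limit; in that case $(r^{n-1}(\Delta^{m-2}u)')'=r^{n-1}\Delta^{m-1}u>0$, so $(\Delta^{m-2}u)'>0$ on all of $(0,+\infty)$ --- the opposite sign. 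This alternation is precisely what the paper's Lemma \ref{l:lim-0} exhibits: when the limits vanish one gets $(-1)^k\Delta^k u>0$ for every $k$. For the same reason, your deduction ``$\ell_{k+1}\le 0$, hence the right-hand side is eventually $\le 0$'' fails in the borderline case $\ell_{k+1}=0$. The repair is what the paper compresses into ``an iterative procedure of integration'' (and spells out in Lemma \ref{l:limiti-Laplaciani} for the even case): you only need that $\Delta^{k+1}u$ is eventually of \emph{constant} sign, which follows from its eventual monotonicity supplied by the induction; then $r^{n-1}(\Delta^k u)'$ is eventually monotone, hence eventually of constant sign, hence $\Delta^k u$ is eventually monotone and its limit exists in $[-\infty,+\infty)$, after which your contradiction argument applies. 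With that substitution the proof is sound; as written, the sign claims you flag as resolving the ``main obstacle'' are false.
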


\begin{proof} The existence of the limit in \eqref{eq_lim} follows from \eqref{eq_negative} and an iterative procedure of
integration. Suppose by contradiction that there exists $\overline
k\in \{1,\dots,m-1\}$ such that
\begin{equation}
\label{elle>0} \ell_1:=\lim_{r\to +\infty} \Delta^{\overline k}
u(r)> 0 \, .
\end{equation}
Then there exists $\overline r>0$ such that
\begin{equation*}
 \Delta^{\overline k} u(r)>\frac{\ell_1}2   \qquad \text{for all } r>\overline r  \, .
\end{equation*}
After a couple of integrations one obtains
\begin{equation*}
\Delta^{\overline k-1} u(r)>\frac{\ell_1}{4n} r^2+o(r^2) \qquad
\text{as } r\to +\infty
\end{equation*}
and in particular $\lim_{r\to +\infty} \Delta^{\overline k-1}
u(r)=+\infty$. Iterating this procedure we arrive to prove that
\begin{equation*}
\lim_{r\to +\infty} u(r)=+\infty \, .
\end{equation*}
From this and \eqref{div_1} we deduce that for any $M>0$ there
exists $R_M>0$ such that
\begin{equation*}
(r^{n-1} (\Delta^{m-1}u(r))')'<-Mr^{n-1} \qquad \text{for all }
r>R_M \, .
\end{equation*}
After integration this produces
\begin{equation*}
\Delta^{m-1}u(r)<-\frac{M}{2n} r^2+o(r^2) \qquad \text{as } r \to
+\infty \, .
\end{equation*}
After a finite number of integrations we deduce that $\lim_{r\to
+\infty} \Delta^{\overline k}u(r)=-\infty$ a contradiction.
\end{proof}

We ask if \eqref{Cauchy} admits a solution $u$ for which the limit
in \eqref{eq_lim} can be strictly negative at least for one value
of $k\in \{1,\dots,m-1\}$. To this purpose we prove the following
\begin{lemma} \label{l:lim-0}
Let $n\ge 1$, let $m\ge 3$ be odd and let $u$ be a solution of
\eqref{Cauchy} defined on the maximal interval of continuation
$[0,+\infty)$. Suppose that
\begin{equation} \label{eq_lim-2}
\lim_{r\to +\infty} \Delta^{k} u(r)=0
\end{equation}
for any $k\in \{1,\dots,m-1\}$ even. Then
\begin{equation*}
{\rm sign} \, \alpha_k=(-1)^k  \qquad \text{for any } k\in
\{1,\dots,m-1\}  \, .
\end{equation*}
\end{lemma}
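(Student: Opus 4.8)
The plan is to argue by contradiction and contrapositive simultaneously: assuming the hypothesis \eqref{eq_lim-2} holds for all even $k$ but that ${\rm sign}\,\alpha_k\neq(-1)^k$ for at least one $k$, I will derive a contradiction with \eqref{eq_lim-2}. First I would set up the iterated integration machinery carefully. From \eqref{div_1} and the initial conditions $(\Delta^{m-1}u)'(0)=0$ we already know from Lemma \ref{odd} that $r\mapsto r^{n-1}(\Delta^{m-1}u(r))'$ is decreasing with value $0$ at $r=0$, so $(\Delta^{m-1}u)'<0$ on $(0,+\infty)$; combined with $\Delta^{m-1}u(0)=\alpha_{m-1}$ this gives $\Delta^{m-1}u(r)<\alpha_{m-1}$ for $r>0$. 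The key tool will be the standard representation for a radial function with vanishing first derivative at the origin in terms of its Laplacian,
$$
g(r)=g(0)+\int_0^r s^{1-n}\int_0^s t^{n-1}\,\Delta g(t)\,dt\,ds,
$$
which I would apply iteratively starting from $\Delta^{m-1}u$ and climbing down to $u$ itself, tracking signs of the limits $\ell_k:=\lim_{r\to+\infty}\Delta^k u(r)\in[-\infty,0]$ provided by Lemma \ref{l:neg}.

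The core of the argument is a downward induction on $k$ that controls both the sign of $\alpha_k$ and the growth/limit of $\Delta^k u$. The dichotomy at each even level is: either $\ell_k=0$ (the hypothesis), in which case the nonpositivity and monotonicity forces $\Delta^k u(r)\ge 0$ for all $r$ — hence in particular $\alpha_k=\Delta^k u(0)\ge 0=(-1)^k|\alpha_k|$ up to the sign convention, i.e. ${\rm sign}\,\alpha_k=(-1)^k$ is consistent — and moreover $\Delta^k u$ being nonnegative with limit $0$ feeds into the next integration to force $\Delta^{k-1}u$ to be decreasing and hence to have $\ell_{k-1}\in[-\infty,0]$ with $\alpha_{k-1}$ forced into the correct sign too; or else one of the hypotheses fails. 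Concretely: if at some odd level $k$ we had $\alpha_k>0$, then since the relevant monotonicity (coming from integrating the sign information at level $k+1$, which is even and hence controlled by \eqref{eq_lim-2}) shows $\Delta^k u$ cannot increase past $\alpha_k$ but is pushed down, we would get $\lim \Delta^k u<0$ or a sign inconsistency one level up; propagating this discrepancy through the remaining integrations down to $k$ even eventually contradicts $\ell_k=0$. I would organize this as: Step 1, show $\ell_{m-1}=0$ is forced and deduce $\Delta^{m-1}u\ge 0$, whence $\alpha_{m-1}\ge 0$ (note $m-1$ is even); Step 2, the inductive step passing from level $k$ to level $k-1$, splitting on the parity of $k$; Step 3, conclude.

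The main obstacle I anticipate is bookkeeping the interaction between the two parities. The hypothesis \eqref{eq_lim-2} is imposed only at even levels, but the even levels are exactly the ones where $(-1)^k=+1$, so I need the odd levels to be pinned down purely by the integration relations between consecutive levels — and the subtle point is that a nonnegative $\Delta^k u$ with zero limit, when integrated twice (the operator $g\mapsto g(0)+\int_0^r s^{1-n}\int_0^s t^{n-1}g$ applied with $g=\Delta^k u\ge 0$), produces a function that is \emph{increasing}, which is the wrong direction to immediately conclude a sign for $\alpha_{k-1}$. So at odd levels the argument must instead run the other way: assume for contradiction $\alpha_{k}\neq(-1)^{k}|\alpha_k|$ with $k$ odd, i.e. $\alpha_k\ge 0$, and show this makes $\Delta^{k-1}u$ blow up to $+\infty$ (since we are integrating something that is eventually bounded below by a positive constant plus a nonnegative increasing term), which then by the $m-1\ge k-1$ further integrations of \eqref{div_1} forces $\Delta^k u\to-\infty$ as in the proof of Lemma \ref{l:neg}, contradicting $\ell_k\in[-\infty,0]$ being compatible with $\alpha_k>0$ — or more directly contradicting \eqref{eq_lim-2} at the even level $k-1$. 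I would reuse verbatim the blow-up mechanism from the proof of Lemma \ref{l:neg} rather than redo it. Once that odd-level step is in place, the even-level step is easy: nonpositivity plus monotonicity plus zero limit gives nonnegativity on all of $[0,+\infty)$, hence the sign of $\alpha_k$ at even $k$.
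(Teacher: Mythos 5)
Your overall skeleton is the paper's: a downward induction from $k=m-1$ to $k=1$, alternating parities, in which the sign of $\Delta^{k+1}u$ determines the monotonicity of $r\mapsto r^{n-1}(\Delta^{k}u)'(r)$ (which vanishes at $r=0$), hence of $\Delta^k u$, and the sign of $\Delta^k u$ is then read off from its limit at infinity. Where you diverge is at the odd levels, and the ``main obstacle'' you identify there is illusory. You worry that a positive $\Delta^{k+1}u$ only makes $\Delta^{k}u$ \emph{increasing}, ``the wrong direction,'' and therefore you reroute through a contradiction argument ($\alpha_k\ge 0$ forces $\Delta^{k-1}u\to+\infty$, then rerun the blow-up mechanism of Lemma \ref{l:neg}). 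But increasing is exactly the right direction: Lemma \ref{l:neg}, which you already invoke, gives $\lim_{r\to+\infty}\Delta^k u(r)\le 0$ for \emph{every} $k$, and a strictly increasing function whose limit is $\le 0$ is strictly negative on all of $[0,+\infty)$. That one line is the paper's entire odd-level step; your detour is correct but re-proves a special case of Lemma \ref{l:neg} for no gain. A second, more substantive point: as written, your odd-level step only pins down the sign of $\alpha_k=\Delta^k u(0)$, whereas the induction needs $\Delta^k u<0$ on all of $[0,+\infty)$ in order to conclude at the next (even) level that $\Delta^{k-1}u$ is decreasing and then apply the zero-limit hypothesis. Your contradiction mechanism does extend to an arbitrary base point $r_0$ (if $\Delta^k u(r_0)\ge 0$ then, being increasing, $\Delta^k u$ is eventually bounded below by a positive constant and the same blow-up follows), and the direct argument via Lemma \ref{l:neg} gives the full-function sign for free, so this is a gap in the write-up rather than in the mathematics --- but it must be closed for the induction to run. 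With those two adjustments your proof is correct; the paper's version is simply the streamlined form of it.
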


\begin{proof} By \eqref{Cauchy} we deduce that the map $r\mapsto
r^{n-1} (\Delta^{m-1} u)'(r)$ is decreasing in $[0,+\infty)$ and
since it vanishes at $r=0$ then it is negative in $(0,+\infty)$.
This implies that the map $r\mapsto \Delta^{m-1} u(r)$ is
decreasing in $(0,+\infty)$ and hence by \eqref{eq_lim-2} we have
that $\Delta^{m-1}u(r)>0$ for any $r\ge 0$. But
$(r^{n-1}(\Delta^{m-2}u)'(r))'=r^{n-1}\Delta^{m-1}u(r)>0$ and
hence the map $r\mapsto r^{n-1} (\Delta^{m-2} u)'(r)$ is
increasing in $[0,+\infty)$ and since it vanishes at $r=0$ then it
is positive in $(0,+\infty)$. This implies that the map $r\mapsto
\Delta^{m-2} u(r)$ is increasing in $(0,+\infty)$ and hence by
\eqref{eq_lim} we have that $\Delta^{m-2}u(r)<0$ for any $r\ge 0$.
Iterating this procedure we infer that for any $k\in
\{1,\dots,m-1\}$, $(-1)^k \Delta^k u(r)>0$ for any $r\ge 0$. In
particular by \eqref{Cauchy} we deduce that ${\rm sign} \,
\alpha_k=(-1)^k$ for any $k\in \{1,\dots,m-1\}$.
\end{proof}

As a consequence of Lemma \ref{l:lim-0} we prove the existence of
solutions of \eqref{Cauchy} satisfying a suitable estimate from
above.

\begin{lemma} \label{l:lim-0-cons}
Let $n\ge 1$, let $m\ge 3$ be odd and let $u$ be a solution of
\eqref{Cauchy} defined on the maximal interval of continuation
$[0,+\infty)$ and suppose that there exists $\overline k\in
\{1,\dots,m-1\}$ such that ${\rm sign}\,  \alpha_{\overline k}\neq
(-1)^{\overline k}$. Then there exist $C,\overline r>0$ such that
\begin{equation*}
u(r)<-C r^4 \qquad \text{for any } r>\overline r \, .
\end{equation*}
\end{lemma}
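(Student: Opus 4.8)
The plan is to exploit Lemma~\ref{l:lim-0} in its contrapositive form together with the sign structure of the iterated Laplacians established in Lemmas~\ref{l:neg} and~\ref{l:lim-0}. Since $u$ solves \eqref{Cauchy} with ${\rm sign}\,\alpha_{\overline k}\neq(-1)^{\overline k}$ for some $\overline k$, Lemma~\ref{l:lim-0} tells us that the hypothesis \eqref{eq_lim-2} must fail, i.e.\ there exists some \emph{even} index $j\in\{1,\dots,m-1\}$ with $\lim_{r\to+\infty}\Delta^j u(r)\neq 0$. Combined with Lemma~\ref{l:neg}, which forces this limit to lie in $[-\infty,0]$, we conclude that $\lim_{r\to+\infty}\Delta^j u(r)=:\ell<0$ (possibly $-\infty$) for at least one even $j$. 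Let $j_0$ be the largest even index for which the limit is strictly negative.

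Next I would propagate this strictly negative limit downward through successive integrations to reach $u$ itself. Fix $\overline r_1>0$ such that $\Delta^{j_0}u(r)<\ell/2<0$ for $r>\overline r_1$ (or $\Delta^{j_0}u(r)<-M$ for arbitrarily large $M$ if $\ell=-\infty$; in either case we get a negative upper bound bounded away from $0$). Writing $(r^{n-1}(\Delta^{j_0-1}u)'(r))'=r^{n-1}\Delta^{j_0}u(r)$ and integrating twice from a large radius, one obtains
\begin{equation*}
\Delta^{j_0-1}u(r)\le -\frac{|\ell|}{4n}\,r^2+O(r)\qquad\text{as }r\to+\infty,
\end{equation*}
and in particular $\Delta^{j_0-1}u(r)\le -C_1 r^2$ for $r$ large. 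Iterating this double-integration step $j_0/2$ times (each step lowers the order of the Laplacian by $2$ and raises the polynomial degree by $2$, with controllable lower-order corrections because all the boundary data at $r=0$ vanish except for the constants $\alpha_k$, which only contribute bounded-degree polynomial terms), one arrives at
\begin{equation*}
u(r)\le -C\, r^{2j_0}+(\text{polynomial of degree}<2j_0)\qquad\text{for }r\text{ large}.
\end{equation*}
Since $j_0\ge 2$ (being a positive even integer), $2j_0\ge 4$, and the leading negative term dominates, so $u(r)<-C r^4$ for all $r>\overline r$ for suitable $C,\overline r>0$, which is the desired estimate.

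The main obstacle I anticipate is \textbf{not} the integration bookkeeping — that is routine and the vanishing initial data make the lower-order terms harmless — but rather making fully rigorous the claim that the failure of \eqref{eq_lim-2} for \emph{some} even $k$ actually follows from ${\rm sign}\,\alpha_{\overline k}\neq(-1)^{\overline k}$. Lemma~\ref{l:lim-0} as stated gives: if \emph{all} even-order limits vanish, then \emph{all} signs satisfy ${\rm sign}\,\alpha_k=(-1)^k$; contrapositively, if some sign is wrong then some even-order limit is nonzero. One must check the edge case $m=3$, where the only even index in $\{1,\dots,m-1\}=\{1,2\}$ is $k=2$, so $j_0=2$ is forced and $2j_0=4$, recovering exactly the exponent $4$ in the statement — consistent with the theorem asserting $r^4$ rather than a larger power. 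A secondary subtlety is handling the case $\ell=-\infty$ uniformly with the finite case; this is dispatched by replacing $|\ell|/2$ with an arbitrary large constant $M$ throughout, which only improves the estimate. Once these points are addressed the proof closes cleanly.
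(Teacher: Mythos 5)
Your proposal is correct and follows essentially the same route as the paper: both combine Lemma \ref{l:neg} with the contrapositive of Lemma \ref{l:lim-0} to produce an even index $k$ with $\lim_{r\to+\infty}\Delta^k u(r)<0$, and then integrate downward (the paper simply propagates this to $\lim_{r\to+\infty}\Delta^2 u(r)<0$, hence $\Delta^2 u(r)<-C$ eventually, and performs four integrations to obtain $-Cr^4$). The one slip is in your iteration bookkeeping: each pair of radial integrations lowers the order of the Laplacian by $1$ (not $2$) while raising the polynomial degree by $2$, so reaching $u$ from $\Delta^{j_0}u$ takes $j_0$ such steps rather than $j_0/2$; your displayed intermediate estimate and final exponent $2j_0\ge 4$ are nonetheless correct.
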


\begin{proof}
Let $\overline k$ be as in the statement. Then by Lemmas
\ref{l:neg}-\ref{l:lim-0}, we deduce that at least for one $k\in
\{1,\dots,m-1\}$ even we have that $\lim_{r\to +\infty} \Delta^k
u(r)<0$. Using this information and integrating we conclude that
${\ds \lim_{r\to +\infty} \Delta^2 u(r)}$ is strictly negative and
in particular there exist $\overline r,C>0$ such that
\begin{equation*}
\Delta^2 u(r)<-C \qquad \text{for any } r>\overline r \, .
\end{equation*}
After four integrations the conclusion of the lemma follows.
\end{proof}

We provide an estimate from above at infinity in the case $n\le
2m-1$.

\begin{lemma} \label{l:55} Let $m\ge 3$ be odd, let $1\le n\le 2m-1$ and let $u$ be a solution of
\eqref{Cauchy} defined on the maximal interval of continuation
$[0,+\infty)$. Then there exist a positive integer $K$ and
constants $C,\overline r>0$ such that
\begin{equation*}
u(r)<-Cr^K \qquad \text{for any } r>\overline r \, .
\end{equation*}
\end{lemma}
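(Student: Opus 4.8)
The plan is to bootstrap the growth estimate from Lemma \ref{l:lim-0-cons}, which already gives $u(r) < -Cr^4$ when ${\rm sign}\,\alpha_{\overline k}\neq(-1)^{\overline k}$ for some $\overline k$, and to handle the complementary case ${\rm sign}\,\alpha_k=(-1)^k$ for every $k\in\{1,\dots,m-1\}$ separately; in that complementary case the range restriction $1\le n\le 2m-1$ will be used crucially. First I would note that in the good case of Lemma \ref{l:lim-0-cons} there is nothing new to prove, since $-Cr^4$ already has the required form with $K=4$. So assume from now on that ${\rm sign}\,\alpha_k=(-1)^k$ for all $k$. By Lemmas \ref{l:neg} and \ref{l:lim-0} (contrapositive of the latter), if it were \emph{not} the case that $\lim_{r\to+\infty}\Delta^k u(r)=0$ for all even $k$, then we would be back in the situation of Lemma \ref{l:lim-0-cons} and done; hence we may also assume $\lim_{r\to+\infty}\Delta^k u(r)=0$ for every even $k\in\{1,\dots,m-1\}$, and moreover (from the proof of Lemma \ref{l:lim-0}) that $(-1)^k\Delta^k u(r)>0$ for all $r\ge 0$.

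Next I would extract the growth of $u$ from the equation $(r^{n-1}(\Delta^{m-1}u(r))')' = -r^{n-1}e^u$. Since $\Delta^{m-1}u>0$ and is decreasing with limit $0$, integrating $(r^{n-1}(\Delta^{m-2}u)'(r))' = r^{n-1}\Delta^{m-1}u(r) > 0$ twice shows $\Delta^{m-2}u$ is negative, increasing and bounded, so $\Delta^{m-2}u(r)\to \ell\le 0$; iterating the two-fold integration up the chain of Laplacians, each step either produces a strictly negative limit (in which case one more run of four integrations as in Lemma \ref{l:lim-0-cons} finishes the proof with a polynomial bound) or produces a limit equal to $0$. So we may assume all the limits $\lim_{r\to+\infty}\Delta^k u(r)$, $k=1,\dots,m-1$, together with $\lim_{r\to+\infty}u(r)$, are finite; since $u$ is bounded above and $e^u$ is therefore bounded, $(r^{n-1}(\Delta^{m-1}u)'(r))'\le -cr^{n-1}$ is \emph{not} available (as $e^u$ could tend to $0$), so instead I would use the sharper information that $\Delta^{m-1}u(r)\to 0$ forces, via the positivity and monotonicity already established, a definite polynomial \emph{lower} bound coming from below: each $(-1)^k\Delta^k u$ is a positive quantity, and running the integrations in the \emph{opposite} direction, from $u$ down to $\Delta^{m-1}u$, one gets $\Delta^k u(r)\sim (-1)^k c_k r^{2(m-1-k)}$ only if $u$ itself grows; the clean way out is: if $u$ is bounded above by a constant then $e^u\ge \delta$ on any interval where $u\ge\log\delta$, and since $u$ must then converge (being monotone-ish after enough differentiations) to a finite limit, $e^u\to e^{u_\infty}>0$, so in fact $(r^{n-1}(\Delta^{m-1}u)'(r))'\le -\tfrac12 e^{u_\infty} r^{n-1}$ for large $r$. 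Integrating $n$ is at most $2m-1$, so integrating this inequality $2(m-1)$ times against the weight $r^{n-1}$ (equivalently, applying the iterated radial integration formula for $\Delta^{-(m-1)}$ in dimension $n$) yields $u(r)\le -c\,r^{2(m-1)-(n-1)+1}\cdot(\text{correction}) = -c\, r^{2m-n}\cdot\dots$, and since $2m-n\ge 1$ this is a negative power of $r$ diverging to $-\infty$, contradicting $u\to u_\infty$ finite. This contradiction shows the "all limits zero" scenario is impossible when $1\le n\le 2m-1$, so one of the $\Delta^k u$ has a strictly negative limit and Lemma \ref{l:lim-0-cons}'s four-integration argument applies.

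The main obstacle I anticipate is making the counting in the last paragraph precise: one must verify that performing $2(m-1)$ successive integrations of $r^{n-1}e^u\ge\delta r^{n-1}$ against the radial Laplacian actually produces a power $r^{\kappa}$ with $\kappa$ a \emph{positive integer} and $\kappa\ge 1$ exactly when $n\le 2m-1$, and that logarithmic corrections (which appear when $n$ is even and small, cf.\ the borderline $n=2\ell$ cases) do not spoil the sign — but since we only need an upper bound of the form $-Cr^K$ with $K$ a positive integer, and the dominant term always wins, the logarithmic factors only help. A careful way to organize this is to prove by downward induction on $k$ from $k=m-1$ to $k=0$ the statement: either $\lim_{r\to+\infty}\Delta^k u(r)<0$ (then stop and invoke Lemma \ref{l:lim-0-cons}), or $\Delta^k u(r) \le -C_k r^{2(m-1-k)-(n-1)\cdot[\text{something}]}$ for large $r$; the exponent is tracked so that at $k=0$ it reads $2m-n\ge 1$. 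With that bookkeeping done, $K$ can be taken to be the integer part of that final exponent (or the exponent itself when it is already an integer), and the proof concludes.
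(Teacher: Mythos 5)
Your reduction to the case where every limit $\lim_{r\to+\infty}\Delta^k u(r)$, $k\in\{1,\dots,m-1\}$, vanishes is sound and matches the skeleton of the paper's proof (Lemma \ref{l:neg} gives the dichotomy, and a strictly negative limit for some $k$ finishes by integration). The gap is in how you dispose of that remaining case. You assume there that $u$ itself converges to a finite limit $u_\infty$, deduce $e^u\ge\tfrac12 e^{u_\infty}>0$, and integrate to reach a contradiction. But the hypothesis $\lim\Delta^k u=0$ for all $k\ge 1$ does not force $u$ to have a finite limit: by Proposition \ref{noboundbelow} the solution \emph{must} be unbounded from below, so in this scenario $u\to-\infty$, only possibly slower than any polynomial (think of $u\sim-\log r$ or $u\sim-\sqrt r$, for which all iterated Laplacians tend to $0$). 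Then $e^u\to 0$ and no uniform bound $e^{u}\ge\delta$ is available; your contradiction only excludes a sub-case that is already impossible for a trivial reason, and the genuinely dangerous sub-polynomial decay is left untouched. (As a secondary point, even granting $e^u\ge\delta$, integrating $\Delta^m u\le-\delta$ gives $u\le -cr^{2m}$, not $-cr^{2m-n}$; and your scheme never uses the hypothesis $n\le 2m-1$ in a load-bearing way.)

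The paper closes exactly this case with a different mechanism. For $3\le n\le 2m-1$ it chooses $k$ with $2k=n-2$ or $2k=n-1$, notes that $v=(-\Delta)^{m-k}u$ is a radial $k$-superpolyharmonic function, and compares $v$ with $\eps\,\Phi$ where $\Phi(r)=r^{2k-n}$ is the fundamental solution of $(-\Delta)^k$: iterating the sign information on $w_\eps=v-\eps\Phi$ yields $(-\Delta)^{m-k}u(r)>\eps r^{2k-n}$ for $r>1$, and integrating this up the chain gives $|\Delta u(r)|\ge Cr^{2m-n-2}$, which contradicts $\Delta u\to 0$ when $n\le 2m-2$ and directly produces $u(r)<-Cr$ when $n=2m-1$ (this is where $n\le 2m-1$ enters). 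For $n=1,2$ the paper argues separately that $r^{n-1}(\Delta^{m-1}u)'(r)$ is decreasing and negative, and the non-integrability of $r^{1-n}$ at infinity forces $\Delta^{m-1}u\to-\infty$, so the ``all limits zero'' case never occurs. Some comparison with an explicit singular barrier (or an equivalent quantitative device) is needed; a purely qualitative limit argument as in your proposal cannot rule out the sub-polynomial decay scenario.
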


\begin{proof}
By Lemma \ref{l:neg} we know that only the two following
alternatives may occur: either there exists $\overline k\in
\{1,\dots,m-1\}$ such that
\begin{equation} \label{eq:alternative-1}
\lim_{r\to +\infty} \Delta^{\overline k} u(r)<0
\end{equation}
or
\begin{equation} \label{eq:alternative-2}
\lim_{r\to +\infty} \Delta^{j} u(r)=0 \qquad \text{for any } j\in
\{1,\dots,m-1\} \, .
\end{equation}
We divide the proof in three parts.

{\bf The case $n=1,2$.} Put $v=\Delta^{m-1} u$ so that $v$ is a
radial superharmonic function in $\R^n$. In particular the map
$r\mapsto r^{n-1}v'(r)$ is decreasing and it is also negative for
any $r>0$ being equal to zero at $r=0$. Hence
$$
r^{n-1} v'(r)<v'(1)<0 \qquad \text{for any } r>1 \, .
$$
Integrating we then obtain
\begin{equation*}
v(r)<
\begin{cases}
v(1)-|v'(1)|\log r & \qquad \text{for any } r>1 \ \text{if} \ n=2 \\
v(1)-|v'(1)|(r-1) & \qquad \text{for any } r>1 \ \text{if}  \ n=1
\, .
\end{cases}
\end{equation*}
In both cases $\lim_{r\to +\infty} v(r)=-\infty$. This implies
that there exist $C,\overline r>0$ such that $\Delta^{m-1}
u(r)<-C$ for any $r>\overline r$. The proof of the lemma follows
after an iterative procedure of integration.

{\bf The case $3\le n\le 2m-2$.} We prove that
\eqref{eq:alternative-1} holds true. Suppose by contradiction that
\eqref{eq:alternative-2} holds true. Then by \eqref{Cauchy} and
\eqref{eq:alternative-2} we have
\begin{align} \label{eq:sign-altern}
((-\Delta)^j u)'(r)<0 \, , \qquad (-\Delta)^j u(r)>0 \qquad
\text{for any } r>0 \ \text{and} \ j\in\{1,\dots,m-1\} \, .
\end{align}
Since $n\ge 3$ we may fix $k\in \{1,\dots,m-2\}$ such that
$2k=n-2$ if $n$ is even and $2k=n-1$ if $n$ is odd. For any $r\ge
0$ put $v(r)=(-\Delta)^{m-k} u(r)$. Then by \eqref{Cauchy} we have
$(-\Delta)^{k} v=(-\Delta)^m u>0$ so that $v$ is a radial
$k$-superpolyharmonic function in $\R^n$. For any $\eps>0$ we
introduce the function $w_\eps(r):=v(r)-\eps \Phi(r)$ defined for
any $r>0$ where $\Phi(r):=r^{2k-n}$ is up to a constant multiplier
the fundamental solution of $(-\Delta)^{k}$. In particular we have
that $(-\Delta)^{k} w_\eps=(-\Delta)^{k} v>0$ in $(0,+\infty)$.
Exploiting \eqref{eq:sign-altern} we deduce that it is not
restrictive to fix $\eps>0$ small enough in such a way that
\begin{align} \label{eq:sign-alt-bis}
& \left(r^{n-1} ((-\Delta)^j
w_\eps)'(r)\right)_{|r=1}=(-1)^j[(\Delta^j v)'(1)-\eps (\Delta^j
\Phi)'(1)]\\
\notag & \qquad =((-\Delta)^{m-k+j} u)'(1)+\eps(-1)^{j+1}
(\Delta^j \Phi)'(1)<0 \qquad \text{for any } j\in \{0,\dots,k-1\}\,
.
\end{align}
where by $(-\Delta)^0$ we simply mean the identity operator.

Since $(-\Delta)^{k} w_\eps>0$ then the map $r\mapsto r^{n-1}
((-\Delta)^{k-1} w_\eps)'(r)$ is decreasing in $(0,+\infty)$ and
its value at $r=1$ is negative in view of \eqref{eq:sign-alt-bis}.
This implies that $((-\Delta)^{k-1} w_\eps)'(r)<0$ for any $r>1$
and, in turn, that the map $r\mapsto (-\Delta)^{k-1} w_\eps$ is
decreasing in $(1,+\infty)$. But by \eqref{eq:alternative-2} and
the definition of $w_\eps$ we have that
$$
\lim_{r\to +\infty} (-\Delta)^{k-1} w_\eps(r)=0
$$
and hence $(-\Delta)^{k-1} w_\eps(r)>0$ for any $r>1$. Iterating
this procedure we deduce that for any $j\in \{1,\dots,k\}$,
$(-\Delta)^j w_\eps>0$ in $(1,+\infty)$ and $w_\eps>0$ in the same
interval. By definition of $v$ and $w_\eps$ we infer
\begin{equation} \label{eq:26}
(-\Delta)^{m-k} u(r)>\eps r^{2k-n} \qquad \text{for any } r>1 \, .
\end{equation}
After an iterative procedure of integration it follows that there
exist $C,\overline r>0$ such that
\begin{equation*} 
|\Delta u(r)|>Cr^{-n+2m-2} \qquad \text{for any } r>\overline r \, .
\end{equation*}
Actually in the case $n$ even we also have that $|\Delta
u(r)|>Cr^{-n+2m-2}\log r$ for any large $r$. However, in any case
we have that $\lim_{r\to +\infty} \Delta u(r)\neq 0$ in
contradiction with \eqref{eq:alternative-2}. We proved the
validity of \eqref{eq:alternative-1} and then the conclusion of
the lemma follows after an iterative procedure of integration.

{\bf The case $n=2m-1$.} If \eqref{eq:alternative-1} holds true
then the proof of the lemma follows after an iterative procedure
of integration. If \eqref{eq:alternative-2} holds true then we
proceed exactly as in the case $3\le n\le 2m-2$ until
\eqref{eq:26} that becomes $\Delta u(r)<-\eps r^{-1}$ for any
$r>1$. Then a couple of integrations shows that $u(r)<-C r$ for
any large $r$. This completes the proof also in this case.
\end{proof}

We conclude this section with an estimate from
above at infinity for solutions of \eqref{eq_1} when $n=1$.
\begin{lemma} \label{l:1-d} Let $m\ge 1$ be odd and let $n=1$. Let $u$ be a solution of
\eqref{eq_1}. Then there exist a positive integer $K$ and
constants $C,\overline r>0$ such that
\begin{equation} \label{eq:st-dec}
u(x)<-C|x|^K \qquad \text{for any } |x|>\overline r \, .
\end{equation}
\end{lemma}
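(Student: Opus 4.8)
The plan is to prove that $u(x)\to-\infty$ at least linearly as $x\to+\infty$ and, by symmetry, as $x\to-\infty$; then \eqref{eq:st-dec} follows with $K=1$ (hence with any positive integer, after enlarging $\overline r$ and shrinking $C$). By Theorem \ref{t:ex-odd}, $u$ is defined and of class $C^{2m}$ on all of $\R$, and since $m$ is odd the equation reads $u^{(2m)}=-e^{u}<0$ on $\R$, so $u^{(2m-1)}$ is strictly decreasing. The substitution $x\mapsto-x$ carries a solution of \eqref{eq_1} into another solution, so it suffices to analyse the behaviour as $x\to+\infty$. Set $L:=\lim_{x\to+\infty}u^{(2m-1)}(x)\in[-\infty,+\infty)$, which exists by monotonicity; note $L=u^{(2m-1)}(0)-\int_0^\infty e^{u}$, so $L=-\infty$ exactly when $\int_0^\infty e^{u}=+\infty$. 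If $L<0$ (in particular if $L=-\infty$), then $u^{(2m-1)}\le-\delta$ on $[x_0,\infty)$ for suitable $\delta,x_0>0$, and $2m-1$ successive integrations give $u(x)\le -C\,x^{2m-1}+q(x)$ for $x\ge x_0$ with $\deg q<2m-1$, hence $u(x)<-C'x$ for all large $x$ — done. We may therefore assume $L\ge0$, which forces $u^{(2m-1)}>0$ on $[0,\infty)$.

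The core of the argument is a finite descent through the derivatives $u^{(j)}$, $j=2m-2,2m-3,\dots,1$. At step $j$ I would carry the inductive hypothesis that $u^{(j+1)}$ keeps a constant sign on $[0,\infty)$ — positive if $j+1$ is odd, negative if $j+1$ is even (for $j=2m-2$ this is exactly $u^{(2m-1)}>0$). Under this hypothesis $u^{(j)}$ is strictly monotone on $[0,\infty)$, so $\ell_j:=\lim_{x\to+\infty}u^{(j)}(x)$ exists in $[-\infty,+\infty]$, and exactly one of three cases occurs. If $\ell_j<0$, then $u^{(j)}\le-\delta$ eventually and $j$ successive integrations yield $u(x)<-Cx^{j}$, hence $u(x)<-C'x$, for large $x$ — done. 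If $\ell_j>0$, then $u^{(j)}\ge\delta$ eventually and $j$ integrations give $u(x)\to+\infty$; but then $u^{(2m)}=-e^{u}\to-\infty$, so $u^{(2m-1)}\to-\infty$, contradicting $u^{(2m-1)}>0$ on $[0,\infty)$, so this case cannot occur. If $\ell_j=0$, then $u^{(j)}$ has constant sign on $[0,\infty)$ — precisely the sign needed to start step $j-1$ — and we pass to $j-1$. If the descent reaches $j=0$ without stopping, then $u'>0$ on $[0,\infty)$, so $u$ is increasing there; either $u\to+\infty$, excluded as above, or $u$ is bounded on $[0,\infty)$, whence $e^{u}\ge\delta>0$ forces $u^{(2m)}\le-\delta$ and again $u^{(2m-1)}\to-\infty$, a contradiction. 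Hence, for the actual solution $u$, the descent must stop in the first case at some $j\in\{1,\dots,2m-2\}$, giving $u(x)<-C'x$ for all large $x$.

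Applying the above to $x\mapsto u(-x)$ as well, we obtain constants $C_{\pm},r_{\pm}>0$ with $u(x)<-C_{+}x$ for $x>r_{+}$ and $u(x)<-C_{-}|x|$ for $x<-r_{-}$; then \eqref{eq:st-dec} holds with $K=1$, $C=\min\{C_{+},C_{-}\}$ and $\overline r=\max\{r_{+},r_{-},1\}$. The only point that needs genuine care is the bookkeeping in the borderline alternative $\ell_j=0$: one must verify that the constant-sign (equivalently, monotonicity) hypothesis is faithfully transmitted from step $j$ to step $j-1$ — the parity of $j+1$ flips at each step and the sign of $u^{(j)}$ must match — and that both offending conclusions ``$u\to+\infty$'' and ``$u$ bounded below on $[0,\infty)$'' collide with $u^{(2m-1)}>0$. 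Everything else reduces to elementary repeated integration of scalar inequalities, of the type ``$g^{(k)}\le-\delta$ on $[x_0,\infty)$ $\Rightarrow$ $g(x)<-cx^{k}$ for large $x$''.
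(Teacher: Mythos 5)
Your proof is correct, and it takes a genuinely different route from the one in the paper. The paper's argument climbs \emph{upward} through the odd-order derivatives: it first shows $u'$ must vanish somewhere, then proves inductively that for each odd $k$ either the decay estimate already holds or $u^{(k+2)}$ also vanishes somewhere, until it reaches $u^{(2m-1)}$, whose strict monotonicity (from $u^{(2m)}=-e^u<0$) then forces it to be negative near $+\infty$ and positive near $-\infty$; the key technical step is a two-sided convexity argument (a strictly convex $u^{(k)}$ vanishing at $x_0$ with negative limit at $+\infty$ must blow up at $-\infty$). You instead descend \emph{downward} from $u^{(2m-1)}$ through $u^{(2m-2)},\dots,u'$, running a trichotomy on $\ell_j=\lim_{x\to+\infty}u^{(j)}(x)$ with a parity-tracked sign hypothesis, working only at $+\infty$ and recovering the other side by the reflection $x\mapsto -x$ (which is legitimate, since $n=1$ and the order $2m$ is even). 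Your bookkeeping checks out: the sign convention $(-1)^{j}$ for $u^{(j+1)}$ is correctly transmitted in the borderline case $\ell_j=0$, the cases $\ell_j>0$ and ``descent reaches $j=0$'' both collide with $u^{(2m-1)}>0$ via $u^{(2m)}\le-\delta$, and the case $m=1$ is covered by the empty descent. Your scheme is in fact the one-dimensional, one-sided analogue of the paper's own Lemmas \ref{l:neg} and \ref{l:55} for radial solutions, so it unifies the symmetric and non-symmetric cases at the price of the reflection step; the paper's version avoids reflection but needs the two-sided convexity lemma and the tracking of zeros of odd derivatives. Both yield the conclusion, yours with the explicit exponent $K=1$.
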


\begin{proof} Consider first the case $m=1$. We claim that there exists
$x_0\in \R$ such that $u'(x_0)=0$. Suppose by contradiction that
$u'(x)\neq 0$ for any $x\in \R$. Up to replace $u$ with the
function $u(-x)$ we may assume that $u'(x)>0$ for any $x\in \R$ so
that $u$ is increasing. Hence there exist $C,M>0$ such that
$e^{u(x)}>C$ for any $x>M$. This shows that $u''<-C$ in
$(M,+\infty)$ so that ${\ds \lim_{x\to +\infty} u'(x)=-\infty}$, a
contradiction. This completes the proof of the claim. The conclusion of the proof follows since $u$ is strictly concave.

We divide the proof of the case $m\ge 3$ odd into two steps.

{\bf Step 1.} Let $k\in \{1,\dots,2m-3\}$ be odd
and assume that there exists $x_0\in\R$ such that
$u^{(k)}(x_0)=0$. We prove that at least one of the two
alternatives holds true: either \eqref{eq:st-dec} holds true for
some $C,\overline r,K$ or $u^{(k+2)}$ vanishes at some point.

Suppose that \eqref{eq:st-dec} does not hold true
for any possible choice of $C,\overline r,K$ and let us prove the
validity of the second alternative. Suppose by contradiction that
$u^{(k+2)}(x)\neq 0$ for any $x\in\R$. Up to replace $u$ with the
function $u(-x)$ we may assume that $u^{(k+2)}(x)>0$ for any
$x\in\R$. Then $u^{(k)}$ is strictly convex and since
$u^{(k)}(x_0)=0$, only two situations may occur:

{\bf Case 1.} ${\ds \lim_{x\to +\infty}
u^{(k)}(x)=+\infty}$;

{\bf Case 2.} ${\ds\lim_{x\to +\infty} u^{(k)}(x)<0}$.

We may exclude Case 1. Indeed, after a finite
number of integrations we would have ${\ds\lim_{x\to +\infty}
\!\!u(x)\!=\!+\infty}$ and hence
$$
\lim_{x\to +\infty} u^{(2m)}(x)=-\lim_{x\to +\infty}
e^{u(x)}=-\infty \, ;
$$
after a finite number of integrations we arrive to ${\ds\lim_{x\to
+\infty} u^{(k)}(x)=-\infty}$ a contradiction.

This means that only Case 2 may occur. But from
strict convexity we necessarily have ${\ds \lim_{x\to -\infty}\!\!
u^{(k)}(x)\!=\!+\infty}$. Combining these two informations, integrating
a finite number of times and taking into account that $k$ is odd,
we conclude that \eqref{eq:st-dec} holds true, a contradiction.

{\bf Step 2.} In this step we complete the proof
of the lemma. We may proceed by contradiction assuming that
\eqref{eq:st-dec} does not hold true for any possible choice of
$C,\overline r,K$. We claim that there exists $x_0\in \R$ such
that $u'(x_0)=0$. Proceeding by contradiction, up to replace $u$
with the function $u(-x)$, we may assume that $u'(x)>0$ for any
$x\in \R$. Therefore $u$ is increasing and hence $e^{u}$ is
bounded away from zero at $+\infty$. Then by \eqref{eq_1} we infer
that ${\ds \lim_{x\to +\infty} u^{(2m)}(x)<0}$ and after a finite
number of integrations we obtain ${\ds\lim_{x\to +\infty}
u'(x)=-\infty}$, a contradiction. Therefore, we may apply
inductively Step 1 and prove that for any $k\in \{1,\dots,2m-3\}$
only the second alternative my occur. In particular this shows
that $u^{(2m-1)}$ vanishes somewhere. But by \eqref{eq_1} we
deduce that $u^{(2m-1)}$ is decreasing and hence it is bounded
away from zero both at $+\infty$ and $-\infty$; more precisely
negative at $+\infty$ and positive at $-\infty$. Taking into
account that $k$ is odd, after a finite number of integrations the
validity of \eqref{eq:st-dec} follows.
\end{proof}

\section{The case $m$ even}
Since \eqref{eq_1} is invariant under the following transformation
$$
u_\lambda(x)=u(\lambda x)+2m\log\lambda \, , \qquad \lambda>0 \, ,
$$
up to fix the value $\alpha_0$, the behavior of solutions of
\eqref{Cauchy} only depends on the values of the parameters
$\alpha_1,\dots,\alpha_{m-1}$. For this reason it is convenient to
treat the real parameter $\alpha_0$ and the vector valued
parameter $(\alpha_1,\dots,\alpha_{m-1})$ in two different ways.

According to \cite{agg,bffg}, for any $\alpha\in\R$ and $\beta\in
\R^{m-1}$, let us denote by $u_{\alpha,\beta}$ the unique local
solution of \eqref{Cauchy} corresponding to $\alpha_0=\alpha$ and
$\alpha_k=\beta_k$ for any $k\in \{1,\dots,m-1\}$, and by
$R_{\alpha,\beta}$ the corresponding maximal interval of
continuation. Finally for any $\alpha\in\R$, we define the set
\begin{equation*}
\mathcal A_\alpha:=\{\beta\in\R^{m-1}:u_{\alpha,\beta} \ \text{is
a global solution of } \eqref{Cauchy}\} \, .
\end{equation*}

We first prove that in dimensions $n=1,2$ the set $\mathcal
A_\alpha$ is empty for any $\alpha\in\R$. In other words for $n=1$
and $n=2$ problem \eqref{eq_1} does not admit any entire radial
solution.

\begin{lemma} \label{l:empty}
Let $n=1$ or $n=2$ and $m$ even. Then for any $\alpha\in\R$ the
set $\mathcal A_\alpha$ is empty.
\end{lemma}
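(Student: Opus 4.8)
The plan is to argue by contradiction: assume $u=u_{\alpha,\beta}$ is a global radial solution of \eqref{Cauchy} with $m$ even, and derive an impossible sign/growth configuration. Since $m$ is even we write $(-\Delta)^m u = \Delta(\Delta^{m-1}u) = -e^u$ is false; rather $(-\Delta)^m u = e^u>0$, and because $m$ is even, $(-\Delta)^m = (-\Delta)(-\Delta)^{m-1}$, so setting $v:=(-\Delta)^{m-1}u$ we have $-\Delta v = e^u>0$, i.e. $v$ is radially superharmonic on $[0,+\infty)$. In radial coordinates this reads $(r^{n-1}v'(r))' = -r^{n-1}e^{u(r)}<0$, so $r^{n-1}v'(r)$ is strictly decreasing and vanishes at $r=0$, whence $v'(r)<0$ for all $r>0$ and $v$ is strictly decreasing. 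The first task is to quantify how fast $v$ decreases: integrating $r^{n-1}v'(r)<v'(1)<0$ for $r>1$ gives, exactly as in the proof of Lemma \ref{l:55} in the case $n=1,2$,
\[
v(r) < \begin{cases} v(1)-|v'(1)|\log r, & n=2,\\[4pt] v(1)-|v'(1)|(r-1), & n=1,\end{cases}
\]
so in both dimensions $v(r)=(-\Delta)^{m-1}u(r)\to -\infty$ as $r\to+\infty$, and in particular $(-\Delta)^{m-1}u(r)<-C<0$ for all $r$ large.

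Next I would propagate this downward through the remaining $m-1$ Laplacians by successive integration. From $(-\Delta)^{m-1}u(r)<-C$ we get $\Delta\big((-\Delta)^{m-2}u\big)(r)=-(-\Delta)^{m-1}u(r)>C$ for $r$ large, i.e. $(r^{n-1}((-\Delta)^{m-2}u)'(r))' > Cr^{n-1}$ for large $r$; integrating twice yields $(-\Delta)^{m-2}u(r)\to +\infty$ at least like a positive power of $r$ (like $r^2$ in dimension $n\ge 1$, with the elementary one-dimensional integration handled directly when $n=1$). Iterating, the sign of $(-\Delta)^{m-1-j}u(r)$ for large $r$ alternates as $(-1)^{j}$ times $+\infty$, and after $m-1$ steps — since $m-1$ is odd — we conclude that $u(r)=(-\Delta)^{0}u(r)\to +\infty$ as $r\to+\infty$. (Each integration step is the routine ODE computation $(r^{n-1}g'(r))'=r^{n-1}h(r)$ with $h$ of one sign, giving $g'$ and then $g$ of the expected sign and growth; I would not write these out in detail.)

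Finally, $u(r)\to+\infty$ is incompatible with the equation. Indeed it forces $e^{u(r)}\to+\infty$, so from $(r^{n-1}v'(r))' = -r^{n-1}e^{u(r)}$ we get, for any $M>0$, $(r^{n-1}v'(r))'<-Mr^{n-1}$ for $r$ large, hence after two integrations $v(r)=(-\Delta)^{m-1}u(r)<-\frac{M}{2n}r^2+o(r^2)$; but this, pushed through the same chain of $m-1$ integrations as above, now forces $u(r)\to -\infty$, contradicting $u(r)\to+\infty$. (Alternatively, and perhaps cleaner, one invokes Proposition \ref{noboundbelow}: $u$ unbounded above together with this superlinear feedback contradicts the nonexistence of solutions bounded below; but the self-contained contradiction above avoids relying on \cite{mp}.) Either way we reach a contradiction, so $\mathcal A_\alpha$ is empty. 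The main obstacle is purely bookkeeping: keeping track of the alternating signs and the accumulating powers of $r$ through the $m-1$ integrations, and treating the $n=1$ case (where the ``fundamental solution'' integration is $r$ rather than $\log r$) in parallel with $n=2$; there is no conceptual difficulty, the whole argument being a monotonicity-and-integration bootstrap driven by the single fact that $(-\Delta)^{m-1}u$ is superharmonic and hence strictly decreasing to $-\infty$.
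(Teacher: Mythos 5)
Your overall strategy --- show by iterated integration that a global solution would satisfy $u(r)\to+\infty$, hence be bounded from below, and conclude by Proposition \ref{noboundbelow} --- is exactly the paper's proof of Lemma \ref{l:empty}, merely rewritten with $v=(-\Delta)^{m-1}u$ (superharmonic, decreasing to $-\infty$ in dimensions $1$ and $2$) in place of $\Delta^{m-1}u$ (subharmonic, increasing to $+\infty$). The first step, the propagation through the remaining $m-1$ Laplacians, and the resulting growth rates ($r^{2m-1}$ for $n=1$, $r^{2m-2}\log r$ for $n=2$) all match the paper's argument.

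However, your preferred ``self-contained'' contradiction at the end is wrong, and you should delete it and keep only the appeal to Proposition \ref{noboundbelow}. Starting from $u(r)\to+\infty$ you deduce $v(r)=(-\Delta)^{m-1}u(r)<-\tfrac{M}{2n}r^2+o(r^2)$, which is correct, but this has the \emph{same sign} as the bound $v(r)<-C$ you already had --- it is only quantitatively stronger. Pushing it through the same chain of $m-1$ integrations therefore reproduces the same alternation of signs and again yields $u(r)\to+\infty$ (since $m$ is even, $(-\Delta)^0u=u$ lands on the ``$+\infty$'' side of the alternation), not $u(r)\to-\infty$. There is no contradiction: for $m$ even the feedback is positive, which is precisely why such solutions blow up rather than oscillate. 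The negative-feedback argument you have in mind is the one used in the paper's Lemma \ref{l:neg} for $m$ \emph{odd}, where the extra minus sign in $\Delta^m u=-e^u$ flips the conclusion at the last step. So the only way to close the proof along these lines is the external input you mention only parenthetically: $u(r)\to+\infty$ implies $u$ is bounded from below, which contradicts Proposition \ref{noboundbelow}. With that substitution your proof is correct and coincides with the paper's.
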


\begin{proof} 
Let $u$ be a solution of \eqref{Cauchy} and let $[0,R)$ its
maximal interval of continuation. Assume by contradiction that $u$
is such that $R=+\infty$. By \eqref{Cauchy}, we have that the
function $r\mapsto r^{n-1} (\Delta^{m-1}u)'(r)$ is increasing in
$[0,R)$ and hence there exists $C>0$ such that for any $r\ge 1$
\begin{equation*}
\Delta^{m-1}u(r)\ge
\begin{cases}
\Delta^{m-1}u(1)+C(r-1) & \qquad \text{if } n=1 \\[8pt]
\Delta^{m-1}u(1)+C\log r & \qquad \text{if } n=2 \, .
\end{cases}
\end{equation*}
After an iterative procedure one can show that for any $k\in
\{1,\dots,m-1\}$ there exists $C_k>0$ and $r_k>0$ such that for
any $r\ge r_k$
\begin{equation} \label{eq:proof-empty}
\Delta^{m-k}u(r)\ge
\begin{cases}
C_k r^{2k-1} & \qquad \text{if } n=1 \\[8pt]
C_k r^{2k-2} \log r & \qquad \text{if } n=2 \, .
\end{cases}
\end{equation}
After two further integrations, from \eqref{eq:proof-empty} with
$k=m-1$ we finally deduce that there exist $C_m>0$ and $r_m>0$
such that for any $r\ge r_m$
\begin{equation*} 
u(r)\ge
\begin{cases}
C_m r^{2m-1} & \qquad \text{if } n=1 \\[8pt]
C_m r^{2m-2} \log r & \qquad \text{if } n=2 \, .
\end{cases}
\end{equation*}
In particular we deduce that $\lim_{r\to +\infty} u(r)=+\infty$
and hence $u$ is bounded from below. We reached a contradiction
with Proposition \ref{noboundbelow}. This completes the proof of
the lemma.
\end{proof}

Suppose now that $n\ge 3$. We prove that if $\beta_{m-1}\ge 0$
then any solution $u_{\alpha,\beta}$ of \eqref{Cauchy} blows up in
finite time.

\begin{lemma} \label{l:not-global}
Let $n\ge 3$ and $m$ even. Then for any $\alpha\in\R$ and any
$\beta=(\beta_1,\dots,\beta_{m-1})\in\R^{m-1}$ with
$\beta_{m-1}\ge 0$, the corresponding solution $u_{\alpha,\beta}$
of \eqref{Cauchy} is not global.
\end{lemma}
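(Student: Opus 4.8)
\textbf{Proof proposal for Lemma \ref{l:not-global}.}

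The plan is to show that when $\beta_{m-1}\ge 0$ the positivity of $\Delta^{m-1}u_{\alpha,\beta}$ is forced from the very start, which in turn pushes $u_{\alpha,\beta}$ to $+\infty$ so fast that the solution cannot exist globally. First I would rewrite the equation as $(r^{n-1}(\Delta^{m-1}u)'(r))'=r^{n-1}e^{u}>0$, since $m$ is even and so $(-\Delta)^m u=\Delta^m u=e^u$. Hence $r\mapsto r^{n-1}(\Delta^{m-1}u)'(r)$ is increasing and vanishes at $r=0$, so $(\Delta^{m-1}u)'(r)>0$ on the maximal interval $[0,R)$; consequently $\Delta^{m-1}u(r)\ge \Delta^{m-1}u(0)=\beta_{m-1}\ge 0$, and in fact $\Delta^{m-1}u(r)>\beta_{m-1}\ge 0$ for $r>0$.

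Next I would run an iterative integration, exactly as in the proof of Lemma \ref{l:empty}, but now using that $n\ge 3$. From $(r^{n-1}(\Delta^{m-2}u)'(r))'=r^{n-1}\Delta^{m-1}u(r)\ge 0$ and the vanishing of the bracket at the origin one gets $(\Delta^{m-2}u)'(r)\ge 0$, hence $\Delta^{m-2}u(r)\ge \Delta^{m-2}u(0)=\beta_{m-2}$; more usefully, integrating the pointwise lower bound $\Delta^{m-1}u(r)\ge \beta_{m-1}$ — or, if $\beta_{m-1}=0$, the strict bound $\Delta^{m-1}u(r)\ge \Delta^{m-1}u(1)>0$ for $r\ge 1$ — twice against the weight $r^{n-1}$ produces, for some $C>0$ and all large $r$, a bound of the form $\Delta^{m-2}u(r)\ge C r^{2}+(\text{lower order})$. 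Iterating this $m-1$ times yields, for suitable $C_k>0$ and $r_k>0$,
\begin{equation*}
\Delta^{m-k}u(r)\ge C_k\, r^{2k}-C_k' \qquad\text{for all } r\ge r_k,\ k\in\{1,\dots,m-1\},
\end{equation*}
and one more pair of integrations with $k=m-1$ gives $u(r)\ge C_m r^{2m-2}-C_m'$ for all large $r$. In particular $\lim_{r\to+\infty}u(r)=+\infty$, so $u$ would be bounded from below on $\R^n$, contradicting Proposition \ref{noboundbelow}; therefore $R<+\infty$ and $u_{\alpha,\beta}$ is not global.

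The one point that requires genuine care — and which I expect to be the main obstacle — is the bookkeeping in the iteration when $\beta_{m-1}=0$: then the lower bound on $\Delta^{m-1}u$ is not a positive constant but only $\Delta^{m-1}u(r)>0$, and one must first integrate once to extract a genuine positive constant lower bound ($\Delta^{m-1}u(r)\ge\Delta^{m-1}u(1)=:c>0$ for $r\ge 1$) before the polynomial cascade kicks in; the powers of $r$ and the dimension-dependent constants from the weight $r^{n-1}$ (which is where $n\ge 3$, as opposed to $n=1,2$, is used to get genuinely polynomial rather than logarithmic growth) then have to be tracked, but this is routine once the base case is set up. An alternative, slicker route would be a direct comparison (via Proposition \ref{mckenna reichel}) with the solution $w$ of the linear problem $\Delta^{m}w=0$ — no, rather $\Delta^{m-1}w\equiv\beta_{m-1}$ — with the same lower-order initial data, giving $u\ge w$ and $w(r)\to+\infty$ when $\beta_{m-1}>0$; but since the borderline case $\beta_{m-1}=0$ still needs the strict-monotonicity argument above, I would present the direct integration, which handles both cases uniformly.
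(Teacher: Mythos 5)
Your proposal is correct and follows essentially the same route as the paper's proof: monotonicity of $r\mapsto r^{n-1}(\Delta^{m-1}u)'(r)$ forces $\Delta^{m-1}u(r)\ge C>0$ for $r\ge 1$, an iterated integration against the weight $r^{n-1}$ (using $n\ge 3$) yields $u(r)\ge C_m r^{2m-2}$ for large $r$, and this contradicts Proposition \ref{noboundbelow}. The only blemish is a bookkeeping slip in your displayed estimate, where the exponent $r^{2k}$ should read $r^{2k-2}$ (each double integration gains a factor $r^2$ starting from a constant at $k=1$), as both your preceding text and your final bound $u(r)\ge C_m r^{2m-2}$ already indicate.
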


\begin{proof} 
Let us denote the function $u_{\alpha,\beta}$ simply by $u$ and by
$[0,R)$ the corresponding maximal interval of continuation.
Suppose by contradiction that $R=+\infty$. As in the proof of
Lemma \ref{l:empty} we have that the function $r\mapsto r^{n-1}
(\Delta^{m-1}u)'(r)$ is increasing in $[0,+\infty)$ and being zero
at $r=0$ then $(\Delta^{m-1}u)'(r)>0$ for any $r>0$. Hence also
the map $r\mapsto \Delta^{m-1}u(r)$ is increasing in $[0,+\infty)$
and being $\Delta^{m-1}u(0)=\beta_{m-1}\ge 0$ then there exists
$C>0$ such that $\Delta^{m-1}u(r)\ge C$ for any $r\ge 1$.

After an iterative procedure as in Lemma \ref{l:empty}, one can
show that for any $k\in \{1,\dots,m-1\}$ there exists $C_k>0$ and
$r_k>0$ such that
\begin{equation} \label{eq:proof-empty-bis}
\Delta^{m-k}u(r)\ge C_k r^{2k-2} \qquad \text{for any } r\ge r_k
\, .
\end{equation}
After two further integrations in \eqref{eq:proof-empty-bis} with
$k=m-1$ we infer
\begin{equation*}
u(r)\ge C_m r^{2m-2} \qquad \text{for any } r\ge r_m \, ,
\end{equation*}
for some $C_m,r_m>0$. This shows that $u$ is bounded from below in
$[0,+\infty)$ in contradiction with Proposition
\ref{noboundbelow}. This completes the proof of the lemma.
\end{proof}

It is possible to provide a more detailed characterization of
blowing-up solutions of \eqref{Cauchy} as shown in the following
lemma.

\begin{lemma} \label{l:blow-up-bis}
Let $n\ge 3$ and $m$ even. For any $\alpha\in\R$ and
$\beta\in\R^{m-1}$ let $u_{\alpha,\beta}$ be the corresponding
solution of \eqref{Cauchy} with maximal interval of continuation
$[0,R_{\alpha,\beta})$, $R_{\alpha,\beta}\in (0,+\infty]$. Then
$R_{\alpha,\beta}<+\infty$ if and only if there exists $R_0\in
(0,R_{\alpha,\beta})$ such that
$\Delta^{m-1}u_{\alpha,\beta}(R_0)\ge 0$. Moreover in such a case
we also have
\begin{align} \label{eq:blow-up-0}
& \lim_{r\to R_{\alpha,\beta}^-} u_{\alpha,\beta}(r)=+\infty \, ,
\quad \lim_{r\to R_{\alpha,\beta}^-} u_{\alpha,\beta}'(r)=+\infty
\, ,  \\
\notag & \lim_{r\to R_{\alpha,\beta}^-} \Delta^k
u_{\alpha,\beta}(r)=+\infty \, , \quad \lim_{r\to
R_{\alpha,\beta}^-} (\Delta^k u_{\alpha,\beta})'(r)=+\infty \, ,
\end{align}
for any $k\in \{1,\dots,m-1\}$.
\end{lemma}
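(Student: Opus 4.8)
The plan is to exploit the sign structure of $\Delta^{m-1}u_{\alpha,\beta}$ along the lines of Lemma \ref{l:not-global}, but localized at an arbitrary radius rather than at the origin. Write $u=u_{\alpha,\beta}$, $R=R_{\alpha,\beta}$ for brevity. First I would record the basic monotonicity: from \eqref{Cauchy} the map $r\mapsto r^{n-1}(\Delta^{m-1}u)'(r)$ has derivative $r^{n-1}e^{u(r)}>0$, so it is strictly increasing on $[0,R)$ and, being zero at $r=0$, is strictly positive on $(0,R)$; hence $(\Delta^{m-1}u)'(r)>0$ on $(0,R)$ and $r\mapsto \Delta^{m-1}u(r)$ is strictly increasing. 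This gives one implication almost immediately in its contrapositive form: if $\Delta^{m-1}u(r)<0$ for every $r\in(0,R)$, I want to conclude $R=+\infty$. For this I would argue as in Lemma \ref{odd}/Lemma \ref{l:not-global}: since $\Delta^{m-1}u$ is negative and increasing it converges to some $\ell\le 0$, so $\Delta^{m-1}u$ is bounded; integrating the relation $(r^{n-1}(\Delta^{m-2}u)'(r))'=r^{n-1}\Delta^{m-1}u(r)$ successively and using the vanishing Cauchy data at $r=0$ shows that on any finite interval $u$ and all its derivatives up to order $2m-1$ stay bounded, whence the solution cannot blow up in finite time and $R=+\infty$. (One should be a little careful: being negative and increasing controls $\Delta^{m-1}u$ from above by $0$ and from below by $\Delta^{m-1}u(1)$ away from the origin, which is all that is needed; near the origin the Cauchy data give the bound.)

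For the converse and the refined conclusion \eqref{eq:blow-up-0}, suppose there is $R_0\in(0,R)$ with $\Delta^{m-1}u(R_0)\ge 0$. Since $\Delta^{m-1}u$ is strictly increasing, $\Delta^{m-1}u(r)>0$ for all $r>R_0$, and in fact $\Delta^{m-1}u(r)\ge c_0>0$ on $[R_0+\delta,R)$ for some $\delta>0$. Now I would run the same iterative integration scheme used in Lemma \ref{l:not-global} (estimate \eqref{eq:proof-empty-bis}), but starting from the radius $R_0+\delta$ instead of from $1$: because $(r^{n-1}(\Delta^{m-1}u)'(r))'=r^{n-1}e^u>0$, the function $r^{n-1}(\Delta^{m-1}u)'(r)$ is increasing, so $(\Delta^{m-1}u)'(r)\ge c_1 r^{1-n}$, hence $\Delta^{m-1}u(r)\to\ell_1\in(0,+\infty]$ and in particular is bounded below by a positive constant; then $(r^{n-1}(\Delta^{m-2}u)'(r))'=r^{n-1}\Delta^{m-1}u(r)$ forces $(\Delta^{m-2}u)'(r)>0$ eventually and $\Delta^{m-2}u(r)$ to grow, and so on down the chain of Laplacians. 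Iterating, one obtains that each of $\Delta^k u$, $(\Delta^k u)'$ ($k=1,\dots,m-1$), together with $u$ and $u'$, is eventually positive, increasing, and bounded below by a positive constant on $[R_1,R)$ for some $R_1<R$. In particular $u(r)\ge c\,r^{2m-2}$-type lower bounds hold, so $u$ is bounded below on $[0,R)$. If $R=+\infty$ this contradicts Proposition \ref{noboundbelow}; hence $R<+\infty$.

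It remains to upgrade "eventually positive and increasing" to the limits in \eqref{eq:blow-up-0}. Once $R<+\infty$ is known, I would argue that the blow-up must actually be of $u$ itself: if $u$ stayed bounded on $[0,R)$, then $e^u$ would be bounded, and integrating the equation $2m$ times from $0$ (using the Cauchy data) would show all derivatives up to order $2m-1$ bounded on $[0,R)$, contradicting maximality of $R$ by the standard ODE continuation argument (exactly as at the end of Lemma \ref{odd}). Since $u$ is eventually increasing on $[R_1,R)$, boundedness above fails precisely as $\lim_{r\to R^-}u(r)=+\infty$. Feeding this back into $(r^{n-1}(\Delta^{m-1}u)'(r))'=r^{n-1}e^{u(r)}$ and integrating on $[R_1,r]$ shows $(\Delta^{m-1}u)'(r)\to+\infty$, hence $\Delta^{m-1}u(r)\to+\infty$; then integrating down the chain (each $\Delta^{k}u$ is the "antiderivative" of $r^{n-1}\Delta^{k+1}u$ divided by $r^{n-1}$, and all lower-order terms are eventually positive) propagates the divergence to $+\infty$ of every $\Delta^k u$, every $(\Delta^k u)'$, and of $u'$, giving all the limits in \eqref{eq:blow-up-0}.

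The main obstacle, I expect, is bookkeeping rather than conceptual: making the iterative integration scheme of Lemma \ref{l:not-global} work cleanly when it is anchored at an interior radius $R_0$ (where only $\Delta^{m-1}u\ge 0$ is known, not the vanishing of the first derivatives) instead of at the origin. One must check that the positivity of the first derivatives $(\Delta^k u)'$ needed at each stage is genuinely produced by the scheme and is not being tacitly imported from the Cauchy data; the strict monotonicity of $r^{n-1}(\Delta^{m-1}u)'(r)$ is what saves this, but the chain of implications has to be written in the right order. The passage from "$u$ bounded on $[0,R)$ with $R<\infty$" to "contradiction with maximality" is the same bounded-coefficients continuation argument already invoked in Lemma \ref{odd}, so it can be cited rather than redone.
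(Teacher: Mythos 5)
Your proof of the equivalence itself is sound and follows the paper's route: the implication ``no such $R_0$ exists $\Rightarrow R_{\alpha,\beta}=+\infty$'' via boundedness of $\Delta^{m-1}u_{\alpha,\beta}$ (negative and increasing) plus the standard continuation argument, and the converse via the iteration of Lemma \ref{l:not-global} and Proposition \ref{noboundbelow}. The genuine gap is in your derivation of \eqref{eq:blow-up-0}. You propagate the blow-up bottom-up, starting from ``$u$ is eventually increasing on $[R_1,R)$, hence $u\to+\infty$''. But the only justification you offer for eventual positivity of $u'$ and of the $(\Delta^k u)'$ is the iterative scheme of Lemma \ref{l:not-global}, and that scheme produces eventual positivity only because the polynomially growing term $\tfrac{c_0}{n}r^n$ eventually dominates the (possibly very negative) value of $r^{n-1}(\Delta^{m-2}u)'$ at $R_0+\delta$ --- i.e.\ it needs $r\to+\infty$. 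Once $R<+\infty$ is known, $r$ is confined to a bounded interval and the scheme yields no sign information at all; so ``eventually increasing'' is, as written, only established under the already-refuted hypothesis $R=+\infty$, and without it ``$u$ unbounded above'' does not give $\lim_{r\to R^-}u(r)=+\infty$ (the function could a priori oscillate). This can be repaired --- since $\Delta^{m-1}u>0$ on $(R_0,R)$, the map $r\mapsto r^{n-1}(\Delta^{m-2}u)'(r)$ is increasing there, hence eventually of one sign, hence $\Delta^{m-2}u$ is eventually monotone, and so on down to $u$; eventual monotonicity plus unboundedness then forces $u\to+\infty$ --- but that argument is not in your text.

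A second, related defect is the upward propagation ``feeding $u\to+\infty$ back into $(r^{n-1}(\Delta^{m-1}u)')'=r^{n-1}e^{u}$ and integrating shows $(\Delta^{m-1}u)'\to+\infty$'': on a finite interval an integrand tending to $+\infty$ need not have a divergent integral (e.g.\ $e^{u(s)}$ behaving like $(R-s)^{-1/2}$), so this inference is invalid as stated, and the same objection applies to each subsequent ``integrate down the chain'' step. The paper avoids both issues by arguing top-down with a limit/contradiction dichotomy at every stage: $\Delta^{m-1}u$ is increasing on all of $[0,R)$, hence has a limit; if that limit were finite then successive integrations would bound $u$ and all derivatives up to order $2m-1$ on $[0,R)$, contradicting the maximality of $R$; therefore $\Delta^{m-1}u\to+\infty$, and the identical dichotomy (the quantity is eventually monotone, hence has a limit, and a finite limit forces global boundedness and a continuation contradiction) is then applied to $(\Delta^{m-2}u)'$, $\Delta^{m-2}u$, and so on down to $u'$ and $u$. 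You should reorganize the proof of \eqref{eq:blow-up-0} along these lines.
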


\begin{proof} For simplicity we write $u=u_{\alpha,\beta}$ and
$R=R_{\alpha,\beta}$. First suppose that $R<+\infty$. By
\eqref{Cauchy} we observe that $\Delta^{m-1} u$ is increasing and
hence admits a limit as $r\to R^{-}$. We claim that this limit is
$+\infty$. Suppose by contradiction that this limit is finite so
that $\Delta^{m-1}u$ is bounded in $[0,R)$. Successive
integrations imply that for any $k\in \{1,\dots,m-1\}$, $u$, $u'$,
$\Delta^k u$, $(\Delta^k u)'$ are bounded in $[0,R)$ and hence
also $u$ and all its derivatives are bounded in the same interval.
A standard argument in the theory of ordinary differential
equations leads to a contradiction with the maximality of $R$.
This completes the proof of the claim.

Since $\lim_{r\to R^-} \Delta^{m-1}u(r)=+\infty$, in particular
$\Delta^{m-1} u(r)>0$ for any $r$ in a sufficiently small left
neighborhood of $R$. After two integrations we deduce that
$(\Delta^{m-2}u)'$ and $\Delta^{m-2}u$ are bounded from below and
they admit a limit as $r\to R^-$. As above one shows that these
limits are necessarily $+\infty$.

Proceeding iteratively it is possible to prove that
\begin{align*} 
\lim_{r\to R^-} u(r)=+\infty \, , \quad \lim_{r\to R^-}
u'(r)=+\infty \, , \quad \lim_{r\to R^-} \Delta^k u(r)=+\infty \,
, \quad \lim_{r\to R^-} (\Delta^k u)'(r)=+\infty
\end{align*}
for any $k\in \{1,\dots,m-1\}$. This implies \eqref{eq:blow-up-0}
and in particular the existence of $R_0\in (0,R)$ such that
$\Delta^{m-1}u(R_0)\ge 0$. This completes the first part of the
proof.

Suppose now that there exists $R_0\in (0,R)$ such that
$\Delta^{m-1}u(R_0)\ge 0$. Proceeding by contradiction as in the
proof of Lemma \ref{l:not-global} we arrive to the conclusion.
\end{proof}

The next lemma is devoted to the behavior at infinity of global
solutions of \eqref{Cauchy}.

\begin{lemma} \label{l:limiti-Laplaciani}
Let $n\ge 3$ and $m$ even. Let $u$ be a global solution of
\eqref{Cauchy}. Then the following limits exist
\begin{equation} \label{eq:ex-lim-Lap}
\lim_{r\to +\infty} u(r) \, , \qquad \lim_{r\to +\infty}
r^{n-1}u'(r) \, \qquad \lim_{r\to +\infty} \Delta^k u(r) \, ,
\qquad \lim_{r\to +\infty} r^{n-1}(\Delta^k u)'(r) \, ,
\end{equation}
for any $k\in \{1,\dots,m-1\}$. Moreover
\begin{equation} \label{eq:neg-lim-Lap}
\lim_{r\to +\infty} u(r)=-\infty \, , \qquad \lim_{r\to +\infty}
\Delta^k u(r)\le 0
\end{equation}
for any $k\in \{1,\dots,m-1\}$.
\end{lemma}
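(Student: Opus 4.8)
The plan is to exploit the divergence structure of the radial polyharmonic operator together with the sign information already extracted from the Cauchy problem when $m$ is even. Recall that from \eqref{Cauchy} we may write $(r^{n-1}(\Delta^{m-1}u)'(r))'=r^{n-1}e^u>0$, so $r\mapsto r^{n-1}(\Delta^{m-1}u)'(r)$ is strictly increasing on $[0,+\infty)$; since it vanishes at $r=0$ it is strictly positive for $r>0$, hence $(\Delta^{m-1}u)'(r)>0$ for all $r>0$ and $\Delta^{m-1}u$ is strictly increasing on $[0,+\infty)$. By Lemma~\ref{l:blow-up-bis}, globality of $u$ forces $\Delta^{m-1}u(r)<0$ for every $r\ge 0$ (otherwise there would be $R_0$ with $\Delta^{m-1}u(R_0)\ge0$ and the solution would blow up in finite time). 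Therefore the monotone bounded-above function $\Delta^{m-1}u$ has a finite limit in $[-\infty,0)$ as $r\to+\infty$; being monotone and bounded above by $0$ and below by $\Delta^{m-1}u(0)=\beta_{m-1}$, the limit is in fact finite. This gives existence of $\lim_{r\to+\infty}\Delta^{m-1}u(r)$.

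Next I would run a downward induction on $k$ from $m-1$ to $1$ to get existence of all the limits in \eqref{eq:ex-lim-Lap}. Suppose $\Delta^{k}u(r)\to \ell_k\in\R$ (with the appropriate sign, to be tracked) as $r\to+\infty$, for some $k\in\{1,\dots,m-1\}$. From $(r^{n-1}(\Delta^{k-1}u)'(r))'=r^{n-1}\Delta^{k}u(r)$ and integration from $0$, the function $r^{n-1}(\Delta^{k-1}u)'(r)=\int_0^r s^{n-1}\Delta^{k}u(s)\,ds$ is eventually monotone (its derivative $r^{n-1}\Delta^k u(r)$ has a fixed sign near infinity since $\Delta^k u\to\ell_k$), hence admits a limit, possibly $\pm\infty$. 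If that limit were $\pm\infty$ one divides by $r^{n-1}$ and integrates once more to conclude $\Delta^{k-1}u(r)\to\pm\infty$ with a definite sign; if it is finite, then $(\Delta^{k-1}u)'(r)\sim c\,r^{1-n}$ and, since $n\ge3$, $r^{1-n}$ is integrable at infinity, so $\Delta^{k-1}u$ again has a (finite) limit. In all cases the limits $\lim r^{n-1}(\Delta^k u)'(r)$ and $\lim \Delta^{k-1}u(r)$ exist in $[-\infty,+\infty]$, which is what \eqref{eq:ex-lim-Lap} asserts (the statement only claims existence, allowing $\pm\infty$). The same argument with $k=0$, using $u'(0)=0$, yields existence of $\lim r^{n-1}u'(r)$ and $\lim u(r)$.

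For the sign statement \eqref{eq:neg-lim-Lap}, the key point is that $\lim_{r\to+\infty}u(r)=-\infty$: indeed, if $\lim u(r)$ were finite or $+\infty$ then $u$ would be bounded from below on $[0,+\infty)$, contradicting Proposition~\ref{noboundbelow}. Once $u\to-\infty$, the inequality $\lim_{r\to+\infty}\Delta^k u(r)\le 0$ for each $k\in\{1,\dots,m-1\}$ follows by propagating the sign up through the Laplacians: if some $\lim_{r\to+\infty}\Delta^k u(r)>0$, then integrating twice as above gives $\Delta^{k-1}u(r)\to+\infty$, and iterating down to $k=0$ forces $u(r)\to+\infty$, contradicting $u\to-\infty$. (Alternatively one can use the already-established monotonicity: $\Delta^{m-1}u<0$, then $(r^{n-1}(\Delta^{m-2}u)')'=r^{n-1}\Delta^{m-1}u<0$ gives $(\Delta^{m-2}u)'<0$, so $\Delta^{m-2}u$ is decreasing with a limit $\le\Delta^{m-2}u(0)$, but to get the limit $\le 0$ one does need the contradiction-with-$u\to-\infty$ argument, since the sign alternates as in Lemma~\ref{l:lim-0}.) The main obstacle I anticipate is the careful bookkeeping of which limits are finite versus $\pm\infty$ along the induction, and making sure the case $n\ge3$ is genuinely used (through integrability of $r^{1-n}$ at infinity); the structural facts themselves are straightforward consequences of Lemmas~\ref{l:blow-up-bis}, the divergence form of the equation, and Proposition~\ref{noboundbelow}.
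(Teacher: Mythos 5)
Your argument is correct and follows essentially the same route as the paper: iterated integration of the divergence form $(r^{n-1}(\Delta^{k}u)')'=r^{n-1}\Delta^{k+1}u$ to propagate eventual monotonicity (hence existence of all limits) downward from $\Delta^{m-1}u$, then Proposition~\ref{noboundbelow} for $u\to-\infty$ and a two-integration contradiction for $\lim\Delta^k u\le 0$. The only point to tighten is the parenthetical justification in your induction: when $\ell_k=0$ the mere existence of the limit does not give $\Delta^k u$ a fixed sign near infinity, but the eventual (weak) sign constancy you need does follow from the eventual monotonicity of $\Delta^k u$ established at the previous inductive step, which is exactly how the paper runs the iteration.
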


\begin{proof} From \eqref{Cauchy} we deduce that the map $r\mapsto
r^{n-1}(\Delta^{m-1})'(r)$ is increasing and positive in
$(0,+\infty)$ and hence it admits a limit as $r\to +\infty$.
Moreover being $(\Delta^{m-1} u)'$ positive the function
$\Delta^{m-1}u$ is increasing in $(0,+\infty)$; hence it admits a
limit as $r\to +\infty$ and it is eventually of constant sign.

We can start again the procedure: the map $r\mapsto
r^{n-1}(\Delta^{m-2})'(r)$ is eventually monotone and hence it
admits a limit as $r\to +\infty$ and it is eventually of constant
sign. Therefore $\Delta^{m-2} u$ is eventually monotone; hence it
admits a limit as $r\to +\infty$ and it is eventually of constant
sign. An iteration of this procedure yields the validity of
\eqref{eq:ex-lim-Lap}.

It remains to prove \eqref{eq:neg-lim-Lap}. By
\eqref{eq:ex-lim-Lap} and Proposition \ref{noboundbelow} we
immediately have that $\lim_{r\to +\infty} u(r)=-\infty$.

Let us consider the second limit in \eqref{eq:neg-lim-Lap}.
Suppose by contradiction that there exists $k\in \{1,\dots,m-1\}$
such that $\lim_{r\to +\infty} \Delta^k u(r)>0$. Hence there exist
$C,\overline r>0$ such that
$$
\Delta^k u(r)>C \qquad \text{for any } r>\overline r
$$
After two integrations we obtain $\lim_{r\to +\infty} \Delta^{k-1}
u(r)=+\infty$ and hence $\lim_{r\to +\infty} u(r)=+\infty$, a
contradiction.
\end{proof}

The next two lemmas are devoted to a detailed description of the
set $\mathcal A_\alpha$ when $n\ge 3$.

\begin{lemma} \label{l:closed}
Let $n\ge 3$ and $m$ even. Then for any $\alpha\in\R$ the set
$\mathcal A_\alpha$ is closed.
\end{lemma}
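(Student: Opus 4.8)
The plan is to show that $\mathcal A_\alpha$ is closed by proving that its complement is open; equivalently, if $\beta^{(j)} \to \beta$ with each $u_{\alpha,\beta^{(j)}}$ global, then $u_{\alpha,\beta}$ is global. The natural tool here is the characterization from Lemma \ref{l:blow-up-bis}: a solution $u_{\alpha,\beta}$ blows up in finite time \emph{if and only if} there exists $R_0 \in (0,R_{\alpha,\beta})$ with $\Delta^{m-1} u_{\alpha,\beta}(R_0) \ge 0$. So the strategy is to assume for contradiction that $\beta \notin \mathcal A_\alpha$, i.e. $R_{\alpha,\beta} < +\infty$; then by Lemma \ref{l:blow-up-bis} there is some $R_0 < R_{\alpha,\beta}$ with $\Delta^{m-1} u_{\alpha,\beta}(R_0) \ge 0$, and in fact — using the strict monotonicity of $r \mapsto \Delta^{m-1} u_{\alpha,\beta}(r)$ established in that lemma — we can even pick $R_1 \in (R_0, R_{\alpha,\beta})$ with $\Delta^{m-1} u_{\alpha,\beta}(R_1) > 0$ strictly.

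Next I would invoke continuous dependence on initial data (this is one of the appendix results the paper refers to). Fix such an $R_1 < R_{\alpha,\beta}$. For $j$ large, $\beta^{(j)}$ is close to $\beta$, and since $u_{\alpha,\beta}$ exists on the compact interval $[0, R_1]$, continuous dependence gives that $u_{\alpha,\beta^{(j)}}$ also exists on $[0,R_1]$ for $j$ large, with $u_{\alpha,\beta^{(j)}} \to u_{\alpha,\beta}$ in, say, $C^{2m}([0,R_1])$. In particular $\Delta^{m-1} u_{\alpha,\beta^{(j)}}(R_1) \to \Delta^{m-1} u_{\alpha,\beta}(R_1) > 0$, so for $j$ large we get $\Delta^{m-1} u_{\alpha,\beta^{(j)}}(R_1) > 0$, hence $\ge 0$. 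But then, applying the ``if'' direction of Lemma \ref{l:blow-up-bis} to $u_{\alpha,\beta^{(j)}}$ with the point $R_0 := R_1 \in (0, R_{\alpha,\beta^{(j)}})$, we conclude $R_{\alpha,\beta^{(j)}} < +\infty$, i.e. $u_{\alpha,\beta^{(j)}}$ is \emph{not} global — contradicting $\beta^{(j)} \in \mathcal A_\alpha$. This contradiction shows $\beta \in \mathcal A_\alpha$, so $\mathcal A_\alpha$ is closed.

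The main obstacle, and the point requiring a little care, is making the continuous-dependence argument rigorous: one must ensure that for $j$ large the solution $u_{\alpha,\beta^{(j)}}$ really is defined on all of $[0,R_1]$ (and not just on a shrinking subinterval). The standard ODE fact is that the maximal existence time is lower semicontinuous in the initial data, which is exactly what is needed; this is presumably the content of the continuous-dependence proposition quoted in the appendix (referenced, e.g., as it is used implicitly elsewhere in the paper). A secondary subtlety is that Lemma \ref{l:blow-up-bis} as stated requires $\Delta^{m-1} u(R_0) \ge 0$ at \emph{some} interior point; since $\Delta^{m-1} u_{\alpha,\beta}$ is strictly increasing on $(0, R_{\alpha,\beta})$ and tends to $+\infty$, once it is $\ge 0$ at some $R_0$ it is $>0$ on $(R_0, R_{\alpha,\beta})$, so choosing $R_1$ with strict positivity is free — this strictness is what survives the passage to the limit. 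Everything else is a routine application of the two lemmas already available.
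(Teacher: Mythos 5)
Your proposal is correct and follows essentially the same route as the paper: both arguments combine the blow-up characterization of Lemma \ref{l:blow-up-bis} (namely, $R_{\alpha,\beta}<+\infty$ if and only if $\Delta^{m-1}u_{\alpha,\beta}\ge 0$ somewhere) with continuous dependence on the initial data from the appendix, the only cosmetic difference being that you phrase closedness sequentially and propagate just the single strict inequality $\Delta^{m-1}u_{\alpha,\beta}(R_1)>0$ to nearby data, whereas the paper propagates strict positivity of $u$, $u'$, $\Delta^k u$, $(\Delta^k u)'$ at one point. Your observation that the strict sign is free because $\Delta^{m-1}u_{\alpha,\beta}$ is increasing and diverges to $+\infty$ is exactly the point that makes the limit passage work.
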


\begin{proof} By Lemma \ref{l:not-global} we know that
$\R^{m-1}\setminus \mathcal A_\alpha\neq \emptyset$. We shall
prove that it is also open. Let $\beta_0\in\R^{m-1}\setminus
\mathcal A_\alpha$. By Lemma \ref{l:blow-up-bis} we may find
$R_0>0$ such that
\begin{equation*}
u_{\alpha,\beta_0}(R_0)>0 \, ,\quad u'_{\alpha,\beta_0}(R_0)>0 \,
,\quad \Delta^k u_{\alpha,\beta_0}(R_0)>0 \, ,\quad (\Delta^k
u_{\alpha,\beta_0})'(R_0)>0 \, ,
\end{equation*}
for any $k\in \{1,\dots,m-1\}$. By Proposition
\ref{dipendenza-continua} we deduce that there exists $\delta>0$
such that for any $\beta\in B(\beta_0,\delta)$ the function
$u_{\alpha,\beta}$ is well-defined at $R_0$ and moreover
\begin{equation*}
u_{\alpha,\beta}(R_0)>0 \, ,\quad u'_{\alpha,\beta}(R_0)>0 \,
,\quad \Delta^k u_{\alpha,\beta}(R_0)>0 \, ,\quad (\Delta^k
u_{\alpha,\beta})'(R_0)>0 \, .
\end{equation*}
Here we denoted by $B(\beta_0,\delta)$ the open ball in $\R^{m-1}$
of radius $\delta$ centered at $\beta_0$. Applying Lemma
\ref{l:blow-up-bis} to these functions $u_{\alpha,\beta}$ we infer
that they are not global thus showing that
$B(\beta_0,\delta)\subseteq \R^{m-1}\setminus \mathcal A_\alpha$.
This completes the proof of the lemma.
\end{proof}

We now prove that for any $\alpha\in\R$ the set $\mathcal
A_\alpha$ is not empty in dimension $n\ge 3$.

\begin{lemma} \label{l:non-empty}
Let $n\ge 3$ and $m$ even. Then the following statements hold:
\begin{itemize}
\item[(i)] for any $\alpha\in\R$ the set $\mathcal A_\alpha$ is
nonempty;

\item[(ii)] for any $\alpha\in\R$ there exists a function
$\Phi_\alpha:\R^{m-2}\to (-\infty,0)$ such that
$$
\mathcal A_\alpha=\{\beta=(\beta_1,\dots,\beta_{m-1})\in
\R^{m-1}:\beta_{m-1}\le \Phi(\beta_1,\dots,\beta_{m-2}) \} \, ;
$$
\item[(iii)] for any $\alpha\in\R$, $\Phi_\alpha$ is a continuous
function, $\partial \mathcal A_\alpha$ coincides with the graph of
$\Phi_\alpha$ and
$$
\ds{\mathop{\mathcal
A_\alpha}^{\circ}}=\{\beta=(\beta_1,\dots,\beta_{m-1})\in
\R^{m-1}:\beta_{m-1}< \Phi(\beta_1,\dots,\beta_{m-2}) \} \, .
$$
\end{itemize}
\end{lemma}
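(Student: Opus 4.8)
\emph{Plan of proof.} Write $\beta=(\beta_1,\dots,\beta_{m-1})$ and $\beta'=(\beta_1,\dots,\beta_{m-2})$, and let $I$ be the radial inverse Laplacian $I[g](r):=\int_0^r t^{1-n}\int_0^t s^{n-1}g(s)\,ds\,dt$, which is linear and positivity preserving; note $I^j[1](r)=c_jr^{2j}$ with $c_j:=\prod_{i=1}^{j}\tfrac1{2i(n+2i-2)}>0$. Using the initial conditions in \eqref{Cauchy}, successive integrations give on $[0,R_{\alpha,\beta})$ the identity $u_{\alpha,\beta}=\alpha+\sum_{j=1}^{m-1}c_j\beta_jr^{2j}+I^m[e^{u_{\alpha,\beta}}]$. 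First I would record two facts. (a) By the comparison principle (Proposition~\ref{mckenna reichel}), $\beta\mapsto u_{\alpha,\beta}$ and each $\beta\mapsto\Delta^ku_{\alpha,\beta}$ are non-decreasing in every component of $\beta$ on common intervals of existence; hence, using Lemma~\ref{l:blow-up-bis}, $\mathcal A_\alpha$ is \emph{downward closed}: if $\widetilde\beta\le\beta$ coordinatewise and $\beta\in\mathcal A_\alpha$, then $\Delta^{m-1}u_{\alpha,\widetilde\beta}\le\Delta^{m-1}u_{\alpha,\beta}<0$ on the maximal interval, so $\widetilde\beta\in\mathcal A_\alpha$. (b) If $\Delta^{m-1}u_{\alpha,\beta}\le-\delta$ on an interval $[0,r_0]$ for some $\delta>0$, then applying $I$ to this inequality $m-1$ times yields the a priori bound $u_{\alpha,\beta}(r)\le\widetilde Q_{\alpha,\beta'}(r)-\delta c_{m-1}r^{2m-2}$ on $[0,r_0]$, where $\widetilde Q_{\alpha,\beta'}(r):=\alpha+\sum_{j=1}^{m-2}c_j\beta_jr^{2j}$ depends only on $\alpha,\beta'$; since $\Delta^{m-1}u_{\alpha,\beta}$ is increasing with $(r^{n-1}(\Delta^{m-1}u_{\alpha,\beta})')'=r^{n-1}e^{u_{\alpha,\beta}}$, a Fubini computation (here the hypothesis $n\ge3$ is essential) bounds its total increase over $[0,r_0]$ by $J_\delta(\alpha,\beta'):=\frac1{n-2}\int_0^\infty s\,e^{\widetilde Q_{\alpha,\beta'}(s)-\delta c_{m-1}s^{2m-2}}\,ds<\infty$.

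\emph{Proofs of (i) and (ii).} To prove (i) I would establish the stronger statement: for every $\alpha,\beta'$ and every $\delta>0$, if $\beta_{m-1}<-\delta-J_\delta(\alpha,\beta')$ then $u_{\alpha,\beta}$ is global. Let $R^{**}:=\sup\{r:\Delta^{m-1}u_{\alpha,\beta}\le-\delta\text{ on }[0,r]\}$, which is positive since $\Delta^{m-1}u_{\alpha,\beta}(0)=\beta_{m-1}<-\delta$. On $[0,R^{**})$, fact (b) gives $\Delta^{m-1}u_{\alpha,\beta}\le\beta_{m-1}+J_\delta(\alpha,\beta')<-\delta$ strictly, so $R^{**}$ cannot be interior to the maximal interval; hence $\Delta^{m-1}u_{\alpha,\beta}<0$ on all of $[0,R_{\alpha,\beta})$ and, by Lemma~\ref{l:blow-up-bis}, $R_{\alpha,\beta}=+\infty$. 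Taking $\delta=1$ shows that every ``vertical slice'' $S_{\beta'}:=\{t\in\R:(\beta',t)\in\mathcal A_\alpha\}$ contains all sufficiently negative $t$; in particular $\mathcal A_\alpha\ne\emptyset$, which is (i). For (ii), fix $\alpha,\beta'$: by (a) $S_{\beta'}$ is downward closed, by Lemma~\ref{l:closed} it is closed, it is nonempty by the above, and by Lemma~\ref{l:not-global} it contains no $t\ge0$, hence $S_{\beta'}\ne\R$. A nonempty, closed, downward closed proper subset of $\R$ is a half-line $(-\infty,c]$ with $c=\max S_{\beta'}\in S_{\beta'}\subseteq(-\infty,0)$, so $c<0$; setting $\Phi_\alpha(\beta'):=c$ gives $\Phi_\alpha:\R^{m-2}\to(-\infty,0)$ with $\mathcal A_\alpha=\{\beta:\beta_{m-1}\le\Phi_\alpha(\beta')\}$.

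\emph{Proof of (iii).} Upper semicontinuity of $\Phi_\alpha$ follows directly from closedness of $\mathcal A_\alpha$ (if $\beta'_k\to\beta'_0$ with $\Phi_\alpha(\beta'_k)\ge\Phi_\alpha(\beta'_0)+\varepsilon$, then $(\beta'_k,\Phi_\alpha(\beta'_0)+\varepsilon)\in\mathcal A_\alpha$ converges to a point not in $\mathcal A_\alpha$). The heart of (iii) is the claim that \emph{$t<\Phi_\alpha(\beta')$ implies $(\beta',t)\in\mathop{\mathcal A_\alpha}^{\circ}$}. Granting it, boundary points $\beta_{m-1}=\Phi_\alpha(\beta')$ are not interior (raising $\beta_{m-1}$ leaves $\mathcal A_\alpha$), so $\mathop{\mathcal A_\alpha}^{\circ}=\{\beta:\beta_{m-1}<\Phi_\alpha(\beta')\}$, $\partial\mathcal A_\alpha$ equals the graph of $\Phi_\alpha$, and openness of $\mathop{\mathcal A_\alpha}^{\circ}$ yields lower semicontinuity of $\Phi_\alpha$; combined with upper semicontinuity, $\Phi_\alpha$ is continuous, proving (iii). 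To prove the claim I would proceed as follows. Pick $t^*\in(t,\Phi_\alpha(\beta'))$, so $v:=u_{\alpha,(\beta',t)}$ and $v^*:=u_{\alpha,(\beta',t^*)}$ are both global and $v^*\ge v$ (comparison); then $\Delta^{m-1}(v^*-v)$ is radially subharmonic (its Laplacian is $e^{v^*}-e^v\ge0$) with value $t^*-t$ and vanishing derivative at the origin, hence $\Delta^{m-1}(v^*-v)\ge t^*-t$ everywhere, and since $\Delta^{m-1}v^*\le0$ (Lemma~\ref{l:blow-up-bis}) we get $\Delta^{m-1}v\le-2\delta_0$ on $[0,\infty)$ with $2\delta_0:=t^*-t>0$. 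From (b), for every $\widetilde\beta$ in a fixed ball around $(\beta',t)$ there is a fixed majorant $g^*$ with $M^*:=\int_0^\infty s^{n-1}g^*(s)\,ds<\infty$ such that $e^{u_{\alpha,\widetilde\beta}(r)}\le g^*(r)$ whenever $\Delta^{m-1}u_{\alpha,\widetilde\beta}\le-\delta_0$ on $[0,r]$. Choose $\rho>1$ with $M^*\rho^{2-n}/(n-2)<\delta_0/4$, and use continuous dependence on $[0,\rho]$ (Proposition~\ref{dipendenza-continua}) to pick $\eta>0$ so that $\Delta^{m-1}u_{\alpha,\widetilde\beta}\le-\tfrac{7}{4}\delta_0$ on $[0,\rho]$ whenever $\|\widetilde\beta-(\beta',t)\|<\eta$. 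For such $\widetilde\beta$, the trapping argument of (i) carried out from $r=\rho$ — where, by the choice of $\rho$, the increase of $\Delta^{m-1}u_{\alpha,\widetilde\beta}$ beyond $\rho$ is at most $M^*\rho^{2-n}/(n-2)<\delta_0/4$ — gives $\Delta^{m-1}u_{\alpha,\widetilde\beta}\le-\delta_0<0$ throughout the maximal interval, so $u_{\alpha,\widetilde\beta}$ is global by Lemma~\ref{l:blow-up-bis}; hence $(\beta',t)\in\mathop{\mathcal A_\alpha}^{\circ}$.

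\emph{Expected main difficulty.} Parts (i) and (ii) amount to bookkeeping around the a priori estimate (b); the genuinely delicate step is part (iii). There, upper semicontinuity is free, but showing that every strict subgraph point of $\Phi_\alpha$ is interior to $\mathcal A_\alpha$ requires carefully combining three ingredients with matching quantifiers — the subharmonicity/``gap'' argument producing a \emph{uniform} negative margin $-2\delta_0$ for $\Delta^{m-1}v$, continuous dependence on a large but finite interval, and the $n\ge3$ decay estimate which makes the behavior of the solution past a large radius $\rho$ essentially independent of the initial data.
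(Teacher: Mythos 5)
Your proposal is correct, and while parts (i)--(ii) follow essentially the paper's own route (a trapping argument keeping $\Delta^{m-1}u$ below a negative threshold, made possible by the integrability of $s^{1-n}\int_0^s t^{n-1}e^{P(t)}\,dt$ for $n\ge 3$, plus Lemmas \ref{l:blow-up-bis}, \ref{l:not-global}, \ref{l:closed} and the comparison principle --- your version is merely direct and quantitative where the paper argues by contradiction as $b\to-\infty$), your part (iii) is genuinely different. The paper perturbs only $\beta'$ at the fixed level $\Phi_\alpha(\beta_0')-\eps$ and tracks the \emph{relative} gap $\Delta^{m-1}u_{\alpha,\beta}\le\Delta^{m-1}u_{\alpha,\beta_0}-\eta$ against the boundary solution, in two steps: first showing that if the gap is lost it is lost by a fixed radius $K$, then deriving a contradiction on $[0,K]$ via continuous dependence and a multiplicative estimate $\Delta^{m-1}u_{\alpha,\beta}(r)-\Delta^{m-1}u_{\alpha,\beta}(0)\le e^\sigma\bigl(\Delta^{m-1}u_{\alpha,\beta_0}(r)-\Delta^{m-1}u_{\alpha,\beta_0}(0)\bigr)$, letting $\sigma\to0^+$ and then $\eta\to0^+$. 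You instead extract a \emph{uniform absolute} margin $\Delta^{m-1}v\le-2\delta_0$ on all of $[0,+\infty)$ from an intermediate global solution $v^*$ (the superharmonicity trick the paper itself deploys later, in the proof of Lemma \ref{l:asympt-1} (ii)), and then split $[0,+\infty)$ at a large $\rho$ where the $n\ge3$ tail estimate makes the further growth of $\Delta^{m-1}u_{\alpha,\widetilde\beta}$ uniformly small over a whole ball of initial data, handling $[0,\rho]$ by continuous dependence. This buys a direct proof that every strict subgraph point is interior to $\mathcal A_\alpha$, from which continuity of $\Phi_\alpha$ falls out, whereas the paper proves lower semicontinuity of $\Phi_\alpha$ first; your architecture avoids the paper's Step 1 and the $e^\sigma$ device altogether, at the price of invoking the existence of the intermediate solution $v^*$, which is available since (ii) is already proved. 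One small point to make explicit in a full write-up: you need continuous dependence of $\Delta^{m-1}u_{\alpha,\widetilde\beta}$ on $[0,\rho]$, while Proposition \ref{dipendenza-continua} is stated only for $u$ itself; the paper uses the same strengthening (e.g.\ in the proof of Lemma \ref{l:closed}), and it is standard for the equivalent first-order system, but it should be stated.
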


\begin{proof}  (i)-(ii) Let $\beta_1,\dots,\beta_{m-2}\in \R$ be
fixed arbitrarily. Put $\beta_{m-1}=b$ where $b$ is a parameter
varying in $(-\infty,0)$ and define $u_b$ as the unique solution
of \eqref{Cauchy} corresponding to the initial values
$\alpha_0=\alpha$, $\alpha_k=\beta_k$ for any $k\in
\{1,\dots,m-2\}$ and $\alpha_{m-1}=b$. Denote by $(0,R_b)$ with
$R_b\in (0,+\infty]$ the maximal interval of continuation of the
solution $u_b$. We shall prove that for any $b<0$ small enough
then $R_b=+\infty$. For any $b<0$ let
$$
M_b:=\sup\left\{r\in (0,R_b): \Delta^{m-1} u_b(s)<\frac b2 \  \
\text{for any } s\in [0,r)\right\} \, .
$$
We claim that there exits $b<0$ such that $M_b=R_b$. We first show
that from this claim we easily arrive to the conclusion of the
proof. Indeed if $b<0$ is such that $M_b=R_b$ then $\Delta^{m-1}
u_b(r)<\frac b2$ for any $r\in [0,R_b)$. If $R_b$ were finite then
after successive integrations one easily shows that $u_b$ is
bounded from above in $[0,R_b)$ and, in turn, that $e^{u_b}$ is
bounded in $[0,R_b)$. Therefore by \eqref{Cauchy} and successive
integrations one can show that $u_b$ and all its derivatives until
order $2m-1$ are bounded in $[0,R_b)$. A standard argument in the
theory of ordinary differential equations leads to a contradiction
with the maximality of $R_b$. Therefore, by Proposition
\ref{mckenna reichel}, Lemma \ref{l:not-global} and Lemma
\ref{l:closed} we infer that there exists $b_0<0$ such that
$$
\{b\in\R:u_b \ \text{is a global solution of
\eqref{Cauchy}}\}=(-\infty,b_0] \, .
$$
Finally it is sufficient to put
$\Phi_\alpha(\beta_1,\dots,\beta_{m-2}):=b_0$.

Let us prove that claim. We proceed by contradiction assuming that
$M_b<R_b$ for any $b<0$. By definition of $M_b$ we have that
$\Delta^{m-1}u_b(r)\le \frac b2$ for any $r\in [0,M_b]$ and that
$\Delta^{m-1}u_b(M_b)=\frac b2$. In the rest of the proof we use
the notation $\sum_{j=k_1}^{k_2} a_j=0$ and $\prod_{j=k_1}^{k_2}
a_j=1$ whenever $k_1>k_2$.

Then, for any $k\in \{2,\dots,m-1\}$ and $r\in [0,M_b]$ we have
\begin{equation*} 
\Delta^{m-k}u_b(r)\le \frac{b}{2^{k} \ (k-1)! \
\prod_{l=1}^{k-1}(n+2l-2)} \, r^{2k-2} +\sum_{j=0}^{k-2}
\frac{\beta_{m-k+j}}{2^j \ j! \ \prod_{l=1}^{j}(n+2l-2)} \, r^{2j}
 \, ,
\end{equation*}
\begin{equation*} 
(\Delta^{m-k}u_b)'(r)\le \frac{b}{2^{k-1} \ (k-2)! \
\prod_{l=1}^{k-1}(n+2l-2)} \, r^{2k-3}
  +\sum_{j=1}^{k-2} \frac{\beta_{m-k+j} \
r^{2j-1}}{2^{j-1} \ (j-1)! \ \prod_{l=1}^{j}(n+2l-2)} \, .
\end{equation*}
Finally other two integrations yield
$$
u'_b(r)\le \frac{b}{2^{m-1} \ (m-2)! \ \prod_{l=1}^{m-1}(n+2l-2)}
\, r^{2m-3}
  +\sum_{j=1}^{m-2} \frac{\beta_{j} \
r^{2j-1}}{2^{j-1} \ (j-1)! \ \prod_{l=1}^{j}(n+2l-2)} \,  ,
$$
and
$$
u_b(r)\le \alpha+\frac{b}{2^{m} \ (m-1)! \
\prod_{l=1}^{m-1}(n+2l-2)} \, r^{2m-2}
  +\sum_{j=1}^{m-2} \frac{\beta_{j} \
r^{2j}}{2^{j} \ j! \ \prod_{l=1}^{j}(n+2l-2)}=:P_b(r) \, ,
$$
for any $r\in [0,M_b]$. The function $P_b$ is a polynomial of
degree $2m-2$ which admits the representation
$$
P_b(r)=C_{n,m}br^{2m-2}+Q(r)
$$
where $C_{n,m}=\left[2^{m} \ (m-1)! \
\prod_{l=1}^{m-1}(n+2l-2)\right]^{-1}$ and $Q$ is a polynomial of
degree $2m-4$. For any $b<-1$, by \eqref{Cauchy}, we then obtain
\begin{align*}
\Delta^{m-1}u_b(r)&=b+\int_0^r s^{1-n} \left(\int_0^s
t^{n-1}e^{u_b(t)} \, dt\right)ds\le b+\int_0^r s^{1-n}
\left(\int_0^s t^{n-1}e^{P_b(t)} \, dt\right)ds \\
& \le b+\int_0^r s^{1-n} \left(\int_0^s t^{n-1}e^{P_{-1}(t)} \,
dt\right)ds  \qquad \text{for any }  r\in [0,M_b] \, .
\end{align*}
We remark that since $n\ge 3$ and $b$ is negative then the
function $s\mapsto s^{1-n} \left(\int_0^s t^{n-1}e^{P_{-1}(t)} \,
dt\right)$ is integrable in $(0,+\infty)$ so that we may write
\begin{align*}
\Delta^{m-1}u_b(r) \le b+\int_0^{\infty} s^{1-n} \left(\int_0^s
t^{n-1}e^{P_{-1}(t)} \, dt\right)ds  \qquad \text{for any }  r\in
[0,M_b] \, .
\end{align*}
In particular for $r=M_b$ we obtain
$$
\frac b2\le b+\int_0^{\infty} s^{1-n} \left(\int_0^s
t^{n-1}e^{P_{-1}(t)} \, dt\right)ds \qquad \text{for any } b<-1 \,
,
$$
and a contradiction follows by letting $b\to -\infty$.

(iii) Let $\beta'_0=(\beta_{0,1},\dots,\beta_{0,m-2})$ a point in
$\R^{m-2}$ and let $\beta_0=(\beta'_0,\Phi_\alpha(\beta'_0))$. We
shall prove that $\Phi_\alpha$ is continuous in $\beta'_0$. We
observe that by Lemma \ref{l:closed} the subgraph of $\Phi_\alpha$
is closed and hence $\Phi_\alpha$ is upper semicontinuous. It
remains to prove that $\Phi_\alpha$ is also lower semicontinuous.
In the rest of the proof we denote by $\beta\in\R^{m-1}$ the point
$\beta:=(\beta',\Phi_\alpha(\beta'_0)-\eps)$ and by
$|\cdot|_{\infty}$ the norm
$$
|\gamma|_\infty:=\max_{1\le k\le m-2} |\gamma_k| \qquad \text{for
any } \gamma\in \R^{m-2} \, .
$$
For $0<\eta<\eps$ and $\beta'\in\R^{m-2}$ let us define
$$
M_{\eta,\beta'}:=\sup\left\{r>0:\Delta^{m-1}u_{\alpha,\beta}(s)\le
\Delta^{m-1}u_{\alpha,\beta_0}(s)-\eta
 \ \text{for any } s\in [0,r]\right\}\in (0,R_{\alpha,\beta}] \, .
$$
We divide the proof of (iii) into three steps.

{\bf Step 1.} We claim that for any $0<\eta<\eps$ there exist
$\overline\delta>0$ and $K\in (0,R_{\alpha,\beta})$ such that if
$\delta\in(0,\overline\delta)$
\begin{equation} \label{claim-0}
 |\beta'-\beta'_0|_\infty<\delta \quad \text{and} \quad
M_{\eta,\beta'}<R_{\alpha,\beta} \quad \Longrightarrow \quad
M_{\eta,\beta'}\le K  \, .
\end{equation}
Proceeding by contradiction we would find $0<\eta<\eps$ such that
for any $\overline\delta>0$ and $K\in (0,R_{\alpha,\beta})$, there
exist $0<\delta<\overline\delta$ and $\beta'\in\R^{m-2}$ such that
$|\beta'-\beta'_0|_\infty<\delta$ and
$K<M_{\eta,\beta'}<R_{\alpha,\beta}$.

Let us put $U(r)=u_{\alpha,\beta}(r)-u_{\alpha,\beta_0}(r)$ for
any $r\in [0,R_{\alpha,\beta})$. Proceeding as in the proof of
(i)-(ii) we obtain for any $r\in [0,M_{\eta,\beta'}]$ and $k\in
\{1,\dots,m-1\}$
\begin{equation} \label{eq:Laplaciani-U}
\Delta^{m-k}U(r)\le -\frac{\eta r^{2k-2}}{2^{k-1} \ (k-1)! \
\prod_{l=1}^{k-1}(n+2l-2)}+\sum_{j=0}^{k-2} \frac{\delta
r^{2j}}{2^j \ j! \ \prod_{l=1}^{j}(n+2l-2)}=:P_{\eta,\delta,k}(r)
 \, ,
\end{equation}
\begin{equation} \label{eq:derivata-Laplaciani-U}
(\Delta^{m-k}U)'(r)\le -\frac{\eta r^{2k-3}}{2^{k-2} \ (k-2)! \
\prod_{l=1}^{k-1}(n+2l-2)}
  +\sum_{j=1}^{k-2} \frac{\delta
r^{2j-1}}{2^{j-1} \ (j-1)! \
\prod_{l=1}^{j}(n+2l-2)}=:Q_{\eta,\delta,k}(r) \, ,
\end{equation}
\begin{equation} \label{eq:derivata-U}
U'(r)\le -\frac{\eta r^{2m-3}}{2^{m-2} \ (m-2)! \
\prod_{l=1}^{m-1}(n+2l-2)}
  +\sum_{j=1}^{m-2} \frac{\delta  r^{2j-1}
}{2^{j-1} \ (j-1)! \
\prod_{l=1}^{j}(n+2l-2)}=:Q_{\eta,\delta,m}(r)
\end{equation}
and
\begin{equation} \label{eq:funzione-U}
U(r)\le -\frac{\eta r^{2m-2}}{2^{m-1} \ (m-1)! \
\prod_{l=1}^{m-1}(n+2l-2)}
  +\sum_{j=1}^{m-2} \frac{\delta r^{2j}
}{2^{j} \ j! \ \prod_{l=1}^{j}(n+2l-2)}=:P_{\eta,\delta,m}(r) \, .
\end{equation}
We may choose $K$ and $\overline\delta$ such that
$$
P_{\eta,\overline\delta,k}(r)<0 \, , \quad
Q_{\eta,\overline\delta,k}(r)<0 \qquad \text{for any } r\ge K
\text{ and } k\in \{1,\dots,m\} \, .
$$
In particular by \eqref{eq:Laplaciani-U}-\eqref{eq:funzione-U}
with $r=M_{\eta,\beta'}$ we infer
\begin{equation*}
U(M_{\eta,\beta'})<0\, , \quad U'(M_{\eta,\beta'})<0\, , \quad
\Delta^j U(M_{\eta,\beta'})<0\, , \quad (\Delta^j
U)'(M_{\eta,\beta'})<0 \, ,
\end{equation*}
for any $j\in \{1,\dots,m-1\}$. Therefore by Proposition
\ref{mckenna reichel} we obtain
\begin{equation*}
u_{\alpha,\beta}(r)\le u_{\alpha,\beta_0}(r) \, , \ \
u'_{\alpha,\beta}(r)\le u'_{\alpha,\beta_0}(r) \, , \ \ \Delta^j
u_{\alpha,\beta}(r)\le \Delta^j u_{\alpha,\beta_0}(r) \, , \ \
(\Delta^j u_{\alpha,\beta})'(r)\le (\Delta^j
u_{\alpha,\beta_0})'(r) \, ,
\end{equation*}
for any $r\in [M_{\eta,\beta'},R_{\alpha,\beta})$ and $j\in
\{1,\dots,m-1\}$. In particular for any $r\in
(M_{\eta,\beta'},R_{\alpha,\beta})$, we obtain
\begin{align*}
\Delta^{m-1} u_{\alpha,\beta}(r)&=\Delta^{m-1}
u_{\alpha,\beta}(M_{\eta,\beta'})+\int_{M_{\eta,\beta'}}^r
(\Delta^{m-1} u_{\alpha,\beta})'(s) \, ds \\
& \le \Delta^{m-1}
u_{\alpha,\beta_0}(M_{\eta,\beta'})-\eta+\int_{M_{\eta,\beta'}}^r
(\Delta^{m-1} u_{\alpha,\beta_0})'(s) \, ds= \Delta^{m-1}
u_{\alpha,\beta_0}(r)-\eta \, ,
\end{align*}
contradicting the maximality of $M_{\eta,\beta'}$. This proves
\eqref{claim-0}.

{\bf Step 2.} We claim that there exist $0<\eta<\eps$ and
$\delta>0$ such that for any $\beta'\in \R^{m-2}$ with
$|\beta'-\beta'_0|_\infty<\delta$, we have
$M_{\eta,\beta'}=R_{\alpha,\beta}$. Suppose by contradiction that
for any $0<\eta<\eps$ and for any $\delta>0$ there exists
$\beta'\in \R^{m-2}$ such that $|\beta'-\beta'_0|_\infty<\delta$
and $M_{\eta,\beta'}<R_{\alpha,\beta}$.

Let $\overline \delta$ and $K$ be as in Step 1. By Proposition
\ref{dipendenza-continua}, up to shrinking $\overline\delta$ if
necessary, we have that $u_{\alpha,\beta}$ is well defined in
$[0,K]$ for any $\beta'$ satisfying
$|\beta'-\beta'_0|_\infty<\delta<\overline\delta$. Moreover
$u_{\alpha,\beta}$ converges uniformly in $[0,K]$ to the function
$u_{\alpha,(\beta_0',\Phi_\alpha(\beta_0')-\eps)}$ as $\beta'\to
\beta'_0$. Hence by Proposition \ref{mckenna reichel} we have that
for any $\sigma>0$ we may shrink $\overline\delta$ in such a way
that
\begin{equation} \label{eq:**}
u_{\alpha,\beta}(r)<u_{\alpha,(\beta_0',\Phi_\alpha(\beta_0')-\eps)}(r)+\sigma
\le u_{\alpha,\beta_0}(r)+\sigma \qquad \text{for any } r\in [0,K]
\end{equation}
with $\beta$ such that
$|\beta'-\beta'_0|_\infty<\delta<\overline\delta$ and
$M_{\eta,\beta'}<R_{\alpha,\beta}$.

By \eqref{Cauchy}, \eqref{claim-0} and \eqref{eq:**}, we obtain
\begin{equation} \label{eq:??}
\Delta^{m-1}u_{\alpha,\beta}(r)-\Delta^{m-1}u_{\alpha,\beta}(0)
\le e^\sigma \Delta^{m-1}u_{\alpha,\beta_0}(r)-e^\sigma
\Delta^{m-1}u_{\alpha,\beta_0}(0)
 \qquad \text{for any } r\in [0,M_{\eta,\beta'}] \, .
\end{equation}
Substituting $r=M_{\eta,\beta'}$ in \eqref{eq:??} and taking into
account that
$$
\Delta^{m-1}u_{\alpha,\beta}(0)=\Delta^{m-1}u_{\alpha,\beta_0}(0)-\varepsilon\,
, \quad
\Delta^{m-1}u_{\alpha,\beta}(M_{\eta,\beta'})=\Delta^{m-1}u_{\alpha,\beta_0}(M_{\eta,\beta'})-\eta
\quad \text{and} \quad M_{\eta,\beta'}\le K \, ,
$$
we obtain
$$
\Delta^{m-1} u_{\alpha,\beta_0}(M_{\eta,\beta'})-\eta\le e^\sigma
\Delta^{m-1}u_{\alpha,\beta_0}(M_{\eta,\beta'})+(1-e^\sigma)
\Delta^{m-1} u_{\alpha,\beta_0}(0)-\eps
$$
for any $\eta\in (0,\eps)$ and $\sigma>0$. Letting $\sigma\to 0^+$
and then $\eta\to 0^+$ we reach a contradiction. This completes
the proof of Step 2.

{\bf Step 3.} In this step we complete the proof of (iii). By Step
2 we have that for any $|\beta'-\beta'_0|_\infty<\delta$
$$
\Delta^{m-1} u_{\alpha,\beta}(r)\le \Delta^{m-1}
u_{\alpha,\beta_0}(r)-\eta<0 \qquad \text{for any } r\in
[0,R_{\alpha,\beta}) \, ,
$$
where the last inequality follows from Lemma \ref{l:blow-up-bis}.
By Lemma \ref{l:blow-up-bis} we also deduce that
$u_{\alpha,\beta}$ is a global solution of \eqref{Cauchy}. By
(i)-(ii), this implies that $\Phi_\alpha(\beta')\ge
\Phi_{\alpha}(\beta'_0)-\eps$ for any $\beta'$ satisfying
$|\beta'-\beta'_0|_\infty<\delta$. Hence
$\Phi_{\alpha}(\beta'_0)\le \liminf_{\beta'\to\beta'_0}
\Phi_\alpha(\beta')$ which together with the upper semicontinuity
gives the continuity of $\Phi_\alpha$ at $\beta_0'$. Since
$\Phi_\alpha$ is continuous then the set
$$
\{\beta=(\beta',\beta_{m-1})\in\R^{m-1}:\beta_{m-1}<\Phi_\alpha(\beta')\}
$$
is open and hence the proof of (iii) follows.
\end{proof}

In order to better understand the asymptotic behavior of global
solutions of \eqref{Cauchy} and the behavior of the function
$\Phi_\alpha$ introduced in Lemma \ref{l:non-empty}, we prove some
auxiliary results.

\begin{lemma} Let $n\ge 3$ and $m$ even. Consider the equation
\begin{equation} \label{eq:Delta^m}
\Delta^{m} U(r)=\frac{1}{r^3} \qquad \text{for any } r>0 \, .
\end{equation}
Then \eqref{eq:Delta^m} admits a solution in the form
\begin{equation} \label{eq:def-U}
U(r)=
\begin{cases}
C_{n,m}r^{2m-3}+\log\lambda_{n,m} & \qquad \text{if } n\ge 4
\\[8pt]
C_{n,m}r^{2m-3}(\log r+D_{n,m})+\log\lambda_{n,m} & \qquad
\text{if } n=3
\end{cases}
\end{equation}
where $C_{n,m}$ is the negative constant defined by
$$
C_{n,m}:=
\begin{cases} \left[\prod_{j=1}^m (2j-3)\cdot
\prod_{j=0}^{m-1} (n+2j-3)
\right]^{-1} & \qquad \text{if } n\ge 4 \\[8pt]
\left[\prod_{j=1}^m (2j-3)\cdot \prod_{j=1}^{m-1} 2j \right]^{-1}
& \qquad \text{if } n=3 \, ,
\end{cases}
$$
$$
\lambda_{n,m}=
\begin{cases}
{\ds\min_{r\in (0,+\infty)}} \frac{\exp[|C_{n,m}|r^{2m-3}]}{r^3}
& \qquad \text{if } n\ge 4 \\[8pt]
{\ds\min_{r\in (0,+\infty)}} \frac{\exp[|C_{n,m}|r^{2m-3}(\log
r+D_{n,m})]}{r^3} & \qquad \text{if } n=3
\end{cases}
$$
and $D_{n,m}\in \R$ is a suitable constant.

Moreover $U$ satisfies
\begin{equation} \label{eq:Delta^m U=e^U}
\Delta^m U(r)\ge e^{U(r)} \qquad \text{for any } r>0 \, .
\end{equation}
\end{lemma}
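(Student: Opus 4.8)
The plan is to verify the two assertions by direct computation, the only genuinely delicate point being the dichotomy between $n\ge 4$ and $n=3$. Everything rests on the action of the radial Laplacian $\Delta=\partial_{rr}+\frac{n-1}{r}\partial_r$ on powers: for every $a\in\R$ one has
\[
\Delta(r^a)=a(a+n-2)\,r^{a-2},\qquad
\Delta(r^a\log r)=a(a+n-2)\,r^{a-2}\log r+(2a+n-2)\,r^{a-2}.
\]

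First I would treat $n\ge 4$. Iterating the first identity from $a=2m-3$ gives $\Delta^m(r^{2m-3})=\big[\prod_{k=0}^{m-1}(2m-3-2k)(2m-3-2k+n-2)\big]r^{-3}$, and a reindexing of the two products shows that the bracket equals $\prod_{j=1}^{m}(2j-3)\cdot\prod_{j=0}^{m-1}(n+2j-3)=C_{n,m}^{-1}$. Since $n\ge 4$ none of the factors $n+2j-3$ vanishes, so $C_{n,m}$ is a well-defined nonzero number, and a sign count (the factor $j=1$ in the first product equals $-1$, all the others are positive) gives $C_{n,m}<0$. As $\Delta^m$ annihilates constants, adding $\log\lambda_{n,m}$ does not affect $\Delta^m U$, so the function $U$ of \eqref{eq:def-U} solves \eqref{eq:Delta^m}. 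For $n=3$ the same product vanishes (the factor $n+2\cdot 0-3$ is $0$, which is exactly why a pure power is now harmonic under $\Delta^m$), so one passes to the ansatz $r^{2m-3}\log r$ and iterates the second identity. Along the cascade each $\Delta$ turns $r^a\log r$ into a multiple of $r^{a-2}\log r$ plus a multiple of $r^{a-2}$, all pushed down to exponent $-3$; the coefficient of the surviving $r^{-3}\log r$ is $\prod_{k=0}^{m-1}(2m-3-2k)(2m-2-2k)$, which is $0$ (the $k=m-1$ factor vanishes), so $\Delta^m(r^{2m-3}\log r)$ reduces to a pure $r^{-3}$ term, whose coefficient one checks to be $C_{3,m}^{-1}$ by a short induction on $m$ (equivalently, in $n=3$ the problem reads $(rU)^{(2m)}=r^{-2}$, for which $C_{3,m}\,r^{2m-3}\log r$ is the leading part of a particular solution). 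The remaining summands $C_{3,m}D_{n,m}r^{2m-3}$ and $\log\lambda_{n,m}$ are annihilated by $\Delta^m$ for every value of $D_{n,m}$, so again $U$ solves \eqref{eq:Delta^m}; the precise value of $D_{n,m}$ plays no role for the two properties at stake here.

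It remains to prove \eqref{eq:Delta^m U=e^U}, i.e. $r^{-3}\ge e^{U(r)}$, which by the previous step is equivalent to $r^3e^{U(r)}\le 1$. Using $C_{n,m}<0$ one rewrites, for $n\ge 4$,
\[
r^3e^{U(r)}=\lambda_{n,m}\Big/\frac{\exp\!\big[|C_{n,m}|r^{2m-3}\big]}{r^3},
\]
and similarly with $r^{2m-3}$ replaced by $r^{2m-3}(\log r+D_{n,m})$ when $n=3$. Since $m$ is even, $2m-3\ge 1$, so the function $r\mapsto r^{-3}\exp[|C_{n,m}|r^{2m-3}]$ (and its $n=3$ analogue) is continuous and strictly positive on $(0,+\infty)$ and tends to $+\infty$ both as $r\to 0^+$ (the exponential tends to $1$) and as $r\to+\infty$ (the exponential outgrows $r^3$); hence it attains a positive minimum, which is precisely $\lambda_{n,m}$ as defined in the statement. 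By that very definition the displayed quotient is $\le 1$ for every $r>0$, which yields \eqref{eq:Delta^m U=e^U}.

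The bulk of the argument is thus routine bookkeeping, namely iterating the two elementary identities and matching the resulting products with $C_{n,m}$; the one step that really requires care is the degeneracy at $n=3$, where a power is killed by $\Delta^m$ and one is forced into the logarithmic ansatz. The attainment of the minimum defining $\lambda_{n,m}$ follows at once from the behaviour of $r^{-3}\exp[|C_{n,m}|r^{2m-3}]$ at the two ends of $(0,+\infty)$.
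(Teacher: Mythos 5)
Your proof is correct and follows essentially the same route as the paper's: a direct computation with the radial Laplacian acting on powers (the paper integrates $\Delta^m U=r^{-3}$ downward $m$ times, you apply $\Delta^m$ to the ansatz — the same bookkeeping in the opposite direction), followed by the observation that \eqref{eq:Delta^m U=e^U} is immediate from the definition of $\lambda_{n,m}$ as a minimum. If anything you are more explicit than the paper on the two points it leaves implicit, namely the logarithmic degeneracy at $n=3$ and the attainment of the minimum defining $\lambda_{n,m}$.
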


\begin{proof} We proceed in this way: let $U=U(r)$ a function
satisfying \eqref{eq:Delta^m}. If $n\ge 4$, after an iterative
procedure of integration we may assume that $U$ satisfies
\begin{equation*}
\Delta^{m-k} U(r)=\left[\prod_{j=1}^k (2j-3)\cdot
\prod_{j=0}^{k-1} (n+2j-3) \right]^{-1} r^{2k-3} \qquad \text{for
any } r>0
\end{equation*}
and
\begin{equation*}
(\Delta^{m-k} U)'(r)=\left[\prod_{j=1}^{k-1} (2j-3)\cdot
\prod_{j=0}^{k-1} (n+2j-3) \right]^{-1} r^{2k-4} \qquad \text{for
any } r>0
\end{equation*}
for any $k\in \{1,\dots,m-1\}$ where we put $\prod_{j=1}^0
(2j-3)=1$. Taking $k=m-1$ in the previous identities and
integrating we also have
\begin{equation*}
U'(r)=\left[\prod_{j=1}^{m-1} (2j-3)\cdot \prod_{j=0}^{m-1}
(n+2j-3) \right]^{-1} r^{2m-4} \qquad \text{for any } r>0 \, .
\end{equation*}
Therefore we may choose $U$ as in \eqref{eq:def-U}. We proceed in
a similar way in the case $n=3$.

Finally the fact that $U$ solves \eqref{eq:Delta^m U=e^U} is a
consequence of the definition of $\lambda_{n,m}$.
\end{proof}

\begin{lemma} \label{l:asympt-1}
Let $n\ge 3$ and $m$ even. For any $\alpha\in\R$ the following
facts hold true:
\begin{itemize}
\item[(i)] if  $\beta\in\partial \mathcal A_\alpha$ then
$$
\lim_{r\to +\infty} \Delta^{m-1} u_{\alpha,\beta}(r)=0;
$$

\item[(ii)] if $\beta\in \ds{\mathop{\mathcal A_\alpha}^{\circ}}$
then
$$
\lim_{r\to +\infty} \Delta^{m-1} u_{\alpha,\beta}(r)=\ell \in
(-\infty,0) \, ,
$$
\begin{equation*}
\Delta^{m-k} u_{\alpha,\beta}(r)\sim \frac{\ell}{2^{k-1} \ (k-1)!
\ \prod_{l=1}^{k-1} (n+2l-2)} \, r^{2k-2} \qquad \text{as } r\to
+\infty
\end{equation*}
for any $k\in \{2,\dots,m-1\}$ and
\begin{equation} \label{eq:u-sim-r2m-2}
u_{\alpha,\beta}(r) \sim \frac{\ell}{2^{m-1} \ (m-1)! \
\prod_{l=1}^{m-1} (n+2l-2)} \, r^{2m-2} \qquad \text{as } r\to
+\infty \, .
\end{equation}
\end{itemize}
\end{lemma}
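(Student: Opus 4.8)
The plan is to separate the elementary structure of a global solution from the two assertions of the lemma, the delicate one being (i).

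\emph{Preliminaries.} For a global solution $u_{\alpha,\beta}$ the equation in \eqref{Cauchy} gives $(r^{n-1}(\Delta^{m-1}u_{\alpha,\beta})'(r))'=r^{n-1}e^{u_{\alpha,\beta}}>0$, so $\Delta^{m-1}u_{\alpha,\beta}$ is strictly increasing; globality and Lemma \ref{l:blow-up-bis} force $\Delta^{m-1}u_{\alpha,\beta}(r)<0$ on $[0,+\infty)$, while Lemma \ref{l:limiti-Laplaciani} gives the existence of $\ell:=\lim_{r\to+\infty}\Delta^{m-1}u_{\alpha,\beta}(r)\in(\beta_{m-1},0]$. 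Both (i) and (ii) then reduce to deciding whether $\ell=0$ or $\ell<0$; and once $\ell<0$ is known, the asymptotic expansions in (ii) follow from $2m-2$ successive integrations of the identities $(\Delta^{j-1}u_{\alpha,\beta})'(r)=r^{1-n}\int_0^r s^{n-1}\Delta^{j}u_{\alpha,\beta}(s)\,ds$ starting from $\Delta^{m-1}u_{\alpha,\beta}\to\ell$, an elementary induction on $k$ yielding $\Delta^{m-k}u_{\alpha,\beta}(r)\sim \ell\,[2^{k-1}(k-1)!\,\prod_{l=1}^{k-1}(n+2l-2)]^{-1}r^{2k-2}$ and, at the last step, \eqref{eq:u-sim-r2m-2}. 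So the heart of the matter is the equivalence $\ell=0\iff\beta\in\partial\mathcal A_\alpha$.

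\emph{Part (ii).} Let $\beta\in\mathop{\mathcal A_\alpha}^{\circ}$. By Lemma \ref{l:non-empty}(iii) we may choose $\delta>0$ so small that $\widetilde\beta:=\beta+(0,\dots,0,\delta)$ still lies in $\mathop{\mathcal A_\alpha}^{\circ}$, hence $u_{\alpha,\widetilde\beta}$ is global. Since $u_{\alpha,\widetilde\beta}$ and $u_{\alpha,\beta}$ have the same Cauchy data at $0$ apart from $\Delta^{m-1}u_{\alpha,\widetilde\beta}(0)=\beta_{m-1}+\delta>\beta_{m-1}$, the comparison principle (Proposition \ref{mckenna reichel}, with the nondecreasing nonlinearity $s\mapsto e^{s}$) yields $u_{\alpha,\widetilde\beta}\ge u_{\alpha,\beta}$ on $[0,+\infty)$. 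Thus $W:=u_{\alpha,\widetilde\beta}-u_{\alpha,\beta}$ satisfies $\Delta^m W=e^{u_{\alpha,\widetilde\beta}}-e^{u_{\alpha,\beta}}\ge 0$, so $\Delta^{m-1}W$ is nondecreasing and, being equal to $\delta$ at $0$, stays $\ge\delta$; letting $r\to+\infty$ and invoking Lemma \ref{l:limiti-Laplaciani} for $u_{\alpha,\widetilde\beta}$ gives $\ell=\lim_{r\to+\infty}\big(\Delta^{m-1}u_{\alpha,\widetilde\beta}(r)-\Delta^{m-1}W(r)\big)\le 0-\delta<0$. Combined with the preliminary remarks this proves (ii).

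\emph{Part (i).} Here is the main obstacle. Let $\beta\in\partial\mathcal A_\alpha$, so $\beta_{m-1}=\Phi_\alpha(\beta_1,\dots,\beta_{m-2})$ by Lemma \ref{l:non-empty}(iii), and suppose for contradiction that $\ell<0$. By the preliminaries, $u_{\alpha,\beta}(r)$ and the functions $\Delta^k u_{\alpha,\beta}(r)$ and $(\Delta^k u_{\alpha,\beta})'(r)$, $1\le k\le m-1$, all tend to $-\infty$ (to $\ell$, resp. to $0$, when $k=m-1$) at explicit polynomial rates. Let $U$ be the radial function of \eqref{eq:def-U}; it satisfies $\Delta^m U\ge e^{U}$ by \eqref{eq:Delta^m U=e^U}, and the explicit formulas show that, as $r\to+\infty$, each of $U(r)$, $\Delta^k U(r)$, $(\Delta^k U)'(r)$ is of strictly lower order in $r$ than the corresponding quantity for $u_{\alpha,\beta}$ and has a compatible sign — in particular $\Delta^{m-1}U(r)\to 0^{-}$ (hence $>\ell$ for large $r$) and $(\Delta^{m-1}U)'(r)$, which is of order $r^{-2}$ (of order $r^{-2}\log r$ when $n=3$), dominates $(\Delta^{m-1}u_{\alpha,\beta})'(r)=O(r^{1-n})$. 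Hence there is $T>0$ large with $U(T)>u_{\alpha,\beta}(T)$, $\Delta^k U(T)>\Delta^k u_{\alpha,\beta}(T)$ and $(\Delta^k U)'(T)>(\Delta^k u_{\alpha,\beta})'(T)$ for $k=0,\dots,m-1$. Fix such $T$. By Proposition \ref{dipendenza-continua}, for $\delta>0$ small the solution $u_{\alpha,\widetilde\beta}$, $\widetilde\beta:=\beta+(0,\dots,0,\delta)$, is defined on $[0,T]$ and $C^{2m}$-close there to $u_{\alpha,\beta}$, so all these strict inequalities persist at $r=T$ with $u_{\alpha,\widetilde\beta}$ in place of $u_{\alpha,\beta}$. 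Comparing, via Proposition \ref{mckenna reichel} on $[T,R_{\alpha,\widetilde\beta})$, the supersolution $U$ with the solution $u_{\alpha,\widetilde\beta}$, we obtain $u_{\alpha,\widetilde\beta}(r)\le U(r)$ there; since $U$ is bounded above, $e^{u_{\alpha,\widetilde\beta}}$ is bounded on the maximal interval of $u_{\alpha,\widetilde\beta}$, and successive integration of \eqref{Cauchy} keeps $u_{\alpha,\widetilde\beta}$ and its derivatives up to order $2m-1$ bounded on bounded sets, so $R_{\alpha,\widetilde\beta}=+\infty$, i.e. $\widetilde\beta\in\mathcal A_\alpha$. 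But then $\beta_{m-1}+\delta\le\Phi_\alpha(\beta_1,\dots,\beta_{m-2})=\beta_{m-1}$, a contradiction; hence $\ell=0$. The decisive point — and the real difficulty — is to recognize the explicit barrier $U$ of the preceding lemma, whose distinctive feature is precisely that its $(m-1)$-st iterated Laplacian decays to $0$, as the correct comparison function: the assumption $\ell<0$ forces $u_{\alpha,\beta}$ and its iterated Laplacians to decay one order faster than $U$, which is exactly what permits ordering the $2m$ Cauchy data at a far radius and then carrying the inequality $u_{\alpha,\widetilde\beta}\le U$ out to infinity.
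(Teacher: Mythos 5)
Your proposal is correct and follows essentially the same route as the paper's: part (i) is the identical contradiction argument, using the explicit supersolution $U$ of \eqref{eq:def-U} as a barrier whose Cauchy data dominate those of $u_{\alpha,\beta}$ at a far radius, continuous dependence to transfer the ordering to a perturbed solution with $\beta_{m-1}+\delta$, and Proposition \ref{mckenna reichel} to conclude that this perturbed solution is global, contradicting the maximality of $\Phi_\alpha$. In (ii) you compare $u_{\alpha,\beta}$ with the upward-shifted global solution $u_{\alpha,\beta+(0,\dots,0,\delta)}$ rather than with the boundary solution $u_{\alpha,(\beta',\Phi_\alpha(\beta'))}$ as the paper does, but this is the same comparison-principle mechanism and works equally well (and your observation that $\ell$ is automatically finite, since $\Delta^{m-1}u_{\alpha,\beta}$ increases from $\beta_{m-1}$ and stays below $0$, slightly streamlines the paper's separate treatment of the case $\ell=-\infty$).
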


\begin{proof} (i) Suppose by contradiction that $\ell:=\lim_{r\to
+\infty} \Delta^{m-1} u_{\alpha,\beta}(r)<0$. We recall that the
case $\ell>0$ can be excluded immediately thanks to
\eqref{eq:neg-lim-Lap}. We claim that $\ell$ is finite. Suppose by
contradiction that $\ell=-\infty$. Then after an iterative
procedure of integration we find that for any $M>0$ there exists
$\overline r>0$ such that
$$
u_{\alpha,\beta}(r)<-M r^{2m-2} \qquad \text{for any } r>\overline
r
$$
so that the map $r\mapsto r^{n-1}e^{u_{\alpha,\beta}(r)}\in
L^1(0,+\infty)$.

Hence by \eqref{Cauchy} we have
$(\Delta^{m-1}u_{\alpha,\beta})'(r)=r^{1-n}\int_0^r
s^{n-1}e^{u_{\alpha,\beta}(s)} ds\in L^1(0,+\infty)$ since $n\ge
3$, in contradiction with $\ell=-\infty$. From now on we may
assume that $\ell\in (-\infty,0)$.

Then, since $n\ge 3$, after integration one obtains
\begin{equation} \label{eq:sim-3}
(\Delta^{m-k} u_{\alpha,\beta})'(r)\sim \ell
\left[\prod_{j=1}^{k-2} 2j \cdot \prod_{j=1}^{k-1}
(n+2j-2)\right]^{-1} r^{2k-3}  \qquad \text{as } r\to +\infty
\end{equation}
and
\begin{equation} \label{eq:sim-4}
\Delta^{m-k} u_{\alpha,\beta}(r)\sim \ell \left[\prod_{j=1}^{k-1}
2j \cdot \prod_{j=1}^{k-1} (n+2j-2)\right]^{-1} r^{2k-2} \qquad
\text{as } r\to +\infty
\end{equation}
for any $k\in \{2,\dots,m-1\}$ where we put $\prod_{j=1}^0 2j=1$.
Moreover we also have
\begin{equation} \label{eq:sim-5}
u_{\alpha,\beta}'(r)\sim \ell \left[\prod_{j=1}^{m-2} 2j \cdot
\prod_{j=1}^{m-1} (n+2j-2)\right]^{-1} r^{2m-3}  \qquad \text{as }
r\to +\infty
\end{equation}
and
\begin{equation} \label{eq:sim-6}
u_{\alpha,\beta}(r)\sim \ell \left[\prod_{j=1}^{m-1} 2j \cdot
\prod_{j=1}^{m-1} (n+2j-2)\right]^{-1} r^{2m-2} \qquad \text{as }
r\to +\infty \, .
\end{equation}
Combining \eqref{eq:sim-3}-\eqref{eq:sim-6} with \eqref{eq:def-U}
we infer that there exists $\overline r$ such that
\begin{equation} \label{eq:ine-overline}
u_{\alpha,\beta}(\overline r)<U(\overline r)\, , \quad
u_{\alpha,\beta}'(\overline r)<U'(\overline r) \, ,\quad \Delta^k
u_{\alpha,\beta}(\overline r)<\Delta^k U(\overline r) \, , \quad
(\Delta^k u_{\alpha,\beta})'(\overline r)<(\Delta^k U)'(\overline
r)
\end{equation}
for any $k\in \{1,\dots,m-1\}$. By \eqref{eq:Delta^m U=e^U} and
Proposition \ref{mckenna reichel} we deduce that the above
inequalities hold not only at $\overline r$ but at any
$r>\overline r$. Then if we write $\beta$ in the form
$(\beta',\beta_{m-1})$ with $\beta'\in \R^{m-2}$ and
$\beta_{m-1}=\Phi_\alpha(\beta')$, and if we define
$\widetilde\beta:=(\beta',\gamma)$ with $\gamma>\beta_{m-1}$
sufficiently close to $\beta_{m-1}$, we deduce that
\eqref{eq:ine-overline} also holds with $u_{\alpha,\gamma}$ in
place of $u_{\alpha,\beta}$. Exploiting again \eqref{eq:Delta^m
U=e^U} and Proposition \ref{mckenna reichel} it follows that
$u_{\alpha,\gamma}$ is a global solution of \eqref{Cauchy} in
contradiction with
 the maximality of $\beta_{m-1}$.

(ii) Let us write $\beta$ in the form $(\beta',\beta_{m-1})$ with
$\beta'\in \R^{m-2}$ and define
$\beta_0:=(\beta',\Phi_\alpha(\beta'))$ so that
$\beta_{m-1}<\Phi_\alpha(\beta')$. Put
$v:=u_{\alpha,\beta}-u_{\alpha,\beta_0}$ so that by Proposition
\ref{mckenna reichel}, $\Delta^m v(r)\le 0$ for any $r>0$,
$\Delta^{m-1} v(0)=\beta_{m-1}-\Phi_\alpha(\beta')<0$ and
$\Delta^k v(0)=0$ for any $k\in \{1,\dots,m-2\}$. After
integration it follows that $\Delta^{m-1} v(r)\le
\beta_{m-1}-\Phi_\alpha(\beta')$ for any $r\ge 0$. Further
integrations then imply $\lim_{r\to +\infty} \Delta^k v(r)<0$ for
any $k\in \{1,\dots,m-1\}$. Hence, by \eqref{eq:neg-lim-Lap} we
deduce that
\begin{equation} \label{eq:lim-neg}
\lim_{r\to +\infty} \Delta^k u_{\alpha,\beta} (r)\le \lim_{r\to
+\infty} \Delta^k v(r)<0
\end{equation}
for any $k\in \{1,\dots,m-1\}$. In particular if we choose $k=1$
then we infer that $\Delta u_{\alpha,\beta}(r)<-C$ for some
constant $C>0$ for $r$ large enough. A couple of integrations then
yield
$$
u_{\alpha,\beta}(r)<-C' r^2 \qquad \text{for any } r>\overline r
$$
for some $C',\overline r>0$. Then, proceeding as in the proof of
(i), one can show that $\ell:=\lim_{r\to +\infty}
\Delta^{m-1}u_{\alpha,\beta}(r)$ is finite and moreover by
\eqref{eq:lim-neg} we have $\ell\in (-\infty,0)$.

After an iterative procedure of integration the proof of the
remaining part of (ii) follows.
\end{proof}

When $\beta\in \partial\mathcal A_\alpha$ estimate
\eqref{eq:u-sim-r2m-2} is no more true. However a suitable
estimate from above can be proved:

\begin{lemma} \label{l:slow-decay}
Let $n\ge 3$ and $m$ even. Let $\alpha\in\R$ and let
$\beta=(\beta_1,\dots,\beta_{m-1})=(\beta',\beta_{m-1})$ be such
that $\beta_{m-1}=\Phi_\alpha(\beta')$.
Then
$$
u_{\alpha,\beta}(r)=o(r^{2m-2}) \qquad \text{as } r\to +\infty
$$
and moreover
\begin{equation*}
u_{\alpha,\beta}(r)\le -2m\log r+O(1) \qquad \text{as } r\to
+\infty \, .
\end{equation*}
\end{lemma}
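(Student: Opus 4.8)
The plan is to prove the two assertions separately. For the first one, $u_{\alpha,\beta}(r)=o(r^{2m-2})$, I would iterate Lemma \ref{l:asympt-1}(i). Set $v_k:=\Delta^{m-k}u_{\alpha,\beta}$ for $k=1,\dots,m$, so that $v_1=\Delta^{m-1}u_{\alpha,\beta}\to0$ as $r\to+\infty$ by Lemma \ref{l:asympt-1}(i), while $\Delta v_{k+1}=v_k$ with $v_{k+1}(0)=\alpha_{m-k-1}$ finite and $v_{k+1}'(0)=0$. I claim that $v_k(r)=o(r^{2k-2})$ as $r\to+\infty$, and I would prove it by induction on $k$: granting it for $v_k$, the identity $r^{n-1}v_{k+1}'(r)=\int_0^r s^{n-1}v_k(s)\,ds$ together with $v_k(s)=o(s^{2k-2})$ gives $v_{k+1}'(r)=o(r^{2k-1})$ (the contribution of $[0,1]$ being $O(r^{1-n})=o(r^{2k-1})$), and one further integration gives $v_{k+1}(r)=v_{k+1}(0)+\int_0^r v_{k+1}'(s)\,ds=o(r^{2k})$. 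Taking $k=m$ yields $u_{\alpha,\beta}(r)=o(r^{2m-2})$.

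For the logarithmic bound, I would first read off the integrability hidden in the hypothesis $\beta_{m-1}=\Phi_\alpha(\beta')$. Integrating the equation in \eqref{Cauchy} once gives $r^{n-1}(\Delta^{m-1}u_{\alpha,\beta})'(r)=\int_0^r s^{n-1}e^{u_{\alpha,\beta}(s)}\,ds$; since $\Delta^{m-1}u_{\alpha,\beta}(0)=\Phi_\alpha(\beta')<0$ while $\Delta^{m-1}u_{\alpha,\beta}(r)\to0$ by Lemma \ref{l:asympt-1}(i), integrating a second time from $r$ to $+\infty$ and using Tonelli's theorem yields
\begin{equation*}
\int_0^{+\infty} s\,e^{u_{\alpha,\beta}(s)}\,ds=(n-2)\,|\Phi_\alpha(\beta')|<+\infty ,
\end{equation*}
as well as, for every $r\ge1$,
\begin{equation*}
-\Delta^{m-1}u_{\alpha,\beta}(r)=\int_r^{+\infty}\rho^{1-n}\Big(\int_0^\rho s^{n-1}e^{u_{\alpha,\beta}(s)}\,ds\Big)\,d\rho\ \ge\ c\,r^{2-n},\qquad c:=\frac{1}{n-2}\int_0^1 s^{n-1}e^{u_{\alpha,\beta}(s)}\,ds>0 .
\end{equation*}

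Then I would split according to whether $e^{u_{\alpha,\beta}}\in L^1(\R^n)$. In the first case $u_{\alpha,\beta}$ admits a representation $u_{\alpha,\beta}=p+w$, with $p$ a radial polyharmonic polynomial --- of degree at most $2m-2$, hence at most $2m-4$ by the first part --- and $w$ the Green potential of $e^{u_{\alpha,\beta}}$; since $u_{\alpha,\beta}$ is continuous and tends to $-\infty$ it is bounded from above, so $p$ must be nonconstant with negative leading coefficient, $u_{\alpha,\beta}$ then decays at a polynomial rate, and the asserted bound is immediate (the conformal case $n=2m$ being covered by Proposition \ref{p:martinazzi} together with Proposition \ref{p:martinazzi}(ii)). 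In the second case one propagates the estimate $\Delta^{m-1}u_{\alpha,\beta}(r)\le-c\,r^{2-n}$ downwards through $\Delta^{m-2}u_{\alpha,\beta},\dots,u_{\alpha,\beta}$ by repeated integration of the radial Poisson equation: when $n$ is small relative to $2m$ this already gives a decay of order $r^{2m-3}$ (for $n=3$), respectively $r^{2m-4}\log r$ (for $n=4$), far stronger than $-2m\log r$; when $n$ is larger this crude estimate stalls, and one instead improves it by a bootstrap --- the facts that $\int_0^{+\infty}s\,e^{u_{\alpha,\beta}}<+\infty$ while $\int_0^r s^{n-1}e^{u_{\alpha,\beta}}\to+\infty$ pin down the growth order of the last integral, hence of $-\Delta^{m-1}u_{\alpha,\beta}$, and feeding this back into the equation improves the decay of $u_{\alpha,\beta}$ step by step --- and by comparison with the shifted singular solution $V(r):=-2m\log r+C$, which for $C$ small enough is a radial supersolution of $(-\Delta)^m V\ge e^V$ on $(0,+\infty)$ in the dimensions for which $(-1)^{m-1}(n-2)\prod_{j=1}^{m-1}2j(2j+2-n)>0$ (in particular whenever $n>2m$), and which must be corrected by a $\log\log r$-type factor, in the spirit of the weights $(\log|x|)^\beta$ of Section \ref{s:Hardy-Rellich}, precisely when that product vanishes, i.e.\ when $n=2\ell$ with $2\le\ell\le m$. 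Once $V$ is available, I would use the estimates of the first part and the sign information on $\Delta^k u_{\alpha,\beta}$ and $(\Delta^k u_{\alpha,\beta})'$ coming from Lemma \ref{l:limiti-Laplaciani} to check that $u_{\alpha,\beta}$ and all its iterated Laplacians and radial derivatives lie below those of $V$ at a suitably large radius $r_0$, and conclude $u_{\alpha,\beta}\le V$ on $[r_0,+\infty)$ by the comparison principle Proposition \ref{mckenna reichel}.

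The hard part is this comparison step in the non-$L^1$, higher-dimensional regime. One must produce a supersolution with the exact leading coefficient $-2m$ --- which is why the singular solution, rather than an arbitrary multiple of $\log r$, is the natural barrier --- and handle the degenerate dimensions $n=2\ell\le2m$, where $(-\Delta)^m(\log r)\equiv0$ and the $\log\log r$-correction is unavoidable; and one must simultaneously dominate all $2m$ Cauchy data of $u_{\alpha,\beta}$ at $r_0$, which requires quantitative control --- sign and order of decay --- of each $\Delta^k u_{\alpha,\beta}$ and $(\Delta^k u_{\alpha,\beta})'$, to be squeezed out of the bootstrap started from $-\Delta^{m-1}u_{\alpha,\beta}(r)\ge c\,r^{2-n}$.
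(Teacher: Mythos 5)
Your proof of the first assertion is fine: the induction $v_k=\Delta^{m-k}u_{\alpha,\beta}=o(r^{2k-2})$, started from Lemma \ref{l:asympt-1}(i), is exactly the content of the paper's one-line remark that the statement ``is a consequence of Lemma \ref{l:asympt-1}(i)''. The identity $\int_0^{+\infty}s\,e^{u_{\alpha,\beta}}\,ds=(n-2)|\Phi_\alpha(\beta')|$ and the lower bound $-\Delta^{m-1}u_{\alpha,\beta}(r)\ge c\,r^{2-n}$ are also correct. The problem is the second assertion, where your argument has genuine gaps in essentially every dimension $n\ge 5$. In the non-$L^1$ case, the comparison with $V(r)=-2m\log r+C$ via Proposition \ref{mckenna reichel} requires, at the matching radius $r_0$, the inequalities $\Delta^k u_{\alpha,\beta}(r_0)\le \Delta^k V(r_0)$ for \emph{all} $k$; since $\Delta^k V$ and $\Delta^k u_{\alpha,\beta}$ have the same sign $(-1)^k$ when $n>2m$, this amounts to quantitative lower bounds $|\Delta^k u_{\alpha,\beta}(r_0)|\gtrsim r_0^{-2k}$, in particular $-\Delta^{m-1}u_{\alpha,\beta}(r)\gtrsim r^{2-2m}$. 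What you actually have is $-\Delta^{m-1}u_{\alpha,\beta}(r)\gtrsim r^{2-n}$, which is strictly weaker for $n>2m$, and the ``bootstrap'' you invoke to upgrade it is never carried out; the two integrability facts $\int s\,e^{u}<\infty$ and $\int s^{n-1}e^{u}=\infty$ only confine the decay of $e^u$ to a wide range and do not pin down the order $r^{-2m}$ — indeed a bound of the form $e^{u}\gtrsim r^{-2m}$ is essentially equivalent to what you are trying to prove. The degenerate dimensions $n=2\ell\le 2m$ (where $(-\Delta)^m\log r\equiv 0$) and the odd dimensions $3\le n\le 2m$ with $\lambda_S<0$ are acknowledged but the corrected barrier is not constructed. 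In the $L^1$ case, the decomposition $u=p+w$ with $p$ a polyharmonic polynomial of degree $\le 2m-2$ is asserted, not proved (it is a Liouville-type theorem for polyharmonic functions bounded above, nontrivial outside $n=2m$), and even granting it the case where $p$ is bounded leaves you with $u=-2m\gamma\log|x|+o(\log|x|)+O(1)$ and the need to show $\gamma\ge 1$ \emph{with no admissible error}, which your argument does not supply.

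The paper avoids all of this with a different and self-contained mechanism. After reducing to the case $\lim_{r\to+\infty}\Delta^k u_{\alpha,\beta}(r)=0$ for all $k$ (otherwise $u_{\alpha,\beta}$ decays like a negative power of $r$ and the bound is trivial), it argues by contradiction: if $u_{\alpha,\beta}(r_j)+2m\log r_j=:M_j\to+\infty$ along some sequence $r_j\uparrow+\infty$, the rescaled functions $u_j(r):=u_{\alpha,\beta}(r_jr)+2m\log r_j-M_j$ vanish on $\partial B_1$, satisfy $\Delta^m u_j=\lambda_j e^{u_j}$ with $\lambda_j=e^{M_j}$, and — thanks to the sign information $(-1)^k\Delta^k u_{\alpha,\beta}>0$ — are supersolutions of the Navier problem for $\Delta^m u=\lambda_j e^u$ in $B_1$. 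Since that problem is solvable only for $\lambda_j\le\lambda^*<+\infty$, the divergence $\lambda_j\to+\infty$ gives the contradiction. This yields the sharp constant $2m$ in one stroke, uniformly in $n\ge 3$, without any barrier construction. If you want to salvage your approach, the missing ingredient is precisely a proof that $e^{u_{\alpha,\beta}(r)}\gtrsim r^{-2m}$ (or equivalently the lower bounds on $|\Delta^k u_{\alpha,\beta}|$ at $r_0$); as it stands, that step is assumed rather than established.
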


\begin{proof} The first assertion of the lemma is a consequence of Lemma \ref{l:asympt-1}
(i).

Let us prove the second assertion. If there exists $k\in
\{1,\dots,m-1\}$ such that $\lim_{r\to +\infty} \Delta^k
u_{\alpha,\beta}(r)<0$, after a finite number of integrations we
observe that $u_{\alpha,\beta}$ diverges to $-\infty$ as $r\to
+\infty$ with the rate of a positive power of $r$ and hence the
conclusion of the lemma trivially follows. For this reason thanks
to Lemma \ref{l:limiti-Laplaciani}, in the rest of the proof it is
not restrictive assuming that
\begin{equation} \label{eq:lim-null}
\lim_{r\to +\infty} \Delta^k u_{\alpha,\beta}(r)=0 \qquad \text{
for any } k\in \{1,\dots,m-1\} \, .
\end{equation}
We proceed similarly to the proof of Lemma 1 in \cite{fergru}.
Suppose by contradiction that $u_{\alpha,\beta}(r)+2m\log r$ is
not bounded from above and let $r_j\uparrow +\infty$ be such that
$M_j:=u_{\alpha,\beta}(r_j)+2m\log r_j\to +\infty$ as $j\to
+\infty$. Next we define $u_j(r)=u_{\alpha,\beta}(r_j r)+2m\log
r_j-M_j$ in such a way that $u_j$ vanishes on $\partial B_1$ and
it solves the equation $\Delta^m u_j=\lambda_j e^{u_j}$ in $B_1$
where we put $\lambda_j:=e^{M_j}$.

By \eqref{Cauchy}, \eqref{eq:lim-null} and successive
integrations, one may check that $(-1)^k \Delta^k
u_{\alpha,\beta}(r)>0$ for any $r>0$ and $k\in \{1,\dots,m-1\}$.

Resuming the above information we deduce that $u_j$ satisfies
\begin{equation*}
\begin{cases}
\Delta^m u_j=\lambda_j e^{u_j} & \quad \text{in } B_1 \\
u_j=0  & \quad \text{on } \partial B_1 \\
(-\Delta)^k u_j>0 & \quad \text{on } \partial B_1 \ \ \text{for
any } k\in\{1,\dots,m-1\} \, .
\end{cases}
\end{equation*}
This means that $u_j$ is a supersolution for the following Navier
boundary value problem
\begin{equation*}
\begin{cases}
\Delta^m u=\lambda_j e^{u} & \quad \text{in } B_1 \\
u=0  & \quad \text{on } \partial B_1 \\
\Delta u=\dots = \Delta^{m-1} u=0 & \quad \text{on } \partial B_1
\, .
\end{cases}
\end{equation*}
One may check that such a problem admits a solution also in a weak
sense only if $\lambda_j\le \lambda^*$ where $\lambda^*\in
(0,+\infty)$ is a suitable extremal value for the existence of a
solution, see \cite{berchiogazzola} for more details in the case
$m=2$. But $\lambda_j \to +\infty$ as $j\to +\infty$ thus
producing a contradiction.
\end{proof}

As a consequence of Lemma \ref{l:asympt-1} (i) we prove

\begin{lemma} \label{l:non-constant}
Let $n\ge 3$ and $m\ge 4$ even. Then for any $\alpha\in\R$,
$\Phi_\alpha$ is decreasing with respect to each variable. In
other words the map $t\mapsto
\Phi_\alpha(\beta_1,\dots,\beta_{k-1},t,\beta_{k+1},\dots,\beta_{m-2})$
is decreasing in $\R$ for any $k\in\{1,\dots,m-2\}$.
\end{lemma}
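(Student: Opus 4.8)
The plan is to prove the stronger statement that, for $\alpha\in\R$ and $k\in\{1,\dots,m-2\}$ fixed, the map $t\mapsto\Phi_\alpha(\beta_1,\dots,\beta_{k-1},t,\beta_{k+1},\dots,\beta_{m-2})$ is \emph{strictly} decreasing. Fix $\beta'=(\beta_1,\dots,\beta_{m-2})\in\R^{m-2}$ and $\eps>0$, let $\widetilde\beta'$ be $\beta'$ with its $k$-th entry raised by $\eps$, and set $b:=\Phi_\alpha(\beta')$, $\beta:=(\beta',b)$, $\widetilde\beta:=(\widetilde\beta',b)$. By Lemma \ref{l:non-empty}(iii) we have $\beta\in\partial\mathcal A_\alpha$, so $u_{\alpha,\beta}$ is global; and by Lemma \ref{l:non-empty}(ii) it suffices to prove that $u_{\alpha,\widetilde\beta}$ is \emph{not} global, since this forces $b>\Phi_\alpha(\widetilde\beta')$, i.e. $\Phi_\alpha(\widetilde\beta')<\Phi_\alpha(\beta')$.

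First I would analyse the difference $w:=u_{\alpha,\widetilde\beta}-u_{\alpha,\beta}$, defined on $[0,R_{\alpha,\widetilde\beta})$ (recall $R_{\alpha,\beta}=+\infty$). Its Cauchy data are $w(0)=w'(0)=0$, $(\Delta^j w)'(0)=0$ for $j\in\{1,\dots,m-1\}$, $\Delta^k w(0)=\eps$ and $\Delta^j w(0)=0$ for $j\in\{1,\dots,m-1\}\setminus\{k\}$; in particular $\Delta^{m-1}w(0)=0$ because both $(m-1)$-th initial values equal $b$. Since $\Delta^k w\ge\eps/2$ on some $[0,\delta_0]$, iterating the radial identity $(\Delta^{j-1}w)'(r)=r^{1-n}\int_0^r s^{n-1}\Delta^j w(s)\,ds$ downwards from order $k$ and using the vanishing of the lower order Cauchy data yields $w(r)>0$ for $r\in(0,\delta_0]$, whence $\Delta^m w(r)=e^{u_{\alpha,\widetilde\beta}(r)}-e^{u_{\alpha,\beta}(r)}>0$ on $(0,\delta_0]$; iterating the same identity downwards from order $m$ (the intermediate Cauchy data of $w$ being nonnegative) then gives $\Delta^j w(r)>0$ and $(\Delta^j w)'(r)>0$ for every $j\in\{0,1,\dots,m-1\}$ and every $r\in(0,\delta_0]$.

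Next I would propagate the ordering. Pick $\overline r\in(0,\delta_0]$; then at $r=\overline r$ every one of $u_{\alpha,\widetilde\beta}$, $u_{\alpha,\widetilde\beta}'$, $\Delta^j u_{\alpha,\widetilde\beta}$, $(\Delta^j u_{\alpha,\widetilde\beta})'$ (with $j\in\{1,\dots,m-1\}$) strictly exceeds the corresponding quantity for $u_{\alpha,\beta}$. Since $t\mapsto e^t$ is increasing and both functions solve the ordinary differential equation in \eqref{Cauchy}, Proposition \ref{mckenna reichel}, applied with base point $\overline r$, gives that all these strict inequalities hold for every $r\in[\overline r,R_{\alpha,\widetilde\beta})$; in particular $(\Delta^{m-1}w)'>0$ there, so $\Delta^{m-1}w$ is increasing on $[\overline r,R_{\alpha,\widetilde\beta})$ and $\Delta^{m-1}w(\overline r)>0$. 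Now suppose, for contradiction, that $u_{\alpha,\widetilde\beta}$ is global. Then \eqref{eq:neg-lim-Lap} in Lemma \ref{l:limiti-Laplaciani} (applied with $k=m-1$) gives $\lim_{r\to+\infty}\Delta^{m-1}u_{\alpha,\widetilde\beta}(r)\le0$, while Lemma \ref{l:asympt-1}(i) gives $\lim_{r\to+\infty}\Delta^{m-1}u_{\alpha,\beta}(r)=0$ since $\beta\in\partial\mathcal A_\alpha$; subtracting, $\lim_{r\to+\infty}\Delta^{m-1}w(r)\le0$, which contradicts $\Delta^{m-1}w(r)\ge\Delta^{m-1}w(\overline r)>0$ for $r\ge\overline r$. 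Hence $u_{\alpha,\widetilde\beta}$ is not global, and the reduction of the first paragraph completes the proof.

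The only genuinely delicate points I foresee are the bookkeeping in the second paragraph of which Cauchy data of $w$ vanish at the intermediate orders — the equality $\Delta^{m-1}w(0)=0$ being precisely what prevents $\Delta^{m-1}w$ from starting at a negative value — and the verification that Proposition \ref{mckenna reichel} applies with the comparison posed at the positive base point $\overline r$ and with the genuine solution $u_{\alpha,\widetilde\beta}$ in the role of the supersolution; both are handled exactly as in the proof of Lemma \ref{l:asympt-1}(i). Everything else is routine integration together with the two cited limit statements.
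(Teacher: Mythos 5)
Your proof is correct and rests on the same mechanism as the paper's: order two solutions via Proposition \ref{mckenna reichel} and derive a contradiction from the limits of $\Delta^{m-1}$ at infinity supplied by Lemma \ref{l:asympt-1}(i) and Lemma \ref{l:limiti-Laplaciani}. The only (cosmetic) difference is that the paper assumes $\gamma_t\le\gamma_s$ and compares the two boundary solutions directly, whereas you keep the last initial datum fixed at $b=\Phi_\alpha(\beta')$ and show the perturbed solution cannot be global.
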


\begin{proof} 
Let $t<s$ and let $u_t$ and $u_s$ be the solutions of
\eqref{Cauchy} corresponding respectively to the initial values
$$
(\alpha,\beta_1,\dots,\beta_{k-1},t,\beta_{k+1},\dots,\beta_{m-2},\gamma_t)
\, , \qquad
(\alpha,\beta_1,\dots,\beta_{k-1},s,\beta_{k+1},\dots,\beta_{m-2},\gamma_s)
$$
where we put $\gamma_t:=
\Phi_\alpha(\beta_1,\dots,\beta_{k-1},t,\beta_{k+1},\dots,\beta_{m-2})$
and
$\gamma_s:=\Phi_\alpha(\beta_1,\dots,\beta_{k-1},s,\beta_{k+1},\dots,\beta_{m-2})$.

Suppose by contradiction that $\gamma_t\le \gamma_s$. Then by
Proposition \ref{mckenna reichel} we deduce that
\begin{align} \label{eq:inequalities}
& \Delta^k u_t(r)<\Delta^k u_s(r) \, , \quad (\Delta^k
u_t)'(r)<(\Delta^k u_s)'(r) \qquad \text{for any } r>0 \
\text{and} \ k\in \{1,\dots,m-1\} \, , \\
\notag & u_t(r)<u_s(r) \, , \quad  u_t'(r)<u_s'(r) \qquad
\text{for any } r>0 \, .
\end{align}
By \eqref{Cauchy} and Lemma \ref{l:asympt-1} (i), we deduce that
$(\Delta^{m-1} u_t)'(r)>0$ and $(\Delta^{m-1} u_s)'(r)>0$ for any
$r>0$ and their antiderivatives admit a finite limit as $r\to
+\infty$. This yields $(\Delta^{m-1} u_t)',(\Delta^{m-1} u_s)'\in
L^1(0,+\infty)$. Moreover by \eqref{eq:inequalities} we obtain
$$
\int_0^{\infty} (\Delta^{m-1} u_t)'(\sigma)\, d\sigma<
\int_0^{\infty} (\Delta^{m-1} u_s)'(\sigma)\, d\sigma
$$
and hence by Lemma \ref{l:asympt-1} (i)
\begin{align*}
& 0=\lim_{r\to +\infty} \Delta^{m-1}
u_t(r)=\gamma_t+\int_0^{\infty} (\Delta^{m-1} u_t)'(\sigma)\,
d\sigma\\
& <\gamma_s+\int_0^{\infty} (\Delta^{m-1} u_s)'(\sigma)\,
d\sigma=\lim_{r\to +\infty} \Delta^{m-1} u_s(r)=0 \, .
\end{align*}
We reached a contradiction.
\end{proof}

\section{Proof of Theorems
\ref{t:existence-odd}-\ref{t:estimates}}

{\it Proof of Theorem \ref{t:existence-odd}.} The proof of Theorem
\ref{t:existence-odd} is an immediate consequence of Lemma
\ref{odd}.

\bigskip

{\it Proof of Theorem \ref{t:existence-even}.} The proof of
Theorem \ref{t:existence-even} (i) is contained in Lemma
\ref{l:empty}. The proof of Theorem \ref{t:existence-even}
(ii)-(iii) is contained in Lemma \ref{l:non-empty}. The proof of
Theorem \ref{t:existence-even} (iv) is contained in Lemma
\ref{l:non-constant}.

\bigskip
{\it Proof of Theorem \ref{t:non-ex-even}.} The
proof follows closely the argument performed in the proof of
Theorem 1 in \cite{bffg}. Suppose by contradiction that
\eqref{eq_1} admits an entire solution $u$. From \eqref{eq_1} we
have that $u^{(2m-2)}$ is strictly convex and hence at least one
of the two limits ${\ds\lim_{x\to +\infty} u^{(2m-2)}(x)}$ or
${\ds\lim_{x\to -\infty} u^{(2m-2)}(x)}$ is equal to $+\infty$ and
up to replace $u$ with the $u(-x)$ we may assume the first one is
$+\infty$. After a finite number of iterations we deduce that
${\ds\lim_{x\to+ \infty}u(x)=+\infty}$ and in particular by
\eqref{eq_1} we also have that $u^{(2m)}$ and, in turn, also
$u^{(2m-1)}$ diverge to $+\infty$ as $x\to +\infty$. Hence there
exists $M>0$ such that
\begin{equation} \label{eq:x>M}
u^{(2m)}(x)=e^{u(x)}\ge (u(x))^2 \quad \text{and} \quad
u^{(2m-1)}(x)\ge 0 \qquad \text{for any } x>M \, .
\end{equation}
Since \eqref{eq_1} is an autonomous equation we may assume that
$M=0$.

As in \cite{bffg} we apply the test function
method developed in \cite{mp}. More precisely, fix $\rho>0$ and a
nonnegative function $\phi\in C^{2m}_c([0,\infty))$ such that
$$
\phi(r)=
\begin{cases}
1\quad & \mbox{for }x\in[0,\rho]\\
0\quad & \mbox{for }x\ge 2\rho\ .
\end{cases}
$$

In particular we have
\begin{align*}
& \phi(0)=1,\qquad \phi^{(k)}(0)=0, \ \ \text{for any } k\in
\{1,\dots,2m-1\}, \\
& \phi(2\rho)=0, \qquad \phi^{(k)}(2\rho)=0 \ \ \text{for any }
k\in \{1,\dots,2m-1\} \, .
\end{align*}
By \eqref{eq_1}, \eqref{eq:x>M} and integration by parts we obtain
\begin{equation}\label{eq5}
\int_\rho^{2\rho}\phi^{(2m)}(x)u(x)dx=\int_0^{2\rho}\phi^{(2m)}(x)u(x)dx\ge\int_0^{2\rho}(u(x))^2\phi(x)dx+u^{(2m-1)}(0)
\ge\int_0^{2\rho}(u(x))^2\phi(x)dx.
\end{equation}
Exploiting the Young inequality
$u\phi^{(2m)}=u\phi^{1/2}\frac{\phi^{(2m)}}{\phi^{1/2}}\le
\frac{1}{2} \left(
 u^2\phi+\frac{|\phi^{(2m)}|^2}{\phi}\right)$ by \eq{eq5} we infer
\begin{equation}\label{eq6}
\int_\rho^{2\rho}\frac{(\phi^{(2m)}(x))^2}{\phi(x)}dx\ge\int_0^\rho
(u(x))^2dx.
\end{equation}
We now choose $\phi(x)=\phi_\rho(x)=\phi_0(\frac{x}{\rho})$, where
$\phi_0\in C^{(2m)}_c([0,\infty))$, $\phi_0\ge0$ and
$$
\phi_0(\tau)=
\begin{cases}
1\quad & \mbox{for }\tau\in[0,1]\\
0\quad & \mbox{for }\tau\ge2\ .
\end{cases}
$$
As noticed in \cite{mp}, there exists a function $\phi_0$ in such
class satisfying moreover
$$\int_1^2\frac{(\phi_0^{(2m)}(\tau))^2}{\phi_0(\tau)}d\tau=:A<\infty.$$
Then, thanks to a change of variables in the integrals, \eq{eq6}
yields
$$
A\rho^{-4m+1}\!=\!\rho^{-4m+1}\!\!\int_1^2\frac{(\phi_0^{(2m)}(\tau))^2}{\phi_0(\tau)}d\tau\!=\!
\rho^{-4m}\!\!\int_\rho^{2\rho}\frac{(\phi_0^{(2m)}(\frac{x}{\rho}))^2}{\phi_0(\frac{x}{\rho})}dx
\!=\!\int_\rho^{2\rho}\frac{(\phi^{(2m)}(x))^2}{\phi(x)}dx
\ge\!\!\! \int_0^\rho (u(x))^2dx
$$
for any $\rho>0$. Letting $\rho\to\infty$, the previous inequality
contradicts the fact that $u$ diverges to $+\infty$ as $r\to
+\infty$.

\bigskip

{\it Proof of Theorem \ref{t:ex-odd}.} We follow
the idea performed in the proof of Theorem \ref{t:existence-odd}
for symmetric solutions. Since \eqref{eq_1} is an autonomous
equation we may assume that $u$ is solution of \eqref{eq_1}
defined in a neighborhood $I$ of $x=0$; we may assume that $I$ is
the maximal interval of continuation. We put $a_0:=u(0)$ and
$a_k:=u^{(k)}(0)$ for any $k\in \{1,\dots,2m-1\}$. Since
$u^{(2m)}=-e^u$ then $u^{(2m-1)}$ is decreasing and hence
$u^{(2m-1)}(x)\le a_{2m-1}$ for any $x\in I$, $x>0$. We then
define the unique solution of the Cauchy problem
\begin{equation} \label{eq:ww}
\begin{cases}
w^{(2m-1)}=a_{2m-1} \\
w^{(k)}(0)=a_k \qquad \text{for any } k\in \{0,\dots,2m-2\} \, .
\end{cases}
\end{equation}
We observe that $w$ is a polynomial and it is a global solution of \eqref{eq:ww}.
Then $u(x)\le w(x)$ for any $x\in I$, $x>0$ and if we assume by
contradiction that $I$ is bounded from above then $u$ would be
bounded from above and $e^u$ bounded in $I\cap \{x\in \R:x>0\}$.
In a standard way this brings to a contradiction with the
maximality of $I$. In a similar way one may prove left
continuation. This completes the proof of the first part.

Let $m=1$ so that \eqref{eq_1} becomes $-u''=e^u$.
Clearly this equation can be solved explicitly but here we want
only to show symmetry. From the first part of the proof of Lemma \ref{l:1-d} we know that
there exists $x_0\in \R$ such that $u'(x_0)=0$.
The proof of the symmetry now follows immediately since the function
$v(x)=u(2x_0-x)$ satisfies $-v''=e^v$ and $v'(x_0)=0$ and hence it
coincides with $u$ by uniqueness of the solution of a Cauchy
problem.

Finally we show that for $m\ge 3$ odd, equation
\eqref{eq_1} admits a nonsymmetric solution. It is enough to
consider the solution of the following Cauchy problem
\begin{equation} \label{eq:1-d-p}
\begin{cases}
-u^{(2m)}=e^u \\
u(0)=0\, , \qquad u'(0)=1\\
u^{(k)}(0)=0 \qquad \text{for any } k\in \{2,\dots,2m-1\} \, .
\end{cases}
\end{equation}
We recall that $u$ is a global solution of \eqref{eq:1-d-p} from
what we showed above. Suppose by contradiction that $u$ is
symmetric with respect to some $x_0\in \R$. Then $u^{(k)}(x_0)=0$
for any $k\in \{1,\dots,2m-1\}$ odd. But $u^{(2m-1)}$ is
decreasing and it equals to zero at $x=0$ so that $x=0$ is the
unique point where it vanishes. This implies $x_0=0$ and hence
$u'(0)=0$, a contradiction.

\bigskip

{\it Proof of Theorem \ref{t:estimates}.} The
proof of Theorem \ref{t:estimates} (i) is contained in Lemma
\ref{l:lim-0-cons}. The proof of Theorem \ref{t:estimates} (ii) is
contained in Lemma \ref{l:55}. The proof of Theorem
\ref{t:estimates} (iii) is contained in Lemma \ref{l:1-d}. The
proofs of Theorem \ref{t:estimates} (iv)-(v) are contained
respectively in Lemma \ref{l:asympt-1} and in Lemma
\ref{l:slow-decay}.

\section{Proof of Theorem \ref{t:non-ex-stable}}

Let $u$ be a stable solution of \eqref{eq_1}. We start by
considering the case $n<2m$. In this situation, we proceed
similarly to the proof of Theorem 6 in \cite{w}. We consider a
function $\eta\in C^\infty(\R^n)$ such that
\begin{equation} \label{eq:eta}
\eta=1 \quad \text{in } B_1\, , \qquad \eta=0 \quad \text{in }
\R^n\setminus B_2 \qquad \text{and } \qquad \|\eta\|_{L^\infty}\le
1
\end{equation}
and for any $R>0$ we define $\eta_R(x):=\eta(x/R)$. Then we have
\begin{equation} \label{eq:cutoff-even}
\int_{\R^n} |\Delta^{m/2} \eta_R|^2 dx=R^{n-2m} \int_{\R^n}
|\Delta^{m/2} \eta|^2 dx\to 0 \qquad \text{as } R\to +\infty
\end{equation}
if $m$ is even and
\begin{equation} \label{eq:cutoff-odd}
\int_{\R^n} |\nabla(\Delta^{\frac{m-1}2} \eta_R)|^2 dx=R^{n-2m}
\int_{\R^n} |\nabla(\Delta^{\frac{m-1}2} \eta)|^2 dx\to 0 \qquad
\text{as } R\to +\infty
\end{equation}
if $m$ is odd. Using $\eta_R$ as a test function in
\eqref{eq:stability} and exploiting
\eqref{eq:cutoff-even}-\eqref{eq:cutoff-odd} respectively in the
cases $m$ even and $m$ odd we infer
\begin{equation*}
\lim_{R\to +\infty} \int_{\R^n} e^u \eta_R^2 \, dx \le \lim_{R\to
+\infty} \int_{\R^n} |\Delta^{m/2} \eta_R|^2 dx=0
\end{equation*}
if $m$ is even and
\begin{equation*}
\lim_{R\to +\infty} \int_{\R^n} e^u \eta_R^2 \, dx \le \lim_{R\to
+\infty} \int_{\R^n} |\nabla(\Delta^{\frac{m-1}2} \eta_R)|^2 dx=0
\end{equation*}
if $m$ is odd. Therefore by Fatou Lemma , the fact that $\eta_R\to
1$ pointwise as $R\to +\infty$ and the stability of $u$, we obtain
$$
\int_{\R^n} e^u dx\le \lim_{R\to +\infty} \int_{\R^n} e^u \eta_R^2
\, dx=0
$$
for any $m\ge 1$ and this is absurd.

It remains to consider the case $n=2m$. Let $\eta$ be as in
\eqref{eq:eta}. We define the sequence of functions $\{\eta_k\}$
by putting
$$
\eta_k(x):=\frac 1k \sum_{j=k}^{2k-1} \eta\left(\frac
x{2^j}\right) \qquad \text{for any } k\ge 1 \, .
$$
Clearly $\eta_k\in C^\infty_c(\R^n)$ and hence it is an admissible
test function for \eqref{eq:stability}. We observe that if $m$ is
even, the functions $\Delta^{m/2} \eta(2^{-j}x)$ have supports
with zero measure intersections, i.e.
\begin{equation} \label{eq:measure-1}
\Big|{\rm supp} \, \Big(\Delta^{m/2} \eta(2^{-i}x)\Big)\cap {\rm
supp} \, \Big( \Delta^{m/2} \eta(2^{-j}x)\Big)\Big|=0 \qquad
\text{if } i\neq j \, .
\end{equation}
Similarly if $m$ is odd we have
\begin{equation} \label{eq:measure-2}
\Big|{\rm supp} \, \Big(|\nabla(\Delta^{\frac{m-1}2}
\eta(2^{-i}x))|\Big) \cap {\rm supp} \,
\Big(|\nabla(\Delta^{\frac{m-1}2} \eta(2^{-j}x))|\Big)\Big|=0
\qquad \text{if } i\neq j \, .
\end{equation}
By \eqref{eq:measure-1}-\eqref{eq:measure-2} and the fact that
$n=2m$ we have
\begin{align} \label{eq:cutoff-even-bis}
& \int_{\R^n} |\Delta^{m/2} \eta_k|^2 dx=\frac{1}{k^2}\int_{\R^n}
\left[\sum_{j=k}^{2k-1} 2^{-jm} \Delta^{m/2}
\eta(2^{-j}x)\right]^2dx=\frac{1}{k^2}\sum_{j=k}^{2k-1}
\int_{\R^n} 2^{-2jm} |\Delta^{m/2} \eta(2^{-j}x)|^2dx
\\
\notag & =\frac{1}{k^2} \sum_{j=k}^{2k-1} \int_{\R^n}
|\Delta^{m/2} \eta|^2dx=\int_{\R^n} |\Delta^{m/2} \eta|^2dx\cdot
\frac 1k \to 0 \qquad \text{as } k\to +\infty
\end{align}
if $m$ is even and
\begin{align} \label{eq:cutoff-odd-bis}
& \int_{\R^n} |\nabla(\Delta^{\frac{m-1}2} \eta_k)|^2
dx=\frac{1}{k^2}\int_{\R^n} \left|\sum_{j=k}^{2k-1} 2^{-jm}
\nabla(\Delta^{\frac{m-1}2} \eta(2^{-j}x))\right|^2dx
\\
\notag & =\frac{1}{k^2} \sum_{j=k}^{2k-1}
 \int_{\R^n} 2^{-2jm}|\nabla(\Delta^{\frac{m-1}2} \eta(2^{-j}x))|^2dx
 =\int_{\R^n} |\nabla(\Delta^{\frac{m-1}2} \eta)|^2dx\cdot \frac 1k \to 0
\qquad \text{as } k\to +\infty
\end{align}
if $m$ is odd. Moreover $\eta_k\to 1$ pointwise as $k\to +\infty$.
Therefore by \eqref{eq:cutoff-even-bis}, \eqref{eq:cutoff-odd-bis}
respectively in the cases $m$ even and $m$ odd, Fatou Lemma and
the stability of $u$, we obtain
\begin{align*}
\int_{\R^n} e^u dx\le\lim_{k\to +\infty} \int_{\R^n} e^u \eta_k^2
\, dx=0
\end{align*}
and this is absurd.

\section{Proof of Theorems
\ref{t:preliminar-1}-\ref{t:Hardy-Rellich-2} and Proposition \ref{p:HR-1d}}
Let
\begin{equation*}
\Phi:\R^n\setminus\{0\}\to \mathcal C:=\R\times \SN\subset
\R^{n+1}
\end{equation*}
be the diffeomorphism defined by
\begin{equation*}
\Phi(x):=\left(-\log|x|,\frac{x}{|x|}\right) \qquad \text{for any
} x\in \R^n\setminus\{0\}
\end{equation*}
and let $\mathcal C_\Omega:=\Phi(\Omega)\subseteq \mathcal C$ for
any open set $\Omega\subseteq \R^n \setminus\{0\}$. For any
$\alpha\in \R$ let us introduce the linear operator
\begin{equation*}
T_\alpha:C^\infty_c (\Omega)\to C^\infty_c(\mathcal C_\Omega)
\end{equation*}
by
\begin{equation} \label{eq:Tu}
T_\alpha
\varphi(t,\theta):=e^{\frac{4-n-\alpha}2t}\varphi(e^{-t}\theta)
\qquad \text{for any } (t,\theta)\in \mathcal C_\Omega \quad
\text{and } \varphi\in C^\infty_c (\Omega) \, .
\end{equation}
Clearly $T_\alpha$ is an isomorphism between vector spaces.  Let
us denote by $\mu$ the volume measure on $\mathcal C$.

\begin{lemma} \label{l:preliminar-0}
Let $n\ge 2$. For any $R>1$ put $\Omega_R:=\R^n\setminus \overline
B_R$. Let $\varphi\in C^\infty_c(\Omega_R)$, $\alpha\in\R$ and
$\beta\ge 0$. Then
\begin{align} \label{eq:stimona}
& \int_{\Omega_R} \frac{|x|^{\alpha} |\Delta
\varphi|^2}{(\log|x|)^\beta} \, dx=\int_{\mathcal C_{\Omega_R}}
|t|^{-\beta} |Lw(t,\theta)|^2 \,  d\mu+ \int_{\mathcal
C_{\Omega_R}} |t|^{-\beta} \left[(\partial^2_t
w(t,\theta))^2+2\bar
\gamma_{n,\alpha} (\partial_t w(t,\theta))^2\right]d\mu\\
\notag & +\int_{\mathcal C_{\Omega_R}} |t|^{-\beta}\left[
2|\nabla_{\SN}(\partial_t
w(t,\theta))|^2-\beta(\beta+1) |t|^{-2}|\nabla_{\SN} w(t,\theta)|^2\right] d\mu\\
\notag & +\int_{\mathcal C_{\Omega_R}} |t|^{-\beta} \left[
(\alpha-2)\beta|t|^{-1} |\nabla_{\SN}
w(t,\theta)|^2-\beta(\beta+1)\gamma_{n,\alpha}|t|^{-2} w^2(t,\theta) \right] d\mu\\
\notag & +\int_{\mathcal C_{\Omega_R}} |t|^{-\beta}\left[
(\alpha-2)\beta \gamma_{n,\alpha} |t|^{-1}
w^2(t,\theta)-(\alpha-2)\beta|t|^{-1} (\partial_t w(t,\theta))^2
\right] d\mu
\end{align}
where $w=T_\alpha \varphi\in C^\infty_c(\mathcal C_{\Omega_R})$,
$L=-\Delta_{\SN}+\gamma_{n,\alpha}$, $\gamma_{n,\alpha}$ is as in
Proposition \ref{p:Hardy-Rellich} and $\bar
\gamma_{n,\alpha}=\left(\frac{n-2}2\right)^2+\left(\frac{\alpha-2}2\right)^2$.
\end{lemma}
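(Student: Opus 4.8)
The plan is to perform the Emden (logarithmic--cylindrical) change of variables attached to $\Phi$ and $T_\alpha$, which turns the weighted Dirichlet integral of $\Delta\varphi$ over $\Omega_R=\R^n\setminus\overline B_R$ into an integral over the cylinder $\mathcal C_{\Omega_R}\subset\mathcal C=\R\times\SN$ of the transformed function $w=T_\alpha\varphi$, and then to reduce everything to integrations by parts in the two variables $t$ and $\theta$. The conceptual content is modest; the real work will be bookkeeping.

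\textbf{Step 1: a pointwise identity on the cylinder.} With $r=|x|$, $t=-\log r$, $\theta=x/|x|$ one has $r\partial_r=-\partial_t$, hence the classical expression $r^2\Delta\varphi=\partial_t^2\varphi-(n-2)\partial_t\varphi+\Delta_{\SN}\varphi$ in these coordinates. By \eqref{eq:Tu} we may write $\varphi=e^{\frac{n+\alpha-4}{2}t}w$; substituting this and using $2\cdot\frac{n+\alpha-4}{2}-(n-2)=\alpha-2$ together with $\frac{n+\alpha-4}{2}\bigl(\frac{n+\alpha-4}{2}-(n-2)\bigr)=-\gamma_{n,\alpha}$ gives the clean identity
\[
r^2\Delta\varphi=e^{\frac{n+\alpha-4}{2}t}\bigl(\partial_t^2 w+(\alpha-2)\partial_t w-Lw\bigr),\qquad L=-\Delta_{\SN}+\gamma_{n,\alpha}.
\]
Since $\Phi$ sends the volume element $dx$ to $e^{-nt}\,d\mu$, and since on $\mathcal C_{\Omega_R}$ one has $t<-\log R<0$ (because $R>1$), hence $\log|x|=-t=|t|$, a direct count of exponents shows that $|x|^\alpha$, the factor $r^{-4}e^{(n+\alpha-4)t}$ coming from $|\Delta\varphi|^2$, and the Jacobian $e^{-nt}$ cancel exactly. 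This reduces the left-hand side of \eqref{eq:stimona} to
\[
\int_{\mathcal C_{\Omega_R}}|t|^{-\beta}\bigl(\partial_t^2 w+(\alpha-2)\partial_t w-Lw\bigr)^2\,d\mu .
\]

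\textbf{Step 2: expand and integrate by parts.} Expanding the square yields the diagonal terms $(\partial_t^2 w)^2$, $(\alpha-2)^2(\partial_t w)^2$, $(Lw)^2$ and the cross terms $2(\alpha-2)\partial_t^2 w\,\partial_t w$, $-2\partial_t^2 w\,Lw$, $-2(\alpha-2)\partial_t w\,Lw$, all weighted by $|t|^{-\beta}$. Each cross term is treated by integrating by parts in $t$ — no boundary terms appear, since $w$ is compactly supported in $\{t<-\log R\}$ — and in $\theta$, using repeatedly that $-\Delta_{\SN}$ is self-adjoint, that $L$ commutes with $\partial_t$, and the key fact $\partial_t|t|^{-\gamma}=\gamma|t|^{-\gamma-1}$ for $t<0$, which is exactly what produces the lower-order contributions with extra factors $|t|^{-1}$, $|t|^{-2}$. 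Carrying this out I expect: $2(\alpha-2)\partial_t^2 w\,\partial_t w=(\alpha-2)\partial_t\bigl((\partial_t w)^2\bigr)$ contributes $-(\alpha-2)\beta|t|^{-\beta-1}(\partial_t w)^2$; the term $-2\partial_t^2 w\,Lw$, after two integrations by parts, contributes $2|t|^{-\beta}\bigl(|\nabla_{\SN}\partial_t w|^2+\gamma_{n,\alpha}(\partial_t w)^2\bigr)-\beta(\beta+1)|t|^{-\beta-2}\bigl(|\nabla_{\SN}w|^2+\gamma_{n,\alpha}w^2\bigr)$; and $-2(\alpha-2)\partial_t w\,Lw$ contributes $(\alpha-2)\beta|t|^{-\beta-1}\bigl(|\nabla_{\SN}w|^2+\gamma_{n,\alpha}w^2\bigr)$.

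\textbf{Step 3: collect, and the main obstacle.} Finally $(Lw)^2=|Lw|^2$; the coefficient of $|t|^{-\beta}(\partial_t w)^2$ equals $(\alpha-2)^2+2\gamma_{n,\alpha}=\frac{(\alpha-2)^2}{2}+\frac{(n-2)^2}{2}=2\bar\gamma_{n,\alpha}$; and the remaining pieces, all carrying a factor $|t|^{-\beta-1}$ or $|t|^{-\beta-2}$, are exactly the $|t|^{-\beta}\cdot|t|^{-1}$ and $|t|^{-\beta}\cdot|t|^{-2}$ terms grouped on the right-hand side of \eqref{eq:stimona}. The genuine difficulty is precisely the bookkeeping in Step 2: tracking the three families of weights $|t|^{-\beta},|t|^{-\beta-1},|t|^{-\beta-2}$ generated by successive differentiations of the logarithmic weight, checking that every boundary term vanishes (both at $t\to-\infty$ and at $t=-\log R$, guaranteed by $\varphi\in C^\infty_c(\Omega_R)$ and $R>1$), and keeping the $t$- and $\theta$-integrations by parts separate; the only genuinely algebraic point is the identity $(\alpha-2)^2+2\gamma_{n,\alpha}=2\bar\gamma_{n,\alpha}$, which is what replaces $\gamma_{n,\alpha}$ by $\bar\gamma_{n,\alpha}$ in the coefficient of $(\partial_t w)^2$.
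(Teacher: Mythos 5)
Your proposal is correct and follows essentially the same route as the paper: the Emden change of variables reducing the left-hand side to $\int_{\mathcal C_{\Omega_R}}|t|^{-\beta}(\partial_t^2w+(\alpha-2)\partial_tw-Lw)^2\,d\mu$ (the paper quotes this pointwise identity from Lemma 2.4 of \cite{calmus}, while you rederive it, which is fine), followed by expanding the square and the same five integrations by parts in $t$ and $\theta$. All of your predicted contributions, including the signs coming from $\partial_t|t|^{-\gamma}=\gamma|t|^{-\gamma-1}$ on $\{t<0\}$ and the identity $(\alpha-2)^2+2\gamma_{n,\alpha}=2\bar\gamma_{n,\alpha}$, match the terms in \eqref{eq:stimona}.
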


\begin{proof} Proceeding as in the proof of Lemma 2.4 in
\cite{calmus} we obtain
$$
\Delta
\varphi(x)=|x|^{-\frac{n+\alpha}2}[-Lw(-\log|x|,x/|x|)+\partial^2_t
w(-\log|x|,x/|x|)+(\alpha-2)\partial_t w(-\log|x|,x/|x|)]
$$
and hence
\begin{align*}
& \int_{\Omega_R} \frac{|x|^{\alpha} |\Delta
\varphi|^2}{(\log|x|)^\beta} \, dx=\int_{\mathcal C_{\Omega_R}}
|t|^{-\beta}[-Lw+\partial^2_t w+(\alpha-2)\partial_t w]^2
d\mu \\
& =\int_{\mathcal C_{\Omega_R}} |t|^{-\beta} \left[\,
|Lw|^2+(\partial^2_t w)^2+(\alpha-2)^2(\partial_t
w)^2+2(\partial^2_t w)(\Delta_{\SN} w)\right]d\mu
\\ & \quad +\int_{\mathcal C_{\Omega_R}} |t|^{-\beta} \left[2(\alpha-2)(\partial_t w)(\Delta_{\SN} w)-2\gamma_{n,\alpha}
w\, \partial^2_t w-2\gamma_{n,\alpha}(\alpha-2) w\, \partial_t
w+2(\alpha-2) \partial_t w \, \partial^2_t w \right]d\mu \, .
\end{align*}
The conclusion of the lemma then follows from the following
identities obtained after some integrations by parts
\begin{multline*}
2 \int_{\mathcal C_{\Omega_R}} |t|^{-\beta} (\partial^2_t
w)(\Delta_{\SN} w) \, d\mu=2\int_{\mathcal C_{\Omega_R}}
|t|^{-\beta} |\nabla_{\SN}(\partial_t w)|^2
d\mu-\beta(\beta+1)\int_{\mathcal C_{\Omega_R}}|t|^{-\beta-2}
|\nabla_{\SN} w|^2 d\mu \, , \\
2(\alpha-2)\int_{\mathcal C_{\Omega_R}} |t|^{-\beta} (\partial_t
w)(\Delta_{\SN} w) \, d\mu=(\alpha-2)\beta \int_{\mathcal
C_{\Omega_R}} |t|^{-\beta-1} |\nabla_{\SN} w|^2 d\mu \, ,\\
-2\gamma_{n,\alpha} \int_{\mathcal C_{\Omega_R}} |t|^{-\beta} w\,
\partial^2_t w \, d\mu=2\gamma_{n,\alpha} \int_{\mathcal C_{\Omega_R}} |t|^{-\beta} (\partial_t
w)^2 d\mu-\beta(\beta+1)\gamma_{n,\alpha} \int_{\mathcal
C_{\Omega_R}} |t|^{-\beta-2} w^2 d\mu \, ,
\\ -2\gamma_{n,\alpha}(\alpha-2) \int_{\mathcal C_{\Omega_R}}|t|^{-\beta} w\, \partial_t
w \, d\mu=(\alpha-2)\beta \gamma_{n,\alpha} \int_{\mathcal
C_{\Omega_R}} |t|^{-\beta-1} w^2 d\mu \, ,
\\
2(\alpha-2) \int_{\mathcal C_{\Omega_R}}  |t|^{-\beta} \partial_t
w \,
\partial^2_t w \, d\mu
=-(\alpha-2)\beta \int_{\mathcal C_{\Omega_R}} |t|^{-\beta-1}
(\partial_t w)^2 d\mu \, . \qquad \qquad \qquad
\end{multline*}
\end{proof}

The next three lemmas are devoted to suitable integral
inequalities involving functions in $H^2(\SN)$.

We start with the following inequality obtained with an
integration by parts:
\begin{equation} \label{eq:Poincare-2nd-ord}
\int_{\SN} |\nabla_{\SN} \psi|^2 dS\le \frac 12
\left(\|\Delta_{\SN}
\psi\|_{L^2(\SN)}^2+\|\psi\|_{L^2(\SN)}^2\right) \qquad \text{for
any } \psi\in H^2(\SN) \, .
\end{equation}

We recall from \cite[Proposition 1.1]{calmus} the following
estimate

\begin{lemma} \label{l:sfera-1}
\  \cite{calmus} Let $n\ge 2$. Let $\alpha\in \R$ and let
$\gamma_{n,\alpha}$ and $L$ be as in Lemma \ref{l:preliminar-0}.
Then
\begin{equation} \label{eq:Poincare-L}
\int_{\SN} |L\psi|^2 dS\ge \mu_{n,\alpha} \int_{\SN} |\psi|^2 dS
\qquad \text{for any } \psi\in H^2(\SN) \, ,
\end{equation}
where $\mu_{n,\alpha}$ is defined by \eqref{mu-n-alpha}.
\end{lemma}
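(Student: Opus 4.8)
The plan is to prove \eqref{eq:Poincare-L} by diagonalising the operator $L=-\Delta_{\SN}+\gamma_{n,\alpha}$ via the spectral decomposition of the Laplace--Beltrami operator on $\SN$. Recall that the spectrum of $-\Delta_{\SN}$ consists exactly of the eigenvalues $\lambda_j:=j(n-2+j)$, $j\in\N\cup\{0\}$, the eigenspace $E_j$ associated with $\lambda_j$ being the space of spherical harmonics of degree $j$ on $\SN$; the $E_j$ are finite dimensional, pairwise orthogonal in $L^2(\SN)$, and their algebraic sum is dense in $L^2(\SN)$. Consequently every $\psi\in H^2(\SN)$ admits the expansion $\psi=\sum_{j\ge 0}\psi_j$, with $\psi_j\in E_j$ the $L^2(\SN)$-orthogonal projection of $\psi$ onto $E_j$, the series converging in $H^2(\SN)$ because the $H^2(\SN)$-norm is equivalent to $\big(\sum_{j\ge0}(1+\lambda_j)^2\|\psi_j\|_{L^2(\SN)}^2\big)^{1/2}$.

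Next I would apply $L$ termwise: since $L\psi_j=(\lambda_j+\gamma_{n,\alpha})\psi_j$ and $L$ maps $H^2(\SN)$ continuously into $L^2(\SN)$, one obtains the orthogonal decomposition $L\psi=\sum_{j\ge0}(\lambda_j+\gamma_{n,\alpha})\psi_j$ in $L^2(\SN)$, whence by Parseval
\begin{equation*}
\int_{\SN}|L\psi|^2\,dS=\sum_{j\ge0}(\lambda_j+\gamma_{n,\alpha})^2\,\|\psi_j\|_{L^2(\SN)}^2 .
\end{equation*}
Because $|\gamma_{n,\alpha}+j(n-2+j)|\to+\infty$ as $j\to+\infty$, only finitely many indices $j$ matter, so the infimum in \eqref{mu-n-alpha} is a minimum and $(\lambda_j+\gamma_{n,\alpha})^2\ge\mu_{n,\alpha}$ for all $j\ge0$. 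Bounding each coefficient from below by $\mu_{n,\alpha}$ and using Parseval once more in the form $\sum_{j\ge0}\|\psi_j\|_{L^2(\SN)}^2=\|\psi\|_{L^2(\SN)}^2$ yields $\int_{\SN}|L\psi|^2\,dS\ge\mu_{n,\alpha}\int_{\SN}|\psi|^2\,dS$, which is \eqref{eq:Poincare-L}.

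I do not expect any genuine obstacle here: the inequality is sharp and is already contained in \cite[Proposition 1.1]{calmus}. The only point deserving a little attention is justifying the spectral expansion and the two Parseval identities at the regularity level $H^2(\SN)$ rather than merely in $L^2(\SN)$, which is standard elliptic spectral theory on the compact manifold $\SN$; alternatively, one may first prove \eqref{eq:Poincare-L} for $\psi\in C^\infty(\SN)$ by exactly the same computation and then pass to $\psi\in H^2(\SN)$ by density, using that $\psi\mapsto L\psi$ is continuous from $H^2(\SN)$ into $L^2(\SN)$.
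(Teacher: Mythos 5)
Your proof is correct: the spectral decomposition of $-\Delta_{\SN}$ into spherical harmonics, termwise application of $L$, and the two Parseval identities give exactly \eqref{eq:Poincare-L}, with the only point needing care (convergence of the expansion in $H^2(\SN)$, or equivalently a density argument from $C^\infty(\SN)$) handled adequately. The paper itself supplies no proof of this lemma, merely citing \cite[Proposition 1.1]{calmus}, and your argument is the standard one underlying that reference, so there is nothing to compare beyond noting that you have filled in the details the authors chose to omit.
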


When $\mu_{n,\alpha}=0$ estimate \eqref{eq:Poincare-L} becomes
trivial but using the argument performed in \cite[Proposition
1.1]{calmus} one deduces the following estimate

\begin{lemma} \label{l:sfera-1-bis}
Let $n\ge 2$ and $\alpha\in\R$ be such that $\mu_{n,\alpha}=0$
with $\mu_{n,\alpha}$ and $\gamma_{n,\alpha}$ as in Proposition
\ref{p:Hardy-Rellich}. Let $L$ be as in Lemma \ref{l:sfera-1}. Let
$\bar j\in \N\cup\{0\}$ be such that
$0=\mu_{n,\alpha}=|\gamma_{n,\alpha}+\bar j(n-2+\bar j)|^2$ and
define $\bar\mu_{n,\alpha}:=\ds{\min_{j\in \N\cup\{0\}, j\neq \bar
j} |\gamma_{n,\alpha}+j(n-2+j)|^2}>0$. Finally let $V$ be the
eigenspace of $-\Delta_{\SN}$ corresponding to the eigenvalue
$\bar j(n-2+\bar j)$. Then
\begin{equation} \label{eq:Poincare-L-barrata}
\int_{\SN} |L\psi|^2 dS\ge \bar\mu_{n,\alpha} \int_{\SN} |\psi|^2
dS \qquad \text{for any } \psi\in V^{\perp}  \, .
\end{equation}
\end{lemma}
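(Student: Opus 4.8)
The plan is to diagonalize the operator $L=-\Delta_{\SN}+\gamma_{n,\alpha}$ in $L^2(\SN)$ by means of spherical harmonics, exactly as in the proof of Lemma \ref{l:sfera-1} (that is, \cite[Proposition 1.1]{calmus}); the only new point is that we restrict to the orthogonal complement of the eigenspace $V$ on which the spectrum of $L$ attains the value $0$.

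First I would recall that on $\SN$ the operator $-\Delta_{\SN}$ has eigenvalues $\lambda_j:=j(n-2+j)$, $j\in\N\cup\{0\}$, with corresponding eigenspaces $V_j$ of spherical harmonics of degree $j$; the $V_j$ are mutually orthogonal in $L^2(\SN)$ and $L^2(\SN)$ is their orthogonal Hilbert sum $\bigoplus_{j\ge 0}V_j$. Consequently $L$ acts on $V_j$ as multiplication by $\gamma_{n,\alpha}+\lambda_j$. Since $n\ge 2$ we have $\lambda_{j+1}-\lambda_j=n-1+2j>0$ for every $j\ge 0$, so $j\mapsto\gamma_{n,\alpha}+\lambda_j$ is strictly increasing, hence injective; thus the index $\bar j$ with $\gamma_{n,\alpha}+\lambda_{\bar j}=0$ is unique, $V=V_{\bar j}$, and $\gamma_{n,\alpha}+\lambda_j\neq 0$ for every $j\neq\bar j$. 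Because $\lambda_j\to+\infty$, only finitely many $j$ keep $|\gamma_{n,\alpha}+\lambda_j|^2$ below any given bound, so $\bar\mu_{n,\alpha}=\min_{j\neq\bar j}|\gamma_{n,\alpha}+\lambda_j|^2$ is attained and, by the preceding sentence, strictly positive, which justifies the positivity stated in the lemma.

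Next, for $\psi\in H^2(\SN)$ with $\psi\in V^{\perp}$, I would write the spherical-harmonics decomposition $\psi=\sum_{j\neq\bar j}\psi_j$, $\psi_j\in V_j$ (the $V_{\bar j}$-component being absent since $\psi\perp V$), the series converging in $H^2(\SN)$; applying the bounded operator $L:H^2(\SN)\to L^2(\SN)$ term by term gives $L\psi=\sum_{j\neq\bar j}(\gamma_{n,\alpha}+\lambda_j)\psi_j$ in $L^2(\SN)$. Hence, by orthogonality of the $V_j$ (Parseval's identity),
\[
\int_{\SN}|L\psi|^2\,dS=\sum_{j\neq\bar j}(\gamma_{n,\alpha}+\lambda_j)^2\,\|\psi_j\|_{L^2(\SN)}^2\ge\bar\mu_{n,\alpha}\sum_{j\neq\bar j}\|\psi_j\|_{L^2(\SN)}^2=\bar\mu_{n,\alpha}\int_{\SN}|\psi|^2\,dS,
\]
which is \eqref{eq:Poincare-L-barrata}.

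I do not expect a genuine obstacle here, since this is the same spectral computation that proves Lemma \ref{l:sfera-1}. The only steps that deserve a line of care are the uniqueness of $\bar j$ (equivalently, the strict positivity of $\bar\mu_{n,\alpha}$), which follows from the strict monotonicity of $j\mapsto\lambda_j$ — valid precisely because $n\ge 2$ — and the remark that the $H^2(\SN)$-convergence of the spherical-harmonics expansion legitimizes applying $L$ termwise and then Parseval's identity.
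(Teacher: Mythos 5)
Your proof is correct and is essentially the paper's own argument: the paper simply states that the lemma follows from ``the argument performed in \cite[Proposition 1.1]{calmus}'', which is precisely the spherical-harmonics diagonalization of $L$ that you carry out explicitly, restricted to $V^{\perp}$. Your added care about the uniqueness of $\bar j$ (via strict monotonicity of $j\mapsto j(n-2+j)$ for $n\ge 2$) and the resulting positivity of $\bar\mu_{n,\alpha}$ is exactly the right justification and matches what the paper implicitly relies on.
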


The next lemma is devoted to an estimate for the $L^2(\SN)$-norm
of the gradient.

\begin{lemma} \label{l:sfera-2} Let $n\ge 2$, $\alpha\in\R$ and let $L$,
$\gamma_{n,\alpha}$ and $\mu_{n,\alpha}$ be as in Lemma
\ref{l:sfera-1}.
\begin{itemize}
\item[(i)] If $\mu_{n,\alpha}>0$ then
\begin{equation*}
\int_{\SN} |\nabla_{\SN} \psi|^2 dS\le \left[1+\mu_{n,\alpha}^{-1}
(\gamma_{n,\alpha}^2+1/2)\right] \int_{\SN} |L\psi|^2 dS \qquad
\text{for any } \psi\in H^2(\SN) \, .
\end{equation*}

\item[(ii)] If $\mu_{n,\alpha}=0$ let $\bar j$ be the unique value
of $j\in \N\cup \{0\}$ for which the minimum in \eqref{mu-n-alpha}
is achieved and put $\bar\mu_{n,\alpha}:=\ds{\min_{j\in
\N\cup\{0\}, j\neq \bar j} |\gamma_{n,\alpha}+j(n-2+j)|^2}>0$.
Then
\begin{equation*}
\int_{\SN} |\nabla_{\SN} \psi|^2 dS\le
\left[1+\bar\mu_{n,\alpha}^{-1} (\gamma_{n,\alpha}^2+1/2)\right]
\int_{\SN} |L\psi|^2 dS+|\gamma_{n,\alpha}| \int_{\SN} \psi^2 dS
\end{equation*}
for any $\psi\in H^2(\SN)$.
\end{itemize}

\end{lemma}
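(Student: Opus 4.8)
The plan is to diagonalize everything on $\SN$ via spherical harmonics, thereby reducing Lemma \ref{l:sfera-2} to a single elementary inequality between the eigenvalues of $-\Delta_{\SN}$ and those of $L$. Let $\lambda_j=j(n+j-2)$, $j\ge 0$, be the eigenvalues of $-\Delta_{\SN}$, write $\psi=\sum_{j\ge 0}\psi_j$ for the $L^2(\SN)$-orthogonal decomposition of $\psi\in H^2(\SN)$ into the corresponding eigenspaces, and set $s_j:=\lambda_j+\gamma_{n,\alpha}$, so that $L\psi_j=s_j\psi_j$. Parseval's identity then gives $\int_{\SN}|\nabla_{\SN}\psi|^2\,dS=\sum_j\lambda_j\|\psi_j\|_{L^2(\SN)}^2$, $\int_{\SN}|L\psi|^2\,dS=\sum_j s_j^2\|\psi_j\|_{L^2(\SN)}^2$ and $\int_{\SN}\psi^2\,dS=\sum_j\|\psi_j\|_{L^2(\SN)}^2$, all series converging since $\psi\in H^2(\SN)$. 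So it suffices to compare $\lambda_j$ with $s_j^2$ mode by mode.

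The heart of the proof is the following elementary bound: if $m>0$ and $s_j^2\ge m$, then $\lambda_j\le[1+m^{-1}(\gamma_{n,\alpha}^2+\tfrac12)]s_j^2$. I would prove it by writing $\lambda_j=s_j-\gamma_{n,\alpha}$, using $s_j^2\ge m$ to bound $m^{-1}(\gamma_{n,\alpha}^2+\tfrac12)s_j^2\ge\gamma_{n,\alpha}^2+\tfrac12$, and completing the square: $[1+m^{-1}(\gamma_{n,\alpha}^2+\tfrac12)]s_j^2-\lambda_j\ge s_j^2-s_j+\gamma_{n,\alpha}+\gamma_{n,\alpha}^2+\tfrac12=(s_j-\tfrac12)^2+(\gamma_{n,\alpha}+\tfrac12)^2\ge 0$. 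A crude Cauchy--Schwarz argument (via $\int_{\SN}|\nabla_{\SN}\psi|^2=\int_{\SN}\psi\,L\psi-\gamma_{n,\alpha}\int_{\SN}\psi^2$ together with \eqref{eq:Poincare-L}) would only deliver the weaker constant $m^{-1/2}+|\gamma_{n,\alpha}|m^{-1}$, so it is precisely this completion of the square that produces the constant stated in the lemma.

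For part (i), when $\mu_{n,\alpha}>0$ the definition \eqref{mu-n-alpha} says exactly that $s_j^2\ge\mu_{n,\alpha}$ for all $j$; applying the pointwise bound with $m=\mu_{n,\alpha}$, multiplying by $\|\psi_j\|_{L^2(\SN)}^2$ and summing gives the claim. For part (ii), when $\mu_{n,\alpha}=0$, the strict monotonicity of $j\mapsto s_j$ forces the minimum in \eqref{mu-n-alpha} to be attained at a unique $\bar j$ with $s_{\bar j}=0$; hence $\lambda_{\bar j}=-\gamma_{n,\alpha}=|\gamma_{n,\alpha}|$ (in particular $\gamma_{n,\alpha}\le 0$), while for $j\ne\bar j$ one has $s_j^2\ge\bar\mu_{n,\alpha}>0$, so the pointwise bound applies with $m=\bar\mu_{n,\alpha}$. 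Summing, and using that the $\bar j$-mode contributes nothing to $\int_{\SN}|L\psi|^2\,dS$ (since $s_{\bar j}=0$) and that $\|\psi_{\bar j}\|_{L^2(\SN)}\le\|\psi\|_{L^2(\SN)}$, one obtains $\int_{\SN}|\nabla_{\SN}\psi|^2\,dS\le[1+\bar\mu_{n,\alpha}^{-1}(\gamma_{n,\alpha}^2+\tfrac12)]\int_{\SN}|L\psi|^2\,dS+|\gamma_{n,\alpha}|\int_{\SN}\psi^2\,dS$, which is (ii).

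I do not expect a serious obstacle here: the only genuinely substantive point is the scalar inequality above, and everything else is routine manipulation of the spectral decomposition. The mild technical point to handle with care is the justification of the term-by-term estimates for an arbitrary $\psi\in H^2(\SN)$ --- convergence of the series in Parseval's identity and the vanishing of the $\bar j$-contribution in case (ii) --- but this is standard once one recalls that $H^2(\SN)=\{\psi:\sum_j(1+\lambda_j)^2\|\psi_j\|_{L^2(\SN)}^2<\infty\}$. One could alternatively base the proof on \eqref{eq:Poincare-2nd-ord} and Lemmas \ref{l:sfera-1}--\ref{l:sfera-1-bis}, but that route appears to yield a slightly worse constant, so the spherical-harmonic computation is preferable.
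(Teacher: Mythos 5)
Your proof is correct, and it takes a genuinely different route from the paper's. You diagonalize completely in spherical harmonics and reduce the lemma to the single scalar inequality $\lambda_j\le\left[1+m^{-1}(\gamma_{n,\alpha}^2+\tfrac12)\right]s_j^2$ whenever $s_j^2\ge m>0$, proved by completing the square; the paper instead argues at the operator level: it starts from the integration-by-parts bound \eqref{eq:Poincare-2nd-ord}, writes $\Delta_{\SN}\psi=\gamma_{n,\alpha}\psi-L\psi$, uses $\int_{\SN}|L\psi-\gamma_{n,\alpha}\psi|^2dS\le 2\int_{\SN}|L\psi|^2dS+2\gamma_{n,\alpha}^2\int_{\SN}\psi^2dS$, and then absorbs the $\int_{\SN}\psi^2dS$ term via Lemma \ref{l:sfera-1} (resp.\ Lemma \ref{l:sfera-1-bis}), splitting $\psi=\psi_1+\psi_2$ only along $V\oplus V^{\perp}$ in case (ii) rather than mode by mode. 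The two arguments are close cousins: read on each eigenspace, the paper's chain amounts to combining $\lambda_j\le\tfrac12(\lambda_j^2+1)$ with $(s_j-\gamma_{n,\alpha})^2\le 2s_j^2+2\gamma_{n,\alpha}^2$, which yields exactly your scalar bound. In particular, your closing remark that the route via \eqref{eq:Poincare-2nd-ord} and Lemmas \ref{l:sfera-1}--\ref{l:sfera-1-bis} appears to give a slightly worse constant is mistaken --- it produces $\tfrac12\left[2+\mu_{n,\alpha}^{-1}(2\gamma_{n,\alpha}^2+1)\right]=1+\mu_{n,\alpha}^{-1}(\gamma_{n,\alpha}^2+1/2)$, i.e.\ the identical constant. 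What your version buys is a fully self-contained, transparent verification (and it makes visible where the bound is tight); what the paper's buys is brevity and direct reuse of the lemmas already established.
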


\begin{proof} Let start with the proof of (i). By
\eqref{eq:Poincare-2nd-ord}, \eqref{eq:Poincare-L} we have
\begin{align*}
& \int_{\SN} |\nabla_{\SN} \psi|^2 dS\le \frac 12 \left(\int_{\SN}
|L\psi-\gamma_{n,\alpha}\psi|^2 dS+\int_{\SN} \psi^2
dS\right)\\
& \le \frac 12 \left(2\int_{\SN} |L\psi|^2
dS+(2\gamma_{n,\alpha}^2+1)\int_{\SN} \psi^2 dS\right)\le \frac 12
\left[2+\mu_{n,\alpha}^{-1} (2\gamma_{n,\alpha}^2+1)\right]
\int_{\SN} |L\psi|^2 dS
\end{align*}
for any $\psi\in H^2(\SN)$ thus completing the proof of (i).

Let us proceed with the proof of (ii). Let $V$ be as in the
statement of Lemma \ref{l:sfera-1-bis} and for any $\psi\in
H^2(\SN)$ let $\psi_1\in V$ and $\psi_2\in V^{\perp}$ be such that
$\psi=\psi_1+\psi_2$. Finally put $\lambda_{\bar j}=\bar
j(n-2+\bar j)=-\gamma_{n,\alpha}$.

Then by \eqref{eq:Poincare-2nd-ord} and
\eqref{eq:Poincare-L-barrata} we have
\begin{align*}
& \int_{\SN} |\nabla_{\SN} \psi|^2 dS=\int_{\SN} |\nabla_{\SN}
\psi_1|^2 dS+\int_{\SN} |\nabla_{\SN} \psi_2|^2 dS\\
& \quad \le \lambda_{\bar j} \int_{\SN} \psi_1^2 dS+\frac
12\left(\int_{\SN} |L\psi_2-\gamma_{n,\alpha}\psi_2|^2
dS+\int_{\SN} \psi_2^2 dS \right)\\
& \quad \le \lambda_{\bar j} \int_{\SN} \psi^2 dS+\frac 12\left(
2\int_{\SN} |L\psi_2|^2 dS+(2\gamma_{n,\alpha}^2+1)\int_{\SN}
\psi_2^2 dS \right) \\
& \quad \le -\gamma_{n,\alpha} \int_{\SN} \psi^2 dS+[1+\bar
\mu_{n,\alpha}^{-1}(\gamma_{n,\alpha}^2+1/2)] \int_{\SN}
|L\psi_2|^2 dS
\end{align*}
and the conclusion follows since $\int_{\SN} |L\psi|^2
dS=\int_{\SN} |L\psi_1|^2 dS+\int_{\SN} |L\psi_2|^2 dS$.
\end{proof}

{\it End of the proof of Theorem \ref{t:preliminar-1}.} Let
$w:=T_\alpha \varphi$. By \eqref{eq:stimona}, Lemmas
\ref{l:sfera-1}-\ref{l:sfera-2} and the fact that $\alpha\le 0$,
$\gamma_{n,\alpha}<0$ being $\mu_{n,\alpha}=0$, we obtain
\begin{align*}
& \int_{\R^n\setminus \overline B_R} \frac{|x|^{\alpha} |\Delta
\varphi|^2}{(\log|x|)^\beta} \, dx \ge \int_{\mathcal
C_{\Omega_R}} |t|^{-\beta} |Lw(t,\theta)|^2 \,  d\mu\\
&  + \int_{\mathcal C_{\Omega_R}} |t|^{-\beta} \left[(\partial^2_t
w(t,\theta))^2+2\bar \gamma_{n,\alpha} (\partial_t
w(t,\theta))^2+2|\nabla_{\SN}(\partial_t
w(t,\theta))|^2\right]d\mu\\
& -C(n,\alpha,\beta)\left[\int_{\mathcal
C_{\Omega_R}}|t|^{-2-\beta} |Lw(t,\theta)|^2d\mu+\int_{\mathcal
C_{\Omega_R}}|t|^{-1-\beta} |Lw(t,\theta)|^2d\mu \right] \\
& +(2-\alpha)\beta \int_{\mathcal C_{\Omega_R}}
|t|^{-1-\beta} (\partial_t w(t,\theta))^2 d\mu \\
&  \ge [1-C(n,\alpha,\beta)((\log R)^{-2}+(\log R)^{-1})]
\int_{\mathcal C_{\Omega_R}} |t|^{-\beta} |Lw(t,\theta)|^2 \, d\mu
+2\bar \gamma_{n,\alpha} \int_{\mathcal C_{\Omega_R}}
|t|^{-\beta}(\partial_t w(t,\theta))^2 d\mu \, ,
\end{align*}
where $C(n,\alpha,\beta)$ is a positive constant depending only on
$n$, $\alpha$ and $\beta$. If choose $R$ sufficiently large the
constant $[1-C(n,\alpha,\beta)((\log R)^{-2}+(\log R)^{-1})]$
becomes positive so that using the one dimensional weighted Hardy
inequality
$$
\left(\frac{\beta+1}2\right)^2 \int_0^\infty t^{-\beta-2}
(\eta(t))^2 dt\le \int_0^\infty t^{-\beta} (\eta'(t))^2 dt \qquad
\text{for any } \eta\in C^\infty_c(0,+\infty)
$$
we obtain
\begin{align*}
& \int_{\R^n\setminus \overline B_R} \frac{|x|^{\alpha} |\Delta
\varphi|^2}{(\log|x|)^\beta} \, dx\ge 2\bar\gamma_{n,\alpha}
\left(\frac{\beta+1}2\right)^2 \int_{\mathcal C_{\Omega_R}}
|t|^{-\beta-2} (w(t,\theta))^2 d\mu\\
& \quad =2\bar\gamma_{n,\alpha} \left(\frac{\beta+1}2\right)^2
\int_{\R^n\setminus \overline B_R} \frac{|x|^{\alpha-4}
\varphi^2}{(\log|x|)^{\beta+2}} \, dx \, .
\end{align*}
This completes the proof of the theorem.

{\it Proof of Proposition \ref{p:HR-1d}.} It is enough to prove \eqref{eq:H-1d-0}. Let $\varphi\in C^\infty_c(\R\setminus \{0\})$ and $\alpha\in\R$. By integration by parts we have that
\begin{equation*}
\int_\R |x|^{\alpha-2} x \varphi(x)\varphi'(x)\, dx=-\frac{\alpha-1}2 \int_\R |x|^{\alpha-2} (\varphi(x))^2 dx
\end{equation*}
and hence by H\"older inequality it follows
\begin{equation*}
\frac{|\alpha-1|}2 \int_\R |x|^{\alpha-2} (\varphi(x))^2 dx\le \left(\int_\R |x|^{\alpha-2} (\varphi(x))^2 dx\right)^{1/2} \left(\int_\R |x|^{\alpha} (\varphi'(x))^2 dx\right)^{1/2} \, .
\end{equation*}
This completes the proof of \eqref{eq:H-1d-0}.

\bigskip

{\it End of the proof of Theorem \ref{t:Hardy-Rellich-2}.} The
proof of \eqref{eq:ineq-1} follows by using Proposition
\ref{p:Hardy-Rellich} and Theorem \ref{t:preliminar-1} and taking
$R>1$ large enough. The proof of \eqref{eq:ineq-2} follows by
combining Proposition \ref{p:Hardy-Rellich}, Theorem
\ref{t:preliminar-1} with the second order Hardy-type inequality
\begin{equation*}
\frac 14 \int_{\R^2\setminus B_R} \frac{\varphi^2}{|x|^2\log^2
|x|}\, dx\le \int_{\R^2\setminus B_R} |\nabla \varphi|^2 dx \qquad
\text{for any } \varphi\in C^\infty_c(\R^2\setminus \overline
B_R)\ R>1
\end{equation*}
(see \cite{adimurthi} and \cite[proof of Theorem 3]{farina}) and
the classical Hardy inequality in dimension $n\ge 3$ and taking
$R>1$ large enough.

\section{Proof of Theorems
\ref{t:n<=2m-1}-\ref{t:conformal-d}} \label{s:stability-proof}

Let $u$ be a solution of \eqref{eq_1} satisfying
the assumptions of Theorems \ref{t:n<=2m-1}-\ref{t:staocs-even}.
Then by Theorem \ref{t:estimates} (i)-(iv), there exist
$C,\overline r>0$ such that
\begin{equation*}
u(x)<-C |x| \qquad \text{for any } |x|>\overline r \, .
\end{equation*}
In particular we have that
\begin{equation*}
e^{u(x)}<e^{-C |x|} \qquad \text{for any } |x|>\overline r \, .
\end{equation*}
According with
\eqref{eq:Hardy-Rellich-1}-\eqref{eq:Hardy-Rellich-2} and
\eqref{eq:HR-1d-ter}-\eqref{eq:ineq-2}, we define the radial
function in the following different cases
\begin{align*}
& V(x):= \\
&
\begin{cases}
 \left(\prod_{i=0}^{m/2-1} \bar \gamma_{n,-4i}
\right)\!\!\cdot\!\! \left(\prod_{i=0}^{m/2-1}
\Big(\frac{2i+1}2\Big)^2 \right)
 \frac{2^{m/2}}{|x|^{2m}(\log|x|)^{m}}
 & m \ \text{even,} \ 3\le n \le 2m \ \text{even} \, , \\[10pt]
 \left(\prod_{i=0}^{\frac{m-1}2-1} \bar
\gamma_{n,-4i-2} \right)\!\!\cdot \!\!
\left(\prod_{i=0}^{\frac{m-1}2-1} \Big(\frac{2i+3}2\Big)^2
\right)\!\! \frac{2^{\frac{m-1}2-2}}{|x|^{2m}(\log|x|)^{m+1}}
 & m\ge 3 \ \text{odd,} \ 2\le n\le 2m \ \text{even} \, , \\[10pt]
 \left(\prod_{i=0}^{\frac{m-1}2-1} (4i-3)^2(4i-5)^2\right)
\frac{2^{-2m}}{|x|^{2m}}  &  m\ge 1 \ \text{odd,} \ n=1 \, , \\[10pt]
\left(\prod_{i=1}^{m/2} \mu_{n,\alpha_i}  \right)
\frac{1}{|x|^{2m}} & m \ \text{even,} \ 3\le n\le 2m \
\text{odd or} \ n>2m \, , \\[10pt]
\left(\frac{n-2}2\right)^2 \left(\prod_{i=1}^{\frac{m-1}2}
\mu_{n,\alpha_i} \right) \frac{1}{|x|^{2m}} & m \ \text{odd,} \
3\le n\le 2m \ \text{odd or} \ n>2m \, .
\end{cases}
\end{align*}
where $\bar \gamma_{n,-4i}$ and $\bar \gamma_{n,-4i-2}$ are
defined in Theorem \ref{t:preliminar-1}.

We observe that the function $x\mapsto
\frac{e^{-C |x|}}{V(x)}$ vanishes as $|x|\to +\infty$. Therefore
the exists $R>\overline r$ such that
\begin{equation*}
\frac{e^{-C |x|}}{V(x)}<1 \qquad \text{for any } |x|>R
\end{equation*}
and hence by \eqref{eq:Hardy-Rellich-1}-\eqref{eq:Hardy-Rellich-2}
and \eqref{eq:HR-1d-ter}-\eqref{eq:ineq-2}, up to enlarging $R$,
we obtain
\begin{align*}
& \int_{\R^n\setminus \overline B_R} |\Delta^{m/2} \varphi|^2
dx-\int_{\R^n\setminus \overline B_R} e^u \varphi^2 dx \ge
\int_{\R^n\setminus \overline B_R} |\Delta^{m/2} \varphi|^2
dx-\int_{\R^n\setminus \overline B_R} \frac{e^{-C |x|}}{V(x)} \,
V(x)\varphi^2(x) \, dx \\
& \qquad \ge \int_{\R^n\setminus \overline B_R} |\Delta^{m/2}
\varphi|^2 dx-\int_{\R^n\setminus \overline B_R} V(x)\varphi^2(x)
\, dx\ge 0 \qquad \text{for any } \varphi\in
C^\infty_c(\R^n\setminus \overline B_R)
\end{align*}
if $m$ is even and
\begin{align*}
& \int_{\R^n\setminus \overline B_R} |\nabla(\Delta^{\frac{m-1}2}
\varphi)|^2 dx-\!\! \int_{\R^n\setminus \overline B_R} e^u
\varphi^2 dx \ge \int_{\R^n\setminus \overline B_R}
|\nabla(\Delta^{\frac{m-1}2} \varphi)|^2 dx-\!\!
\int_{\R^n\setminus \overline B_R} \frac{e^{-C |x|}}{V(x)} \,
V(x)\varphi^2(x) \, dx \\
& \qquad \ge \int_{\R^n\setminus \overline B_R}
|\nabla(\Delta^{\frac{m-1}2} \varphi)|^2 dx-\int_{\R^n\setminus
\overline B_R} V(x)\varphi^2(x) \, dx\ge 0 \qquad \text{for any }
\varphi\in C^\infty_c(\R^n\setminus \overline B_R) \, .
\end{align*}
This completes the proof of the first four theorems.

The proof of Theorem \ref{t:conformal-d} follows in the same way
as above since the explicit solution $u$ defined in
\eqref{eq:sol-esplicita} satisfies $u(x)=-4m\log|x|+O(1)$ as
$|x|\to +\infty$.

\section{An autonomous equation associated with \eqref{Cauchy}}
\label{s:dynamical}

In order to provide detailed information on the asymptotic
behavior at infinity of radial solutions of polyharmonic equations
like \eqref{eq_1}, it can be useful to reduce the equation in
\eqref{Cauchy} to an autonomous equation by mean of a suitable
change of variable, see for example
\cite{agg,bffg,fergru,fergrukar,ferwar,gazgru} where biharmonic
equations with both power and exponential type nonlinearities are
studied.

Throughout this section we will assume that $n>2m$. Consider the
function $u_S(x)=-2m \log|x|$ for any $x\neq 0$. By direct
computation one sees that $u_S$ solves the equation
\begin{equation} \label{eq:u-S}
(-\Delta)^m u_S=\lambda_S e^{u_S} \qquad \text{in }
\R^n\setminus\{0\}
\end{equation}
where ${\ds\lambda_S=2^m m! \prod_{k=1}^{m} (n-2k)} >0$. In order
to find a solution of \eqref{eq_1} it is sufficient to define the
function $U_S(x)=u_S(x)+\log \lambda_S$ for any $x\neq 0$ which
clearly satisfies
\begin{equation} \label{eq:U}
(-\Delta)^m U_S=e^{U_S} \qquad \text{in } \R^n\setminus\{0\} \, .
\end{equation}
Then we put $s=\log r$ in such a way that if $u=u(r)$ is a radial
solution of \eqref{eq_1} then the function
\begin{equation} \label{eq:cambio}
w(s):=u(e^s)-U_S(e^s)=u(e^s)+2ms-\log \lambda_S
\end{equation}
solves the equation
\begin{equation} \label{eq:autonomous}
Q_m(\partial_s)w(s)=\lambda_S (e^{w(s)}-1) \, ,\qquad s\in \R
\end{equation}
where $U_S$ and $\lambda_S$ are as in \eqref{eq:u-S}-\eqref{eq:U},
$Q_m$ is the polynomial of degree $2m$ defined by
\begin{equation*}
Q_m(t):=(-1)^m \prod_{j=0}^{m-1} (t-2j)(t+n-2j-2)
\end{equation*}
and $Q_m(\partial_s)$ is the linear differential operator of order
$2m$ whose characteristic polynomial is given by $Q_m$.

We observe that equation \eqref{eq:autonomous} admits the trivial
solution $w\equiv 0$ and according to the change of variable
\eqref{eq:cambio}, $w$ corresponds to the function $u(r)=-2m\log
r+\log\lambda_S$. For this reason it may be interesting to study
the behavior of solutions of \eqref{eq:autonomous} approaching to
zero as $s\to +\infty$. To this purpose it may be useful to
consider the linearized equation at $w=0$ corresponding to
\eqref{eq:autonomous}:
\begin{equation*}
Q_m(\partial_s)w(s)=\lambda_S w(s) \, ,\qquad s\in \R \, .
\end{equation*}
The last equation may be rewritten as $P_m(\partial_s) w=0$ once
we define
\begin{equation*}
P_m(t):=Q_m(t)-\lambda_S
\end{equation*}
and we denote by $P_m(\partial_s)$ the linear differential
operator whose characteristic polynomial is given by $P_m$.

In order to have a clear picture on the behavior of solutions of
\eqref{eq:autonomous}, a fundamental aspect that has to be taken
in consideration, is the presence or not of non real roots of the
polynomial $P_m$.

We also recall that in the cases $m=1$ and $m=2$ the condition
which determines the presence or not of non real roots of $P_m$
determines also the existence and nonexistence of stable solutions
of \eqref{eq_1}, see for example \cite{BrezisVazquez,farina,jl}
for the case $m=1$ and \cite{bffg,dggw} for the case $m=2$.

By direct computation one may check that if $m=1$ and $n>2$ then
$P_m$ admits non real roots if and only if $n\le 9$ and from
\cite{farina,jl} we know that if $n\le 9$ then \eqref{eq_1} does
not admit any stable solution (also between non radial solutions)
while if $n\ge 10$ all radial entire solutions of \eqref{eq_1} are
stable. Similarly if $m=2$ and $n>4$ then $P_m$ admits non real
roots if and only if $n\le 12$ and from \cite{bffg,ddgm} we know
that if $n\le 12$ then \eqref{eq_1} admits radial entire solutions
which are unstable while if $n\ge 13$ all radial entire solutions
of \eqref{eq_1} are stable.

We also observe that in both cases $m=1$ and $m=2$ the existence
of non real roots is strictly related to the values taken by the
parameter $\lambda_S$ and the best constant for the corresponding
Hardy-Rellich inequality as the dimension $n$ varies. Indeed
stability of all radial entire solutions occurs if and only if
\begin{equation} \label{eq:m12}
\begin{array}{ll}
2(n-2)=\lambda_S\le \frac{(n-2)^2}4 & \qquad \text{if } m=1 \, , \\
8(n-2)(n-4)=\lambda_S\le \frac{n^2(n-4)^2}{16} & \qquad \text{if }
m=2 \, .
\end{array}
\end{equation}
See Proposition \ref{p:mitidieri} for the values of the optimal
constant in the Hardy-Rellich inequalities. The two inequalities
in \eqref{eq:m12} are equivalent respectively to $n\ge 10$ if
$m=1$ and $n\ge 13$ if $m=2$.

One may ask whether at least for $m\ge 4$ even and $n>2m$,
existence of radial unstable solutions of \eqref{eq_1} and/or
existence of non real roots of $P_m$, is again equivalent to the
validity of the inequality $\lambda_S>A_{n,m/2}$ with $A_{n,m/2}$
as in Proposition \ref{p:mitidieri}.

A question then arises: for any $m\ge 4$ even, does it exist a
critical dimension $n^*\in \N$ such that $\lambda_S>A_{n,m/2}$ for
any $2m<n\le n^*-1$ and $\lambda_S\le A_{n,m/2}$ for any $n\ge
n^*$? The next proposition answer positively to this question.

\begin{proposition} Let $m\ge 2$ even and let $\lambda_S$ and
$A_{n,m/2}$ respectively as in \eqref{eq:u-S} and in Proposition
\ref{p:mitidieri}. Then there exists $n^*\in \N$ such that
$\lambda_S>A_{n,m/2}$ for any $2m<n\le n^*-1$ and $\lambda_S\le
A_{n,m/2}$ for any $n\ge n^*$.
\end{proposition}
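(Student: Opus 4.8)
The plan is to set $k:=m/2$ and to analyse the ratio
\[
f(n):=\frac{A_{n,m/2}}{\lambda_S}
\]
as a function of the real variable $n>2m$. I will show that $f$ is strictly increasing on $(2m,+\infty)$ and that $f(n)\to+\infty$, and then take for $n^*$ the smallest integer exceeding $2m$ at which $f\ge 1$.

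First I would put both $A_{n,m/2}$ and $\lambda_S$ into a common product form. Re-indexing ($j=k-i$ and $j=k-1-i$) in the formula of Proposition \ref{p:mitidieri}\,(i) with this value of $k$ gives
\[
A_{n,k}=\frac{1}{16^k}\left(\prod_{j=1}^k(n-4j)\right)^2\left(\prod_{j=0}^{k-1}(n+4j)\right)^2 ,
\]
while $\lambda_S=2^m m!\prod_{\ell=1}^m(n-2\ell)=4^k(2k)!\prod_{\ell=1}^{2k}(n-2\ell)$, and splitting the last product according to the parity of $\ell$ yields $\prod_{\ell=1}^{2k}(n-2\ell)=\prod_{j=1}^k(n-4j)\cdot\prod_{j=1}^k(n-4j+2)$. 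Hence, for $n>2m$, where every factor is positive,
\[
f(n)=\frac{1}{64^k(2k)!}\cdot\frac{\prod_{j=1}^k(n-4j)\left(\prod_{j=0}^{k-1}(n+4j)\right)^2}{\prod_{j=1}^k(n-4j+2)}>0 .
\]

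Next I would differentiate $\log f$ and use the elementary identity $\tfrac1a-\tfrac1{a+2}=\tfrac{2}{a(a+2)}$ to get
\[
\frac{f'(n)}{f(n)}=\sum_{j=1}^k\frac{2}{(n-4j)(n-4j+2)}+2\sum_{j=0}^{k-1}\frac{1}{n+4j} .
\]
For $n>2m=4k$ each summand is strictly positive, so $f'(n)>0$ and $f$ is strictly increasing on $(2m,+\infty)$. A degree count (the numerator of the displayed formula for $f$ has degree $3k$ in $n$, the denominator degree $k$) shows $f(n)\to+\infty$ as $n\to+\infty$; equivalently, $A_{n,m/2}$ is a polynomial of degree $2m$ with positive leading coefficient while $\lambda_S$ has degree $m$. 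Therefore the set $\{n\in\N:\ n>2m,\ \lambda_S\le A_{n,m/2}\}=\{n\in\N:\ n>2m,\ f(n)\ge 1\}$ is nonempty; let $n^*$ be its minimum, so $n^*>2m$. By minimality $\lambda_S>A_{n,m/2}$ for every integer $n$ with $2m<n\le n^*-1$, and for every integer $n>n^*$ the strict monotonicity of $f$ gives $f(n)>f(n^*)\ge1$, i.e.\ $\lambda_S\le A_{n,m/2}$. This is exactly the assertion of the proposition.

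The only step that needs any care is the bookkeeping in the two re-indexings that bring $f$ to the displayed closed form; once that is done, the strict monotonicity is immediate from the logarithmic-derivative identity, which is the heart of the argument, and I do not foresee any genuine obstacle.
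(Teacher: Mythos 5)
Your proof is correct and follows essentially the same route as the paper: both reduce to showing that the ratio $A_{n,m/2}/\lambda_S$ (which you and the paper put into the same closed form) is increasing in $n$ on $(2m,+\infty)$ and tends to $+\infty$, and then take $n^*$ to be the first integer where the ratio reaches $1$; your logarithmic-derivative computation is simply a cleaner justification of the monotonicity that the paper asserts factor by factor. The only content of the paper's proof you omit is the estimate $A_{2m+1,m/2}/\lambda_S<3/4<1$, which guarantees $n^*\ge 2m+2$ and hence that the regime $\lambda_S>A_{n,m/2}$ is genuinely nonempty; since the literal statement of the proposition does not require this, your argument is complete as it stands.
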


\begin{proof} First we observe that for any $n>2m$ we
may write
\begin{equation*}
\frac{A_{n,m/2}}{\lambda_S}=\frac{1}{8^m \cdot m!} \, \cdot
\prod_{i=0}^{m/2-1} (n+2m-4i-4)^2 \cdot  \prod_{i=1}^{m/2}
\frac{n-4i}{n-4i+2} \, .
\end{equation*}
Hence for any fixed $m$ the previous quotient is increasing with
respect to $n$ provided that $n>2m$.

For $n=2m+1$ the quotient becomes
\begin{align*}
\frac{A_{n,m/2}}{\lambda_S}&=\frac{1}{8^m \cdot m!} \, \cdot
\prod_{i=0}^{m/2-1} (4m-4i-3)^2 \cdot  \prod_{i=1}^{m/2}
\frac{2m-4i+1}{2m-4i+3}<\frac{1}{8^m \cdot m!} \, \cdot
\prod_{i=0}^{m/2-1} (4m-4i)^2\\
& < \frac{1}{8^m \cdot m!} \, 2^m \prod_{i=0}^{m-1} (2m+1-i)
=\frac{1}{8^m \cdot m!} \, 2^m \frac{2m+1}{m+1}\,
\frac{(2m)!}{m!}<2\cdot 4^{-m} \cdot \frac{(2m)!}{(m!)^2}=:a_m
\end{align*}
Since the sequence $\{a_m\}$ is decreasing then we obtain
$$
\frac{A_{n,m/2}}{\lambda_S}<a_2=\frac 34<1 \, .
$$
On the other hand it is clear that for any $m$ fixed we have that
$\lim_{n\to +\infty} \frac{A_{n,m/2}}{\lambda_S}=+\infty$.

After collecting all the above information the proof of the
proposition follows.
\end{proof}

In contrast with the case $m=2$, numerical evidence shows that for
$m\ge 4$ even, the condition $n\ge n^*$ is not sufficient to
guarantee that all the roots of $P_m$ are real, see the two pictures
below respectively in the cases $m=4$, $n=n^*-1=17$ and $m=4$, $n=n^*=18$.

\begin{figure}[th]
\begin{center}
{\includegraphics[height=60mm, width=120mm]{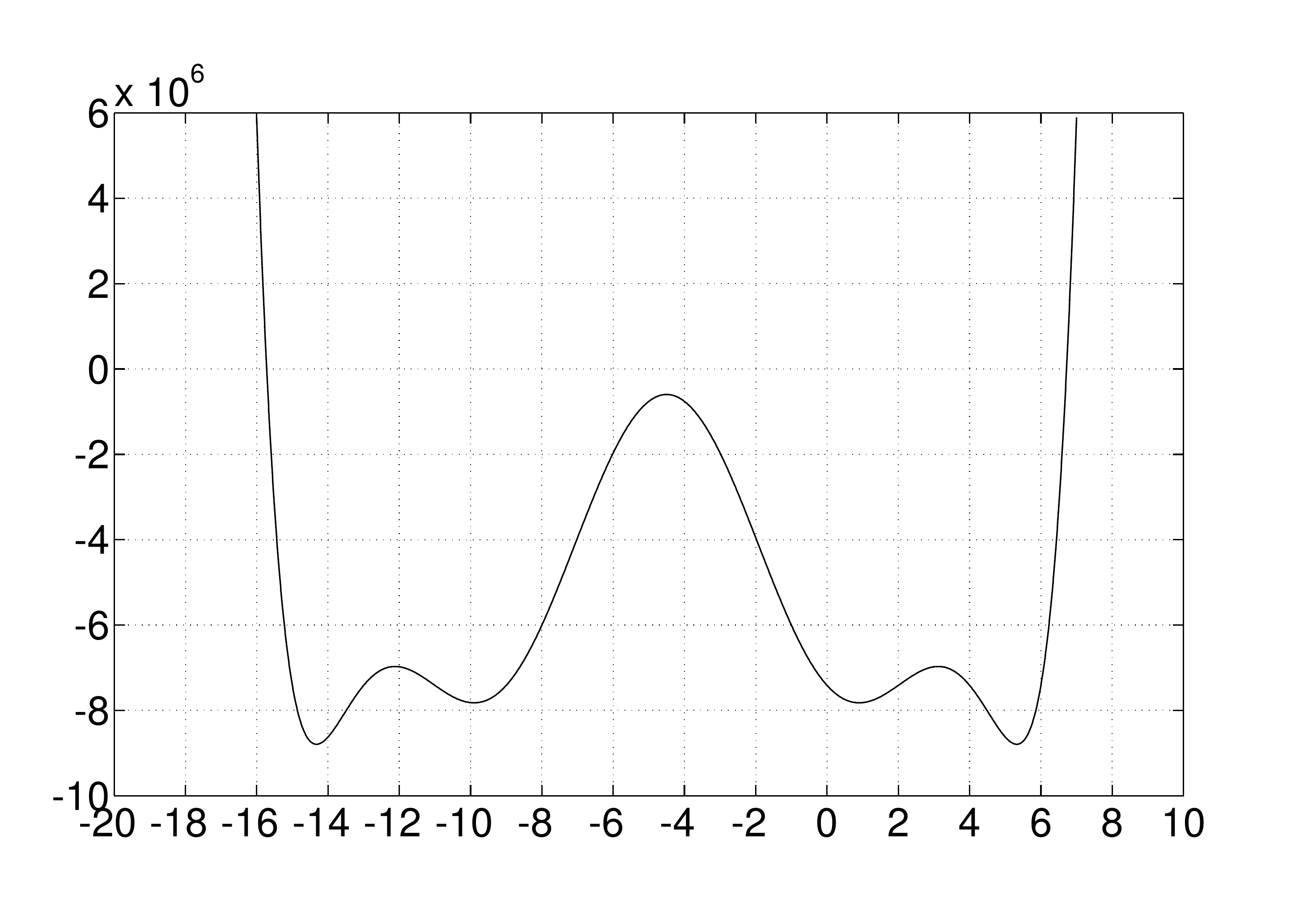}}
\caption{Graph of $P_m$ for $m=4$ and $n=n^*-1=17$ .}
\end{center}
\end{figure}

\begin{figure}[th]
\begin{center}
{\includegraphics[height=60mm, width=120mm]{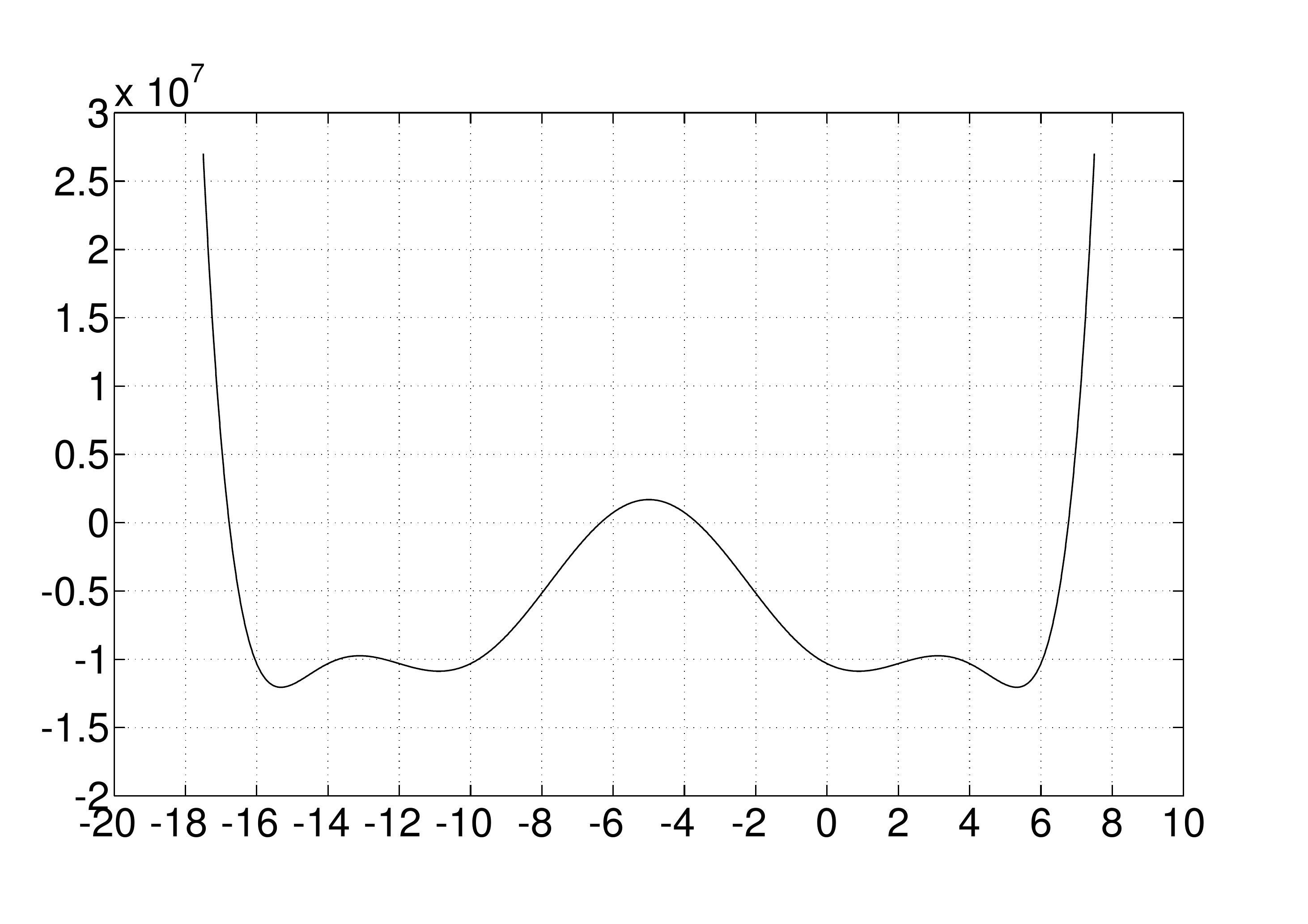}}
\caption{Graph of $P_m$ for $m=4$ and $n=n^*=18$ .}
\end{center}
\end{figure}

We recall that in the case $m=2$, the possibility of factorizing
$P_m$ as a product of four real polynomial of degree 1 was
fundamental for proving stability of radial entire solutions of
\eqref{eq_1} corresponding to the case $\beta \in \partial\mathcal
A_\alpha$, see the proofs of Theorem 6 and Lemma 12 in
\cite{bffg}. The impossibility for $m\ge 4$ even of having a
factorization of $P_m$ as a product of real polynomials of degree
1 also in dimensions $n\ge n^*$ makes difficult to understand if
the existence of stable solutions of \eqref{eq_1} occurs for such
dimensions.

\appendix
 \section*{Appendix}
\setcounter{section}{1}
 \setcounter{proposition}{0}

In this appendix we recall from \cite{fls} and
\cite{MacKennaReichel} a couple of results concerning solutions of
\eqref{Cauchy}: the first one is a result dealing with continuous
dependence on initial conditions and the second a comparison
principle.

\begin{proposition} \label{dipendenza-continua}
(\cite{fls}) For any $n\ge 1$ we have:

\begin{itemize}
\item[(i)] for any $\alpha_0,\dots,\alpha_{m-1}\in \R$ problem
\eqref{Cauchy} admits a unique local solution defined on the
maximal interval of continuation $[0,R)$ with $0<R\le +\infty$;

\item[(ii)] let $\alpha_0,\dots,\alpha_{m-1}\in\R$ and let
$\{\alpha_{0,k}\},\dots,\{\alpha_{m-1,k}\}$ sequences in $\R$ such
that
$$
\alpha_{0,k}\to \alpha_0 \, ,  \qquad \alpha_{i,k}\to \alpha_i
\quad \text{for any } i\in\{1,\dots,m-1\}\, , \qquad \text{as }
k\to +\infty \, .
$$
Denote by $u$ and $u_k$ the solutions of \eqref{Cauchy}
corresponding respectively to the initial values
$\alpha_0,\dots,\alpha_{m-1}$ and
$\alpha_{0,k},\dots,\alpha_{m-1,k}$. Denote by $[0,R)$, $0<R\le
+\infty$ the maximal interval of continuation of $u$. Then for any
$S\in (0,R)$ there exists $\overline k>0$ such that for any
$k>\overline k$, $u_k$ is well defined in $[0,S]$ and moreover
$u_k\to u$ uniformly in $[0,S]$ as $k\to +\infty$.
\end{itemize}
\end{proposition}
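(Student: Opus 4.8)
The plan is to recast \eqref{Cauchy} as a fixed point problem for a system of integral equations and then to invoke the standard theory of ordinary differential equations; both parts of the statement are classical, and a detailed proof can be found in \cite{fls}. First I would set $v_0:=u$ and $v_j:=\Delta^j u$ for $j\in\{1,\dots,m-1\}$ and observe that a radial function $u\in C^{2m}$ solves \eqref{Cauchy} with the prescribed data if and only if $(r^{n-1}v_j')'=r^{n-1}v_{j+1}$ for $j\in\{0,\dots,m-2\}$, $(r^{n-1}v_{m-1}')'=(-1)^m r^{n-1}e^{v_0}$, together with $v_j(0)=\alpha_j$ and $v_j'(0)=0$. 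The conditions $v_j'(0)=0$ are precisely what is needed to integrate twice from the origin and rewrite the system, equivalently, as
\begin{align*}
v_j(r) &= \alpha_j + \int_0^r \sigma^{1-n} \left( \int_0^\sigma s^{n-1} v_{j+1}(s)\, ds \right) d\sigma , \qquad 0 \le j \le m-2, \\
v_{m-1}(r) &= \alpha_{m-1} + (-1)^m \int_0^r \sigma^{1-n} \left( \int_0^\sigma s^{n-1} e^{v_0(s)}\, ds \right) d\sigma .
\end{align*}
Since $\sigma^{1-n}\int_0^\sigma s^{n-1}\,ds=\sigma/n$, the linear operators on the right are bounded on $C([0,\delta];\R^m)$ with norm $O(\delta^2)$, and $v_0\mapsto e^{v_0}$ is locally Lipschitz; hence, for $\delta>0$ small (depending on $\alpha_0,\dots,\alpha_{m-1}$) the right-hand side defines a contraction on a suitable closed ball of $C([0,\delta];\R^m)$, and the Banach fixed point theorem produces a unique local solution, which a bootstrap through the differential system shows it to be $C^{2m}$. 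The usual continuation argument then yields a unique maximal solution on an interval $[0,R)$ with $0<R\le+\infty$, proving (i).

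For (ii) I would fix $S\in(0,R)$, choose $M$ with $|v_j(r)|\le M$ for all $r\in[0,S]$ and all $j$, and set $L:=e^{M+1}$, a Lipschitz constant for $t\mapsto e^t$ on $[-M-1,M+1]$. Writing $v_{0,k},\dots,v_{m-1,k}$ for the layers attached to $u_k$, subtracting the integral identities for $u_k$ and for $u$ and setting $E_k(r):=\sum_{j=0}^{m-1}\sup_{[0,r]}|v_{j,k}-v_j|$, one is led to an inequality
$$
E_k(r) \le \sum_{j=0}^{m-1} |\alpha_{j,k} - \alpha_j| + \max(1,L) \int_0^r \frac{\sigma}{n}\, E_k(\sigma)\, d\sigma
$$
on every interval $[0,r]\subseteq[0,S]$ on which $u_k$ is defined and $E_k\le 1$ (so that the relevant arguments of the exponential stay in $[-M-1,M+1]$). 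Gronwall's inequality converts this into $E_k(r)\le C(S,M)\sum_{j=0}^{m-1}|\alpha_{j,k}-\alpha_j|$ with $C(S,M)$ independent of $k$ and of $r\le S$. A standard open--closed argument on the set of $S'\in(0,S]$ for which $u_k$ is defined on $[0,S']$ with $E_k(S')\le 1$ then shows, once $k$ is large enough that $C(S,M)\sum_j|\alpha_{j,k}-\alpha_j|<1/2$, that this set is all of $(0,S]$; hence $u_k$ is defined on $[0,S]$, and the same estimate on $[0,S]$ gives $u_k\to u$ uniformly there, proving (ii).

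The step I expect to be the main obstacle is precisely this uniform continuation argument in (ii): one must break the apparent circularity between "$E_k$ remains below $1$ on $[0,S']$" and "$u_k$ extends past $S'$". The resolution is that the Gronwall constant $C(S,M)$ depends only on $S$ and $M$ — through the integrable weight $\sigma^{1-n}\int_0^\sigma s^{n-1}\,ds=\sigma/n$ and the Lipschitz bound $L$ — and neither on the subinterval nor on $k$, so feeding the resulting bound on $E_k$ back into the maximality argument closes the induction for $k$ large. Everything else reduces to routine estimates on the iterated integral operators displayed above.
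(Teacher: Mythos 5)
Your argument is sound, but note first that the paper itself offers no proof of this proposition: it is stated in the appendix and simply quoted from the reference \cite{fls} (Franchi--Lanconelli--Serrin), so there is no in-paper proof to compare against. What you have written is a correct, self-contained proof along entirely standard lines: the reformulation of \eqref{Cauchy} as the integral system for $v_j=\Delta^j u$ is the right one (the sign $(-1)^m$ in the last equation is correct, and the conditions $v_j'(0)=0$ are exactly what justifies the double integration from the origin), the kernel bound $\sigma^{1-n}\int_0^\sigma s^{n-1}\,ds=\sigma/n$ gives the $O(\delta^2)$ operator norm needed for the contraction, and the Gronwall constant $C(S,M)=\exp\bigl(\max(1,L)S^2/(2n)\bigr)$ is indeed independent of $k$ and of the subinterval, which is precisely what closes the open--closed continuation argument in (ii); you correctly identified and resolved the only genuinely delicate point, namely the circularity between ``$E_k\le 1$ on $[0,S']$'' and ``$u_k$ extends past $S'$''. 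Two minor points you gloss over but which are routine: the bootstrap from the integral system back to a $C^{2m}$ solution of the PDE requires checking regularity of the radial function at the origin (the formula $v_j'(r)=r^{1-n}\int_0^r s^{n-1}v_{j+1}\,ds$ shows $v_j''$ extends continuously to $r=0$ with value $v_{j+1}(0)/n$, and one iterates); and the nonemptiness of your open--closed set uses that the contraction radius $\delta$ can be taken uniform in $k$ for $k$ large, which follows from the convergence of the initial data. Neither is a gap worth worrying about.
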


\begin{proposition} \label{mckenna reichel}
Assume that $f:\R\rightarrow \R$ is locally Lipschitz continuous
and monotonically increasing and let $m\in \N$, $m\ge 1$. Let
$u,v\in C^{2m}([0,R))$ be such that
\begin{equation} \label{inequalities}
\left \{ \begin{array}{ll} \forall r\in[0,R):
\Delta^m u(r)-f(u(r))\geq \Delta^m v(r)-f(v(r))\,, \\
u(0)\geq v(0)\,,\quad u'(0)= v'(0)=0\,, \\
\Delta^k u(0)\geq \Delta^k v(0)\,,\quad (\Delta^k u)'(0)=
(\Delta^k v)'(0)=0 \quad \text{for any } k=1,\dots, m-1 \,    .
\end{array}
\right.
\end{equation}
Then, for all $r\in[0,R)$ and for all $k\in \{1,\dots,m-1\}$ we
have
\begin{equation} \label{eq:comparison}
u(r)\geq v(r)\,,\quad u'(r)\geq v'(r)\,,\quad \Delta^k u(r)\geq
\Delta^k v(r)\,,\quad(\Delta^k u)'(r) \geq (\Delta^k v)'(r)\,.
\end{equation}
Moreover, the initial point $0$ can be replaced by any initial
point $\rho>0$ if all the four initial data are weakly ordered and
a strict inequality in one of the initial data at $\rho\geq 0$ or
in the differential inequality in $(\rho,R)$ implies a strict
ordering of $u,u',\Delta^k u,(\Delta^k u)'$ and $v,v',\Delta^k
v,(\Delta^k v)'$ on $(\rho,R)$ for any $k\in \{1,\dots,m-1\}$.
\end{proposition}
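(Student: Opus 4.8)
The plan is to set $w:=u-v\in C^{2m}([0,R))$ and prove the stronger statement that $w$, $w'$, $\Delta^k w$ and $(\Delta^k w)'$ are all nonnegative on $[0,R)$. The role of the monotonicity of $f$ is to linearize the differential inequality: putting $c(r):=\frac{f(u(r))-f(v(r))}{u(r)-v(r)}$ (and $c(r):=0$ where $u(r)=v(r)$), monotonicity gives $c(r)\ge 0$ while local Lipschitz continuity gives $0\le c(r)\le L$ on each compact subinterval, so the hypothesis in \eqref{inequalities} reads $\Delta^m w(r)\ge c(r)\,w(r)$. The basic tool is the radial integration operator: for a radial $\psi$ with $\psi'(0)=0$ one has $\Delta\psi=r^{1-n}(r^{n-1}\psi')'$, hence $\psi=\psi(0)+\mathcal I[\Delta\psi]$, where
$$
\mathcal I[h](r):=\int_0^r t^{1-n}\Big(\int_0^t s^{n-1}h(s)\,ds\Big)\,dt .
$$
The operator $\mathcal I$ is positivity preserving: if $h\ge 0$ then $\mathcal I[h]\ge 0$ and $(\mathcal I[h])'(r)=r^{1-n}\int_0^r s^{n-1}h\,ds\ge 0$, and it also sends nonnegative constants to nonnegative functions.

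Applying the identity $\psi=\psi(0)+\mathcal I[\Delta\psi]$ successively to $\Delta^{m-1}w,\dots,w$ and using the initial conditions $(\Delta^k w)'(0)=0$ yields the representation
$$
w=\sum_{k=0}^{m-1}\mathcal I^{k}\big[\Delta^k w(0)\big]+\mathcal I^{m}[\Delta^m w].
$$
Since each $\Delta^k w(0)\ge 0$ and each $\mathcal I^k$ maps nonnegative constants to nonnegative functions, the first sum is $\ge 0$ on all of $[0,R)$, so that $w\ge \mathcal I^{m}[\Delta^m w]\ge\mathcal I^{m}[c\,w]$ there.

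The crux is to extract $w\ge 0$ from this integral inequality, and here lies the main obstacle: the bound $\Delta^m w\ge c\,w$ controls $\Delta^m w$ only by a multiple of $w$ whose sign is a priori unknown, so one cannot argue linearly. To break the circularity I would estimate the negative part $w^-:=\max(-w,0)$. Writing $c\,w=c\,w^+-c\,w^-$ and using $0\le c\le L$ on compact subintervals, the representation gives $w\ge -L\,\mathcal I^{m}[w^-]$, whence $w^-\le L\,\mathcal I^{m}[w^-]$ pointwise. A kernel estimate of the shape $\mathcal I^{m}[h](r)\le C_{n,m}\,r^{2m}\sup_{[0,r]}h$ for $h\ge 0$ then turns this into a Volterra-type inequality: on a first interval $[0,r_1]$ with $L\,C_{n,m}\,r_1^{2m}<1$ it forces $\sup_{[0,r_1]}w^-=0$, and the standard Gronwall-type continuation for Volterra kernels propagates $w^-\equiv 0$ to the whole of $[0,R)$. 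I expect this step — getting the kernel bound and running the continuation cleanly — to be where the real work is.

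Once $w\ge 0$ on $[0,R)$ is established, $u\ge v$ gives $f(u)\ge f(v)$, so $\Delta^m w\ge 0$ throughout; feeding this back into the iterated representation, now with a genuinely nonnegative integrand, produces at each stage $\Delta^{m-k}w\ge 0$ and $(\Delta^{m-k}w)'\ge 0$, which is exactly \eqref{eq:comparison}. For the final assertion I would rerun the same scheme with the integration based at $\rho$ instead of $0$: the only change is that the boundary terms $(\Delta^k w)'(\rho)$ need no longer vanish, but since they are now assumed weakly ordered, i.e.\ $\ge 0$, the monotone integration $r^{n-1}(\Delta^{m-1}w)'(r)\ge \rho^{n-1}(\Delta^{m-1}w)'(\rho)\ge 0$ still goes through, and likewise at every level. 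Finally, strictness propagates because $\mathcal I$ maps a nonnegative function that is positive somewhere, or a strictly positive constant, to a function that is strictly positive for $r>\rho$; tracking a single strict inequality — in one initial datum at $\rho$, or in the differential inequality on $(\rho,R)$ — through the operators $\mathcal I,\mathcal I^2,\dots,\mathcal I^m$ then forces the strict ordering of all four families $u,u',\Delta^k u,(\Delta^k u)'$ against their counterparts on $(\rho,R)$.
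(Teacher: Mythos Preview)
Your proof is correct and proceeds along a genuinely different route from the paper's. The paper argues in two stages: first it treats the \emph{strict} case directly, by fixing an index $k$ with $\Delta^k u>\Delta^k v$ on a maximal interval $(0,R_0)$, integrating the radial identity $\Delta^k w=r^{1-n}(r^{n-1}(\Delta^{k-1}w)')'$ downward to obtain $u>v$ on $(0,R_0)$, using monotonicity of $f$ to get $\Delta^m u\ge\Delta^m v$ there, and then integrating back up to show $\Delta^k u-\Delta^k v$ is nondecreasing, contradicting $R_0<R$. For the \emph{weak} case it then perturbs: it replaces $u$ by the solution $u_\varepsilon$ of the same equation with $u_\varepsilon(0)=u(0)+\varepsilon$, applies the strict case to $u_\varepsilon$ versus $v$, and passes to the limit $\varepsilon\to0$ using continuous dependence on initial data (Proposition~\ref{dipendenza-continua}).

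Your argument instead attacks the weak case head-on via the integral representation $w=\sum_{k=0}^{m-1}\mathcal I^{k}[\Delta^k w(0)]+\mathcal I^{m}[\Delta^m w]$ and a Gronwall/Volterra bound on $w^-$, and only afterwards extracts strictness by propagating a single positive contribution through the positivity-preserving operator $\mathcal I$. The trade-off is that your proof is self-contained --- it does not invoke the external continuous-dependence result --- at the price of the kernel estimate and the small-interval/continuation step, whereas the paper's argument avoids any quantitative estimate but relies on the perturbation device. Both are standard moves; yours is closer in spirit to the integral-inequality proofs of ODE comparison principles, the paper's to the classical ``strict case plus approximation'' scheme.
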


\begin{proof} Suppose first that at least one of the inequalities in the differential inequality or
in the initial conditions is strict. Then there exist $0<R_0\le R$
and $k\in \{1,\dots,m-1\}$ such that
\begin{equation} \label{ine_k}
\Delta^k u(r)>\Delta^k v(r) \qquad \text{for all } r\in(0,R_0) \,
.
\end{equation}
We may assume that $R_0$ is optimal in the sense that
$$
R_0=\sup\{r\in (0,R):\Delta^k u(s)>\Delta^k v(s) \ \text{for any }
s\in (0,r)\} \, .
$$
Suppose by contradiction that $R_0<R$.

Writing $\Delta^k u(r)=r^{1-n} (r^{n-1}(\Delta^{k-1} u)')'$ and
using the large inequalities in \eqref{inequalities}, two
successive integrations yield
\begin{equation*}
(\Delta^{k-1} u(r))'>(\Delta^{k-1} v(r))' \qquad \text{and} \qquad
\Delta^{k-1} u(r)>\Delta^{k-1} v(r) \qquad \text{for all }
r\in(0,R_0) \, .
\end{equation*}

After a finite number of steps we arrive to
\begin{equation*}
u(r)>v(r) \qquad \text{for all } r\in (0,R_0)
\end{equation*}
and hence
\begin{align*}
 (r^{n-1} (\Delta^{m-1}u(r))')'&=r^{n-1} \Delta^m u(r)
\ge r^{n-1}\left[f(u(r))+\Delta^m v(r)-f(v(r))\right] \\
& \ge r^{n-1} \Delta^m v(r)=(r^{n-1} (\Delta^{m-1}v(r))')' \quad
\text{for all } r\in (0,R_0) \, .
\end{align*}
We can restart the iterative procedure of successive integrations
to conclude that the map
$$
r\mapsto \Delta^k u(r)-\Delta^k v(r)
$$
is nondecreasing so that by \eqref{ine_k} we have $\Delta^{k}
u(r)>\Delta^{k} v(r)$ for all $r\in(0,R_0]$. The strict inequality
until $r=R_0$ contradicts the optimality of $R_0$. The validity of
the inequality \eqref{ine_k} in the whole interval $(0,R)$ yields
the strict inequalities in the whole $(0,R)$ also for the other
terms in \eqref{eq:comparison}.

It remains to prove the proposition in the case of large
inequalities. To this purpose we consider the unique solution
$u_\varepsilon$ of the initial value problem
\begin{equation}
\left \{ \begin{array}{ll}
\Delta^m u_\varepsilon-f(u_\epsilon)=\Delta^m u-f(u)  \\
u_\varepsilon(0)=u(0)+\varepsilon\, , \quad u'_\varepsilon(0)=0 \\
\Delta^k u_\varepsilon(0)=\Delta^k u(0) \, \quad (\Delta^k
u_\varepsilon)'(0)=(\Delta^k u)'(0) \qquad \text{for all }
k\in\{1,\dots,m-1\} \, .
\end{array}
\right.
\end{equation}
Existence and uniqueness for this initial value problem follows by
Proposition \ref{dipendenza-continua} (i).

From the first part of the proof we infer that for any $r\in
(0,R)$ and any $\varepsilon>0$
\begin{align*}
& u_\varepsilon(r)>v(r)\, , \quad u'_\varepsilon(r)>v'(r) \, , \\
& \Delta^k u_\varepsilon(r)>\Delta^k v(r) \, , \quad (\Delta^k
u_\varepsilon)'(r)>(\Delta^k v)'(r) \qquad \text{for any }
k\in\{1,\dots,m-1\}\, .
\end{align*}
Letting $\varepsilon\to 0^+$ and using Proposition
\ref{dipendenza-continua} (ii) we arrive to the conclusion. We
finally observe that the previous argument can be repeated by
replacing the initial condition at $r=0$ with an initial condition
at any other $\rho\in (0,R)$.
\end{proof}

{\bf Acknowledgement}

The authors are grateful to Elvise Berchio for the fruitful
discussions during the preparation of this paper.

\end{document}